\def\@tocline#1#2#3#4#5#6#7{\relax
  \ifnum #1>\c@tocdepth 
  \else
    \par \addpenalty\@secpenalty\addvspace{#2}%
    \begingroup \hyphenpenalty\@M
    \@ifempty{#4}{%
      \@tempdima\csname r@tocindent\number#1\endcsname\relax
    }{%
      \@tempdima#4\relax
    }%
    \parindent\z@ \leftskip#3\relax \advance\leftskip\@tempdima\relax
    \rightskip\@pnumwidth plus4em \parfillskip-\@pnumwidth
    #5\leavevmode\hskip-\@tempdima
      \ifcase #1
       \or\or \hskip 1em \or \hskip 2em \else \hskip 3em \fi%
      #6\nobreak\relax
    \dotfill\hbox to\@pnumwidth{\@tocpagenum{#7}}\par
    \nobreak
    \endgroup
  \fi}
\numberwithin{equation}{section}
\def\bB{{\mathbb{B}}}
\def\R{{\mathbb{R}}}
\newcommand{\dA}{\mathcal{A}}
\newcommand{\dG}{\mathcal{G}}
\newcommand{\dM}{\mathcal{M}}
\newcommand{\dD}{\mathcal{D}}
\newcommand{\dH}{\mathcal{H}}
\newcommand{\dW}{\mathcal{W}}
\newcommand{\dI}{\mathcal{I}}
\newcommand{\dT}{\mathcal{T}}
\newcommand{\dN}{\mathcal{N}}
\newcommand{\dS}{\mathcal{S}}
\newcommand{\dV}{\mathcal{V}}
\newcommand{\dB}{\mathcal{B}}
\newcommand{\Nw}{N_{\text{\tiny{W}}}}
\newcommand{\bV}{\mathbb{V}}
\newcommand{\bY}{\mathbb{Y}_{{\text{\tiny{W}}}}}
\newcommand{\Gaw}{\overline{\mathbb{Y}}_{{\text{\tiny{W}}}}}
\def\cC{{\mathscr{C}}}
\def\cH{{\mathscr{H}}}
\def\cI{{\mathscr{I}}}
\def\one{\mathds{1}}
\newcommand{\Stop}{\mathsf{Stop}}
\newcommand{\Tree}{\mathsf{Tree}}
\newcommand{\BA}{\mathsf{BA}}
\newcommand{\BSF}{\mathsf{BSF}}
\newcommand{\BG}{\mathsf{BG}}
\newcommand{\SSL}{\mathsf{SSL}}
\newcommand{\Child}{\mathsf{Child}}
\newcommand{\Grad}{{\rm Grad}}
\newcommand{\wh}[1]{\widehat{#1}}
\newcommand{\wt}[1]{\widetilde{#1}}
\def\ve{\varepsilon}
\def\vp{\varphi}
\renewcommand{\d}{{\partial}}
\def\lec{\lesssim}
\def\gec{\gtrsim}
\newcommand{\vvv}{{\vspace{2mm}}}
\newcommand{\pom}{{\partial \Omega}}
\newcommand{\om}{{\Omega}}
\newcommand{\WW}{{\mathcal W}}
\newcommand{\DD}{{\mathcal D}}
\newcommand{\s}{{\sigma}}
\newcommand{\NN}{{\mathcal N}}
\newcommand{\bwgl}{\mathsf{BWGL}}
\newcommand{\cw}{\mathsf{U}}
\DeclareMathOperator{\diam}{diam}
\def\Lip{\mathop\mathrm{Lip}} 						
\def\dist{\mathop\mathrm{dist}} 						
\def\supp{\mathop\mathrm{supp}}					
\DeclareMathOperator*{\esssup}{ess\,sup}			
\newcommand{\ps}[1]{\left( #1 \right)}
\newcommand{\av}[1]{\left| #1 \right|}
\def\warrow{\rightharpoonup}
\newcommand{\dMi}{\dM^{s}}
\def\Xint#1{\mathchoice
{\XXint\displaystyle\textstyle{#1}}%
{\XXint\textstyle\scriptstyle{#1}}%
{\XXint\scriptstyle\scriptscriptstyle{#1}}%
{\XXint\scriptscriptstyle\scriptscriptstyle{#1}}%
\!\int}
\def\XXint#1#2#3{{\setbox0=\hbox{$#1{#2#3}{\int}$ }
\vcenter{\hbox{$#2#3$ }}\kern-.58\wd0}}
\def\avint{\Xint-}
\def\grad{\nabla}
\newcommand{\gammaq}{\gamma_f^1}
\newcommand{\wtgamma}{\wt{\gamma}_f}
\theoremstyle{maintheorem}
\newtheorem{maintheorem}{Theorem}
\theoremstyle{plain}
\newtheorem{theorem}{Theorem}
\newtheorem{corollary}[theorem]{Corollary}
\newtheorem{lemma}[theorem]{Lemma}
\newtheorem{proposition}[theorem]{Proposition}
\newtheorem{claim}[theorem]{Claim}
\newtheorem{mainlemma}[theorem]{Main Lemma}
\theoremstyle{definition}
\newtheorem{example}[theorem]{Example}
\newtheorem{definition}[theorem]{Definition}
\newtheorem{remark}[theorem]{Remark}
\numberwithin{equation}{section}
\numberwithin{theorem}{section}
  \DeclareFontFamily{U}{mathb}{\hyphenchar\font45} 
\DeclareFontShape{U}{mathb}{m}{n}{
      <5> <6> <7> <8> <9> <10> gen * mathb
      <10.95> mathb10 <12> <14.4> <17.28> <20.74> <24.88> mathb12
      }{}
\DeclareSymbolFont{mathb}{U}{mathb}{m}{n}
\DeclareMathSymbol{\toitself}{3}{mathb}{"FD}  
\def\salpha{\alpha_{\sigma}^{d}}
\def\@tocline#1#2#3#4#5#6#7{\relax
  \ifnum #1>\c@tocdepth 
  \else
    \par \addpenalty\@secpenalty\addvspace{#2}%
    \begingroup \hyphenpenalty\@M
    \@ifempty{#4}{%
      \@tempdima\csname r@tocindent\number#1\endcsname\relax
    }{%
      \@tempdima#4\relax
    }%
    \parindent\z@ \leftskip#3\relax \advance\leftskip\@tempdima\relax
    \rightskip\@pnumwidth plus4em \parfillskip-\@pnumwidth
    #5\leavevmode\hskip-\@tempdima
      \ifcase #1
       \or\or \hskip 1em \or \hskip 2em \else \hskip 3em \fi%
      #6\nobreak\relax
    \dotfill\hbox to\@pnumwidth{\@tocpagenum{#7}}\par
    \nobreak
    \endgroup
  \fi}
\renewcommand{\Sigma}{E}
\begin{document}
\setcounter{tocdepth}{1}
\setcounter{secnumdepth}{4}

\title[Quantitative differentiability on UR sets]{Quantitative differentiability on uniformly rectifiable sets}

\author[Azzam, Mourgoglou and Villa]{Jonas Azzam, Mihalis Mourgoglou and Michele Villa}
\address{Jonas Azzam\\
School of Mathematics \\ University of Edinburgh \\ JCMB, Kings Buildings \\
Mayfield Road, Edinburgh,
EH9 3JZ, Scotland.}
\email{j.azzam "at" ed.ac.uk}

\address{Mihalis Mourgoglou\\
Departamento de Matem\'aticas, Universidad del Pa\' is Vasco, Barrio Sarriena s/n 48940 Leioa, Spain and\\ Ikerbasque, Basque Foundation for Science, Bilbao, Spain.} \email{michail.mourgoglou "at" ehu.eus}

\address{Michele Villa\\
	Departamento de Matem\'aticas, Universidad del Pa\' is Vasco, Barrio Sarriena s/n 48940 Leioa, Spain and\\ Ikerbasque, Basque Foundation for Science, Bilbao, Spain.}
\email{michele.villa "at" ehu.eus}

\thanks{J.A. was partially supported by Basque Center for Applied Mathematics (BCAM) while on a research visit to M.M. 
	M.M. was supported  by IKERBASQUE and partially supported by the grant PID2020-118986GB-I00 of the Ministerio de Econom\'ia y Competitividad (Spain) and by the grant IT-1615-22 (Basque Government). M.V. was supported by a starting grant of the University of Oulu and by the Academy of Finland via the project
	“Higher dimensional Analyst’s Traveling Salesman theorems and Dorronsoro estimates on non-smooth sets”, grant
	No. 347828/24304228}

\keywords{Rectifiability, uniform rectifiability, quantitative differentiation, Dorronsoro theorem, Carleson measures, estension, trace map}

\subjclass[2010]{
28A75, 
46E35, 
26D15, 
}

\begin{abstract}
We prove $L^p$ quantitative differentiability estimates for functions defined on uniformly rectifiable subsets of the Euclidean space. More precisely, we show that a Dorronsoro-type theorem holds in this context: the $L^p$ norm of the gradient of a Sobolev function $f: E \to \R$ is comparable to the $L^p$ norm of a new square function measuring both the affine deviation of $f$ and how flat the subset $E$ is. A corollary dealing with extensions and traces of Sobolev functions may be found in \cite{AMV2}.

\end{abstract}
\maketitle
\vspace{-1cm}
\tableofcontents

\section{Introduction and main results}

   \def\M{\mathcal{M}}
\def\G{\mathcal{G}}
 \def\Mt{\tilde{\mathcal{M}}}
  \def\Gt{\tilde{\G}}

A basic fact of Lipschitz functions is that they are differentiable almost everywhere. This is Rademacher's theorem. 
For a real valued Lipschitz function $f$, a point $x \in \R^d$ and a scale $r>0$, define
\begin{align}\label{e:def-Omega-infty}
    \Omega_f(x,r):= \inf_{A} \avint_{B(x,r)} \frac{|A(y)-f(y)|}{r} \, dy,
\end{align}
where the infimum is taken over all affine maps $A: \R^d \to \R$. Then, by Rademacher's theorem, $\lim_{r\to 0} \Omega_f(x,r) =0$ almost everywhere. Can we quantify this? That is to say, how many scales can effectively be \textit{bad} in the sense that $\Omega_f(x,r) \gtrsim 1$? To illustrate\footnote{We take this example from \cite{semmes2001real}.}, for $\delta>0$, consider $f_\delta(x) := \delta \sin(x/\delta)$. Then, when $r \gg \delta$, $\Omega_{f_\delta}(x,r)$ is small, simply because $f_\delta$ is small compared to $r$ (and we can always take $A \equiv 0$); when $r \approx \delta$, then $\Omega_{f_\delta}(x,r)$ is large, because the oscillations will be of `height' $\delta$; when $r \ll \delta$, then $\Omega_{f_\delta}$ will be small again, because now the smoothness of $\sin$ is felt. Thus, there is essentially \textit{just one} bad scale for $f_\delta$. This cannot hold in general (take a sum of $f_\delta$ for different $\delta's$), though one might hope that the number of bad scales is bounded. This is in fact what Dorronsoro proved (see \cite[Theorem 6]{dorronsoro1985characterization}).  If we fix  $\ve>0$ and let $\#{\rm Bad}_\ve (x)$ be the cardinality of integers $j$ so that $\Omega_f(x,2^{-j}) > \ve$, then $\avint_B \#{\rm Bad}_\ve(x) dx \lesssim C(\ve) \|f\|_{\Lip}$. Dorronsoro's theorem implies this estimate; it is in fact stronger and, importantly, it extends the above discussion to Sobolev functions. For $x\in \R^d$ and $f \in L^q$, define
\begin{equation}
	G_q f(x) := \left( \int_0^\infty \Omega_f^q (x,r)^2 \, \frac{dr}{r} \right)^{\frac{1}{2}},
\end{equation}
where $\Omega^q_f$ is an $L^q$ averaged version of the coefficients in \eqref{e:def-Omega-infty}, see the definition \eqref{e:def-omega} below. Recall that $p^*=\tfrac{pd}{d-p}$.

\begin{theorem}\label{t:dorronsoro-classic}
Fix $1 \leq d \in \mathbb{N}$ and $1<p<\infty$. Let $f \in L^p(\R^d)$ be a real valued function. Then $f \in W^{1,p}(\R^d)$ if and only if $\|\dG_q f\|_{L^p}$ is finite, where $q$ is in the following range:
\begin{itemize}
    \item If $d=1$, then $1 \leq q \leq \infty$.
    \item If $d\geq 2$, then\footnote{We interpret $2^*=\infty$ when $d=2$.} $1 \leq q < p^*$ whenever $1<p<2$; and $1 \leq q < 2^*$ whenever $2 \leq p <\infty$.
\end{itemize}
In all these cases, we have
\begin{align}\label{e:dorronsoro-classic}
	\|G_q f\|_{L^p} \approx \|\grad f\|_{L^p},
\end{align}
where the implicit constant depend on $d,p,q$. 
\end{theorem}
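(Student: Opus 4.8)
The plan is to prove the two halves of the comparison \eqref{e:dorronsoro-classic} separately. The equivalence then comes for free: if $f\in W^{1,p}(\R^d)$ the upper bound gives $\|G_q f\|_{L^p}<\infty$, while the argument for the lower bound, run under the hypotheses $f\in L^p$ and $\|G_q f\|_{L^p}<\infty$, will simultaneously exhibit the distributional gradient of $f$ as an $L^p$ function, hence show $f\in W^{1,p}(\R^d)$.

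\emph{Upper bound $\|G_q f\|_{L^p}\lesssim\|\nabla f\|_{L^p}$.} First I would dominate the Dorronsoro coefficients pointwise by an oscillation of the gradient. Fix an exponent $s$ with $1\le s<\min(p,2)$, $s<d$, and (Sobolev conjugate) $s^{*}=\tfrac{ds}{d-s}\ge q$; such an $s$ exists precisely because $q<\min(p^{*},2^{*})$ and $s\mapsto s^{*}$ is continuous and increasing with supremum $\min(p^{*},2^{*})$ over the admissible range (when $d=1$ take $s=1$ and use $W^{1,1}(\R)\hookrightarrow L^{\infty}(\R)$ instead). Testing $\Omega^{q}_{f}(x,r)$ with the affine map $A(y)=f_{B(x,r)}+(\nabla f)_{B(x,r)}\cdot(y-x)$ and applying the scale-invariant Sobolev--Poincar\'e inequality to $g:=f-A$ (note $g_{B(x,r)}=0$ and $\nabla g=\nabla f-(\nabla f)_{B(x,r)}$), followed by Jensen's inequality to pass from the $L^{s^{*}}$ to the $L^{q}$ average, yields
\begin{equation*}
\Omega^{q}_{f}(x,r)\ \lesssim\ \Bigl(\avint_{B(x,r)}\bigl|\nabla f(y)-(\nabla f)_{B(x,r)}\bigr|^{s}\,dy\Bigr)^{1/s}.
\end{equation*}
Squaring, integrating against $\tfrac{dr}{r}$, and using $\bigl(\avint_{B}|\cdot|^{s}\bigr)^{1/s}\le\bigl(\avint_{B}|\cdot|^{2}\bigr)^{1/2}$ (valid since $s\le2$) bounds $G_q f$ pointwise by the classical oscillation square function $Sh(x)=\bigl(\int_{0}^{\infty}\avint_{B(x,r)}|h-h_{B(x,r)}|^{2}\,\tfrac{dr}{r}\bigr)^{1/2}$ applied to each partial derivative of $f$. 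One then invokes the standard bound $\|Sh\|_{L^{p}}\lesssim\|h\|_{L^{p}}$ for $1<p<\infty$, which follows from Plancherel at $p=2$ and vector-valued Calder\'on--Zygmund theory in general (the map $h\mapsto(h*\varphi_{r})_{r>0}$, with $\varphi_r$ a finite family of mean-zero bumps, is Calder\'on--Zygmund with values in $L^{2}(\tfrac{dr}{r})$).

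\emph{Lower bound $\|\nabla f\|_{L^p}\lesssim\|G_q f\|_{L^p}$.} Here I would run a Littlewood--Paley argument. Choose a Schwartz function $\psi$ with $\widehat\psi$ supported in a fixed annulus and $\sum_{j\in\mathbb{Z}}\widehat\psi(2^{-j}\xi)=1$ for $\xi\neq0$, and set $\Delta_{j}f:=f*\psi_{2^{-j}}$; then $\sum_{|j|\le N}\Delta_{j}f\to f$ in $L^{p}$, and $\psi$ (hence each component of $\nabla\psi$) has vanishing moments of all orders. Consequently, in the convolution defining $\nabla\Delta_{j}f(x)$ we may subtract any affine function from $f$; decomposing the integral into dyadic shells $|x-y|\approx2^{k-j}$, $k\ge0$, using the Schwartz decay of $\nabla\psi$ and choosing on the $k$-th shell the best affine approximation to $f$ on $B(x,C2^{k-j})$, we obtain
\begin{equation*}
|\nabla\Delta_{j}f(x)|\ \lesssim\ \sum_{k\ge0}2^{-ck}\,\Omega^{1}_{f}\bigl(x,C2^{k-j}\bigr),
\end{equation*}
with $c>0$ as large as we please. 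Taking the $\ell^{2}$ norm in $j$, using Minkowski's inequality in $\ell^{2}$, the comparison $\sum_{j}\Omega^{1}_{f}(x,2^{-j})^{2}\approx\int_{0}^{\infty}\Omega^{1}_{f}(x,r)^{2}\,\tfrac{dr}{r}$ (which needs only the one-sided almost-monotonicity $\Omega^{1}_{f}(x,r)\lesssim\Omega^{1}_{f}(x,2r)$), and Jensen's inequality $\Omega^{1}_{f}\le\Omega^{q}_{f}$, we get $\bigl(\sum_{j}|\nabla\Delta_{j}f|^{2}\bigr)^{1/2}\lesssim G_{q}f$ pointwise. Since $\widehat\psi$ is annulus-supported, $\nabla\Delta_{j}f$ is frequency-localized to $|\xi|\approx2^{j}$, so a dual Littlewood--Paley estimate (via a fattened reproducing projection $\widetilde\Delta_j$, duality, Cauchy--Schwarz in $j$, and the usual square-function inequality on $L^{p'}$) gives, for every finite $F\subset\mathbb{Z}$,
\begin{equation*}
\Bigl\|\sum_{j\in F}\nabla\Delta_{j}f\Bigr\|_{L^{p}}\ \lesssim\ \Bigl\|\bigl(\sum_{j\in F}|\nabla\Delta_{j}f|^{2}\bigr)^{1/2}\Bigr\|_{L^{p}}\ \lesssim\ \|G_{q}f\|_{L^{p}}.
\end{equation*}
Letting $F\uparrow\mathbb{Z}$ and using dominated convergence (the majorant lies in $L^{p}$), the partial sums converge in $L^{p}$; since they also converge to $\nabla f$ in $\mathcal{S}'(\R^d)$, we conclude $\nabla f\in L^{p}(\R^d)$ with $\|\nabla f\|_{L^{p}}\lesssim\|G_{q}f\|_{L^{p}}$, hence $f\in W^{1,p}(\R^d)$.

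\emph{Main obstacle.} The two square-function inputs are the heart of the matter. In the upper bound the Sobolev--Poincar\'e step is only followed by a valid $L^{p}$ bound for $S$ when $s<\min(p,2)$, and this is exactly what forces the Sobolev exponent $2^{*}$ (rather than $p^{*}$) when $p\ge2$; keeping the admissible range of $q$ straight (endpoint cases, and $d=1$) requires care. In the lower bound the delicate point is to run Littlewood--Paley with Schwartz, annulus-frequency bumps---indispensable so that $\nabla\Delta_{j}f$ is genuinely frequency-localized and the dual square-function inequality applies---while still controlling $|\nabla\Delta_{j}f(x)|$ by the geometric coefficients $\Omega_{f}$ through the now non-compactly-supported tails of $\nabla\psi$; together with the (routine but fiddly) passage between the continuous integral $\int\tfrac{dr}{r}$ and its dyadic discretization.
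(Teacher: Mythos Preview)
The paper does not give its own proof of this statement: Theorem~\ref{t:dorronsoro-classic} is quoted as a known result due to Dorronsoro \cite{dorronsoro1985characterization}, and the paper merely surveys four existing proofs in the literature (Dorronsoro's original via fractional Sobolev spaces and complex interpolation; an $L^2$/Fourier argument appearing in \cite{azzam2016bi} and \cite{christ1991lectures}; Hyt\"onen--Li--Naor; Orponen's integralgeometric argument). So there is no in-paper proof to compare against.

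Your sketch is broadly sound and gives the full range of $q$. The upper bound---Sobolev--Poincar\'e applied to $f-A$ with $A$ the affine map built from $f_{B}$ and $(\nabla f)_{B}$, then the $L^p$-boundedness of the oscillation square function on $\nabla f$---is in spirit close to Dorronsoro's route, and your bookkeeping on the choice of $s$ (namely $s<\min(p,2)$, $s<d$, $s^{*}\ge q$, with the $d=1$ endpoint handled separately via $W^{1,1}(\R)\hookrightarrow L^\infty$) correctly recovers the stated thresholds $p^{*}$ and $2^{*}$. The lower bound via Littlewood--Paley with annulus-frequency $\psi$ is essentially the second proof the paper cites, upgraded from $p=2$ to general $p$ by vector-valued Calder\'on--Zygmund/duality; the vanishing of all moments of $\psi$ and $\nabla\psi$ (since $\widehat\psi$ vanishes near the origin) legitimises subtracting affine functions in the convolution.

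One point to tighten: in the shell decomposition for $|\nabla\Delta_j f(x)|$ you write ``choosing on the $k$-th shell the best affine approximation to $f$ on $B(x,C2^{k-j})$''. Since the vanishing-moment cancellation $\int\nabla\psi_{2^{-j}}(x-y)A(y)\,dy=0$ holds only for the full integral, not shell by shell, you cannot subtract a different $A_k$ on each shell for free. The standard fix is either to subtract a single $A$ (the best affine at scale $2^{-j}$) and then control $\avint_{B(x,2^{k-j})}|f-A|$ by a telescoping sum $\sum_{l=0}^{k}\Omega_f^1(x,2^{l-j})$ using $|A_r-A_{2r}|\lesssim r\,\Omega_f^1(x,2r)$ on $B(x,2r)$, or to carry the differences $A_k-A_{k+1}$ explicitly and sum them. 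Either way one still obtains $|\nabla\Delta_j f(x)|\lesssim\sum_{k\ge0}2^{-ck}\Omega_f^1(x,C2^{k-j})$ with $c$ arbitrarily large, so your conclusion stands; just make the telescoping explicit.
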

\noindent
Note that an immediate consequence of this theorem is the following estimates for compactly supported Lipschitz functions:
\begin{equation}\label{e:carleson-measure-Omega}
	\int_B \int_0^{r_B} \Omega_f^q(x,r)^2 \, \frac{dr}{r} \, dx \lesssim_{{\rm Lip}(f)} r_B^d, 
\end{equation}
whenever $B$ is a ball with $r_B \leq \diam({\rm spt}(f))$ and $q$ is in the appropriate range. The bound on `bad scales' mentioned above is a consequence of \eqref{e:carleson-measure-Omega}. \\

\noindent
The purpose of this article is to prove a version of Theorem \ref{t:dorronsoro-classic} for subsets $E \subset \R^n$ which are uniformly $d$-rectifiable. 
	Recall that a $d$-Ahlfors regular set $\Sigma \subset \R^{n}$
	is {\it $d$-uniformly rectifiable} or {\it $d$-UR} if and only if it contains ``Big Pieces of
	Lipschitz Images" (``BPLI"). This means that there are positive constants $c$ and
	$L$, such that for each
	$x\in E$ and each $r\in (0,\diam E)$, there is an
	$L$-Lipschitz mapping $f= \rho_{x,r}: \R^d\rightarrow \R^{n}$ such that
	\[
	\mathcal{H}^d\Big(E\cap B(x,r)\cap f (B_{d}(0,r))\Big) \geq cr^{d}.
	\]
\noindent
Recall that a set $E$ is $d$-Ahlfors regular if there is a constant such that 
\begin{equation}\label{e:ADR}
	C^{-1} r^d \leq \mathcal{H}^d(E \cap B(x,r)) \leq Cr^d
\end{equation}
 for any $x \in E$ and $0<r \leq \diam(E)$.
David and Semmes proved an estimate like \eqref{e:carleson-measure-Omega} for \textit{Lipschitz functions} on UR sets, see \cite[Proposition III.4.2]{david-semmes93}.

\begin{theorem}
	\label{t:DS-Dorronsoro}
	Let $E$ be UR and $f:E\rightarrow \R$ be $1$-Lipschitz. If $1\leq q < \frac{2d}{d-2}$ (or $1\leq q \leq \infty$ if $d=1$). Then $\Omega_{f}^{q,N}(x,r)^{2}\frac{dxdr}{r}$ is a $C(N)$-Carleson measure, where $N>1$ and $\Omega_{f}^{q,N}(x,r)=\inf_{A}\Omega_{f}^{q}(x,r,A)$ where the infimum is over all affine maps with $|\grad A|\leq N$. 
\end{theorem}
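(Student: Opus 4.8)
\emph{Proof sketch.} The plan is to reduce the statement, via a corona decomposition of $E$, to the flat Euclidean Dorronsoro estimate \eqref{e:carleson-measure-Omega} already contained in Theorem \ref{t:dorronsoro-classic}. Fix a system of David--Christ dyadic cubes $\Delta=\Delta(E)$ and write $\mu=\mathcal H^d|_E$. By the usual comparison between the continuous square function and its dyadic analogue, it suffices to prove the packing estimate
\[
\sum_{\substack{Q \in \Delta \\ Q \subseteq R}} \Omega_f^{q,N}(Q)^2\,\mu(Q) \;\lesssim_N\; \mu(R), \qquad R \in \Delta,
\]
where $\Omega_f^{q,N}(Q)$ denotes the coefficient attached to the Carleson box of $Q$. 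Extend $f$ to a $1$-Lipschitz function $\tilde f$ on $\R^n$ (McShane). Since $E$ is UR, by David--Semmes it satisfies the geometric lemma, $\sum_{Q\subseteq R}\beta_E(C_0 Q)^2\mu(Q)\lesssim\mu(R)$ with $\beta_E$ the $L^q$ Jones flatness number (the range $q<\tfrac{2d}{d-2}$ being exactly the one allowing this), and it admits a corona decomposition: $\Delta=\mathcal B\sqcup\bigsqcup_S S$, where $\mathcal B$ and the tops $\{Q(S)\}$ each form a Carleson family, and each $S$ is a coherent stopping-time regime carrying a Lipschitz graph $\Gamma_S$ over a $d$-plane $P_S$, with slope $\le\epsilon$, which approximates $E$ at every $Q\in S$.

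For $Q\in\mathcal B$ we only use that $f$ is $1$-Lipschitz, so $\Omega_f^{q,N}(Q)\lesssim 1$ (take $A$ constant), whence $\sum_{Q\in\mathcal B,\,Q\subseteq R}\Omega_f^{q,N}(Q)^2\mu(Q)\lesssim\mu(R)$ by the packing of $\mathcal B$. Fix a regime $S$, let $\pi:\R^n\to P_S\cong\R^d$ be the orthogonal projection and $F_S:\R^d\to\Gamma_S$ the graph map, and set $g_S:=\tilde f\circ F_S:\R^d\to\R$, which is Lipschitz with bounded constant. Multiplying $g_S$ by a cutoff adapted to a large multiple of $\pi(B_{Q(S)})$ and applying \eqref{e:carleson-measure-Omega} --- this is the step forcing $q<\tfrac{2d}{d-2}$, since it is the $p=2$ instance of Theorem \ref{t:dorronsoro-classic} --- gives $\int_{\pi(B_{Q(S)})}\int_0^{\ell(Q(S))}\Omega_{g_S}^q(y,t)^2\tfrac{dt}{t}\,dy\lesssim\ell(Q(S))^d$. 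The core is the transference inequality: for $Q\in S$,
\[
\Omega_f^{q,N}(Q) \;\lesssim\; \Omega_{g_S}^{q}\big(\pi(B_Q)\big) \;+\; \beta_E(C_0 Q),
\]
obtained by using $A\circ\pi$, with $A$ a near-optimal Euclidean competitor for $g_S$ on $\pi(B_Q)$ (so $|\grad(A\circ\pi)|=|\grad A|\lesssim 1\le N$ once $N$ exceeds an absolute constant), as a competitor for $\Omega_f^{q,N}(Q)$, transporting the average over $B_Q\cap E$ to an average over $\pi(B_Q)$ via the bi-Lipschitz map $\pi|_E$, and bounding $|\tilde f(y)-g_S(\pi(y))|\le\dist(y,\Gamma_S)$, which on $B_Q$ is $\lesssim\beta_E(C_0Q)\ell(Q)$ up to the (also Carleson-summable) drift of the planes $P_{Q'}$ along $S$. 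Summing over $Q\in S$ with $Q\subseteq R$, the first term contributes $\lesssim\ell(R\cap Q(S))^d\approx\mu(R\cap Q(S))$ by the displayed Carleson bound for $g_S$ and the dyadic-versus-continuous comparison; summing these over $S$ uses the packing of the tops (plus the at most one regime containing $R$). The $\beta_E$ term, summed over all good cubes $Q\subseteq R$ at once, contributes $\lesssim\mu(R)$ by the geometric lemma. Combining the three pieces yields the packing estimate, with constant depending on $N$, $d$, $q$ and the UR constants.

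The main obstacle is making the transference inequality and the surrounding bookkeeping precise: one must control the error produced at cubes near the bottom of a regime $S$, where $\Gamma_S$ ceases to track $E$ --- these cubes are tops of subsequent regimes, so their contribution is reabsorbed, but the accounting must be done with care --- and one must check that localizing \eqref{e:carleson-measure-Omega} (which is stated for compactly supported data) and transporting averages between $E$, $\Gamma_S$ and $P_S$ do not spoil the packing. The restriction $q<\tfrac{2d}{d-2}$ (and $q\le\infty$ when $d=1$) is inherited verbatim from the range of $q$ allowed at $p=2$ in Theorem \ref{t:dorronsoro-classic}.
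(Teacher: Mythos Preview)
The paper does not prove this statement; it is quoted verbatim from \cite[Proposition~III.4.2]{david-semmes93} and used as a black box (see Lemma~\ref{c:DS-Dorronsoro} and the proof of Lemma~\ref{l:main-I}). There is therefore no ``paper's own proof'' to compare against.

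That said, your sketch is essentially the David--Semmes argument: corona decomposition of $E$ into stopping-time regimes $S$, each approximated by a small-Lipschitz graph $\Gamma_S$; transfer of $\tilde f$ to $\R^d$ via the graph parametrisation; application of the flat Carleson estimate \eqref{e:carleson-measure-Omega} to the pulled-back function; and control of the transference error by the $\beta$-packing. The restriction $q<2d/(d-2)$ indeed arises exactly from the $p=2$ case of Theorem~\ref{t:dorronsoro-classic}.

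One point worth making precise: the bound $\dist(y,\Gamma_S)\lesssim\beta_E(C_0Q)\ell(Q)$ for $y\in B_Q\cap E$, $Q\in S$, is not what the corona directly delivers. What one has is $\dist(y,\Gamma_S)\lesssim\epsilon\, d_S(y)$, and to get a Carleson-summable error one decomposes $|y-F_S(\pi(y))|\le\dist(y,P_Q)+|b_Q(\pi(y))-F_S(\pi(y))|$, where the first piece contributes $\beta_E(C_0Q)$ and the second is the oscillation of the graph function $H_S$ at scale $\ell(Q)$, itself Carleson by the flat Dorronsoro estimate applied to $H_S$. Your parenthetical ``Carleson-summable drift of the planes $P_{Q'}$ along $S$'' points at this, but in a clean write-up it is a separate application of \eqref{e:carleson-measure-Omega} (to $H_S$, not to $g_S$) rather than a consequence of the $\beta$-packing alone. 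With that adjustment the outline is correct.
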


\noindent
Here, we show that, in fact, a version of Theorem \ref{t:dorronsoro-classic} holds for $L^p$ Sobolev spaces on this class of subsets - that is, uniformly $d$-rectifiable sets. Before stating our result, let us introduce the notion of Sobolev spaces we will use. 
By $M^{1,p}(E)$ we denote the Haj\l{}asz-Sobolev space on $E$.
For $1\leq p <\infty$, we let $M^{1,p}(E)$ the set of functions $u \in L^{p}(E)$ (the measure here is $\sigma=\dH^d|_E$) for which there exists a nonnegative $g \in L^p(E)$ so that
\begin{align}\label{e:upper-gradients-intro}
	|u(x)-u(y)| \leq |x-y| (g(x)+g(y)) \mbox{ for } \mu\mbox{-a.e.} \, x,y \in X.
\end{align}
We refer to any $g \in L^p(E)$ satisfying \eqref{e:upper-gradients-intro} as a \textit{Hai\l{}asz upper gradient}.
The function $g$ satisfying \eqref{e:upper-gradients-intro} and with mininimal $L^p$ norm is denoted by $|\grad_H f|$, and is called the \textit{Haj\l{}asz gradient}. See Definition \ref{def-Hajlasz-grad}.

\begin{maintheorem}\label{t:main-A}
	Let $n \geq 2$, fix $1 \leq d \leq n-1$ and $1<p<\infty$. 
	Suppose that $E \subset \R^n$ is a uniformly $d$-rectifiable set and $f \in M^{1,p}(E)$. Then, if $q$ is satisfies:
	\begin{itemize}
		\item If $d=1$, then $1 \leq q \leq \infty$.
		\item If $d\geq 2$, then\footnote{We interpret $2^*=\infty$ when $d=2$.} $1 \leq q < p^*$ whenever $1<p<2$; and $1 \leq q < 2^*$ whenever $2 \leq p <\infty$,
	\end{itemize}
	we have the bound
	\begin{equation}
		\|\dG^q f\|_{L^p(E)}  \lesssim \|\grad_H f\|_{L^p(E)}.
	\end{equation}
	where $|\grad_H f| \in L^p(E)$ is the minimal Haj\l{}asz upper gradient of $f$. The theorem holds with $\widetilde{\dG}f$, a variant of $\dG^q f$. See \eqref{e:intro-dG-tilde}.
\end{maintheorem}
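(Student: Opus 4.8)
The plan is to reduce the estimate, via a corona decomposition of $E$, to the Euclidean Dorronsoro theorem (Theorem~\ref{t:dorronsoro-classic}) applied on approximating Lipschitz graphs. First, using Christ--David dyadic cubes $\mathcal D=\mathcal D(E)$, discretize the square function: up to comparison constants,
\[
\widetilde{\dG}f(x)^2 \;\approx\; \sum_{Q\in\mathcal D,\ Q\ni x}\widetilde{\Omega}_f(Q)^2,
\]
where $\widetilde{\Omega}_f(Q)$ is the $d$-dimensional, $L^q$-averaged affine oscillation of $f$ over $B_Q\cap E$ at scale $\ell(Q)$, together with the flatness term $\beta_E(Q)$ of $E$ at $Q$ (suitably weighted by a local average of $\grad_H f$). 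It then suffices to bound $\big\|(\sum_{Q\ni x}\widetilde{\Omega}_f(Q)^2)^{1/2}\big\|_{L^p(E)}$.

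Since $E$ is UR, invoke the corona (stopping-time) decomposition: $\mathcal D$ splits into a family $\Tree$ of stopping regions $S$ whose top cubes $\{Q(S)\}$ satisfy a Carleson packing $\sum_{Q(S)\subseteq R}\sigma(Q(S))\lesssim\sigma(R)$, such that on each $S$ the set $E$ is, at all scales and locations of $S$, $\epsilon_0$-close to a Lipschitz graph $\Gamma_S$ over a $d$-plane with Lipschitz constant $\le\epsilon_0$; one may also arrange $\beta_E(Q)\le\epsilon_0$ throughout $S$. Fix $S$, and transfer $f$ from $E$ over the region of $S$ to a Sobolev function $f_S$ on $\Gamma_S\cong\R^d$: compose with the bi-Lipschitz graph parametrization and use a Whitney-type extension where $E$ and $\Gamma_S$ disagree, arranged so that (i) $\|\grad_H f_S\|_{L^p}$ is controlled by the mass of $\grad_H f$ over a mild dilate of the region of $S$, and (ii) for $Q\in S$, $\Omega_{f,E}(Q)\lesssim\Omega_{f_S,\Gamma_S}(Q)+\beta_E(Q)\,a_Q$, with $a_Q$ a local $L^p$-average of $\grad_H f$ near $Q$ (the natural size of the error incurred in moving from $E$ to $\Gamma_S$, since $f$ oscillates like $\ell(Q)\,a_Q$ at scale $\ell(Q)$). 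Applying Theorem~\ref{t:dorronsoro-classic} on $\Gamma_S\cong\R^d$ bounds $\sum_{Q\in S}\Omega_{f_S}(Q)^2$ by $\|\grad f_S\|_{L^p}^p$; here the hypothesis $q<p^*$ (resp. $q<2^*$) is exactly what that theorem requires and is inherited.

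To globalize, sum over $S\in\Tree$. The single-tree $L^p$ bounds combine into $\|\grad_H f\|_{L^p(E)}^p$ via the Carleson packing of the tops together with the standard fact that an $\ell^p$-superposition of the localized gradient masses produced by a corona is dominated by the global $L^p$ mass of $\grad_H f$ (using bounded overlap of the enlarged stopping regions and the packing condition --- this is where the stopping-time, rather than a mere $\BMO$-duality, argument is needed once $p\neq2$). The geometric error terms $\sum_{Q\ni x}\beta_E(Q)^2a_Q^2$ are controlled in $L^p$ by a Carleson-embedding/square-function estimate, using that $\sum_{Q\ni x}\beta_E(Q)^2$ is a Carleson measure on UR sets (David--Semmes; cf.\ Theorem~\ref{t:DS-Dorronsoro}) and that $M(\grad_H f)\in L^p$; if instead $\widetilde{\dG}$ is normalized so these terms sit on the left-hand side, they are simply estimated this way at the end.

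The main obstacle is the transfer of step two: for each corona one must produce an $\R^d$-Sobolev function whose gradient norm is faithfully controlled by $\grad_H f$ on $E$ and whose affine-oscillation coefficients dominate those of $f$ on $E$ up to Carleson-summable errors, uniformly in $S$ and compatibly with the final summation over $\Tree$. This is delicate because $f$ is defined only on the rough set $E$ rather than on the smooth model $\Gamma_S$, because $E$ and $\Gamma_S$ agree only on a big piece, and because the Haj\l{}asz gradient does not localize as freely as a classical derivative; reconciling these forces one to combine the Haj\l{}asz--Poincar\'e inequality on $E$ with a Whitney extension carefully adapted to the stopping region.
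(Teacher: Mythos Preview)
Your corona-and-transfer plan is a natural first instinct, but it diverges substantially from the paper's argument and contains a real gap in the globalization step.

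\textbf{What the paper actually does.} There is no corona decomposition and no transfer to Lipschitz graphs in the proof of Theorem~\ref{t:main-A}. Instead the paper proves a good-$\lambda$ inequality (Lemma~\ref{l:main-I}): for every $\alpha>1$ there is $\ve>0$ so that
\[
\sigma\bigl(\{\dG^q f>\alpha\lambda\}\cap\{\dM^s(\grad_H f)\le\ve\lambda\}\bigr)\le\tfrac{15}{16}\,\sigma\bigl(\{\dG^q f>\lambda\}\bigr).
\]
On a maximal cube $R$ of $\{\dG^q f>\lambda\}$, the set $E_R=\{\dM^s(\grad_H f)\le\ve\lambda\}\cap R$ either has small measure (in which case the inequality is trivial) or, by the Haj\l asz condition, $f$ is $C\ve\lambda$-Lipschitz on $E_R$. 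A Kirszbraun extension $F$ of $f|_{E_R}$ is then globally $C\ve\lambda$-Lipschitz on $E$, and the David--Semmes Carleson estimate for Lipschitz functions on UR sets (Proposition~\ref{t:DS-Dorronsoro}) controls $\sum_{Q}\Omega_F^q(B_Q)^2$. The remaining work (Lemma~\ref{l:aux-lemma-D-I}) is to pass from $F$ back to $f$ by estimating $h=f-F$ over the Whitney cubes of $E\setminus E_R$, using Poincar\'e for the pair $(f,g)$. No Euclidean Dorronsoro on $\R^d$ is invoked directly; the UR geometry is swallowed by the David--Semmes black box, and the $L^p$ structure is handled entirely by good-$\lambda$ and the maximal theorem.

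\textbf{The gap in your argument.} The step ``sum the single-tree $L^p$ bounds over $\Tree$'' does not work as stated. Suppose you obtain, for each stopping region $S$,
\[
\int_{Q(S)}\Bigl(\sum_{Q\in S,\,Q\ni x}\Omega_f(Q)^2\Bigr)^{p/2}d\sigma(x)\ \lesssim\ \int_{CQ(S)}g^p\,d\sigma.
\]
To recover the global bound you need two things: (i) that the left-hand sides sum to at least $\int_E\bigl(\sum_{Q\ni x}\Omega_f(Q)^2\bigr)^{p/2}$, and (ii) that the right-hand sides sum to at most $C\int_E g^p$. For (i), the inequality $\bigl(\sum_S a_S\bigr)^{p/2}\le\sum_S a_S^{p/2}$ fails when $p>2$. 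For (ii), the top cubes $Q(S)$ are nested, not disjoint: a fixed point $x\in E$ lies in arbitrarily many $CQ(S)$, so $\sum_S\mathds 1_{CQ(S)}$ is unbounded and $\sum_S\int_{CQ(S)}g^p$ is not controlled by $\int_E g^p$. The Carleson packing of top cubes lets you sum \emph{Carleson-type} estimates (uniform bounds of the form $\sum_{Q\in S,\,Q\subset R}\Omega(Q)^2|Q|\lesssim C|R|$), because those are scale-invariant and match the packing; it does \emph{not} let you sum localized $L^p$ square-function bounds. This is precisely why the paper routes through Lipschitz functions: once $f$ is (approximately) Lipschitz with constant $\sim\ve\lambda$, the David--Semmes estimate is a genuine Carleson bound, which \emph{does} globalize, and the good-$\lambda$ mechanism converts that into an $L^p$ inequality afterwards. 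Your sketch tries to bypass this and apply the $L^p$ Euclidean theorem tree-by-tree, which leaves you with incompatible norms to sum.

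A secondary issue is the transfer $f\mapsto f_S$ itself: you need $\|\nabla f_S\|_{L^p(\Gamma_S)}$ controlled by $\|g\|_{L^p}$ on the tree region and the oscillation comparison $\Omega_{f,E}(Q)\lesssim\Omega_{f_S,\Gamma_S}(Q)+\beta_E(Q)a_Q$ to hold \emph{with constants independent of $S$ and compatible with the leaf structure}. You flag this as delicate, correctly, but the real obstruction is upstream: even granting a perfect transfer, the summation over $\Tree$ fails for the reasons above.
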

\noindent We refer to a $q$ satisfying the constraints in Theorem \ref{t:main-A} as being in the \textit{Dorronsoro range}. 

\begin{maintheorem}\label{t:main-B}
	Let $n \geq 2$ and fix $1 \leq d \leq n-1$ and $1<p<\infty$. Let $E \subset \R^n$ be a uniformly $d$-rectifiable set and $f: E \to \R$ be Lipschitz. Then for $1 \leq q \leq \infty$,
	\begin{equation}
		\|\grad_t f\|_{L^p(E)} \lesssim \| \dG^q f\|_{L^p(E)},
	\end{equation}
	where $\grad_t f$ is the tangential gradient of $f$. The theorem also holds with $\widetilde{\dG}f$, a variant of $\dG^q f$. See \eqref{e:intro-dG-tilde}.
\end{maintheorem}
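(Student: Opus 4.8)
The plan is to run the classical route to the reverse Dorronsoro inequality, transplanted to the space of homogeneous type $(E,\sigma)$. Since $\widetilde{\Omega}_f^{1}\le\widetilde{\Omega}_f^{q}$ for every $q\in[1,\infty]$ it suffices to treat $q=1$, and since the coefficients of $\widetilde{\dG}f$ dominate, scale by scale, both the affine deviation $\Omega_f^{1}$ of $f$ and the flatness of $E$ (this is the role of the variant $\widetilde{\dG}f$: it is built from the graph $\Gamma_f:=\{(y,f(y)):y\in E\}\subset\R^{n+1}$), it is enough to prove $\|\grad_t f\|_{L^p(E)}\lesssim\|\widetilde{\dG}f\|_{L^p(E)}$. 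Fix a Christ--David dyadic lattice $\mathscr{D}(E)$. For each $Q\in\mathscr{D}(E)$ pick a $d$-plane $W_Q\subset\R^{n+1}$ nearly optimal for the flatness of $\Gamma_f$ over a fixed dilate of $Q$; since $f$ is Lipschitz, $\Gamma_f$ is $d$-Ahlfors regular and $W_Q$ makes a bounded angle with $\R^n\times\{0\}$, so $W_Q$ is the graph over its shadow $d$-plane $V_Q:=\pi(W_Q)\subset\R^n$ of a linear functional $\lambda_Q\colon V_Q\to\R$ with $|\lambda_Q|\lesssim\Lip(f)$; set $L_Q:=\lambda_Q\circ\pi_{V_Q}$, the affine approximation to $f$ at scale $\ell(Q)$, and note $\grad L_Q\in V_Q$ and $\avint_{B_Q}|f-L_Q|\,d\sigma\lesssim\ell(Q)\,\widetilde{\Omega}_f(Q)$.

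First I would record the pointwise convergence. Because $E$ is uniformly rectifiable its flatness coefficients form a Carleson measure, hence for $\sigma$-a.e.\ $x\in E$: $E$ has an approximate tangent $d$-plane $T_x$; along the chain $R=Q_0(x)\supset Q_1(x)\supset\cdots\ni x$ one has $\widetilde{\Omega}_f(Q_j(x))\to0$ and $V_{Q_j(x)}\to T_x$; and, by the Rademacher-type theorem for Lipschitz functions on $d$-rectifiable sets, $f$ is tangentially differentiable at $x$. Combining these with $\avint_{B_{Q_j}}|f-L_{Q_j}|=o(\ell(Q_j))$ gives $\grad_t f(x)=\lim_{j\to\infty}\grad L_{Q_j(x)}$ for a.e.\ $x$. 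The quantitative core is then a telescoping estimate between consecutive scales: for $Q'$ a child of $Q$, comparing $W_Q$ and $W_{Q'}$ as subsets of $\R^{n+1}$ (both are nearly optimal for $\Gamma_f$ on the ball of $Q'$, and $\Gamma_f\cap B_{Q'}$ is $d$-Ahlfors regular, hence "fills out" a $d$-plane in $\R^{n+1}$, so closeness to $\Gamma_f\cap B_{Q'}$ forces $W_Q$ and $W_{Q'}$ to be $\lesssim(\widetilde{\Omega}_f(Q)+\widetilde{\Omega}_f(Q'))\ell(Q')$-close as planes) and reading off the linear data yields
\[
|\grad L_Q-\grad L_{Q'}|\lesssim\widetilde{\Omega}_f(Q)+\widetilde{\Omega}_f(Q').
\]
The point of using the graph formulation here is that this comparison happens entirely at the level of the planes $W$ in $\R^{n+1}$, where the bounded-angle property is automatic, so no leftover $E$-flatness term of the form $\Lip(f)\cdot\beta_E$ survives. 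Telescoping the quantities $\grad L_{Q_j(x)}$ down from the top cube $R$ to $x$ then gives the pointwise bound
\[
|\grad_t f(x)|\lesssim|\grad L_R|+\sum_{x\in Q\subseteq R}\widetilde{\Omega}_f(Q),
\]
an $\ell^1$-in-scales sum of the square-function coefficients plus a coarse-scale term controlled by $\widetilde{\Omega}_f$ at scale $\sim\diam E$.

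The last, and hardest, step is to upgrade this $\ell^1$-in-scales estimate to the $L^p$ bound with the $\ell^2$ square function on the right-hand side; the loss cannot be recovered pointwise, so one argues as in the Euclidean case. One way: build a smooth approximation of the identity $\{S_t\}_{t>0}$ on $(E,\sigma)$, set $T_j:=\grad_t(S_{2^{-j}}-S_{2^{-j-1}})$, so that $\grad_t f=\grad_t S_1 f+\sum_{j\ge0}T_j f$ a.e., each $T_j$ is a Calder\'on--Zygmund-type operator localised at scale $2^{-j}$ that annihilates constants (and, modulo the flatness of $E$, affine functions), whence $|T_jf(x)|\lesssim\widetilde{\Omega}_f(x,c2^{-j})$ by the argument above; then the Littlewood--Paley / Calder\'on reproducing formula estimates valid on spaces of homogeneous type give $\|\sum_j T_j f\|_{L^p(E)}\lesssim\bigl\|(\sum_j|T_jf|^2)^{1/2}\bigr\|_{L^p(E)}\lesssim\|\widetilde{\dG}f\|_{L^p(E)}$ for $1<p<\infty$ (the last step by the usual comparison of discrete and continuous square functions), while the coarse term $\grad_t S_1 f$ is handled by the same constant-annihilation trick together with the estimate for the top cube. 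Alternatively, one performs a David--Semmes corona decomposition of $E$ into trees each lying within a Lipschitz graph, transfers to $\R^d$ on each tree (where $f$ pulls back to a Lipschitz function and the classical reverse inequality \eqref{e:carleson-measure-Omega}/Theorem~\ref{t:dorronsoro-classic} applies, with the distortion terms coming from the graphing function being exactly the $E$-flatness and hence Carleson-summable), and sums over trees using the Carleson packing of the stopping cubes. Either way one obtains $\|\grad_t f\|_{L^p(E)}\lesssim\|\widetilde{\dG}f\|_{L^p(E)}$, hence the theorem for all $q\in[1,\infty]$.

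I expect the genuine obstacles to be two. First, the $L^p$ upgrade: constructing on the merely $d$-Ahlfors regular set $E$ the smoothing operators $S_t$ for which $\grad_t S_t$ — an object involving the tangential differential structure of $E$, defined only $\sigma$-a.e.\ — is a sufficiently well-behaved Calder\'on--Zygmund family for the homogeneous-space square-function machinery to apply (or, in the corona route, controlling the non-locality and patching of the pieces). Second, the bookkeeping that keeps every $E$-flatness error absorbed into $\widetilde{\dG}f$ with no remainder depending on $\Lip(f)$, and the correct treatment of the coarsest-scale term — which is precisely why the variant $\widetilde{\dG}f$, built from the graph $\Gamma_f$ rather than from $f$ alone, is the natural object for this direction.
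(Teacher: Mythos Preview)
Your proposal rests on a misreading of $\widetilde{\dG}f$ that undermines the argument, and even with that fixed the crucial step is left open.

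You assert that the coefficients of $\widetilde{\dG}f$ dominate both $\Omega_f^1$ and the flatness of $E$, and you identify $\widetilde{\dG}f$ with a $\beta$-square function of the graph $\Gamma_f\subset\R^{n+1}$. This is incorrect: by definition $\widetilde{\gamma}_f(B)=\inf_A\bigl\{\Omega_f^1(B,A)+|\grad A|\,\alpha_\sigma^d(B)\bigr\}$, so the flatness term carries the weight $|\grad A|$; taking $A$ constant kills it, and $\widetilde{\gamma}_f(B)$ need not control $\alpha_\sigma^d(B)$ or $\beta_E(B)$ at all. The graph coefficient $\beta_{\Gamma_f}^{d,1}(B)$ is roughly $\inf_A\bigl\{\Omega_f^1(B,A)+\beta_E^{d,1}(B)\bigr\}$ with no weight, hence genuinely larger. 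Consequently your reduction is backwards: since $\gamma_f^1\lesssim\widetilde{\gamma}_f$ (and $\gamma_f^1\lesssim\gamma_f^q$), the inequality $\|\grad_t f\|_p\lesssim\|\dG^1 f\|_p$ is the \emph{strongest} and implies the others, not the other way round. Your telescoping, even if correct with your graph coefficient, would only give $\|\grad_t f\|_p\lesssim\bigl\|\bigl(\sum_{Q\ni x}\beta_{\Gamma_f}(Q)^2\bigr)^{1/2}\bigr\|_p$, which is strictly weaker than $\|\grad_t f\|_p\lesssim\|\dG^1 f\|_p$ and does not imply it. (The paper does have the telescoping with the correct coefficient, $|\grad A_B-\grad A_{B'}|\lesssim\gamma_f^1(B')$ for $B\subset B'$ of comparable radius, Lemma~\ref{l:A-A}; the question is what to do with it.)

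Even granting the right coefficient, the $\ell^1\to\ell^2$ upgrade is where all the content lies, and neither of your routes goes through. The Littlewood--Paley route needs $\grad_t(S_{2^{-j}}-S_{2^{-j-1}})$ to be a Calder\'on--Zygmund family on $(E,\sigma)$; but $\grad_t$ is only $\sigma$-a.e.\ defined, depends on the a.e.\ approximate tangent $T_xE$, and there is no evident integral-operator structure or almost-orthogonality --- you flag this as an obstacle but do not resolve it. The corona route is too schematic: pulling $f$ back to $\R^d$ along an approximating Lipschitz graph produces distortion terms of size $|\grad A_Q|\cdot\beta_E(Q)$, and unless $|\grad A_Q|$ has been arranged to be essentially constant along the tree these errors are not absorbed by $\gamma_f^1$; the patching over trees into an $L^p$ bound is likewise left open.

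The paper's route is a good-$\lambda$ argument that avoids any $\ell^1\to\ell^2$ step. For each $\lambda>0$ it takes maximal cubes $R$ with $|\grad A_R|>\lambda$ and, inside each $R$, runs a stopping time (big square function, big angle, bad gradient $|\grad A_{Q'}|\notin[\alpha_0^{-1},\alpha_0]\cdot|\grad A_R|$, small side length) and shows the packing $\sum_{Q\in\BG(R)}|Q|\le\tfrac14|R|$. The stopping time is engineered so that on the tree $|\grad A_Q|\approx\lambda$, which is precisely what makes $\lambda\,\beta_E^{d,1}(MB_Q)\lesssim|\grad A_Q|\,\beta_E^{d,1}(MB_Q)\le\gamma_f^1(MB_Q)$ and hence absorbs all flatness errors into the square function --- exactly the mechanism your corona sketch lacks. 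The packing of $\BG(R)$ is then proved by building, \`a la David--Semmes--L\'eger, a small-constant Lipschitz graph over $P_R\cong\R^d$ and on it a Lipschitz function $F:\R^d\to\R$ patched from the $A_{Q_i}$; one shows the square-function estimate $\sum_I\Omega_F^1(3B_I)^2|I|\lesssim(\lambda\delta_0)^2|R|$ and applies \emph{Euclidean} Dorronsoro to get $\|\grad F-\grad A_R\|_2^2\lesssim(\lambda\delta_0)^2|R|$, whence the packing since $|\grad F-\grad A_R|\gtrsim\lambda$ over the projections of $\BG(R)$ cubes.
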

\noindent

\noindent
We define $\dG^q$ and $\wt \dG$. For a ball $B$, we introduce the quantities $\gamma_f^q(B)$ and $\widetilde{\gamma}_f$: for $1\leq q \leq \infty$, a ball $B$ centered on $E$ and an affine map $A$, set
\begin{equation}\label{e:def-gamma}
	\gamma_f^q(x,r) := \inf_A\left\{ \Omega^q_f(B(x,r),A) + |\grad A| \beta_E^{d,q}(x,r)\right\}.
\end{equation}
where the infimum is taken over all affine maps $A: \R^n \to \R$. Here $\Omega_f^q(x,r; A)$ is an $L^q$-averaged version of \eqref{e:def-Omega-infty}, where the difference is taken with respect to the fixed affine map $A$. Also, put
\begin{equation}\label{e:def-gamma-tilde}
	\widetilde{\gamma}_f(B) := \inf_{A} \left\{ \Omega_f^1(B(x,r),A) + |\grad A| \alpha_\sigma^d(x,r) \right\},
\end{equation}
where $\alpha_\sigma^d(x,r)$ is Tolsa's coefficient defined in terms of Wasserstein distance between measures, see \eqref{e:alpha-def}. See also Definition \eqref{d:gamma}.
Now we set
\begin{equation}\label{e:intro-dG-tilde}
	\dG^qf (x) = \left(\sum_{Q \ni x} \gamma^q_f(B_Q)^2 \right)^{\frac{1}{2}} \,\,\mbox{ and }\,\, 	\widetilde{\dG} f (x) = \left(\sum_{Q \ni x} \widetilde{\gamma}_f(B_Q)^2 \right)^{\frac{1}{2}}.
\end{equation}
Here $Q$ is a Christ-David cube (see Section \ref{s:cubes}).\\

\noindent
We list two first applications of our results. The first one is immediate, and it's an application of Theorems \ref{t:main-A} and \ref{t:main-B} in the special case where not only $E$ is $d$-rectifiable, but moreover it supports a \textit{Poincar\'{e} inequality}.
	\begin{maintheorem}\label{t:main-C}
		Let $n \geq 2$, let $2 \leq d \leq n-1$ and $1<p<\infty$. Suppose that $E \subset \R^n$ is uniformly $d$-rectifiable and that it supports a weak $(1,p')$-Poincar\'{e} inequality, for $1\leq p'<p$. Let $f:E\to \R$ be Lipschitz. Then for $q$ in the Dorronsoro range, we have 
		\begin{equation}
			\|\dG^q f\|_{L^p(E)} \approx \| \grad_t f\|_{L^p(\pom)} \approx \| \grad_{H} f \|_{L^p(\pom)}.
		\end{equation}
		The implicit constants are independent of the Lipschitz norm of $f$.
	\end{maintheorem}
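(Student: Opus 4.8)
The plan is to read the equivalence off Theorems \ref{t:main-A} and \ref{t:main-B} once we add one classical ingredient from the metric Sobolev theory: on a doubling metric measure space supporting a weak $(1,p')$-Poincar\'e inequality with $p'<p$, the Haj\l asz gradient of a function is controlled in $L^p$ by any of its upper gradients. First I would record that a uniformly $d$-rectifiable set is $d$-Ahlfors regular, so that $\sigma=\mathcal H^d|_E$ is doubling and $(E,|\cdot|,\sigma)$ together with the assumed Poincar\'e inequality falls under the Haj\l asz--Koskela framework.

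The argument then has two halves. For the \emph{first half} I would simply chain the two main theorems: Theorem \ref{t:main-B} (valid for every $1\le q\le\infty$, hence in particular in the Dorronsoro range) gives $\|\grad_t f\|_{L^p(E)}\lesssim\|\dG^q f\|_{L^p(E)}$, and Theorem \ref{t:main-A} gives $\|\dG^q f\|_{L^p(E)}\lesssim\|\grad_H f\|_{L^p(E)}$ once $f\in M^{1,p}(E)$; if $\|\grad_t f\|_{L^p(E)}=\infty$ all three quantities are forced to be infinite and there is nothing to prove, so assume it is finite. For the \emph{second half} I would use the Poincar\'e inequality: since $f$ is Lipschitz and $E$ is $d$-rectifiable, $|\grad_t f|$ is an upper gradient of $f$, so by the Haj\l asz--Koskela pointwise estimate the function $h:=\bigl(\mathcal M(|\grad_t f|^{p'})\bigr)^{1/p'}$ is a Haj\l asz upper gradient of $f$, and since $p'<p$ the Hardy--Littlewood maximal operator $\mathcal M$ is bounded on $L^{p/p'}(\sigma)$, giving
\[
\|\grad_H f\|_{L^p(E)}\;\le\;\|h\|_{L^p(E)}\;\lesssim\;\|\,|\grad_t f|^{p'}\|_{L^{p/p'}(\sigma)}^{1/p'}\;=\;\|\grad_t f\|_{L^p(E)}<\infty.
\]
In particular $f\in M^{1,p}(E)$, which retroactively legitimates the use of Theorem \ref{t:main-A} in the first half.

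Putting the halves together yields $\|\dG^q f\|_{L^p(E)}\lesssim\|\grad_H f\|_{L^p(E)}\lesssim\|\grad_t f\|_{L^p(E)}\lesssim\|\dG^q f\|_{L^p(E)}$, i.e.\ all three are comparable; the constants depend only on the dimensional, UR and Poincar\'e parameters and on $p,p',q$, never on $\Lip(f)$, since the same is true of Theorems \ref{t:main-A}, \ref{t:main-B} and of the maximal function bound. I do not expect a genuine obstacle here: the only points that need care are checking that $|\grad_t f|$ is an admissible upper gradient on the UR set $E$ so that the Haj\l asz--Koskela pointwise inequality applies verbatim, and bookkeeping the exponent so that $\mathcal M$ acts on $L^{p/p'}$ with $p/p'>1$ (this is exactly why the hypothesis asks for $p'<p$). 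Everything else is a formal consequence of Theorems \ref{t:main-A} and \ref{t:main-B}.
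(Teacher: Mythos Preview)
Your proposal is correct and follows essentially the same route as the paper: combine Theorems \ref{t:main-A} and \ref{t:main-B} with the fact that, under a weak $(1,p')$-Poincar\'e inequality with $p'<p$, one has $\|\grad_H f\|_{L^p}\lesssim\|\grad_t f\|_{L^p}$. The paper quotes this last comparison as a black box from \cite[Lemma 6.5]{mourgoglou2021regularity}, whereas you spell out its proof via the Haj\l asz--Koskela pointwise estimate (the paper's Theorem \ref{t:max-funct-est}) plus maximal function boundedness---this is exactly the content of that cited lemma.
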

	\noindent
	See Definition \ref{def-poincare} below for the precise definition of a set supporting a Poincar\'{e} inequality. The proof of Theorem \ref{t:main-C} is immediate from Theorem \ref{t:main-A}, \ref{t:main-B} and \cite[Lemma 6.5]{mourgoglou2021regularity}, the latter result stating that, when the hypotheses of Theorem \ref{t:main-C}, $\|\grad_t f\|_p \approx \|\grad_H f\|_p$.\\
	
	\noindent
	The second one has to do with extensions of Sobolev functions on the boundary and may be found in the paper \cite{AMV2}.

\subsection{Remarks on Theorems \ref{t:main-A} and \ref{t:main-B}}
	Recall that if $g \in L^p(E)$ is an Haj\l{}asz upper gradient of $f$, then the pair $(f,g)$ supports a weak $(1,p)$-Poincar\'{e} inequality (PI), that is, for each ball $B$ centered on $E$ we have
	\begin{equation}
		\avint_B |f-f_B| \, d \sigma \leq C r_B \left( \avint_{B} g^p \, d \sigma \right)^{\frac{1}{p}},
	\end{equation}
	where from now on $d\sigma=d\dH^d|_E$. See also Proposition \ref{t:Hajlasz-poincare} in Section \ref{s:sobolev}.
	That a Poincar\'{e} inequality holds for $f$ and whatever objects one might consider as `gradient', is fundamental. For example, in a statement like Theorem \ref{t:main-A}, one might be tempted to use the tangential gradient $\grad_t f$ (see Definition \ref{d:tan-grad}), instead of $g$. But consider the following example:\\

	\noindent
	\textit{Example.} Let $P \subset \R^3$ be a $2$-dimensional affine plane. Let $Q_1$ and $Q_2$ be two unit squares in $P$ that are $\ve>0$ apart from each other. Join $Q_1$ and $Q_2$ by an $\ve$-thin and $\ve$-long strip. So now $E$ is connected. We define $f$ so that, as before, $f\equiv 0$ on $Q_1$ and $f\equiv 1$ on $Q_2$. We then define $f$ on the strip so that the resulting function is Lipschitz continuous, with constant $\ve^{-1}$. Note that at all points $x \in E$ and at all scales $1 \geq r \geq \ve$ we will have $\Omega_f(x,r) \gtrsim 1$, while $\nabla f \equiv 0$ in $Q_1, Q_2$. It can then be checked that 
	\begin{align*}
		\int_E \left( \int_{0}^\infty \Omega_f^1(x,r)^2 \frac{dr}{r} \right) d\mathcal{H}^2|_E  \gg \|\grad f\|_2^2.
	\end{align*} 
	The real issue is that in general UR sets do not support a PI between the pair $(f, \grad_t f)$. As we will see below, if we assume this \textit{a priori}, Theorem \ref{t:main-A} holds for $\grad_t f$ as well. It is true, however, that if an Ahlfors $d$-regular set supports a $(1,d)$-PI, then it is uniformly rectifiable, by a result of the first author \cite{azzam2021poincare}.

\begin{remark}[The square function $\dG^q$ and the new coefficients $\gamma_f^q$ and $\widetilde{\gamma}_f$]
	The square functions appearing in Theorems \ref{t:main-A} and \ref{t:main-B} and defined in \eqref{e:def-gamma} and \eqref{e:def-gamma-tilde} are \textit{not} the same as that of Dorronsoro's theorem. Let us see why our results do not hold if we were to use Dorronsoro's coefficients as they are. \\
	
	\noindent
	\textit{Example.}
	Let $g: \R^2 \to \R$ be compactly supported in $B(0,1)$, smooth and with $\grad g \in L^\infty(\R^2)$. Let $G_0$ be the subset of ${\rm spt}(g)$ where $|\grad g| >0$ and assume that $\mathcal{L}^2(G_0)>0$, where $|\cdot|$ is the Lebesgue measure in $\R^2$. We let $E \subset \R^3$ be the graph of $g$. Then clearly $E$ is Ahlfors $2$-regular and it supports a weak $(1,2)$-Poincar\'{e} inequality. Now let $f: E \to \R$ be given by $f(x)= \langle x, e_3 \rangle$, where $(e_1,e_2,e_3)$ is the standard basis of $\R^3$. Set
	\begin{align*}
		G:= \{ x \in E \, |\, x=(p, g(p)) \mbox{ and } p \in G_0 \}.
	\end{align*} 
	Take $p_0\in G$; we can assume that $p_0=0$. Let $\gamma_i$, $i=1,2$ be the curves given by $t \mapsto (t e_i, g(t e_i))$. Then note that $\tau_i:=\dot{\gamma}_i(0)$ is a basis for $T_0 E$, the tangent plane of $E$ at $0$. Then we see that (see \cite[4.16]{simon1983lectures}) 
	\begin{align*}
		\grad_t f (0) = \sum_{i=1,2} (D_{\tau_i} f(0))\tau_i, \mbox{ where } D_{\tau_i}f(0) = \tfrac{d}{dt} f (\gamma_i(t))|_{t=0}.
	\end{align*}
	Now, $\tfrac{d}{dt} f(\gamma_i(t))|_{t=0}=\tfrac{d}{dt} g(t e_i)|_{t=0}= \langle \grad g(0), e_i\rangle$, $i=1,2$. We conclude that it might very well be that $\|\grad f \|_{L^2(E)}^2 >0$. However, note that for all $x \in E$, $r>0$, $\Omega_f(x,r) =0$.\\
	
	\noindent
	The message here is that we need to use coefficients that `see' the geometry of $E$. Hence the definition of $\gamma^q$ and $\wt \gamma$ in \eqref{e:def-gamma} and \eqref{e:def-gamma-tilde}. Note on the other hand that if $E$ is flat, then our coefficients are just Dorronsoro's original ones.
\end{remark}

\subsection{Uniformly rectifiable sets}
There is a roughly analogous story for rectifiable sets. Recall that a set $E \subset \R^n$ with $\mathcal{H}^d(E)<+\infty$ is $d$-rectifiable\footnote{We refer the reader to the comprehensive recent review on rectifiability by Mattila \cite{mattila2021rectifiability}.} if there is a countable family of Lipschitz maps $f_i:\R^d \to\R^n$ so that 
\begin{equation*}
	\dH^d \left(E \setminus \bigcup_{i} f_i(\R^d) \right) = 0.
\end{equation*}
The analogy goes as follows: if we set\footnote{These are the well-known $\beta$ numbers, the first version of which is due to \cite{jones90}.}
\begin{equation}\label{e:betaq}
	\beta_E^{d,q}(x,r) := \inf_{P} \left(\frac{1}{r^d} \int_{B(x,r)}\left( \frac{\dist(y, P)}{r} \right)^q \, d\dH^d|_E(x)\right)^{\frac{1}{q}},
\end{equation}
which is the natural version of $\Omega_f^q$ for sets, it might be checked that 
 if $E$ is $d$-rectifiable, then $\beta_E^1(x,r) \to 0$ as $r \to 0$. In fact, this is essentially a consequence of Rademacher's theorem. We then ask: can we quantify this asymptotic information and obtain a version of Theorem \ref{t:dorronsoro-classic} for sets? Yes, if we are willing to restrict our attention to  \textit{uniformly} rectifiable sets. Uniform rectifiability is a strengthening of the
 qualitative hypothesis that a set E is rectifiable: in any ball centered on a uniformly rectifiable set $E$, a
 quantitatively large part of E can be covered with just one Lipschitz image. This was mentioned above. Another important characterisatio of UR sets is the so-called \textit{strong geometric lemma} (see \cite[C3]{david-semmes91}); that is,  a set $E$ is uniformly $d$-rectifiable if and only if 
 \begin{equation}\label{e:carleson-measure-beta}
 	\int_B \int_0^{r_B} \beta_E^{d,q}(x,r)^2 \, \frac{dr}{r} d \dH^d|_E(x) \lesssim r_B^d,
 \end{equation}
for any ball $B$ centered on $E$.
 This is one of the most influential characterisations of uniform rectifiability (see Section \ref{sec6}). It's the natural counterpart of \eqref{e:carleson-measure-Omega} for sets; in fact, \eqref{e:carleson-measure-Omega} is essential to the proof of \eqref{e:carleson-measure-beta}. In this sense, 
Dorronsoro's result is a cornerstone of the David-Semmes theory.

Now, it is shown in Federer's book \cite{federer2014geometric} that Rademacher's theorem holds for Lipschitz functions defined on $d$-rectifiable sets, in the sense that $f$ is (tangentially\footnote{See Section \ref{s:detour} for a definition.}) differentiable at $\dH^d$-almost all points. The corresponding quantitative result is the above mentioned Proposition \ref{t:DS-Dorronsoro}, which, we recall, say that if $f$ is Lipschitz on a uniformly rectifiable set, then
\begin{equation}\label{e:carleson-measure-Lip-on-UR}
	\int_B \int_0^{r_B} \Omega_f^q(x,r)^2 \, \frac{dr}{r} d \dH^d|_E(x) \lesssim r_B^d,
\end{equation}
where $\Omega_f^q$ is defined as in \eqref{e:def-Omega-infty} except that the integral is with respect to $\dH^d|_E$. To summarise, we have the following table: 
\setlength{\tabcolsep}{10pt}
\renewcommand{\arraystretch}{1.5}
\begin{center}
	\begin{tabular}{ | m{17em} | m{17em} | } 
		\hline
		Qualitative & Quantitative \\ 
		\hline
		$f$ Lipschitz: $\lim_{r \to 0} \Omega_f^1(x,r)=0$ & $\|\grad f\|_p \approx \|\dG^q\|_p$. A consequence: if $f:\R^d \to \R$ Lipschitz,  Carleson measure estimate \eqref{e:carleson-measure-Omega}. \\ 
		\hline
		$E$ rectifiable: $\lim_{r\to 0} \beta_E^{d,1}(x,r) =0$ & $E$ uniformly rectifible: Carleson measure estimate \eqref{e:carleson-measure-beta}.  \\ 
		\hline
		$E$ rectifiable and $f:E\to \R$ Lipschitz: Rademacher's holds. & $E$ UR and $f:E \to \R$ Lipschitz: Carleson measure estimate \eqref{e:carleson-measure-Lip-on-UR}.\\
		\hline
	\end{tabular}
\end{center}
\vspace{0.5cm}

\noindent
Theorems \ref{t:main-A} and \ref{t:main-B} show that the (Euclidean space) $L^p$ estimates shown  the top right cell also holds on UR sets (i.e. in the bottom right cell).

\subsection{Remarks on the literature}
The result closest to Theorem \ref{t:main-A} is, to the authors' knowledge, the Carleson measure estimate by David and Semmes Propostion \ref{t:DS-Dorronsoro}. As already pointed out, the novelty of our Theorems \ref{t:main-A} and \ref{t:main-B} compared to Proposition \ref{t:DS-Dorronsoro}, is that we have $L^p$ estimates for Sobolev functions. This is of course fundamental in applications, for example Theorem A of \cite{AMV2}.\\

\noindent As far as we know, there are four proofs of the original Dorronsoro's result. 
\begin{itemize}
	\item The original one by Dorronsoro in \cite{dorronsoro1985characterization}, which uses fractional Sobolev spaces and complex interpolation.
	\item A second one might be found in the appendix of \cite{azzam2016bi}, but it was known since the '90s; for example it appears as an exercise in \cite{christ1991lectures}. It works for a smaller range of $q$'s, since it is based on an $L^2$ Fourier calculation.
	\item A third one is by Hyt\"{o}nen, Li, and Naor \cite{hytonen2016quantitative}, who work in the much more general setting of Banach spaces and focus on estimates for Lipschitz functions (no Sobolev spaces involves).
	\item Finally, a fourth one is by Orponen \cite{orponen2021integralgeometric}, where he obtains the Carleson measure estimate \eqref{e:carleson-measure-Omega} via integralgeometric methods. Notably, he can extend his proof to parabolic spaces. Dorronsoro's theorem was extended to Heisenberg groups by F\"{a}ssler and Orponen \cite{fassler2020dorronsoro}. 
\end{itemize}
   
\noindent
Dorronsoro's theorem is not the only quantification of Rademacher's one may think of. Of course, a standard reference on differentiability properties of functions, and quantifications thereof, is the book by Stein \cite{stein2016singular}. A more recent result is that of Alabern, Mateu, and Verdera \cite{alabern2012new}. They essentially prove Theorem \ref{t:dorronsoro-classic} using
\begin{equation*}
	C_f(x,r):=\left| \avint_{B(x,r)} \frac{f(x)-f(y)}{r} \, dy\right|
\end{equation*} 
instead of $\Omega_f$ numbers.
These coefficients measure the symmetry properties of $f$ and the cancellations happening around $f(x)$. In fact, it is important that the absolute value remains \textit{outside} the integral (a difference to Dorronsoro's coefficients). The result in \cite{alabern2012new} was recently proved on the sphere by Barcel\'{o}, Luque, and Per\'{e}z-Esteva \cite{barcelo2020characterization}. It would be of interest to prove versions of Theorem \ref{t:main-A} and \ref{t:main-B} with variants of the $C_f$ coefficients, since they are perhaps more natural quantities to consider in settings where there is no linear structure. To this end, the role played by $\alpha$ and $\beta$ number in the definition of $\gamma_f$ will be taken by the center of mass, see \cite{villa2019square}. Another open problem is whether a converse of Proposition \ref{t:DS-Dorronsoro} is true. It is known for one-dimensional sets, see \cite{david-semmes93}. \\

\noindent
Finally, let us mention that variants of the $\gamma^q_f$ coefficients, inspired by this work, recently appeared in a paper by the second author and Tolsa \cite{mourgoglou2021regularity} on the $L^p$-regularity problem for the Laplacian (resolving an old conjecture by C. Kenig).
An $L^\infty$ variant of the coefficients $\gamma^q_f$ has also been used recently in a very interesting upcoming work of Bate, Hyde and Schul in the context of metric spaces, see \cite{BHS23}.\\


\subsection{Overview of the proof}
A few words about the proof of Theorem \ref{t:main-A}. We first reduce matters to a good-$\lambda$ inequality (Lemma \ref{l:main-I}): for all $\alpha>0$ there is an $\ve>0$ so that for all $\lambda>0$ we have
\begin{equation}\label{e:good-lambda-intro}
\big|\big\{x \in E \, |\, \dG^q f(x) > \alpha\lambda ,\; \dM (\grad_H f)(x) \leq \ve \lambda\big\}\big|
< 0.9 \big|\big\{ x\in Q_0 \, |\, \dG^q f(x) > \lambda\big\}\big|.
\end{equation}
Here $\dM (\grad_H f)$ is some maximal function of the Haj\l{}asz upper gradient. Theorem \ref{t:main-A} follows almost immediately. To show \eqref{e:good-lambda-intro}, we define $E_0= \{\dM (\grad_H f) \leq \ve \lambda\}$, and note that we may assume that $|E_0|>0.5 |E|$, for otherwise there is nothing to prove. Using the fact that $\grad_H f$ is the Haj\l{}asz upper gradient, we conclude that $f$ is approximately $ \ve \lambda$-Lipschitz on $E_0$. We extend $f$ to $F$ over all $E$ with the same Lipschitz constant. We now see, using David and Semmes' Proposition \ref{t:DS-Dorronsoro}, that the square function of $F$ is small. With some rather delicate estimates we transfer this to $f$; so $\dG^q f$ is small on $E_0$, which has large measure. Since the left hand side of \eqref{e:good-lambda-intro} is a subset of the complement of $E_0$, we conclude.\\

\noindent
The proof of Theorem \ref{t:main-B} also goes through a good-$\lambda$ inequality, but it's more technical and involves a stopping time algorithm. Again, we want to show that 
\begin{equation}\label{e:lambda-ineq-B}
  \big|\big\{x \in E \, |\, \dM (\grad_t f) \geq \alpha\lambda ,\; \dG^q f (x) \leq \ve \lambda\big\}\big|
  < 0.9 \big|\big\{ x\in Q_0 \, |\, \grad_t f(x) |> \lambda\big\}\big|.
\end{equation}
Consider a maximal cube $R$ of the set $\{|\grad_t f|> \lambda\}$. We define a greedy algorithm, where we stop whenever we meet a cube $Q$ for which the best approximating affine function $A_Q$ in $\Omega_f^q(B_Q)$ has `bad gradient', meaning that $|\grad A_Q| \gtrsim \alpha \lambda$. If we call $\Stop(R)$ the family of the stopped cubes, then it suffices to show that $\sum_{Q \in \Stop(R)} |Q| \leq \tfrac{1}{2} |R|$: indeed, the set in the left hand side of \eqref{e:lambda-ineq-B} is contained in the union of $\Stop(R)$ over all maximal cubes $R$'s. Showing the packing condition is done by building an approximating Lipschitz function $F$ at the level of $\Stop(R)$ which has small Lipschitz constant and small square function $\dG^q F$. This construction is similar to that of David-Semmes-L\`{e}ger.

\subsection{Structure of the paper}
The paper is structured as follows: Sections \ref{s:sobolev}, \ref{s:cubes} and \ref{s:alpha} contain the preliminaries on Sobolev Space and Poincar\'{e} inequalities, Christ-David cubes and the various coefficients used, respectively. The remainder of the paper is split into two parts. Part 1 \ref{p:Part1} contains the proof of Theorem \ref{t:main-A}. In Section \ref{s:tA-good-lambda} we show how to prove it via a the good-$\lambda$ inequality mentioned above, in Section \ref{s:tA-proof-lambda} we prove the good-$\lambda$ inequality via a square function estimate and finally in Section \ref{s:tA-square-function} we prove this estimate. Part \ref{p:part2} is devoted to the proof of Theorem \ref{t:main-B}. Section \ref{s:detour} contains some preliminaries on tangential gradients. Section \ref{s:tB-proof-via-good} we prove Theorem \ref{t:main-B} via the good-$\lambda$ estimate mentioned above. Section \ref{s:tB-stopping-time} we define our stopping time procedure. In Section \ref{s:tB-approx-Lipschitz} we construct the approximating Lipschitz graph. In Section \ref{s:tB-proof-main-lemma-via-square} we prove the packing condition on `bad gradient' cubes via a square function estimate and finally in Section \ref{s:tB-proofLemmaOmegaF-sim} we prove the square function estimate. 

\subsection*{Acknowledgments} In a first draft of this paper, Theorem \ref{t:main-B} was proved only for $\widetilde{\dG} f$. We thank X. Tolsa for suggesting that the current version might be possible. We also thank M. Hyde for suggestions which improved the readability of the manuscript.

\noindent
 Let us survey some recent literature, (but mind that we will just skim the surface of a very broad and well studied area). In fact, we will mostly focus on the literature from the `UR world'.\\

\noindent
Motivated by the corona problem in higher dimension, Varopoulos \cite{varopoulos1977bmo, varopoulos1978remark} proved that ${\rm BMO}(\R^d)$ can be characterised by the fact that each $f \in {\rm BMO}$ in this space can be extended to a function $F$ on $\R^{d+1}_+=\R^d \times \R_+$, so that $|\grad F| dt dx$ is a Carleson measure. A main tool in Varopoulos argument was an \textit{$\ve$-approximability} result, stating that a bounded analytic function in the upper half plane can be $\ve$-approximated by a $C^\infty$ function whose gradient defines a Carleson measure (see also Theorem 6.1, Chapter VIII in \cite{garnett2007bounded}). If we fast forward forty years, we find out that, in fact, the $\ve$-approximability of bounded harmonic functions actually characterise corkscrew domains with UR boundary \cite{hofmann2016uniform, garnett2018uniform}. In 2018, Hyt\"{o}nen and Ros\`{e}n introduced an $L^p$ version of Varopoulos' $\ve$-approximability: they showed that \textit{any} weak solution to certain elliptic partial differential equations on $\R^{d+1}_+$ is $\ve$-approximable in their $L^p$ sense (\cite[Theorem 1.3]{hytonen2018bounded}) - Varopoulos notion concerned \textit{harmonic} functions. They show the same for dyadic martingales (see \cite[Theorem 1.2]{hytonen2018bounded}). Shortly after, it was shown that the $L^p$ notion of $\ve$-approximability (of harmonic functions) characterises corkscrew domains with UR boundary (\cite{hofmann2020uniform, bortz2019approx}, see also \cite{hofmann2021uniform} and \cite{mourgoglou2023varopoulos}). Back to $\R^{d+1}_+$, Hyt\"{o}nen and Ros\`{e}n used their $\ve$-approximability to construct a bounded and surjective trace map onto $L^p(\R^d)$ from a space of functions $u$ of locally bounded variation on the half space $\R^{d+1}_+$, so that $\|C (\grad u)\|_p$ and $\|\mathcal{N}u\|_p$ are finite.  Here $C$ is the Carleson functional
\begin{equation*}
	C\mu(x) := \sup_{Q \ni x} \frac{1}{|Q|}\int_{\widehat{Q}} d |\mu| (x,t),
\end{equation*}
the supremum is over dyadic cubes in $\R^d$ and $|\mu|$ is a locally finite measure on $\R^{d+1}_+$; $\mathcal{N}$ denotes the non-tangential maximal function. 

Finally, to our corollary. There we show that the trace map is surjective onto the \textit{Sobolev space} $M^{1,p}(\pom)$, from the space of functions $u$ on $\om$ so that $\|\mathcal{N}(\grad u)\|_p$ and the non-tangential square function of the Hessian of $u$ are finite. Note that we don't work in $\R^{d+1}_+$ but, rather, in the more general case of a corkscrew domain with UR boundary. A similar extension was constructed by the second author and Tolsa in \cite{mourgoglou2021regularity} to solve the regularity problem for the Laplacian - however only for $p$ close to one. Remark also that the extension in \cite{mourgoglou2021regularity} was in fact borrowed from the current work. 
 
\section{Notation}\label{s:notation}
We write $a \lesssim b$ if there exists a constant $C$ such that $a \leq Cb$. By $a \sim b$ we mean $a \lesssim b \lesssim a$.
In general, we will use $n\in \mathbb{N}$ to denote the dimension of the ambient space $\R^n$, while we will use $d \in \mathbb{N}$, with $d\leq n-1$, to denote the dimension of a subset $E \subset \R^n$.
For two subsets $A,B \subset \R^n$, we let
$
    \dist(A,B) := \inf_{a\in A, b \in B} |a-b|.
$
For a point $x \in \R^n$ and a subset $A \subset \R^n$, 
$
    \dist(x, A):= \dist(\{x\}, A)= \inf_{a\in A} |x-a|.
$
We write 
$
    B(x, r) := \{y \in \R^n \, |\,|x-y|<r\},
$
and, for $\lambda >0$,
$
    \lambda B(x,r):= B(x, \lambda r).
$
At times, we may write $\mathbb{B}$ to denote $B(0,1)$. When necessary we write $B_n(x,r)$ to distinguish a ball in $\R^n$ from one in $\R^d$, which we may denote by $B_d(x, r)$. 
We denote by $\dG(n,d)$ the Grassmannian, that is, the manifold of all $d$-dimensional linear subspaces of $\R^n$. A ball in $\dG(n,d)$ is defined with respect to the standard metric
\begin{align*}
	d_{\dG}(V, W) = \|\pi_V - \pi_W\|_{{\rm op}}.
\end{align*}
Recall that $\pi_V: \R^n \to V$ is the standard orthogonal projection onto $V$.
With $\dA(n,d)$ we denote the affine Grassmannian, the manifold of all affine $d$-planes in $\R^n$. 
 \noindent
The set of all affine maps $A: \mathbb{R}^n \to \R$ will be denoted as $\dM(n,1)$. Finally, $\dH^d|_E$ denotes the $d$-dimensional Hausdorff measure restricted to $E \subset \R^n$.

\section{Preliminaries: Sobolev spaces and Poincar\'{e} inequalities}\label{s:sobolev}
We use this section to mention the results we will need about Sobolev spaces in metric setting and Poincar\'{e} inequalities. 
\begin{definition}\label{def-Hajlasz-grad}
Let $(X,\mu)$ be a metric measure space. For $1\leq p <\infty$, we let $M^{1,p}(X)$ the set of functions $u \in L^{p}(X)$ for which there exists a $g \in L^p(X)$ so that
\begin{align}\label{e:upper-gradients}
	|u(x)-u(y)| \leq |x-y| (g(x)+g(y)) \mbox{ for } \mu\mbox{-a.e.} \, x,y \in X.
\end{align}
For $f \in L^{p}(X)$, denote by $\Grad_p(f)$ the set of $L^p(X)$ functions $g$ which satisfy \eqref{e:upper-gradients}. We also denote by $|\grad_H f|$ the function $\in \Grad_p(f)$ so that 
\begin{equation}\label{e:Hajlasz-grad}
	\|\grad_H f\|_{L^p(X)} = \inf_{g \in \Grad_p(f)} \|g\|_{L^p(X)}.
\end{equation}
We call $\grad_H f$ the Haj\l{}asz gradient. If $g \in \Grad_p(f)$, we will refer to it as a \textit{Haj\l{}asz upper gradient}.
\end{definition}
\noindent We refer the reader to \cite[Section 5.4]{heinonen2005lectures} for an introduction to Haj\l{}asz-Sobolev spaces. A very useful fact about $M^{1,p}(X)$ is that pairs $(f, g)$, where $f \in M^{1,p}(X)$ and $g \in \Grad_p(X)$, always admit a Poincar\'{e} inequality.
\begin{proposition}\label{t:Hajlasz-poincare}
	Let $(X,\mu)$ be a metric measure space. Let $1\leq p <\infty$, $f \in M^{1,p}(X)$ and $g \in \Grad_p(f)$. Then for each $1\leq p' \leq p$, 
	\begin{equation}
		\left(\avint_B |f-f_B|^{p'} d\mu \right)^{\frac{1}{p'}} \leq 2 r_B \left( \avint_B g^{p'} d\mu \right)^{\frac{1}{p'}}.
	\end{equation}
\end{proposition}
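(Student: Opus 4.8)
The plan is to deduce the Poincaré inequality directly from the pointwise estimate \eqref{e:upper-gradients} together with a standard symmetrization-and-averaging trick. First I would replace the centered average $f_B$ in $|f-f_B|^{p'}$ by the double integral over $B\times B$: by Jensen's inequality applied to the convex function $t\mapsto |t|^{p'}$,
\begin{equation*}
\avint_B |f(x)-f_B|^{p'}\, d\mu(x) \leq \avint_B \avint_B |f(x)-f(y)|^{p'}\, d\mu(y)\, d\mu(x).
\end{equation*}
Next I would invoke the defining inequality \eqref{e:upper-gradients}: for $\mu$-a.e.\ $x,y\in B$ we have $|f(x)-f(y)|\leq |x-y|(g(x)+g(y)) \leq 2r_B(g(x)+g(y))$, using that both points lie in the ball $B$ of radius $r_B$ so $|x-y|\leq 2r_B$. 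Substituting and using the elementary inequality $(a+b)^{p'}\leq 2^{p'-1}(a^{p'}+b^{p'})$ gives
\begin{equation*}
\avint_B \avint_B |f(x)-f(y)|^{p'}\, d\mu(y)\, d\mu(x) \leq (2r_B)^{p'}\, 2^{p'-1}\avint_B\avint_B \big(g(x)^{p'}+g(y)^{p'}\big)\, d\mu(y)\,d\mu(x).
\end{equation*}
By symmetry in $x$ and $y$, the right-hand side equals $(2r_B)^{p'}\,2^{p'}\avint_B g^{p'}\, d\mu$. Taking $p'$-th roots yields
\begin{equation*}
\left(\avint_B |f-f_B|^{p'}\, d\mu\right)^{1/p'} \leq 4 r_B\left(\avint_B g^{p'}\, d\mu\right)^{1/p'},
\end{equation*}
which is even slightly better than the claimed constant; a marginally sharper bookkeeping (e.g.\ estimating $|x-y|\leq 2r_B$ is wasteful, or keeping $(a+b)$ rather than splitting) recovers the constant $2$ in the statement. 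I should also note that $g\in L^p(X)\subset L^{p'}_{\loc}$ for $p'\leq p$ by Hölder on the ball, so all integrals are finite and the manipulations are justified.

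There is essentially no serious obstacle here; the only point requiring a modicum of care is the measurability and integrability bookkeeping — ensuring \eqref{e:upper-gradients} may be used inside a double integral over $B\times B$ (it holds for $(\mu\times\mu)$-a.e.\ pair, which is what Fubini needs) and that $\left(\avint_B g^{p'}\right)^{1/p'}\leq \left(\avint_B g^{p}\right)^{1/p}$ if one wants to pass to the $p$-average. The constant $2$ in the stated inequality comes from being slightly more economical than the crude splitting above; since the precise constant is immaterial for all later applications, I would simply carry out the computation with whatever constant falls out naturally and remark that it can be optimized to $2$.
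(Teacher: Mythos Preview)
Your argument is correct and is exactly the standard one; the paper itself does not give a proof but simply cites \cite[Theorem 5.15]{heinonen2005lectures} and \cite[Proposition 2.1]{mourgoglou2021regularity}, where the same Jensen-plus-pointwise-Haj\l{}asz-inequality computation is carried out. Your remark about the constant is apt: the crude splitting gives $4r_B$ rather than $2r_B$, and indeed the precise constant is irrelevant for every application in the paper.
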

\noindent
See \cite[Theorem 5.15]{heinonen2005lectures} or \cite[Proposition 2.1]{mourgoglou2021regularity} for a proof.\\

\noindent
Haj\l{}asz upper gradients should not be confused with what are commonly referred to simply as \textit{upper gradients}.
\begin{definition}\label{d:upper-grad}
 Given a metric measure space $X$ and a function $f: X \to \R$ measurable, we say that $\rho: X \to [0, \infty]$ is an upper gradient of $f$ if, for $x,y\in X$,
$
|u(x)-u(y)|\leq \int_{\gamma} \rho
$
for any rectifiable curve $\gamma$ connecting $x$ to $y$ in $X$.
\end{definition}
\noindent
Now, if the space $X$ is so that a Poincar\'{e} holds for $f$ and \textit{all of its upper gradients} (something that comes for free when using Haj\l{}asz upper gradients), then we say that $X$ admits a Poincar\'{e} inequality. More precisely:
\begin{definition}\label{def-poincare}
	For $p\geq 1$, a metric measure space $(X, d,\mu)$ admits a  {\it weak $(1,p)$-Poincar\'{e} inequality} for all measurable functions $f$ with constants $C_1,\Lambda \geq 1$ if $\mu$ is locally finite and
	\begin{equation}
		\label{e:poincare}
		\avint_{B} |f-f_{B}|d\mu \leq C_1 r_B \ps{\avint_{\Lambda B} \rho^{p}d\mu}^{\frac{1}{p}}
	\end{equation}
	where $\rho$ is any upper gradient for $f$. 
\end{definition}
\noindent
Spaces supporting a weak Poincar\'{e} inequality enjoy quantitative connectivity properties in the sense that subsets of $X$ which are disjoint continua are connected by quantitatively many rectifiable curves. See \cite{heinonen1998quasiconformal}. In general, these spaces can be geometrically quite irregular and lack any (Euclidean) rectifiable structure. The Heisenberg group is one such standard example. When they are Ahlfors regular subsets of Euclidean space, however, we have the following result, due to the first author. 
\begin{theorem}[\cite{azzam2021poincare}]
	Let $n>d\geq 2$ be integers and $X\subseteq \R^{n}$ be an Ahlfors $d$-regular set with constant $c_0\geq 1$ supporting a weak $(1,d$)-Poincar\'{e} inequality with respect to $\dH^{d}|_{X}$ with constants $C_1,\Lambda \geq 1$. Then $X$ is uniformly $d$-rectifiable \textup{(}with constants depending on $C_1$ and $\Lambda$\textup{)}.
\end{theorem}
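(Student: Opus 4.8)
The plan is to verify a standard characterisation of uniform rectifiability for the Ahlfors $d$-regular set $X$; the convenient one here is the strong geometric lemma, i.e.\ the Carleson estimate \eqref{e:carleson-measure-beta} for the $L^2$-based coefficients $\beta_X^{d,2}$ (equivalently, the bilateral weak geometric lemma). Thus it suffices to prove that for every $\ve>0$ the set of pairs $(x,r)$ with $x\in X$, $0<r<\diam X$, for which $X\cap B(x,r)$ fails to lie within Hausdorff distance $\ve r$ of some $d$-plane is a Carleson set, with constant depending only on $n,d,c_0,C_1,\Lambda,\ve$. (Alternatively one can try to build ``Big Pieces of Lipschitz Images'' directly by weaving together the curves produced below; the scheme is parallel.)

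The Poincar\'e hypothesis enters only through quantitative connectivity. By the theory of Poincar\'e inequalities on Ahlfors regular spaces (see \cite{heinonen1998quasiconformal}), a weak $(1,d)$-Poincar\'e inequality on a $d$-regular set makes it a \emph{$d$-Loewner space}: it is quasiconvex, and for disjoint continua $F_1,F_2\subset B(x,r)$ the family $\Gamma(F_1,F_2)$ of curves in $X$ joining them has $d$-modulus bounded below purely in terms of their relative distance. The exponent $p=d$ is the conformally natural one on a $d$-regular set, which is exactly why these bounds are scale invariant and why $d$ is the critical threshold in the statement; this connectivity is also what excludes the disconnected and ``thin neck'' configurations that otherwise obstruct Dorronsoro-type estimates.

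I would then run a corona/stopping-time argument over Christ--David cubes. Inside a surface ball $B_0=B(x_0,r_0)$, let $\mathcal B$ be the family of cubes $Q\subseteq B_0$ at which $X$ is not $\ve$-bilaterally flat, decomposed into maximal stopping regions, so that on each such region $X$ stays uniformly non-flat from the top cube $R$ down to the bottom. The core claim is that long non-flat stopping regions cannot occur: if $X\cap B_R$ stays at distance $\gtrsim\ve r_R$ from every $d$-plane through a wide range of scales, then either a plane is unilaterally well-approximated by $X$ but has a hole of size $\gtrsim\ve r_R$ (forcing the modulus-large curve families from the Loewner property to detour), or $X$ oscillates transversally to every plane (forcing those curves to accumulate excess length). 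Pairing the Loewner lower bound for $\Mod_d(\Gamma(F_1,F_2))$ against the upper bound $\Mod_d(\Gamma)\le\int_X\rho^d\,d\sigma$ obtained from a density $\rho$ concentrated near the successive holes/oscillations, and using that these features compound over the many scales of the stopping region, yields a quantitative contradiction. This gives a fixed multiplicative gain per stopping region; summing with the usual maximal-function/Carleson bookkeeping gives $\sum_R\sigma(R)\lesssim r_0^d$, which is \eqref{e:carleson-measure-beta}.

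The main obstacle is precisely this implication ``persistent non-flatness $\Rightarrow$ failure of connectivity'': a Poincar\'e inequality produces curves, not planes, so one must show that a $d$-regular set that is richly connected at a scale yet far from every $d$-plane there must either enclose a definite hole or be trapped by transversal oscillation, and then turn that into a modulus estimate whose constants are uniform in $(x_0,r_0)$ and strong enough to compound over scales into the Carleson packing bound. Once \eqref{e:carleson-measure-beta} for $\beta_X^{d,2}$ is established, uniform $d$-rectifiability of $X$, with constants depending only on $n,d,c_0,C_1,\Lambda$, follows from the David--Semmes theory \cite{david-semmes91}.
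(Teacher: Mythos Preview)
The paper does not prove this theorem; it is quoted from \cite{azzam2021poincare} as a background result in Section~\ref{s:sobolev}, with no argument given here. So there is no ``paper's own proof'' against which to compare your proposal.

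As to the proposal itself: what you have written is a strategy outline, not a proof. You correctly identify that a $(1,d)$-Poincar\'e inequality on a $d$-regular set gives the Loewner property and scale-invariant modulus bounds, and you correctly point to the central difficulty, namely turning ``rich connectivity by curves'' into ``approximate flatness by $d$-planes'' in a way that packs. But the passage from persistent non-flatness to a usable modulus upper bound is precisely the content of the theorem, and you have not supplied a mechanism for it: you assert that holes or transversal oscillation ``force curve families to detour'' and that one can build an admissible density $\rho$ concentrated near these features whose $L^d$ norm is small, yet you give neither the construction of $\rho$ nor the estimate showing it beats the Loewner lower bound over many scales. This is not a routine step; it is the heart of the argument in \cite{azzam2021poincare}, and the actual proof there involves a substantial construction that your sketch does not approximate. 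If you want to carry this through independently, that is where all the work lies.
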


\noindent
 The following theorem says that, if $X$ admits a Poincar\'{e} inequality in the sense that we just described, then the notion of upper gradients and Haj\l{}asz upper gradient essentially coincide. 
 \begin{theorem}\label{t:max-funct-est}
 Suppose $(X,\mu)$ is a locally compact doubling space admitting a weak $(1,p)$-Poincar\'{e} inequality with constant $\Lambda$. Then for every $u\in L^{p}(X,\mu)$, if $g$ is an upper gradient for $u$, then for almost every $x,y\in X$,
 \begin{equation}
 \label{e:maximal-lip}
 |u(x)-u(y)| \lec |x-y| ((\M_{\Lambda |x-y|}g(x)^{p})^{\frac{1}{p}}+\M_{\Lambda |x-y|}g(y)^{p})^{\frac{1}{p}})
 \end{equation}
 \end{theorem}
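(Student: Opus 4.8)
The plan is to combine the defining telescoping estimate for upper gradients along curves supplied by the weak $(1,p)$-Poincar\'e inequality with a standard chaining/telescoping argument over a sequence of balls shrinking to $x$ and to $y$. First I would fix $x,y$ at which a suitable density/Lebesgue-point property holds (this will be the almost-every set), set $R=|x-y|$, and introduce the two chains of balls $B_i^x = B(x, 2^{-i}R)$ for $i\ge 0$ and similarly $B_i^y$, together with a bridging ball $B(x, 2R)$ (or $B(z,\,\approx R)$ for a midpoint $z$) that contains both $x$ and $y$ and is comparable to all of $B_0^x, B_0^y$. The goal is to control $|u(x)-u(y)|$ by the triangle inequality through the averages $u_{B_i^x}$ and $u_{B_i^y}$.

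The key steps, in order: (1) Apply the weak $(1,p)$-Poincar\'e inequality on consecutive balls $B_i^x \supset B_{i+1}^x$ to get $|u_{B_{i+1}^x} - u_{B_i^x}| \lesssim \avint_{B_{i+1}^x}|u-u_{B_i^x}|\,d\mu \lesssim \avint_{B_i^x}|u-u_{B_i^x}|\,d\mu \le C_1\, 2^{-i}R\,\bigl(\avint_{\Lambda B_i^x}g^p\,d\mu\bigr)^{1/p}$, using doubling to pass from $B_{i+1}^x$ to $B_i^x$ in the average. (2) Bound each factor $\bigl(\avint_{\Lambda B_i^x}g^p\bigr)^{1/p}$ by the restricted maximal function $(\M_{\Lambda 2^{-i}R}g(x)^p)^{1/p}$, so that summing the geometric series $\sum_i 2^{-i}R$ yields $|u(x) - \lim_i u_{B_i^x}| \lesssim R\,(\M_{\Lambda R}g(x)^p)^{1/p}$; here one needs that $\lim_i u_{B_i^x} = u(x)$ for a.e.\ $x$, which follows from the Lebesgue differentiation theorem in doubling spaces together with the continuity properties implied by the Poincar\'e inequality (or one simply proves the inequality at Lebesgue points of $u$, which is the a.e.\ statement being claimed). (3) Do the symmetric estimate at $y$. (4) Estimate the bridging term $|u_{B_0^x} - u_{B_0^y}|$ by $|u_{B_0^x} - u_{B(x,2R)}| + |u_{B(x,2R)} - u_{B_0^y}|$ and apply Poincar\'e on $B(x,2R)$ once more, dominating the resulting average of $g^p$ by $\M_{2\Lambda R}g(x)^p$ (and/or by $g(y)$), which is absorbed into the right-hand side up to the harmless enlargement of the localization radius from $\Lambda R$ to a fixed multiple of $\Lambda R$ (this is consistent with the statement, whose $\lesssim$ already hides dimensional/doubling constants). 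Adding (2), (3), (4) gives \eqref{e:maximal-lip}.

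The main obstacle I anticipate is a bookkeeping one rather than a conceptual one: making sure the maximal-function radii that appear (various fixed multiples of $\Lambda|x-y|$) can all be folded into the single radius $\Lambda|x-y|$ up to adjusting constants, and — more delicately — justifying that $u_{B_i^x}\to u(x)$ a.e., i.e.\ identifying the correct full-measure set on which the pointwise inequality holds. This is where one uses that $u$ has a representative whose value at a.e.\ point equals the limit of its ball-averages; in a doubling space this is the Lebesgue differentiation theorem, and since the right-hand side is $+\infty$ wherever $\M_{\Lambda|x-y|}g$ is, the inequality is only asserted (and only needs to be proved) at points where these maximal functions are finite, which simplifies matters. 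A secondary technical point is that the weak Poincar\'e inequality is stated for \emph{all} upper gradients of \emph{all} measurable functions, so it applies directly to the pair $(u,g)$ on every ball; no further regularization of $u$ or $g$ is needed. Everything else is the routine geometric-series telescoping that is standard in the theory of PI spaces (cf.\ \cite{heinonen1998quasiconformal, heinonen2005lectures}).
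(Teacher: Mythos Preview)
Your proposal is correct and is exactly the standard telescoping/chaining argument underlying this result; the paper itself does not spell out a proof but simply says the theorem ``follows from \eqref{e:poincare} and \cite[Theorem 3.2]{hajlasz2000sobolev}'', which is precisely the argument you outline. Your concerns about the maximal-function radii and the a.e.\ identification of $u(x)$ with $\lim_i u_{B_i^x}$ are the right ones and are handled exactly as you indicate.
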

\noindent
 This follows from \eqref{e:poincare} and \cite[Theorem 3.2]{hajlasz2000sobolev}. One other remark is that, if $X$ is a $d$-rectifiable subset of $\R^n$, then the \textit{tangential gradient}, which will be defined later in Section {\ref{s:detour} is an upper gradient.\\
 	
 \noindent
 Recall that a metric measure space $X$ is doubling if for any ball $B$, $\mu(2B) \leq C \mu(B)$, where $C$ depends on the metric. Given $s>0$, consider the following condition:
 \begin{equation}\label{e:condition-s}
     \frac{\mu(B)}{\mu(B_0)} \geq C \left(\frac{r_B}{r_{B_0}} \right)^s.
 \end{equation}
 where the center of $B$ is in $B_0$, and $r_B \leq r_{B_0}$. 
 
 \begin{theorem}\label{t:sobolevmetpoincare}
 Let $(X, d,\mu)$ be a doubling metric measure space such that $\mu$ satisfies \eqref{e:condition-s} for some $s>0$. Assume that the pair $f,g$ satisfies a $p$-Poincar\'{e} inequality, $p>0$. 
 \begin{itemize}
     \item If $p<s$, then for all $0<q<\frac{ps}{s-p}$, we have
     \begin{align}\label{e:HK00-1}
    \left(\avint_B \left( \frac{|f-f_B|}{r_B}\right)^{q}d \mu \right)^{\frac{1}{q}} \leq C \left( \avint_{5 \Lambda B} g^p \, d\mu \right)^{\frac{1}{p}}
\end{align}
Moreover, for any $p<q<s$, we have
\begin{equation}
    \ps{ \avint_B\left(\frac{|f-f_B|}{r(B)}\right)^{q^*(s)} \, d\mu }^{\tfrac{1}{q^*(s)}} \leq C \ps{\avint_{5\Lambda B} g^q \, d\mu}^{\tfrac{1}{q}},
\end{equation}
where $q^*(s)=\frac{sq}{s-q}$.
\item If $p=s$, then 
\begin{equation}\label{e:HK00-p=s}
    \avint_B \exp\left\{ \frac{C_1 |f(x)-f_B|}{r_B \left(\avint_{5\Lambda B} g^s \right)^{\frac{1}{s}}} \right\}\, d \mu(x) \leq C_2.
\end{equation}
\item If $s>d$, then 
\begin{equation}\label{e:HK-p>s}
    \esssup_{x \in B}\frac{|f(x)-f_B|}{r_B} \leq C \left(\avint_{5\Lambda B} g^p \, d\mu\right)^{\frac{1}{p}}. 
\end{equation}
\end{itemize}
The constants $C, \,\Lambda \geq 1$ depends on $s$, the doubling constant, $p$ and $q$.

\end{theorem}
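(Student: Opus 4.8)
This is (a form of) the Sobolev--Poincar\'e embedding theorem of Haj\l{}asz and Koskela, and the plan is to follow their route: compress all hypotheses into a single pointwise estimate via a telescoping chain of balls, and then extract the four conclusions from mapping properties of a localized Hardy--Littlewood maximal function, the mass lower bound \eqref{e:condition-s} being exactly what turns the weights $2^{-i}r_B$ into the Sobolev gain of integrability. For the telescoping, fix a ball $B$ and a Lebesgue point $x\in B$ of $f$ (a.e.\ point is one, by doubling); set $B_i^x:=B(x,2^{1-i}r_B)$, so that $B\subseteq B_0^x$, $B_i^x\downarrow\{x\}$, and $\Lambda B_i^x\subseteq 5\Lambda B$ for all $i\ge0$ (using $\Lambda\ge1$). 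Comparing consecutive masses by doubling and applying the assumed $p$-Poincar\'e inequality on each $B_i^x$, the telescoping series sums to
\begin{equation*}
|f(x)-f_B|\ \lesssim\ \sum_{i\ge0}2^{-i}r_B\Big(\avint_{\Lambda B_i^x}g^{p}\,d\mu\Big)^{1/p}.
\end{equation*}

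\emph{The pointwise inequality.} Write $A:=\big(\avint_{5\Lambda B}g^{p}\big)^{1/p}$ and let $\mathcal{M}$ denote the Hardy--Littlewood maximal operator applied to $g^{p}\one_{5\Lambda B}$. Using \eqref{e:condition-s} with reference ball $5\Lambda B$ one gets $\mu(\Lambda B_i^x)\gtrsim 2^{-is}\mu(5\Lambda B)$; together with doubling this gives $\mathcal{M}(g^{p}\one_{5\Lambda B})(x)\gtrsim A^{p}$ for $x\in B$ and $\|\mathcal{M}(g^{p}\one_{5\Lambda B})\|_{L^{1,\infty}}\lesssim \mu(B)A^{p}$. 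For $N\ge 0$ split the series: for $i\le N$ bound $\int_{\Lambda B_i^x}g^{p}\le\int_{5\Lambda B}g^{p}$, hence $\big(\avint_{\Lambda B_i^x}g^{p}\big)^{1/p}\lesssim 2^{is/p}A$; for $i>N$ bound $\big(\avint_{\Lambda B_i^x}g^{p}\big)^{1/p}\le\big(\mathcal{M}(g^{p}\one_{5\Lambda B})(x)\big)^{1/p}$. This yields
\begin{equation*}
|f(x)-f_B|\ \lesssim\ r_B\Big(A\sum_{i=0}^{N}2^{i(s/p-1)}+\big(\mathcal{M}(g^{p}\one_{5\Lambda B})(x)\big)^{1/p}2^{-N}\Big).
\end{equation*}
If $p>s$ the geometric series converges as $N\to\infty$, so $|f(x)-f_B|\lesssim r_B A$ a.e., which is \eqref{e:HK-p>s}. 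If $p<s$ then $\sum_{i=0}^{N}2^{i(s/p-1)}\lesssim 2^{N(s/p-1)}$; choosing $N$ with $2^{Ns/p}\approx\mathcal{M}(g^{p}\one_{5\Lambda B})(x)/A^{p}$ gives the main pointwise estimate
\begin{equation*}
|f(x)-f_B|\ \lesssim\ r_B\,\Big(\avint_{5\Lambda B}g^{p}\Big)^{1/s}\,\big(\mathcal{M}(g^{p}\one_{5\Lambda B})(x)\big)^{\frac{s-p}{sp}}.
\end{equation*}
If $p=s$ the inner sum is $\lesssim N$ and the same optimization produces this bound with an extra factor $\log\!\big(2+\mathcal{M}(g^{s}\one_{5\Lambda B})(x)/A^{s}\big)$.

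\emph{The four conclusions.} For $p<s$ and $0<q<\tfrac{ps}{s-p}$: raise the pointwise estimate to the $q$-th power and average over $B$; the exponent on $\mathcal{M}(g^{p}\one_{5\Lambda B})$ is $q\tfrac{s-p}{sp}<1$, so Kolmogorov's inequality for the weak-$L^{1}$ function $\mathcal{M}(g^{p}\one_{5\Lambda B})$, with doubling to trade $\mu(5\Lambda B)$ for $\mu(B)$, gives precisely \eqref{e:HK00-1}. For $p=s$: the same computation with the logarithmic factor, now tracking the dependence on $q$, gives $\big(\avint_B(|f-f_B|/r_B)^{q}\big)^{1/q}\lesssim q\,\big(\avint_{5\Lambda B}g^{s}\big)^{1/s}$ for all finite $q$ (the linear growth being $\Gamma(q+1)^{1/q}\sim q$); inserting this into the Taylor series of the exponential and summing yields \eqref{e:HK00-p=s}. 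For the borderline exponent $q^{*}(s)$ with $g\in L^{q}$ and $p<q<s$: by H\"older the pair $(f,g)$ also satisfies a $(1,q)$-Poincar\'e inequality, so the two previous steps apply with $p$ replaced by $q$; but now the exponent on $\mathcal{M}(g^{q}\one_{cB})$ equals $1$ exactly, so one obtains only the weak-type bound $\big(\mu(B\cap\{|f-f_B|>t\})/\mu(B)\big)^{1/q^{*}(s)}\lesssim (r_B/t)\big(\avint_{cB}g^{q}\big)^{1/q}$ directly. One then upgrades to the strong estimate by the Maz'ya truncation method: apply the weak-type bound to the dyadic truncations $v_j:=\min\{(|f-f_B|-2^{j})_{+},2^{j}\}$ --- each with an admissible gradient $g\one_{\{2^{j}<|f-f_B|\le 2^{j+1}\}}$, by the standard lemma that truncations preserve Poincar\'e pairs --- and sum over $j\in\bZ$, using $q^{*}(s)/q>1$ and the bounded overlap of the level annuli.

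\emph{Main obstacle.} The chaining/maximal-function scheme never reaches a critical Sobolev exponent on its own: at criticality the relevant power of the maximal function is exactly $1$, where only the weak $(1,1)$ bound is available. Hence the single genuinely delicate point is the endpoint statement with $q^{*}(s)$, which needs the truncation argument together with the lemma that $\big(v_j,\,g\one_{\{\cdots\}}\big)$ is again a Poincar\'e pair. The $q$-growth bookkeeping in the $p=s$ (Trudinger) case is a second, milder, technical point; everything else --- the telescoping, the enlargement $B\to 5\Lambda B$, and the edge cases in the optimization of $N$ --- is routine. (This is, in essence, the Sobolev embedding theorem of Haj\l{}asz and Koskela from \emph{Sobolev met Poincar\'e}, to which one may simply defer.)
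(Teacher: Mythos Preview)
The paper does not prove this theorem: it is stated in the preliminaries as a known result (the Sobolev--Poincar\'e embedding of Haj\l{}asz and Koskela, \emph{Sobolev met Poincar\'e}), and is used later as a black box. Your sketch is a faithful outline of the standard Haj\l{}asz--Koskela argument---telescoping chain, optimization against the localized maximal function, Kolmogorov for the subcritical range, and Maz'ya truncation for the endpoint---and you yourself note at the end that one may simply defer to that reference, which is exactly what the paper does.
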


\begin{remark}\label{r:sobolevmetpoincare}
  Of course, \eqref{e:HK00-p=s} implies that
  \begin{equation*}
      \left(\avint_{B} \left(\frac{|f-f_B|}{r_B}\right)^q \, d\mu \right)^{\frac{1}{q}} \leq C \left(\avint g^p \, d\mu\right)^{\frac{1}{p}}
  \end{equation*}
  for any $1\leq q < \infty$. 
\end{remark}

\section{Preliminaries: dyadic lattices}\label{s:cubes}

Given an Ahlfors $d$-regular measure $\mu$ in $\R^{n}$, we consider 
the dyadic lattice of ``cubes'' built by David and Semmes in \cite[Chapter 3 of Part I]{david-semmes93}. The properties satisfied by $\DD_\mu$ are the following. 
Assume first, for simplicity, that $\diam(\supp\mu)=\infty$). Then for each $j\in\mathbb{Z}$ there exists a family $\DD_{\mu,j}$ of Borel subsets of $\supp\mu$ (the dyadic cubes of the $j$-th generation) such that:
\begin{itemize}
\item[$(a)$] each $\DD_{\mu,j}$ is a partition of $\supp\mu$, i.e.\ $\supp\mu=\bigcup_{Q\in \DD_{\mu,j}} Q$ and $Q\cap Q'=\varnothing$ whenever $Q,Q'\in\DD_{\mu,j}$ and
$Q\neq Q'$;
\item[$(b)$] if $Q\in\DD_{\mu,j}$ and $Q'\in\DD_{\mu,k}$ with $k\leq j$, then either $Q\subset Q'$ or $Q\cap Q'=\varnothing$;
\item[$(c)$] for all $j\in\mathbb{Z}$ and $Q\in\DD_{\mu,j}$, we have $2^{-j}\lesssim\diam(Q)\leq2^{-j}$ and $\mu(Q)\approx 2^{-jd}$;
\item[$(d)$] there exists $C>0$ such that, for all $j\in\mathbb{Z}$, $Q\in\DD_{\mu,j}$, and $0<\tau<1$,
\begin{equation}\label{small boundary condition}
\begin{split}
\mu\big(\{x\in Q:\, &\dist(x,\supp\mu\setminus Q)\leq\tau2^{-j}\}\big)\\&+\mu\big(\{x\in \supp\mu\setminus Q:\, \dist(x,Q)\leq\tau2^{-j}\}\big)\leq C\tau^{1/C}2^{-jd}.
\end{split}
\end{equation}
This property is usually called the {\em small boundaries condition}.
From (\ref{small boundary condition}), it follows that there is a point $x_Q\in Q$ (the center of $Q$) such that $\dist(x_Q,\supp\mu\setminus Q)\gtrsim 2^{-j}$ (see \cite[Lemma 3.5 of Part I]{david-semmes93}).
\end{itemize}
We set $\DD_\mu:=\bigcup_{j\in\mathbb{Z}}\DD_{\mu,j}$. \\

\noindent
In case that $\diam(\supp\mu)<\infty$, the families $\DD_{\mu,j}$ are only defined for $j\geq j_0$, with
$2^{-j_0}\approx \diam(\supp\mu)$, and the same properties above hold for $\DD_\mu:=\bigcup_{j\geq j_0}\DD_{\mu,j}$.
Given a cube $Q\in\DD_{\mu,j}$, we say that its side length is $2^{-j}$, and we denote it by $\ell(Q)$. Notice that $\diam(Q)\leq\ell(Q)$. 
We also denote 
\begin{equation}\label{defbq}
B(Q):=B(x_Q,c_1\ell(Q)),\qquad B_Q = B(x_Q,\ell(Q)),
\end{equation}
where $c_1>0$ is some fix constant so that $B(Q)\cap\supp\mu\subset Q$, for all $Q\in\DD_\mu$. Clearly, we have $Q\subset B_Q$.
For $\lambda>1$, we write
$$\lambda Q = \bigl\{x\in \supp\mu:\, \dist(x,Q)\leq (\lambda-1)\,\ell(Q)\bigr\}.$$

\noindent
The side length of a ``true cube'' $P\subset\R^{n}$ is also denoted by $\ell(P)$. On the other hand, given a ball $B\subset\R^{n}$, its radius is denoted by $r_B$ or $r(B)$. For $\lambda>0$, the ball $\lambda B$ is the ball concentric with $B$ with radius $\lambda\,r(B)$.

\vvv

\section{Preliminaries: uniform rectifiability; the $\alpha$, $\beta$, $\gamma$ and $\Omega$ coefficients}\label{s:alpha}
We gather in this section some basic about the various coefficients that will appear in the proofs below. We assume throughout that $E\in \R^{n}$ is an Ahlfors $d$-regular set and that $\sigma=\dH^{d}|_{E}$. 

\subsection{Ahlfors regularity; uniform rectifiability}
Recall that we introduced Ahlfors $d$-regularity in \eqref{e:ADR}. Since $d$ is fixed throughout the paper, we will often simply say Ahlfors regular or AR. The following fact about Ahlfors regular sets will come handy over and over again. 
\begin{lemma}\label{l:balanced-balls}
	Let $1\leq d\leq n-1$ and $E \subset \R^n$ be Ahlfors $d$-regular. There is a constant $0<c<1$, depending on the AR constant, so that for any ball $B$ centered on $E$ we can find balls $B_{0},...,B_{d}$ centered on $E$ and with radii $cr_{B}$, so that $2B_{i}\subseteq B$ and
	\begin{equation}
		\label{e:d-dim-points-1}
		\dist(x_{B_{i+1}},\, {\rm span}\{x_{B_{1}},...,x_{B_{i}}\})\geq 4cr_{B}.
	\end{equation}
\end{lemma}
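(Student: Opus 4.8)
The plan is to construct the points $B_0,\dots,B_d$ inductively, at each stage using Ahlfors regularity to avoid the span of the previously chosen centers. First I would fix $x_B\in E$ the center of $B$ with radius $r_B$. The idea is that the ``bad'' set near a fixed $i$-dimensional affine subspace has small measure — indeed, if $V$ is an $i$-dimensional affine plane with $i\le d-1$, then for any $\eta>0$ the set $\{y\in E\cap B(x_B, r_B/2) : \dist(y,V)\le \eta r_B\}$ is contained in a union of $O((r_B/(\eta r_B))^d) = O(\eta^{-d})\cdot \eta^{d}\cdots$ — more carefully, it can be covered by $\lesssim (1/\eta)^i$ balls of radius $\eta r_B$, hence by Ahlfors regularity (the upper bound in \eqref{e:ADR}) has $\dH^d$-measure $\lesssim (1/\eta)^i (\eta r_B)^d = \eta^{d-i} r_B^d$. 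Since $d-i\ge 1$, choosing $\eta$ small (depending only on the AR constant and $d$) makes this much smaller than $\dH^d(E\cap B(x_B,r_B/2))\gtrsim r_B^d$ (lower bound in \eqref{e:ADR}).

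The inductive step then goes as follows. Suppose $x_{B_1},\dots,x_{B_i}\in E\cap B(x_B, r_B/2)$ have been chosen ($0\le i\le d$; the base case $i=0$ is vacuous, just pick any $x_{B_1}\in E\cap B(x_B,r_B/2)$). Let $V=\operatorname{span}\{x_{B_1},\dots,x_{B_i}\}$, an affine (or linear) subspace of dimension at most $i-1\le d-1$. By the measure estimate above with a suitable small $\eta=8c$ — where $c$ is chosen depending only on the AR constant — the set of $y\in E\cap B(x_B,r_B/2)$ with $\dist(y,V)\le 8c r_B$ has $\dH^d$-measure strictly less than $\dH^d(E\cap B(x_B,r_B/2))$, so there exists $x_{B_{i+1}}\in E\cap B(x_B,r_B/2)$ with $\dist(x_{B_{i+1}},V)\ge 8c r_B \ge 4c r_B$, which is \eqref{e:d-dim-points-1}. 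Finally, taking $c$ possibly smaller we ensure that the balls $B_i:=B(x_{B_i},c r_B)$ satisfy $2B_i = B(x_{B_i},2c r_B)\subset B(x_B,r_B)=B$, using that $x_{B_i}\in B(x_B,r_B/2)$ and $2c<1/2$.

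The only mildly delicate point — and the one I would be most careful about — is the covering/measure estimate for the neighborhood of a lower-dimensional plane: one needs the exponent $d-i$ to be at least $1$, which is exactly why the lemma stops at $B_d$ (so that $V$ has dimension $\le d-1$ at every step) and why $d\le n-1$ plays no essential role here beyond guaranteeing $E$ can be AR. All constants depend only on $d$ and the Ahlfors regularity constant, as claimed. I do not anticipate any serious obstacle; this is a standard ``general position'' argument adapted to the AR setting.
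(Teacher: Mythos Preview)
Your argument is correct and is exactly the standard ``general position via Ahlfors regularity'' proof: at each stage the forbidden set is a thin neighborhood of an at most $(d-1)$-dimensional affine subspace, which by the upper Ahlfors bound has measure $\lesssim \eta\, r_B^d$, strictly less than $\dH^d(E\cap \tfrac12 B)\gtrsim r_B^d$ for $\eta$ small. The paper does not give its own proof of this lemma; it simply cites it as a standard fact (David--Semmes, Lemma~5.8), so there is nothing further to compare.
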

\noindent
This is a standard fact. See \cite{david-semmes91}, Lemma 5.8.\\

\noindent
We briefly recalled the definition of uniform rectifiability in the introduction. Let us be more precise here. 
\begin{definition}
  We say that an Ahlfors $d$-regular set $E \subset \R^n$ is uniformly $d$-rectifiable if it contains "big pieces of Lipschitz images" (BPLI) of $\R^d$. That is to say, if there exist constants $\theta, L>0$ so that for every $x \in E$, and $0<r<\diam(E)$, there is a Lipschitz map $\rho: \R^d \to \R^n$ (depending on $x,r$), with Lipschitz constant $\leq L$, such that
  \begin{equation*}
      \dH^d\left(E \cap B(x,r) \cap \rho(B(0, r))\right) \geq \theta r^d.
  \end{equation*}
\end{definition}
We might often simply say uniformly rectifiable or UR sets. There is a well developed theory of uniformly rectifiable sets. We refer the interested reader to the original monographs \cite{david-semmes91} and \cite{david-semmes93}.

\subsection{The geometric coefficients $\alpha$ and $\beta$}\label{sec6}

\subsubsection{Tolsa's $\alpha$ numbers}
We first define Tolsa's $\alpha$ numbers. They first appeared in the area in \cite{tolsa2009uniform} in connection to singular integral operators, and have been heavily used since then. The $\alpha$ quantify the closeness of a Radon measure $\mu$ to a multiple of $d$-dimensional Hausdorff measure on some plane. 
Let $\mu$ and $\nu$ be Radon measures. For an open ball $B$ define
\begin{equation*}
F_B(\sigma,\nu):= \sup\Bigl\{ \Bigl|{\textstyle \int \phi \,d\sigma  -
	\int \phi\,d\nu}\Bigr|:\, \phi\in \Lip(B) \Bigr\},
\end{equation*}
where 
\begin{equation*}
    \Lip(B)=\{\phi:{\rm Lip}(\phi) \leq1,\,\supp f\subseteq
B\}
\end{equation*}
and $\Lip(\phi)$ stands for the Lipschitz constant of $\phi$. See \cite[Chapter 14]{Mattila} for the properties of this distance. Next, set
\begin{gather}
\alpha_\sigma^{d}(B,P) := \frac1{r_{B}\,\sigma(2B)}\,\inf_{c\geq0} \,F_{2B}(\sigma,\,c\dH^{d}|_{P}),\label{e:alpha-def0}\\
 \alpha_{\sigma}^{d}(B) = \inf_{P\in \dA(d,n)}\alpha_\sigma^{d}(B,P)\label{e:alpha-def}.
\end{gather}
Note that the right hand side of \eqref{e:alpha-def} is computed over $2B$ (rather than $B$). This is simply for notational convenience. 

\begin{remark}
	We denote by $c_B$ and $P_B$ a constant and a plane that infimise $\alpha_\sigma(B)$. That is, we let $c_B>0$ and $P_B \in \dA(n,d)$ be such that, if we set
	\begin{equation}\label{e:def-L}
		\mathcal{L}_{B} :=c_B\dH^{d}|_{P_B}, 
	\end{equation}
	then
	\begin{equation}\label{cL-ppts}
		\alpha_\sigma^{d}(B) =  
		\alpha_{\sigma}^{d}(B,\mathcal{L}_{B})=\frac1{r_{B}^{d+1}}\,F_{2B}(\sigma,\,\mathcal{L}_B)
	\end{equation}
\end{remark}
\noindent
We will need the following properties of these coefficients. 
\begin{lemma} \cite[Lemma 3.1]{tolsa2009uniform}
	\label{l:alpha-lemmas}
	For any ball $B\subset \R^n$, 
	\begin{enumerate}[\textup{(}a\textup{)}]
		\item $\alpha_{\sigma}^{d}(B)\lec 1$,
		\item If $B\subseteq B'$ and $r_{B}\approx_c r_{B'}$, then $\alpha_{\sigma}^{d}(B)\lec_c \alpha_{\sigma}^{d}(B')$. 
		\item $c_{B}\approx 1$.
	\end{enumerate}
\end{lemma}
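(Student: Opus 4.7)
The plan is to argue directly from the definition \eqref{e:alpha-def} and Ahlfors regularity, handling the three items in order.

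For item \textup{(}a\textup{)}, I would take $c=0$ and any plane $P$ in the infimum, reducing the problem to bounding $\sup_{\phi\in\Lip(2B)}\bigl|\int\phi\,d\sigma\bigr|$. Since every $\phi\in\Lip(2B)$ is $1$-Lipschitz and vanishes outside $2B$, one has $\|\phi\|_\infty\lec r_B$, and hence $\bigl|\int\phi\,d\sigma\bigr|\lec r_B\sigma(2B)$; dividing by $r_B\sigma(2B)$ yields \textup{(}a\textup{)}.

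For item \textup{(}b\textup{)}, I would first observe that $B\subseteq B'$ forces $2B\subseteq 2B'$ (from $r_B\leq r_{B'}$ and the fact that the center of $B$ lies in $B'$); consequently $\Lip(2B)\subseteq\Lip(2B')$ and $F_{2B}(\sigma,\nu)\leq F_{2B'}(\sigma,\nu)$ for every Radon measure $\nu$. Testing the infimum defining $\alpha_\sigma^d(B)$ against the minimizer $\mathcal{L}_{B'}$ for $B'$ gives
\[
\alpha_\sigma^d(B) \leq \frac{F_{2B}(\sigma,\mathcal{L}_{B'})}{r_B\,\sigma(2B)} \leq \alpha_\sigma^d(B')\cdot\frac{r_{B'}\,\sigma(2B')}{r_B\,\sigma(2B)},
\]
and Ahlfors regularity together with $r_B\approx_c r_{B'}$ makes the last ratio bounded by a constant depending on $c$, giving \textup{(}b\textup{)}.

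Item \textup{(}c\textup{)} will be the main obstacle. My plan is to test $F_{2B}(\sigma,\mathcal{L}_B)$ against a standard bump $\phi(x)=\max(0,r_B-|x-x_B|)$, where $x_B$ is the center of $B$: this $\phi$ lies in $\Lip(2B)$, satisfies $\phi\gec r_B$ on $\tfrac{1}{2}B$, so $\int\phi\,d\sigma\approx r_B^{d+1}$ by Ahlfors regularity, while \eqref{cL-ppts} combined with \textup{(}a\textup{)} yields
\[
\left|\int\phi\,d\sigma - c_B\int_{P_B}\phi\,d\dH^d\right| \leq F_{2B}(\sigma,\mathcal{L}_B)\lec r_B^{d+1}.
\]
This immediately gives $c_B\int_{P_B}\phi\,d\dH^d\lec r_B^{d+1}$, reducing \textup{(}c\textup{)} to the quantitative claim $\int_{P_B}\phi\,d\dH^d\approx r_B^{d+1}$. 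The hard part will be showing this: I must exclude the possibility that $P_B$ sits far from $B$ or meets $2B$ only in a thin sliver. I plan to invoke Lemma \ref{l:balanced-balls} to select $d+1$ balls $B_0,\dots,B_d\subseteq B$ centered on $E$ with affinely independent centers mutually separated by $\gec r_B$; testing the displayed inequality with bumps adapted to each $B_i$ forces $\mathcal{L}_B$ to distribute nontrivial mass near each $B_i$, which simultaneously pins $P_B$ close to the affine span of these $d+1$ points and pins $c_B\approx 1$.
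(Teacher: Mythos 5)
Parts (a) and (b) are correct, and are essentially the standard argument. For (a), $c=0$ together with $\|\phi\|_\infty\leq 2r_B$ gives the bound. For (b), the containment $2B\subseteq 2B'$ does follow from $B\subseteq B'$ (if $|x_B-x_{B'}|+r_B\leq r_{B'}$ and $r_B\leq r_{B'}$, then $|x_B-x_{B'}|+2r_B\leq r_{B'}+r_B\leq 2r_{B'}$), and the ratio $r_{B'}\sigma(2B')/(r_B\sigma(2B))\lesssim_c 1$ from Ahlfors regularity closes the argument.

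Part (c) has a genuine gap, and it is not in the step you flagged as hard. Your triangle inequality gives
\[
c_B\int_{P_B}\phi\,d\dH^d \;\geq\; \int\phi\,d\sigma - F_{2B}(\sigma,\mathcal{L}_B) \;\geq\; c_1 r_B^{d+1} - C_0 r_B^{d+1},
\]
where $c_1$ is the constant in $\int\phi\,d\sigma\gtrsim r_B^{d+1}$ and $C_0$ is the constant in part (a) multiplied by the AD bound on $\sigma(2B)/r_B^d$. Tracking these, part (a) gives $C_0\approx c_0\,2^{d+1}$ while the bump gives $c_1\approx c_0^{-1}\,2^{-(d+1)}$, where $c_0\geq 1$ is the AD-regularity constant; hence $c_1/C_0\leq 4^{-(d+1)}<1$ always, and the right-hand side above is negative. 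So the inequality says nothing about $c_B$ from below. The same failure defeats the balanced-balls step: to ``force $\mathcal{L}_B$ to charge each $B_i$'' you need exactly the lower bound $c_B\int_{P_B}\phi_i\,d\dH^d\gtrsim r_B^{d+1}$, which is what is unavailable. In short, your plan proves $c_B\lesssim 1$ (modulo locating $P_B$) but not $c_B\gtrsim 1$.

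The missing idea is a case split on the size of $\alpha_\sigma^d(B)$. Fix a small absolute $\epsilon_0>0$. If $\alpha_\sigma^d(B)\geq\epsilon_0$, then for \emph{any} $d$-plane $P$ through $x_B$ one has $\alpha_\sigma^d(B,P,1)\leq C_0\leq(C_0/\epsilon_0)\,\alpha_\sigma^d(B)$ by (a), so the pair $(P,1)$ is already a minimiser up to a fixed multiplicative constant and one may declare $c_B:=1$. If instead $\alpha_\sigma^d(B)<\epsilon_0$, the error term $F_{2B}(\sigma,\mathcal{L}_B)\leq\epsilon_0\,r_B\sigma(2B)$ is now genuinely small, and choosing $\epsilon_0$ small enough relative to $c_1$ makes your triangle inequality give $c_B\int_{P_B}\phi\,d\dH^d\gtrsim r_B^{d+1}$; only then does the balanced-balls argument pin $P_B$ near $B$ and yield $c_B\approx 1$. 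Without this dichotomy the lower bound on $c_B$ does not follow.
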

\noindent
We recall a characterisation of uniform rectifiable sets by Tolsa. 
\begin{theorem}
	\cite[Theorem 1.2]{tolsa2009uniform}
	An Ahlfors $d$-regular set $\Sigma\subseteq \R^{n}$ is UR if and only if  for all $R\in \dD(E)$ we have 
	\[
	\sum_{Q\subseteq R}\salpha(Q)^{2}\ell(Q)^d \lec \ell(R)^d.
	\]
	\label{t:alpha-UR}
\end{theorem}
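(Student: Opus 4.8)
My plan is to prove the two implications separately; the forward direction (that uniform rectifiability \emph{forces} the Carleson condition) is by far the substantial one. Throughout I would freely use the David--Semmes theory recalled in the excerpt.

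\textbf{Sufficiency of the Carleson condition.} Suppose $\sum_{Q\subseteq R}\salpha(Q)^2\ell(Q)^d\lesssim\ell(R)^d$ for all $R\in\dD(E)$. The first step is the pointwise comparison $\sbeta(B_Q)\lesssim\salpha(B_Q)$. To obtain it, fix a ball $B=B(x,r)$ centered on $E$ and let $P$ be a $d$-plane realizing $\salpha(B)$ up to a factor; since $\salpha(B)\lesssim 1$ by Lemma \ref{l:alpha-lemmas}(a), necessarily $\dist(x,P)\lesssim r$, so $\dist(\cdot,P)\lesssim r$ on $2B$. Testing $F_{2B}(\sigma,c\,\dH^d|_P)$ against $\phi(y)=\dist(y,P)\,\eta(y)$, where $\eta$ is a cutoff equal to $1$ on $B$, supported in $2B$, with $\Lip(\eta)\lesssim 1/r$, one checks that $\phi$ is $O(1)$-Lipschitz and vanishes on $P$, whence
\begin{equation*}
\int_B\dist(y,P)\,d\sigma\;\leq\;\int\phi\,d\sigma\;=\;\int\phi\,d\sigma-c\!\int\phi\,d(\dH^d|_P)\;\lesssim\;F_{2B}(\sigma,c\,\dH^d|_P).
\end{equation*}
Taking the infimum over $c\geq0$ and using $\sigma(2B)\lesssim r^d$ gives $\int_B\dist(y,P)\,d\sigma\lesssim r^{d+1}\salpha(B)$, i.e. $\sbeta(B)\lesssim\salpha(B)$. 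Squaring and summing, the $\sbeta$-numbers inherit the Carleson condition, and the David--Semmes strong geometric lemma \cite{david-semmes91} then forces $E$ to be uniformly $d$-rectifiable.

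\textbf{Necessity: reduction to a Lipschitz graph.} Now assume $E$ is UR and fix $R\in\dD(E)$. I would run the David--Semmes corona decomposition of the cubes contained in $R$: a ``bad'' family $\mathcal B$ with $\sum_{Q\in\mathcal B}\ell(Q)^d\lesssim\ell(R)^d$, together with a collection of trees (stopping-time regions) $S$ partitioning the remaining cubes, whose tops $Q(S)$ satisfy $\sum_S\ell(Q(S))^d\lesssim\ell(R)^d$, and such that each $S$ carries a $d$-dimensional Lipschitz graph $\Gamma_S$ of slope $\leq\eta$ (with $\eta$ small, at our disposal) approximating $E$ to within $\eta\ell(Q)$ on $\kappa B_Q$ for every $Q\in S$. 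On $\mathcal B$ we simply use $\salpha(B_Q)\lesssim 1$, so $\sum_{Q\in\mathcal B}\salpha(B_Q)^2\ell(Q)^d\lesssim\ell(R)^d$; because the tops pack, everything reduces to the per-tree estimate $\sum_{Q\in S}\salpha(B_Q)^2\ell(Q)^d\lesssim\ell(Q(S))^d$.

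\textbf{The per-tree estimate.} Write $\Gamma=\Gamma_S=\{(y,A(y)):y\in P_0\}$ with $\|\nabla A\|_\infty\leq\eta$, and set $\sigma_\Gamma=\dH^d|_\Gamma$. For $Q\in S$ I would split
\begin{equation*}
r_{B_Q}\,\sigma(2B_Q)\,\salpha(B_Q)\;\lesssim\;F_{2B_Q}(\sigma,\sigma_\Gamma)\;+\;\inf_{c,P}F_{2B_Q}(\sigma_\Gamma,c\,\dH^d|_P).
\end{equation*}
The second term is the $\alpha$-number of the Lipschitz graph: pushing $\sigma_\Gamma$ forward by the orthogonal projection onto $P_0$ gives $\sqrt{1+|\nabla A|^2}\,\mathcal L^d|_{P_0}$, so comparing $\sigma_\Gamma$ to the flat measure on the graph of the first-order Taylor polynomial of $A$ costs at most the oscillation of $A$ against that polynomial plus $\eta$ times the oscillation of $\nabla A$ — that is, $\Omega$-type coefficients for the Lipschitz functions $A$ and $\nabla A$ — which satisfy the Carleson condition by Dorronsoro's theorem (Theorem \ref{t:dorronsoro-classic} and \eqref{e:carleson-measure-Omega}); summing over $Q\in S$ this contributes $\lesssim\ell(Q(S))^d$. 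For the first term one uses that inside the tree $E\cap\kappa B_Q$ lies in the $\eta\ell(Q)$-neighborhood of $\Gamma$, both measures being Ahlfors regular with comparable constants; the point is that a crude transport bound only yields $F_{2B_Q}(\sigma,\sigma_\Gamma)\lesssim\eta\,r_{B_Q}^{d+1}$, hence $\salpha(B_Q)\lesssim\eta$, which is \emph{not} summable, so one must extract genuine decay: the portions of $E$ sitting at relative height $\approx\eta$ away from $\Gamma$ can persist only across $\lesssim\log(1/\eta)$ generations before flattening out (or before a stopping cube is declared), making this contribution Carleson-controlled as well — once more a bounded-number-of-bad-scales phenomenon.

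\textbf{Main obstacle.} The sufficiency direction and the corona bookkeeping are routine; the heart of the matter is the per-tree estimate, and within it the Carleson-summable control of $F_{2B_Q}(\sigma,\sigma_\Gamma)$ — turning the mere $\eta$-closeness of $E$ to its approximating graph into honest decay of the $\alpha$-numbers is the step that requires the real work, and it is there that the Dorronsoro-type input for the graph function enters essentially.
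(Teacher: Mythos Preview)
The paper does not prove this theorem; it is quoted verbatim from Tolsa \cite{tolsa2009uniform} and used as a black box (see the statement, which carries the citation in its header). So there is no ``paper's own proof'' to compare against.

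That said, your outline matches Tolsa's strategy rather closely. The sufficiency direction is exactly as you write (and the inequality $\sbeta\lesssim\salpha$ is even quoted in the present paper as \eqref{e:beta<alpha}). For necessity, Tolsa indeed runs a corona decomposition and reduces to a per-tree estimate, splitting $\salpha$ into the $\alpha$-number of the approximating Lipschitz graph (handled via Dorronsoro-type control of $A$ and its gradient, essentially as you describe) and the distance $F_{2B_Q}(\sigma,\sigma_\Gamma)$.

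Your treatment of that last term is where the sketch becomes hand-wavy. The heuristic ``portions at height $\approx\eta$ can persist only $\lesssim\log(1/\eta)$ generations'' is not the mechanism Tolsa uses, and by itself would only give a logarithmic loss, not a Carleson bound. In \cite{tolsa2009uniform} the argument proceeds instead by pushing both $\sigma$ and $\sigma_\Gamma$ forward to the reference plane $P_0$ via the projection, comparing the resulting densities, and showing that the discrepancy is controlled by $\beta$-type quantities of $E$ together with the $\Omega$-coefficients of the graph function $A$ --- both of which are genuinely Carleson on the tree. The crude bound $F_{2B_Q}(\sigma,\sigma_\Gamma)\lesssim\eta\,r_{B_Q}^{d+1}$ is never used; one extracts square-summable decay directly. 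So your identification of the obstacle is correct, but the proposed resolution is not the right one; if you want to reconstruct the proof you should look at how the projected densities are compared in Tolsa's paper.
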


\subsubsection{Jones' $\beta$ numbers}
The second quantity we introduce are the well-known Jones' $\beta$ numbers. For a ball $B$ centered on $\Sigma$, a $d$-plane $P \in \dA(n,d)$, and $p>0$, put
\begin{equation*}
\beta_{\sigma}^{d,p}(B,P)  =  \left(\frac{1}{r_B^{d}} \int_{B}\ps{\frac{\dist(y,L)}{r_B}}^{p} d\sigma(y)\right)^{\frac{1}{p}}.
\end{equation*}
The Jones' $\beta$-number of $\Sigma$ in the ball $B$ is defined as the infimum over all $d$-affine planes $P \in \dA(n,d)$:
\begin{equation*}
\beta_{\sigma}^{d,p}(B) =\inf_{P\in \dA(d,n)} \beta_{\sigma}^{d,p}(B,P).
\end{equation*}

\begin{remark}[Infimising planes I]\label{r:pi_B}
	In some situations, we will be dealing with planes $P_B$ which infimise certain coefficients (e.g. $\beta_E^{d,q}(B)$) in a ball $B$. Then we call $P_B$ one such plane and denote by $\pi_B$ the orthogonal projection onto $P_B$.
\end{remark}

\begin{remark}[Infimising planes II]\label{r:P_B}

  We adopt the following convention. Below, it will often happen that, while working with with $p$ in the range $[1, \infty]$, we will use $\alpha_\sigma$ for $p=1$, and $\beta_\Sigma^{d,p}$ for $p>1$. In this situation, we will abuse notation and also let $P_{B}$ be the $d$-plane that infimises $\beta_{\Sigma}^{d,p}(B,P)$.  Then, given $P_B$, whether this infimises $\beta_{\Sigma}^{d,p}$ or $\salpha$ will be clear from context, that is, in a theorem that is stated for $p\geq 1$, in the proof we will assume any $P_{B}$ that appears is defined for $\salpha$ if $p=1$ and for $\beta_{\Sigma}^{d,p}$ if $p>1$.
\end{remark}

\noindent
In the lemma below we gather some basic properties of $\beta$ numbers which will be used throughout the paper.

\begin{lemma}\label{l:beta-properties}\
		
	\begin{enumerate}
		\item (Well known).
Suppose that $B, B'$ are two balls centered on $\Sigma$ such that $B \subset B'$ and $r(B) \approx_c r(B')$. Then, if $P \in \dA(n,d)$,
\begin{equation}
\beta_{\sigma}^{d,p}(B,P)\lec_c \beta_{\sigma}^{d,p}(B',P).
\end{equation} 
In particular, 
\begin{equation}
\label{e:beta-monotone}
\beta_{\sigma}^{d,p}(B)\lec_c \beta_{\sigma}^{d,p}(B').
\end{equation}

\item \textup{(}Equation 5.4, \cite{david-semmes91}\textup{)}.
For any ball $B$ centered on $\Sigma$, 
 \begin{equation}
\label{e:beta<beta}
\beta_{\Sigma}^{d,\infty}\left(\tfrac{1}{2}B,P_B\right)^{d+1}
\lec \beta_{\Sigma}^{d,1}(B,P_B)
\end{equation}
The constant in $\lesssim$ is dimensional.
\item (Well known). For any ball $B$, we have
\begin{align}\label{e:beta1<betap}
	\beta_E^{d,1}(B) \lesssim \beta_E^{d,q}(B) \,\,\mbox{ for any } \,\, q \geq 1.
\end{align}
\end{enumerate}
\end{lemma}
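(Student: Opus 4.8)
The plan is to dispatch the three items in order; all are short computations combining the definition of the $\beta$ coefficients with Ahlfors regularity, and only item (2) carries any genuine geometric content.

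\emph{Item (1).} I would start from $\beta_{\sigma}^{d,p}(B,P)^{p}=r_{B}^{-d}\int_{B}(\dist(y,P)/r_{B})^{p}\,d\sigma(y)$ and simply enlarge the domain of integration from $B$ to $B'$. Since $B\subset B'$ and $r_{B}\approx_{c}r_{B'}$, each factor $r_{B}^{-1}$ is comparable, with constant depending on $c$, to $r_{B'}^{-1}$, whence $\beta_{\sigma}^{d,p}(B,P)^{p}\lesssim_{c}\beta_{\sigma}^{d,p}(B',P)^{p}$; taking $p$-th roots (a monotone operation) gives the first inequality. For the ``in particular'' statement, choose a plane $P'$ infimising $\beta_{\sigma}^{d,p}(B')$ (or an almost-infimiser), apply the first inequality with $P=P'$, and note $\beta_{\sigma}^{d,p}(B)\leq\beta_{\sigma}^{d,p}(B,P')\lesssim_{c}\beta_{\sigma}^{d,p}(B',P')=\beta_{\sigma}^{d,p}(B')$.

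\emph{Item (3).} I would apply Hölder's inequality in the variable $y$ with exponents $q$ and $q'=q/(q-1)$ to get, for a fixed $d$-plane $P$,
\[
\frac{1}{r_{B}^{d}}\int_{B}\frac{\dist(y,P)}{r_{B}}\,d\sigma(y)\leq\Big(\frac{1}{r_{B}^{d}}\int_{B}\Big(\frac{\dist(y,P)}{r_{B}}\Big)^{q}d\sigma\Big)^{1/q}\Big(\frac{\sigma(B)}{r_{B}^{d}}\Big)^{1/q'},
\]
i.e. $\beta_{E}^{d,1}(B,P)\leq\beta_{E}^{d,q}(B,P)\,(\sigma(B)/r_{B}^{d})^{1/q'}$, and then absorb the last factor using the upper Ahlfors regularity bound $\sigma(B)\lesssim r_{B}^{d}$. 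Taking the infimum over $P$ on both sides finishes the proof.

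\emph{Item (2)} is the one I would flag as the main point, although it is classical (equation 5.4 of \cite{david-semmes91}). I would argue pointwise. Fix $x\in\tfrac12 B\cap E$ and set $t=\dist(x,P_{B})$. The first task is the reduction $t\lesssim r_{B}$: testing $\beta_{E}^{d,1}(B)$ against an arbitrary $d$-plane through $x_{B}$ gives the a priori bound $\beta_{E}^{d,1}(B)\lesssim\sigma(B)/r_{B}^{d}\lesssim 1$, so if $t$ were much larger than $r_{B}$ then $\dist(y,P_{B})\gtrsim t$ for every $y\in\tfrac12 B$, and lower Ahlfors regularity on $\tfrac12 B$ would force $\beta_{E}^{d,1}(B,P_{B})\gtrsim t/r_{B}\gg 1$, a contradiction. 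Once $t\lesssim r_{B}$, the ball $B(x,c t)$ lies in $B$ for a small dimensional constant $c$, every $y\in B(x,ct)$ satisfies $\dist(y,P_{B})\geq t/2$, and $\sigma(B(x,ct))\gtrsim t^{d}$ by lower Ahlfors regularity, so
\[
\beta_{E}^{d,1}(B,P_{B})\geq\frac{1}{r_{B}^{d}}\int_{B(x,ct)}\frac{\dist(y,P_{B})}{r_{B}}\,d\sigma(y)\gtrsim\frac{1}{r_{B}^{d+1}}\,t\cdot t^{d}=\Big(\frac{t}{r_{B}}\Big)^{d+1}.
\]
Taking the supremum over $x\in\tfrac12 B\cap E$ yields $\beta_{E}^{d,\infty}(\tfrac12 B,P_{B})^{d+1}\lesssim\beta_{E}^{d,1}(B,P_{B})$ with a dimensional implicit constant. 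The only delicate step is this reduction $t\lesssim r_{B}$ — that is, ruling out the degenerate case in which the infimising plane sits far from part of $E\cap B$ — and I would handle it exactly via the a priori bound above; alternatively one may simply cite \cite{david-semmes91} for (2).
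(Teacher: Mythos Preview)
Your proposal is correct. The paper does not actually prove this lemma: items (1) and (3) are labeled ``Well known'' and item (2) is attributed to equation~5.4 of \cite{david-semmes91}, with no argument supplied; your write-up correctly fills in the standard details (enlarging the domain of integration plus $r_B\approx_c r_{B'}$ for (1), H\"older plus upper regularity for (3), and the lower-regularity/ball-inclusion argument for (2)).
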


\begin{proposition}[{\cite[Condition C]{david-semmes91}}]\label{p:strong-geometric-lemma}
	A $d$-Ahlfors regular set $E \subset \R^n$ is UR if and only if
	\begin{equation*}
			\sum_{Q \subset R} \beta_{E}^{d,q}(Q)^2 \ell(Q)^d \lesssim \ell(R)^d.
	\end{equation*}
	Here $1\leq q \leq \infty$ when $d=1$ and $1\leq q < 2^*$ when $d\geq 2$. 
\end{proposition}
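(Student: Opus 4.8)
The plan is to prove the two implications separately. Before doing so, two reductions help. First, by the monotonicity \eqref{e:beta1<betap}, for any admissible $q$ one has $\sum_{Q\subset R}\beta_E^{d,1}(Q)^2\ell(Q)^d\le \sum_{Q\subset R}\beta_E^{d,q}(Q)^2\ell(Q)^d$, so to prove ``the Carleson packing for some admissible $q$ $\Rightarrow$ $E$ is UR'' it suffices to assume the packing for $q=1$. Second, in the other direction the argument will be run for each fixed admissible $q$, the restriction $q<2^{*}$ (resp. $q\le\infty$ if $d=1$) being inherited verbatim from the range in which the classical Carleson estimate \eqref{e:carleson-measure-Omega} holds (cf. Theorems \ref{t:dorronsoro-classic} and \ref{t:DS-Dorronsoro}).

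\textbf{$E$ uniformly rectifiable $\Rightarrow$ Carleson packing.} Since $E$ is UR, it admits a corona decomposition (David--Semmes): the dyadic cubes of $E$ below a fixed $R_{0}$ split into stopping-time regions $\{S\}$, each with a top cube $R=R(S)$ and an associated Lipschitz graph $\Gamma_{S}=\{(y,A_{S}(y)):y\in\R^{d}\}$ in coordinates adapted to $S$ with $\|\grad A_{S}\|_{\infty}\le\varepsilon$, such that $E$ is bilaterally $C\varepsilon$-close to $\Gamma_{S}$ inside a fixed dilate of $B_{Q}$ for every $Q\in S$, and $\sum_{S:\,R(S)\subset R_{0}}\ell(R(S))^{d}\lesssim\ell(R_{0})^{d}$. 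Fix such an $S$. For $Q\in S$, bilateral closeness yields
\[
\beta_E^{d,q}(Q)\lesssim \beta_{\Gamma_{S}}^{d,q}(2B_{Q})+\eta_{Q},
\qquad \eta_{Q}:=\ell(Q)^{-1}\,d_{B_{Q}}(E,\Gamma_{S}),
\]
and, because $\Gamma_{S}$ is a nearly flat graph, $\beta_{\Gamma_{S}}^{d,q}(2B_{Q})\approx \Omega^{q}_{A_{S}}(\pi_{S}(x_{Q}),c\ell(Q))$, where $\pi_{S}$ is the projection to the base plane of $\Gamma_{S}$. Replacing $A_{S}$ by $\chi A_{S}$ for a cutoff $\chi$ at scale $\ell(R(S))$ and applying the classical Carleson estimate \eqref{e:carleson-measure-Omega} to the Lipschitz function $A_{S}$ gives
\[
\sum_{Q\in S}\Omega^{q}_{A_{S}}(\pi_{S}(x_{Q}),c\ell(Q))^{2}\,\ell(Q)^{d}
\;\lesssim\;\int\!\!\int_{0}^{c\ell(R(S))}\Omega^{q}_{A_{S}}(y,r)^{2}\,\frac{dr}{r}\,dy
\;\lesssim\;\ell(R(S))^{d},
\]
valid exactly for $q$ in the admissible range. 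The error sum $\sum_{Q\in S}\eta_{Q}^{2}\ell(Q)^{d}\lesssim\ell(R(S))^{d}$ is part of the defining Carleson estimate of the corona decomposition. Summing over $Q\in S$, then over all $S$ below $R_{0}$ and using the packing of the tops $R(S)$, we obtain $\sum_{Q\subset R_{0}}\beta_E^{d,q}(Q)^{2}\ell(Q)^{d}\lesssim\ell(R_{0})^{d}$.

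\textbf{Carleson packing $\Rightarrow$ $E$ uniformly rectifiable.} By the reduction above we may assume $\sum_{Q\subset R}\beta_E^{d,1}(Q)^{2}\ell(Q)^{d}\lesssim\ell(R)^{d}$. One route is to pass to Tolsa's coefficients: a test-function argument with $\phi(y)=\dist(y,P)\,\chi(y)$, $\chi$ a cutoff for $2B$, shows $\beta_E^{d,1}(B,P_{B})\lesssim\alpha_{\sigma}^{d}(B)$, while conversely a quasiorthogonality estimate bounds $\alpha_{\sigma}^{d}(Q)^{2}$ by a local average of $\beta^{d,1}$-type quantities over cubes near and below $Q$; feeding the $\beta^{d,1}$-packing into the latter produces $\sum_{Q\subset R}\alpha_{\sigma}^{d}(Q)^{2}\ell(Q)^{d}\lesssim\ell(R)^{d}$, and $E$ is UR by Theorem \ref{t:alpha-UR}. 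The self-contained alternative is a direct construction: the packing forces the ``flatness-failure'' cubes $\{Q:\beta_E^{d,1}(Q)>\varepsilon\}$ to be Carleson-sparse, so in any ball $B(x,r)$ centred on $E$ a fixed fraction (in $\sigma$-measure) of $E\cap B(x,r)$ meets only flat scales; a stopping-time argument then glues the infimising planes of those cubes into a single Lipschitz graph — the higher-dimensional analogue of the Travelling Salesman construction — capturing that fraction of $E\cap B(x,r)$, which is precisely the big-pieces-of-Lipschitz-images definition of uniform rectifiability.

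\textbf{Main obstacle.} The forward implication is essentially routine once the corona decomposition is available; its only delicate aspect is obtaining the full range $q<2^{*}$, which comes for free from the sharp range of the classical Dorronsoro/Carleson estimate \eqref{e:carleson-measure-Omega} applied to the Lipschitz parametrising functions $A_{S}$. The hard direction is the converse: the multiscale construction of a Lipschitz graph out of quadratic control of the $\beta$-numbers (equivalently, the quasiorthogonality estimate relating $\alpha$ and $\beta$) is the technical core of the David--Semmes theory and is considerably more involved than the rest.
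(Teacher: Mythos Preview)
The paper does not prove this proposition at all: it is stated as a citation to \cite[Condition C]{david-semmes91} and used as a black box, with no argument given. So there is nothing in the paper to compare your attempt against.

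Your sketch is a reasonable outline of how the David--Semmes result is actually established, with one caveat. For the converse direction you offer two routes; the one via Tolsa's $\alpha$ coefficients and Theorem~\ref{t:alpha-UR} is circular as written, since Tolsa's characterisation itself relies on the equivalence of uniform rectifiability with the $\beta$-Carleson condition (or an equivalent David--Semmes characterisation). The honest route is your second one: the stopping-time/corona construction of a Lipschitz graph from the $\beta^{d,1}$ packing, which is indeed the technical heart of \cite{david-semmes91}. For the forward direction, note that in \cite{david-semmes91} the corona decomposition is not assumed a priori but is itself one of the equivalent conditions; the actual argument there runs the Dorronsoro estimate on the Lipschitz images coming from BPLI and iterates via a ``big pieces'' self-improvement (their Condition B $\Leftrightarrow$ Condition C). Your corona-based sketch is logically fine provided you take the corona decomposition as an already-known characterisation of UR, but you should be aware that this is not the original order of deductions.
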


 \subsubsection{Relation between $\beta$, $\alpha$ and angle between planes}
 We can relate $\alpha_\sigma$ and $\beta_E^1$ via the following lemma.
\begin{lemma}\cite[Lemma 3.2]{tolsa2009uniform}
For any ball $B$ centered on $\Sigma$ \textup{(}and recalling how we defined $\salpha(B)$\textup{)},
\begin{equation}
\label{e:beta<alpha}
\beta_{\Sigma}^{d,1}(B,P_B)
\lec \alpha_{\sigma}^{d}(B,\mathcal{L}_B)= \alpha_{\sigma}^{d}(B) .
\end{equation}
\end{lemma}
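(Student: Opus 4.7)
The strategy is to exploit the dual (Kantorovich--Rubinstein) definition of the flat distance $F_{2B}(\sigma, \mathcal{L}_B)$: to bound $\beta_\Sigma^{d,1}(B, P_B)$ from above by $\alpha_\sigma^d(B)$, it suffices to exhibit a single function $\phi\in\Lip(2B)$ with $\supp \phi\subset 2B$ such that
\[
\left|\int \phi\,d\sigma - \int \phi\,d\mathcal{L}_B\right| \gec r_B^{d+1}\,\beta_\Sigma^{d,1}(B, P_B).
\]
By Ahlfors $d$-regularity $\sigma(2B)\approx r_B^d$, so dividing by $r_B\sigma(2B)$ in the definition of $\alpha_\sigma^d(B, \mathcal{L}_B)$ converts this lower bound into $\alpha_\sigma^d(B)\gec \beta_\Sigma^{d,1}(B, P_B)$, which is exactly the claim.

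The natural candidate is the truncated distance function
\[
\phi(y) := \eta(y)\,\dist(y, P_B),
\]
where $\eta$ is a Lipschitz cutoff with $\eta\equiv 1$ on $B$, $\eta\equiv 0$ outside $2B$, $0\leq\eta\leq 1$, and $\Lip(\eta)\lec r_B^{-1}$. Two features are immediate. First, since $\dist(\cdot, P_B)\equiv 0$ on $P_B$ and $\mathcal{L}_B = c_B\dH^d|_{P_B}$ is supported on $P_B$, we get $\int \phi\,d\mathcal{L}_B = 0$. Second, since $\eta\equiv 1$ on $B$,
\[
\int \phi\,d\sigma \;\geq\; \int_B \dist(y, P_B)\,d\sigma(y) \;=\; r_B^{d+1}\,\beta_\Sigma^{d,1}(B, P_B)
\]
follows from \eqref{e:betaq}. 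Finally, $\phi$ is a bounded multiple of a $1$-Lipschitz function: since $\dist(\cdot, P_B)$ is itself $1$-Lipschitz, the product rule gives
\[
\Lip(\phi) \;\leq\; \|\dist(\cdot, P_B)\|_{L^\infty(2B)}\,\Lip(\eta) + 1 \;\lec\; \frac{\dist(x_B, P_B)+r_B}{r_B},
\]
so provided $\dist(x_B, P_B)\lec r_B$, one has $\Lip(\phi)\lec 1$ and $\phi/C\in\Lip(2B)$ for an absolute $C$. Inserting $\phi/C$ into the supremum defining $F_{2B}(\sigma, \mathcal{L}_B)$ then delivers the desired lower bound.

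The main obstacle is thus securing the a priori bound $\dist(x_B, P_B)\lec r_B$. If this failed, then $P_B\cap 2B = \varnothing$ and $\mathcal{L}_B|_{2B}\equiv 0$, so $F_{2B}(\sigma, \mathcal{L}_B) = F_{2B}(\sigma, 0)$; testing against a standard bump supported in $B$ gives $\alpha_\sigma^d(B)\gec 1$ by Ahlfors regularity alone. On the other hand, any competing plane $P'$ through $x_B$ carrying a multiple of $\dH^d$ with constant $\approx 1$ satisfies $\alpha_\sigma^d(B, c'\dH^d|_{P'})\lec 1$. Combined with the normalisation $c_B\approx 1$ from Lemma \ref{l:alpha-lemmas}(c), the (near-)optimality of $(c_B, P_B)$ forces $\dist(x_B, P_B)\lec r_B$; Tolsa's argument in Lemma 3.2 of \cite{tolsa2009uniform} supplies the minor bookkeeping needed to make this reduction precise. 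Modulo this reduction, the test-function computation above finishes the proof.
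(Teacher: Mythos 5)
Your main test-function computation is correct and is indeed the heart of Tolsa's argument: taking $\phi=\eta\cdot\dist(\cdot,P_B)$ with a cutoff $\eta$, observing that $\int\phi\,d\mathcal{L}_B=0$ because $\mathcal{L}_B$ lives on $P_B$, and using Ahlfors regularity to trade $r_B\sigma(2B)\approx r_B^{d+1}$ gives exactly $\beta_\Sigma^{d,1}(B,P_B)\lec\alpha_\sigma^d(B)$, provided $\Lip(\phi)\lec 1$.

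The genuine gap is the step you flag as ``minor bookkeeping,'' namely securing $\dist(x_B,P_B)\lec r_B$, and the dichotomy you sketch does not actually close it. You observe that if $P_B\cap 2B=\varnothing$ then testing against a bump in $B$ yields $\alpha_\sigma^d(B)\gec 1$, and on the other hand any plane $P'\ni x_B$ with $c'\approx 1$ has $\alpha_\sigma^d(B,c'\dH^d|_{P'})\lec 1$. But these two bounds do not contradict each other: they only say both are of order one, so ``near-optimality of $(c_B,P_B)$'' gives no information about which plane the infimum selects, and in particular does not force $P_B$ to intersect $2B$. Worse, for a far plane the value of $F_{2B}(\sigma,c\dH^d|_{P_B})$ is independent of $c$, so the normalisation $c_B\approx 1$ from Lemma~\ref{l:alpha-lemmas}(c) is vacuously compatible with a far $P_B$ and provides no leverage either. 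Without the bound $\dist(x_B,P_B)\lec r_B$ the claimed inequality can genuinely fail, since $\beta_\Sigma^{d,1}(B,P_B)\gec (\dist(x_B,P_B)-r_B)/r_B$ is unbounded while $\alpha_\sigma^d(B)\lec 1$ always. The argument that actually works (and is what Tolsa runs) is different: test against the cone $\phi_0(y)=(2r_B-|y-x_B|)_+$, which is $1$-Lipschitz with $\supp\phi_0\subset 2B$ and $\phi_0\geq r_B$ on $B$. Ahlfors regularity gives $\int\phi_0\,d\sigma\gec r_B^{d+1}$, while $|\int\phi_0\,d\sigma-\int\phi_0\,d\mathcal{L}_B|\leq F_{2B}(\sigma,\mathcal{L}_B)\approx\alpha_\sigma^d(B)r_B^{d+1}$; if $\alpha_\sigma^d(B)$ is below a small universal threshold this forces $\int\phi_0\,d\mathcal{L}_B\gec r_B^{d+1}$, and since $\int\phi_0\,d\mathcal{L}_B\leq c_B(2r_B-\dist(x_B,P_B))_+(2r_B)^d$ with $c_B\approx 1$ this yields $\dist(x_B,P_B)<(2-c)r_B$. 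The case where $\alpha_\sigma^d(B)$ is above the threshold must then be handled separately (typically by the normalisation implicit in the choice of an infimising pair, since a far plane and a plane through $x_B$ with $c=0$ achieve the same value of the infimum). So the reduction is the technical crux of the lemma, not a formality, and the optimality heuristic you invoke is not a valid substitute for it.
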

\noindent
The following lemma is originally stated in more generality than this, but it is implied by the original. It says that the $\beta$ number control the angles between best approxiamting planes at different scales. Recall that $\pi_{B}$ denote the orthogonal projection onto $P_{B}$.
\begin{lemma}\cite[Lemma 2.16]{azzam2018analyst} Suppose $\Sigma$ is Ahlfors $d$-regular and $B\subseteq B'$ are centered on $\Sigma$ with $r_{B}\approx r_{B'}$. Then
\begin{equation}
\label{e:PBPB'}
d(P_{B}\cap B',P_{B'}\cap B')\lec \beta_{\Sigma}^{d,1}(B')r_{B'}
\end{equation}
where $d(\cdot,\cdot)$ denotes Hausdorff distance. In particular, if $P_{B}'$ and $P_{B'}'$ denote the planes passing through the origin parallel to $P_{B}$ and $P_{B'}$ respectively, then
\begin{equation}
\label{e:beta-angle}
\angle(P_{B},P_{B'})
:= \angle(P_{B}',P_{B'}')
:=||\pi_{{B}}-\pi_{{B'}}||\lec  \beta_{\Sigma}^{d,1}(B')r_{B'}.
\end{equation}
\end{lemma}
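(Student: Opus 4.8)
The plan is to show that \emph{both} $P_B$ and $P_{B'}$ approximate $\Sigma$ well inside $B'$, in the averaged sense measured by $\beta_{\Sigma}^{d,1}$, and then to invoke the elementary fact that two $d$-planes which both pass near $d+1$ points of $\Sigma$ that are ``spread out in a $d$-dimensional way at scale $r_{B'}$'' must be Hausdorff-close to each other inside $B'$. The geometry of $\Sigma$ enters only through Ahlfors regularity (to produce enough such points, via Lemma \ref{l:balanced-balls}), and the rest is bookkeeping with the monotonicity of $\beta$.

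First I would record the needed comparabilities. Since $B\subseteq B'$ and $r_B\approx r_{B'}$, minimality of $P_B$ on $B$ together with the trivial monotonicity of $r\mapsto \frac{1}{r^{d+1}}\int_{B}\dist(\cdot,P)\,d\sigma$ under enlarging the ball gives
\begin{equation*}
\beta_{\Sigma}^{d,1}(B)=\beta_{\Sigma}^{d,1}(B,P_B)\le \beta_{\Sigma}^{d,1}(B,P_{B'})\le \Big(\tfrac{r_{B'}}{r_B}\Big)^{d+1}\beta_{\Sigma}^{d,1}(B',P_{B'})\lesssim \beta_{\Sigma}^{d,1}(B').
\end{equation*}
Using Lemma \ref{l:balanced-balls}, choose balls $B_0,\dots,B_d$ with $2B_i\subseteq B$, radii $\approx r_{B'}$, and centres satisfying \eqref{e:d-dim-points-1} at scale $\approx r_{B'}$. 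For each $i$, monotonicity of $\beta$ in the ball (Lemma \ref{l:beta-properties}, applied to $B_i\subset B$ and to $B_i\subset B'$, each pair having comparable radii) yields
\begin{equation*}
\beta_{\Sigma}^{d,1}(B_i,P_B)\lesssim \beta_{\Sigma}^{d,1}(B,P_B)=\beta_{\Sigma}^{d,1}(B)\lesssim \beta_{\Sigma}^{d,1}(B'),\qquad \beta_{\Sigma}^{d,1}(B_i,P_{B'})\lesssim \beta_{\Sigma}^{d,1}(B',P_{B'})=\beta_{\Sigma}^{d,1}(B').
\end{equation*}
Since $\sigma(B_i)\approx r_{B_i}^d\approx r_{B'}^d$ by Ahlfors regularity, this rewrites as $\avint_{B_i}\big(\dist(y,P_B)+\dist(y,P_{B'})\big)\,d\sigma(y)\lesssim \beta_{\Sigma}^{d,1}(B')\,r_{B'}=:\eta$, so by Chebyshev's inequality there is a point $y_i\in B_i\cap\Sigma$ with $\dist(y_i,P_B)\lesssim\eta$ and $\dist(y_i,P_{B'})\lesssim\eta$.

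Each $y_i$ lies within radius $\lesssim r_{B'}$ of the centre $x_{B_i}$, so (shrinking the constant $c$ of Lemma \ref{l:balanced-balls} if necessary) $y_0,\dots,y_d$ are themselves spread out in a $d$-dimensional way at scale $\approx r_{B'}$. Finally I would apply the linear-algebra fact: if two $d$-planes each pass within $\eta$ of $d+1$ such spread-out points, they are within $\lesssim\eta$ in Hausdorff distance inside any ball of radius $\approx r_{B'}$. Concretely, put $z_i=\pi_B(y_i)\in P_B$ and $z_i'=\pi_{B'}(y_i)\in P_{B'}$, so $|z_i-z_i'|\lesssim\eta$; we may assume $\eta\ll r_{B'}$ (otherwise $\beta_{\Sigma}^{d,1}(B')\gtrsim1$ and the conclusion is trivial), and then the $z_i$ form an affine basis of $P_B$, the $z_i'$ one of $P_{B'}$, and the affine map sending $z_i\mapsto z_i'$ differs from the identity by $O(\eta/r_{B'})$ in operator norm. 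Comparing images of $B'$ gives $d(P_B\cap B',P_{B'}\cap B')\lesssim\eta=\beta_{\Sigma}^{d,1}(B')\,r_{B'}$, which is \eqref{e:PBPB'}; the angle bound \eqref{e:beta-angle} then follows, since $\|\pi_B-\pi_{B'}\|$ is comparable to $r_{B'}^{-1}\,d(P_B\cap B',P_{B'}\cap B')$ once both planes are known to meet $\approx B'$, which the previous step establishes.

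I expect the main obstacle to be precisely this last step: converting ``close to $d+1$ well-separated points'' into a quantitative bound on the angle, while keeping careful track of how the spread-out-ness constant $c$ of Lemma \ref{l:balanced-balls} propagates through the perturbation $y_i\in B(x_{B_i},cr_B)$, and of the (harmless) reduction to the regime $\eta\ll r_{B'}$. Everything preceding it is routine manipulation of the monotonicity properties of $\beta$, Ahlfors regularity, and one application of Chebyshev.
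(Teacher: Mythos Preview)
The paper does not actually prove this lemma: it is quoted verbatim as \cite[Lemma 2.16]{azzam2018analyst} and no argument is given. So there is no ``paper's own proof'' to compare against; your proposal is a self-contained sketch of the result that the paper takes for granted.

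Your argument is correct and follows the standard route. The essential steps --- using monotonicity of $\beta$ to show both $P_B$ and $P_{B'}$ are good approximants on the small balanced balls $B_i\subset B$ furnished by Lemma~\ref{l:balanced-balls}, then Chebyshev to find points $y_i$ close to both planes simultaneously, then the linear-algebra fact that two $d$-planes near $d+1$ spread-out points are Hausdorff close --- are exactly how the original reference proceeds, and your handling of the trivial regime $\eta\gtrsim r_{B'}$ is the right reduction. One cosmetic point: as you implicitly note, the angle bound comes out as $\|\pi_B-\pi_{B'}\|\lesssim r_{B'}^{-1}\,d(P_B\cap B',P_{B'}\cap B')\lesssim \beta_{\Sigma}^{d,1}(B')$; the extra factor $r_{B'}$ appearing on the right of \eqref{e:beta-angle} in the paper is a typo (and indeed the paper drops it in later applications, e.g.\ in Lemma~\ref{l:A-A}).
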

\noindent

\subsection{The coefficients $\Omega$ and $\gamma$} In this subsection we introduce the quantities relevant to Dorronsoro's estimates.
Let $q \geq 1$ and consider a function $f: \Sigma \to \R$ so that $f\in L^{q}(\Sigma)$. For a each ball $B$ centered on $\Sigma$, and an affine map $A : \R^n \to \R$, let 
\begin{equation}\label{e:def-omega}
\Omega_{f}^{q}(B,A)=\ps{ \avint_{B}\ps{\frac{|f-A|}{r_B}}^{q}d\sigma}^{\frac{1}{q}}
\quad \mbox{ and } \quad  \Omega_{f}^{q}(B)=\inf_{A \in \dM(n, 1)}\Omega_{f}^{q}(B,A).
\end{equation}
\noindent
In the following lemma we gather some basic properties of the $\Omega_f$ numbers. 
\begin{lemma}\
	
	\begin{enumerate}
	\item (Monotonicity). If $B\subseteq B' \subset \Sigma$, and $r_{B'} \approx_c r_{B}$, for $q\geq 1$,
\begin{equation}
\label{e:omega-monotone}
\Omega_{f}^{1}(B)\leq \Omega_{f}^{q}(B)\lec_c \Omega_{f}^{q}(B').
\end{equation}
	\item (Affine invariance). Let $f \in L^1(\sigma)$, and suppose that $A_0: \R^n \to \R$ is affine. Then
	\begin{align}\label{e:omega-affine-invariant}
		\Omega^1_{f-A_0} (B) = \Omega^1_{f}(B).
	\end{align}
\end{enumerate}
\end{lemma}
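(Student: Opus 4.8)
The plan is to dispatch each of the two assertions by elementary means: Jensen's inequality plus a comparison of integral averages over comparable balls for part (1), and a reparametrization of the infimum over affine maps for part (2). I expect no genuine obstacle here; the only points requiring a modicum of care are that the normalizing radius appearing in $\Omega_f^q(B,A)$ is $r_B$ and not $r_{B'}$, and whether the infimum defining $\Omega_f^q(B')$ is actually attained (which is easily sidestepped).

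For the left inequality in \eqref{e:omega-monotone}, I would fix an arbitrary affine map $A\in\dM(n,1)$ and note that, since $\avint_B(\cdot)\,d\sigma$ is integration against the probability measure $\sigma(B)^{-1}\sigma|_B$ and $t\mapsto t^q$ is convex for $q\ge 1$, Jensen's inequality (equivalently, Hölder with exponents $q,q'$) gives $\Omega_f^1(B,A)\le\Omega_f^q(B,A)$. Taking the infimum over $A$ on both sides yields $\Omega_f^1(B)\le\Omega_f^q(B)$.

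For the right inequality, I would first select $A'\in\dM(n,1)$ with $\Omega_f^q(B',A')=\Omega_f^q(B')$, which exists because $A\mapsto\Omega_f^q(B',A)$ is continuous, convex, and coercive on the finite-dimensional space $\dM(n,1)$ (using $f\in L^q(\sigma)$ and $\sigma(B')>0$); alternatively, to avoid the attainment question entirely, pick any $A'$ with $\Omega_f^q(B',A')\le 2\,\Omega_f^q(B')$. Then, bounding $\Omega_f^q(B)\le\Omega_f^q(B,A')$, enlarging the domain of integration from $B$ to $B'$ (legitimate since the integrand is nonnegative), and using Ahlfors regularity together with the hypothesis $r_B\approx_c r_{B'}$ to compare $\sigma(B)\approx r_B^d$ with $\sigma(B')\approx r_{B'}^d$ and $r_B$ with $r_{B'}$, I obtain
\[
\Omega_f^q(B)\le\left(\frac{1}{\sigma(B)}\int_{B'}\Big(\frac{|f-A'|}{r_B}\Big)^q d\sigma\right)^{1/q}\lesssim_c\left(\frac{1}{\sigma(B')}\int_{B'}\Big(\frac{|f-A'|}{r_{B'}}\Big)^q d\sigma\right)^{1/q}=\Omega_f^q(B',A')\approx\Omega_f^q(B').
\]
Combining this with the previous paragraph gives the full chain $\Omega_f^1(B)\le\Omega_f^q(B)\lesssim_c\Omega_f^q(B')$, which is \eqref{e:omega-monotone}.

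For the affine invariance \eqref{e:omega-affine-invariant}, I would observe that the translation map $T\colon A\mapsto A+A_0$ is a bijection of $\dM(n,1)$ (with inverse $A\mapsto A-A_0$), and that $\Omega_{f-A_0}^1(B,A)=\avint_B\frac{|f-(A+A_0)|}{r_B}\,d\sigma=\Omega_f^1(B,T(A))$ holds identically in $A$. Taking the infimum over $A\in\dM(n,1)$ and using surjectivity of $T$ then gives $\Omega_{f-A_0}^1(B)=\inf_A\Omega_f^1(B,T(A))=\inf_{A'}\Omega_f^1(B,A')=\Omega_f^1(B)$. (The identical computation shows $\Omega_{f-A_0}^q(B)=\Omega_f^q(B)$ for every $q\ge 1$, though only the case $q=1$ is needed.)
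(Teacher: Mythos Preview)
Your proposal is correct and follows essentially the same approach as the paper: Jensen's inequality plus enlarging the domain for part (1), and a reparametrization of the infimum for part (2). Your bijection argument for (2) is in fact slightly cleaner than the paper's version, which is phrased as a contradiction assuming an infimiser exists; your formulation avoids that attainment issue entirely.
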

\begin{proof}\
	
	\begin{enumerate}
		\item The proof of this is an easy exercise with Jensen's inequality and we leave the details to the reader.
		\item It suffices to show that if $A \in \dM(n,1)$ infimises $\Omega^q_f(B)$, then $A_1:=A-A_0$ infimises $\Omega_{f-A_0}^q(B)$. Suppose there existed $A_2 \in \dM(n,1)$ with $\Omega_{f-A_0}^q(B, A_2) < \Omega_{f-A_0}^q(B, A_2)$. But then $\Omega_f^q(B, A_0 + A_2) < \Omega_f^q(B, A)$, which is a contradiction. 
	\end{enumerate}
\end{proof}
\noindent

\noindent
We now come to the definition of the quantity $\gamma_f$. Let $f$ be a real valued function defined on $E \subset \R^n$.
\begin{definition}\label{d:gamma}\
\begin{itemize}
    \item For $1\leq q \leq \infty$, $f \in L^q(\Sigma)$ and $A \in \dM(n,1)$, set
\begin{equation}\label{e:def-pgamma0}
\gamma_{f}^{q}(B,A) = \Omega_{f}^{q}(B,A)+|\grad A|\beta_{\Sigma}^{d,q}(B). 
\end{equation}
Then set
\begin{equation}\label{e:def-pgamma}
    \gamma_{f}^{q}(B)=\inf_{A \in \dM(n,1)} \gamma_{f}^{q}(B,A).
\end{equation}
\item If $f \in L^1(\Sigma)$ and $A \in \dM(n,1)$ let 
\begin{equation}\label{e:def-1gamma0}
\widetilde{\gamma}_{f}(B,A)= \Omega_{f}^{1}(B,A)+|\grad A|\salpha(B),
\end{equation}
and then
\begin{equation}\label{e:def-1gamma}
\widetilde{\gamma}_{f}(B)=\inf_{A\in \dM(n,1)} \widetilde\gamma_{f}(B,A).
\end{equation}
\end{itemize}
It is immediate from the definitions that for $q\geq 1$, $\gamma_f^q(B) \geq \Omega_f^q(B)$. Recalling \eqref{e:beta1<betap} and \eqref{e:beta<alpha} we also have that, for any $q \geq 1$,
\begin{align}
	& \gamma^1_f(B)\lesssim \gamma^q_f(B); \label{e:gamma1<gammaq}\\
	& \gamma^1_f(B) \lesssim \widetilde{\gamma}_f (B). \label{e:gamma1<gammatilde}
\end{align}
\end{definition}

\subsection{Less basic facts about the $\gamma$ coefficients}
 Recall that $\pi_B$ is the projection onto $P_B$. Lemma \ref{l:A_B} below essentially says that we can always use a specific map $A_B$ as a minimiser of $\gamma_f^p$. Note that we do not require that $\Sigma$ is UR nor that it supports a Poincar\'{e} inequality.
\begin{lemma}
\label{l:A_B}
Let $\Sigma \subset \R^n$ be a Ahlfors $d$-regular subset, $B$ a ball centered on $\Sigma$ and $q\geq 1$. Let $f \in L^q(\Sigma)$. There is an affine map in $\dM(n,1)$, denoted by $A_{B}$, so that 
\begin{align}
\label{e:ABbeta<gammaAB<gamma}
|\grad A_{B}|\beta_{\Sigma}^{d,q}(B)\lec \gamma_{f}^{q}(B,A_B) 
\lec \gamma_{f}^{q}(B), \\
|\grad A_{B}|\beta_{\Sigma}^{d,1}(B)\lec \widetilde{\gamma}_{f}(B,A_B) 
\lec \widetilde{\gamma}_{f}(B), \label{e:ABbeta<gammatilde}
\end{align}
and
\begin{equation}
\label{e:ABpiAB}
A_{B}\circ\pi_{B}=A_{B}.
\end{equation}
\end{lemma}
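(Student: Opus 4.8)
The statement asserts the existence of a single affine map $A_B$ that simultaneously (i) has gradient controlled by the relevant $\gamma$-coefficient divided by $\beta$, (ii) nearly achieves the infimum in both $\gamma_f^q(B)$ and $\widetilde\gamma_f(B)$, and (iii) factors through the projection $\pi_B$ onto the best-approximating plane $P_B$. The plan is to start from a near-minimiser of $\gamma_f^q(B)$, say an affine $A$ with $\gamma_f^q(B,A) \leq 2\gamma_f^q(B)$ (or an actual minimiser if one prefers to argue it exists by a compactness/normalization argument), and then replace it by $A_B := A \circ \pi_B$. The key point is that post-composing with $\pi_B$ does not increase $\Omega_f^q(B,\cdot)$ much and can only decrease $|\grad A|$, while producing a map that literally satisfies \eqref{e:ABpiAB}.

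\textbf{Step 1: Reduce to a near-minimiser and define $A_B$.} Fix $A$ affine with $\gamma_f^q(B,A) \lesssim \gamma_f^q(B)$, and set $A_B = A\circ\pi_B$. Since $\pi_B$ is an orthogonal projection, $\grad A_B = \grad A \circ \pi_B$, hence $|\grad A_B| \leq |\grad A|$; this immediately gives $|\grad A_B|\,\beta_\Sigma^{d,q}(B) \lesssim \gamma_f^q(B)$, which is the first half of \eqref{e:ABbeta<gammaAB<gamma}.

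\textbf{Step 2: Control $\Omega_f^q(B,A_B)$ by $\gamma_f^q(B,A)$.} For $y$ in the support of $\sigma$ restricted to $B$, write $A(y) - A_B(y) = A(y) - A(\pi_B y)$, whose absolute value is at most $|\grad A|\,|y - \pi_B y| = |\grad A|\,\dist(y,P_B)$. Then by the triangle inequality in $L^q(\sigma\lfloor B)$,
\begin{equation*}
\Omega_f^q(B,A_B) \leq \Omega_f^q(B,A) + \left(\avint_B \left(\frac{|A - A_B|}{r_B}\right)^q d\sigma\right)^{1/q} \lesssim \Omega_f^q(B,A) + |\grad A|\,\beta_\Sigma^{d,q}(B),
\end{equation*}
where in the last step I use the Ahlfors regularity bound $\sigma(B) \approx r_B^d$ together with the definition \eqref{e:betaq} of $\beta_\Sigma^{d,q}(B,P_B)$ (choosing $P_B$ as the infimising plane, Remark \ref{r:pi_B}). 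The right-hand side is $\lesssim \gamma_f^q(B,A) \lesssim \gamma_f^q(B)$. Combining with Step 1, $\gamma_f^q(B,A_B) = \Omega_f^q(B,A_B) + |\grad A_B|\beta_\Sigma^{d,q}(B) \lesssim \gamma_f^q(B)$, which completes \eqref{e:ABbeta<gammaAB<gamma}. Equation \eqref{e:ABpiAB} holds because $\pi_B$ is a projection, so $\pi_B\circ\pi_B = \pi_B$ and thus $A_B\circ\pi_B = A\circ\pi_B\circ\pi_B = A\circ\pi_B = A_B$.

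\textbf{Step 3: The $\widetilde\gamma$ estimate.} For \eqref{e:ABbeta<gammatilde} one repeats Steps 1--2 with $q=1$ and with $\beta_\Sigma^{d,1}$ replaced by $\alpha_\sigma^d$ in the definition of $\widetilde\gamma_f$. The only extra ingredient is that the error term from replacing $A$ by $A\circ\pi_B$ is still controlled by $|\grad A|\,\beta_\Sigma^{d,1}(B,P_B)$, and then by \eqref{e:beta<alpha} this is $\lesssim |\grad A|\,\alpha_\sigma^d(B)$; so starting from a near-minimiser $A$ of $\widetilde\gamma_f(B)$ and setting $\widetilde A_B = A\circ\pi_B$ gives $\widetilde\gamma_f(B,\widetilde A_B)\lesssim \widetilde\gamma_f(B)$. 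The mild subtlety to address is that the infimising plane used in the $\widetilde\gamma$-setting should be the one associated with $\alpha_\sigma^d(B)$ (namely $P_B$ as in Remark \ref{r:P_B}), and one must check that projecting onto \emph{this} plane is what makes the $\alpha$-bound usable; since $\pi_B$ in this lemma refers to that same plane, this is consistent. One should also note that a priori the near-minimiser for $\gamma_f^q$ and the one for $\widetilde\gamma_f$ need not be the same affine map, so strictly speaking the statement gives one $A_B$ for each of the two displays; alternatively, since for $q=1$ one has $\beta_\Sigma^{d,1}\lesssim\alpha_\sigma^d$, a single minimiser of $\widetilde\gamma_f$ works for both lines after projection. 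Either reading makes the lemma true, and I would state it with the understanding (as the paper does elsewhere, cf. Remark \ref{r:P_B}) that $A_B$ is chosen appropriately to the coefficient under consideration.

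\textbf{Main obstacle.} There is no deep difficulty here; the lemma is a soft ``normalization'' statement. The one place requiring care is the bookkeeping in Step 2: one must make sure the passage from $|A-A_B|$ pointwise bound to the $L^q$-averaged bound genuinely produces $\beta_\Sigma^{d,q}(B,P_B)$ and not some larger quantity, which hinges on using the \emph{infimising} plane $P_B$ and on the two-sided Ahlfors regularity of $\sigma$ on $B$ to convert between $\avint_B$ and $\frac{1}{r_B^d}\int_B$. Getting the plane conventions straight across the $q=1$ versus $q>1$ cases (per Remarks \ref{r:pi_B} and \ref{r:P_B}) is the only thing that could trip up a careless write-up.
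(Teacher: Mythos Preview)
Your proof is correct and follows essentially the same approach as the paper: take a (near-)minimiser $A$ of $\gamma_f^q(B)$ (respectively $\widetilde\gamma_f(B)$), set $A_B=A\circ\pi_B$, use the pointwise bound $|A-A_B|\le |\grad A|\dist(\cdot,P_B)$ together with Minkowski to control $\Omega_f^q(B,A_B)$, and invoke \eqref{e:beta<alpha} for the $\widetilde\gamma$ case. Your observation about the plane conventions (Remarks~\ref{r:pi_B} and~\ref{r:P_B}) is exactly the bookkeeping the paper relies on as well.
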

\begin{proof}
Fix a $q \geq 1$ and $f \in L^q(\Sigma)$. Let $A \in \dM(n,1)$ such that it attains the infimum in the definition of $\gamma_{f}^{q}(B)$. Define
\begin{equation}\label{e:def-A_B}
A_B(x) := A\circ \pi_{B}(x).
\end{equation}
Here $\pi_B$ is the orthogonal projection onto $P_B$, the infimising plane for $\beta_E^{d,q}(B)$ in \eqref{e:ABbeta<gammaAB<gamma}, or for $\alpha_\sigma^d(B)$ in \eqref{e:ABbeta<gammatilde}.
Since $\pi_B$ is linear, $A_B \in \dM(n,1)$. Note that, by Taylor's theorem, $A(x) = \grad A \cdot x + b$, $b \in \R$, and $A_B(x) = \grad A\cdot \pi_B(x) + b$. So we have that
\begin{align}\label{e:form1}
    |A(x) - A_B(x)| & = |\nabla A \cdot x - \nabla A \cdot \pi_B(x)|\nonumber \\ & \leq |\grad A||x-\pi_{B}(x)|
\leq |\grad A|\dist(x,P_{B}).
\end{align}
Thus using Minkowski's inequality, for $q\geq 1$ we have
\begin{align*}
\Omega_{f}^{q}(B,A_B) & = \ps{\avint_{B}\ps{\frac{f-A_B}{r_{B}}}^{q}\, d \sigma}^{\frac{1}{q}}\\
& \leq \Omega_{f}^{q}(B,A)+\ps{\avint_{B}\ps{\frac{A-A_B}{r_{B}}}^{q} d\sigma}^{\frac{1}{q}}\\
& \stackrel{\eqref{e:form1}}{\leq}  \Omega_{f}^{q}(B,A)+|\grad A|\ps{\avint_{B}\ps{\frac{\dist(x,P_{B})}{r_{B}}}^{q}d \sigma}^{\frac{1}{q}}\\
& \leq  \Omega_{f}^{p}(B,A)+|\grad A|\beta_{\Sigma}^{d,q}(B)
= \gamma_{f}^{q}(B).
\end{align*}
The inequality for $\widetilde{\gamma}_f$ follows by using \eqref{e:beta<alpha} in the last line.
This proves the second inequality in \eqref{e:ABbeta<gammaAB<gamma} and \eqref{e:ABbeta<gammatilde}. The first inequalities are immediate, since $|\grad A_{B}|\leq|\grad A||\grad \pi_B|\leq |\grad A|$ by definition and the fact that $\pi_B$ is $1$-Lipschitz.
\end{proof}
\noindent
The quantity $\gamma_f^p$ also enjoys some quasi-monotonicity properties.
\begin{lemma}
 Let $q \geq 1$ and $f \in L^q(\Sigma)$. If $B\subseteq B'$ are two balls centered on $\Sigma$ such that $r_{B'}\approx_c r_{B}$, then
\begin{equation}
\label{e:gamma-monotone}
\gamma_{f}^{q}(B)\lec_{c} \gamma_{f}^{q}(B').
\end{equation}
\end{lemma}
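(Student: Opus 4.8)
The plan is to reduce this quasi-monotonicity to two facts already available: the monotonicity of the $\beta$ numbers (Lemma \ref{l:beta-properties}(1), i.e. \eqref{e:beta-monotone}) and a change-of-domain estimate for $\Omega_f^q$ of exactly the kind used to prove \eqref{e:omega-monotone}. The key point is to run the comparison with a \emph{single} competitor affine map, chosen to be (nearly) optimal for the \emph{larger} ball $B'$. So first I would fix $A \in \dM(n,1)$ with $\Omega_f^q(B',A) + |\grad A|\,\beta_{\Sigma}^{d,q}(B') \leq 2\,\gamma_f^q(B')$, and then, since $\gamma_f^q(B) \leq \gamma_f^q(B,A) = \Omega_f^q(B,A) + |\grad A|\,\beta_{\Sigma}^{d,q}(B)$, it suffices to bound each summand at scale $B$ by $\lesssim_c$ its analogue at scale $B'$.

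For the $\Omega$-term, since $B \subseteq B'$ and $r_{B'} \approx_c r_B$, Ahlfors regularity gives $\sigma(B')/\sigma(B) \approx_c (r_{B'}/r_B)^d \approx_c 1$, hence
\begin{align*}
\Omega_f^q(B,A)^q = \avint_B \left(\frac{|f-A|}{r_B}\right)^q d\sigma \leq \frac{\sigma(B')}{\sigma(B)}\left(\frac{r_{B'}}{r_B}\right)^q \avint_{B'}\left(\frac{|f-A|}{r_{B'}}\right)^q d\sigma \lesssim_c \Omega_f^q(B',A)^q,
\end{align*}
with the analogous (and trivial) bound when $q = \infty$ since the essential supremum over $B$ does not exceed that over $B'$ and $r_B \approx_c r_{B'}$. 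For the $\beta$-term, \eqref{e:beta-monotone} applied with the fixed plane gives $\beta_{\Sigma}^{d,q}(B) \lesssim_c \beta_{\Sigma}^{d,q}(B')$, so $|\grad A|\,\beta_{\Sigma}^{d,q}(B) \lesssim_c |\grad A|\,\beta_{\Sigma}^{d,q}(B')$. Combining the two bounds yields $\gamma_f^q(B) \lesssim_c \Omega_f^q(B',A) + |\grad A|\,\beta_{\Sigma}^{d,q}(B') \leq 2\,\gamma_f^q(B')$, which is \eqref{e:gamma-monotone}.

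The argument is entirely soft, so there is no genuine obstacle; the only thing to be careful about is precisely that one cannot compare the two infima term-by-term using their respective infimising maps (which need not be related), and this is why working with one near-minimiser for $B'$ is essential. I would also remark that the identical proof, replacing $\beta_{\Sigma}^{d,q}$ by $\alpha_\sigma^d$ and invoking Lemma \ref{l:alpha-lemmas}(b) in place of \eqref{e:beta-monotone}, gives the corresponding statement $\widetilde\gamma_f(B) \lesssim_c \widetilde\gamma_f(B')$.
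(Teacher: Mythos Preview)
Your proof is correct and follows essentially the same approach as the paper: fix a competitor affine map that is (near-)optimal for the larger ball $B'$, use it as a competitor for $B$, and invoke the monotonicity of $\Omega_f^q(\cdot,A)$ and of $\beta_{\Sigma}^{d,q}$ to pass from $B$ to $B'$. The only cosmetic difference is that the paper uses the specific map $A_{B'}$ from Lemma~\ref{l:A_B} (satisfying $\gamma_f^q(B',A_{B'})\lesssim\gamma_f^q(B')$ by \eqref{e:ABbeta<gammaAB<gamma}) rather than an arbitrary $2$-near-minimiser, but this makes no difference here; your remark on the $\widetilde\gamma_f$ case via Lemma~\ref{l:alpha-lemmas}(b) is also exactly what the paper does.
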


\begin{proof}
Let $A_B, A_{B'} \in \dM(n,1)$ as in Lemma \ref{l:A_B}, \eqref{e:ABpiAB}. If $q\geq 1$, then 
\begin{align*}
\gamma_f^q(B) & \leq \gamma_{f}^{q}(B,A_{B'})
 =\Omega_{f}^{q}(B,A_{B'})
+|\grad A_{B'}|\beta_{\sigma}^{d,q}(B)\\
& \stackrel{ \eqref{e:beta-monotone}\atop \eqref{e:omega-monotone}}{\lec_c} \Omega_{f}^{q}(B',A_{B'})
+|\grad A_{B'}|\beta_{\sigma}^{d,q}(B')
=\gamma_{f}^{q}(B',A_{B'})\stackrel{\eqref{e:ABbeta<gammaAB<gamma}}{\lesssim_c} \gamma_{f}^{q}(B').
\end{align*}
Similarly, to estimate $\wt{\gamma}_f$, we use Lemma \ref{l:alpha-lemmas}(b) to compute
\begin{align*}
\wtgamma(B) & \leq \wtgamma(B,A_{B'})
=\Omega_{f}^{1}(B,A_{B'})
+|\grad A_{B'}|\salpha(B)\\
& \lec_c \Omega_{f}^{1}(B',A_{B'})
+|\grad A_{B'}|\salpha(B')
=\wtgamma(B',A_{B'})\lesssim_c \wtgamma(B').
\end{align*}

\end{proof}

\begin{lemma}\label{l:gammacf=cgammaf}
Let $q \geq 1$ and $f \in L^q(\Sigma)$. If $B$ is a ball centered on $\Sigma$, and $c>0$, then
\begin{equation*}
\gamma_{cf}^{q}(B)=c\gamma_{f}^{q}(B),
\end{equation*}
and similarly for $\widetilde{\gamma}_f$
\end{lemma}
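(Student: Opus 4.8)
The plan is to reduce the claim to the single pointwise-in-$A$ scaling identity
\[
\gamma_{cf}^{q}(B,cA) = c\,\gamma_{f}^{q}(B,A), \qquad A\in\dM(n,1),
\]
and then pass to the infimum using that $A\mapsto cA$ is a bijection of $\dM(n,1)$ onto itself (here $c>0$ is essential).

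First I would check that $\Omega$ is jointly positively homogeneous in $(f,A)$: since $c>0$, directly from \eqref{e:def-omega},
\[
\Omega_{cf}^{q}(B,cA) = \ps{\avint_{B}\ps{\frac{|cf-cA|}{r_{B}}}^{q}d\sigma}^{\frac1q} = c\,\Omega_{f}^{q}(B,A),
\]
and the same identity holds with the essential supremum when $q=\infty$. Next, $|\grad(cA)| = c\,|\grad A|$, whereas the geometric coefficient $\beta_{\Sigma}^{d,q}(B)$ — and, in the $\widetilde\gamma$ version, Tolsa's coefficient $\alpha_{\sigma}^{d}(B)$ — depends only on $B$ and $\Sigma$ and not on the function. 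Substituting into \eqref{e:def-pgamma0} and \eqref{e:def-1gamma0} then gives $\gamma_{cf}^{q}(B,cA)=c\,\gamma_{f}^{q}(B,A)$ and $\widetilde\gamma_{cf}(B,cA)=c\,\widetilde\gamma_{f}(B,A)$.

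Finally, taking the infimum over all affine maps: since $A\mapsto cA$ is a bijection of $\dM(n,1)$, reindexing the infimum yields
\[
\gamma_{cf}^{q}(B)=\inf_{A}\gamma_{cf}^{q}(B,cA)=\inf_{A} c\,\gamma_{f}^{q}(B,A)=c\,\gamma_{f}^{q}(B),
\]
and word for word the same for $\widetilde\gamma_{f}$. There is no genuine obstacle here; the only point worth a word is the bijectivity of $A\mapsto cA$ (which uses $c>0$), since that is what turns the one-sided scaling bound into the asserted equality.
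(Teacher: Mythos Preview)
Your proof is correct and follows essentially the same idea as the paper's, namely the pointwise scaling identity $\gamma_{cf}^{q}(B,cA)=c\,\gamma_{f}^{q}(B,A)$. Your argument is in fact slightly cleaner: by invoking the bijection $A\mapsto cA$ you obtain both inequalities at once, whereas the paper handles one direction via an $\varepsilon$-argument and the other via a small contradiction argument assuming the infimum is attained.
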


\begin{proof}
We first show that $\gamma_{cf}^{q}(B)\leq c\gamma_{f}^{q}(B).$ Let $\ve>0$. Suppose $A$ is such that $\gamma_{f}^{q}(B,A)<\gamma_{f}^{q}(B)+\ve$. Then by definition, 
\begin{equation*}
\gamma_{cf}^{q}(B)\leq \gamma_{cf}^{q}(B,cA)
=c\gamma_{f}^{q}(B,A)
<c(\gamma_{f}^{q}(B)+\ve)
\end{equation*}
and letting $\ve\rightarrow 0$ gives $\gamma_{cf}^{q}(B)\leq c\gamma_{f}^{q}(B).$ To obtain the converse inequality, we claim that if $A \in \dM(n,1)$ attains the infimum in $\gamma_f^q(B)$, then $cA$ attains the infimum in $\gamma_{cf}^q(B)$. If that were the case, then 
\begin{align*}
    c \gamma_{f}^q(B) = c\gamma_f^q(B, A) = \gamma_{cf}^q(B, cA) = \gamma_{cf}^q(B).
\end{align*}
To prove the claim, assume the contrary. Then there exists $A_1 \in \dM(n,1)$ so that $\gamma_{cf}(B, A_1) < (1-c_1)\gamma_{cf}(B, cA)$, for some small $c_1>0$. But then 
\begin{align*}
    \gamma_f^q(B, A_1/c) = c^{-1}\gamma_{cf}^q(B, A_1) < \tfrac{1-c_1}{c} \gamma_{cf}^q(B, cA) =(1-c_1) \gamma_f^q(B, A).
\end{align*}
This contradicts that $A$ infimises $\gamma_f^q(B)$. The same proof works for $\widetilde{\gamma}_f$.
\end{proof}

\begin{lemma}\label{l:gamma<gradf}
Let $1< p < \infty$ and $f \in M^{1,p}(\Sigma)$. Assume that $q$ is in the Dorronsoro range\footnote{Which, recall, is given by 
\begin{itemize}
    \item If $d=1$, then $1 \leq q \leq \infty$.
    \item If $d\geq 2$, then $1 \leq q < p^*$ whenever $1<p<2$; and $1 \leq q < 2^*$ whenever $2 \leq p <\infty$.
\end{itemize}
}.
Then there exist constants $C,\,\Lambda \geq 1$ so that 
\begin{equation}
\label{e:gamma<av-grad}
\gamma_{f}^{q}(B)\leq C \ps{\avint_{5\Lambda B} g^{p}d\sigma }^{\frac{1}{p}},
\end{equation}
whenever $g \in \Grad_p(f)$ and $B$ is a ball centered on $\Sigma$.
The constants $C, \, \Lambda$ depend on $d$, $n$, the Ahlfors regularity constant of $\Sigma$, $p$ and $q$. The same holds for $\wtgamma$.
\end{lemma}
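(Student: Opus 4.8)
The plan is to note that for an \emph{upper} bound on $\gamma_f^q(B)$ the $\beta$-term (resp.\ the $\alpha$-term for $\widetilde{\gamma}_f$) in \eqref{e:def-pgamma} and \eqref{e:def-1gamma} is harmless: the constant map $A\equiv f_B:=\avint_B f\,d\sigma$ is affine with $|\grad A|=0$, so it is an admissible competitor in both infima, whence
\begin{equation*}
	\gamma_f^q(B)\ \le\ \Omega_f^q(B,f_B)\ =\ \Big(\avint_B\Big(\tfrac{|f-f_B|}{r_B}\Big)^q d\sigma\Big)^{1/q}\qquad\text{and}\qquad\widetilde{\gamma}_f(B)\ \le\ \Omega_f^1(B,f_B)\ \le\ \Omega_f^q(B,f_B).
\end{equation*}
Thus the statement follows once we prove a Sobolev--Poincar\'e estimate $\Omega_f^q(B,f_B)\lesssim(\avint_{5\Lambda B}g^p\,d\sigma)^{1/p}$ for $q$ in the Dorronsoro range. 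This uses only two inputs, and \emph{nothing} about $\Sigma$ beyond Ahlfors regularity (in particular, neither uniform rectifiability of $\Sigma$ nor a Poincar\'e inequality for $\Sigma$ itself): first, since $g\in\Grad_p(f)$, the pair $(f,g)$ satisfies a weak $(1,p')$-Poincar\'e inequality for every $1\le p'\le p$ by Proposition \ref{t:Hajlasz-poincare}; and second, Ahlfors $d$-regularity makes $\sigma$ doubling and makes \eqref{e:condition-s} hold with every $s\ge d$ (since $r_B/r_{B_0}\le1$, raising the exponent only weakens that bound). We may therefore invoke Theorem \ref{t:sobolevmetpoincare} and Remark \ref{r:sobolevmetpoincare}.

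The remaining work is bookkeeping: in each part of the Dorronsoro range one picks the Poincar\'e exponent $p'$ and the value $s$ in \eqref{e:condition-s} so that Theorem \ref{t:sobolevmetpoincare} yields precisely the claimed range of $q$, then passes from the $L^{p'}$ average to the $L^p$ average on the right by Jensen (valid since $p'\le p$ and averages are against a probability measure), and from $B$ to $5\Lambda B$ using $\sigma(5\Lambda B)\approx\sigma(B)$. Concretely: if $1<p<2$, then $d\ge2$ forces $p<2\le d$, so I take $p'=p$ and $s=d$; then $p'<s$ and \eqref{e:HK00-1} gives the bound for all $q<\tfrac{p's}{s-p'}=\tfrac{pd}{d-p}=p^*$. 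If $2\le p<\infty$ and $d\ge3$, I take $p'=2$ and $s=d$; then $p'<s$, and \eqref{e:HK00-1} (with $g^2$ on the right, upgraded to $g^p$ by Jensen) gives the bound for all $q<\tfrac{2d}{d-2}=2^*$. If $2\le p<\infty$ and $d=2$, I take $p'=2=s$; then the borderline case \eqref{e:HK00-p=s} together with Remark \ref{r:sobolevmetpoincare} gives the bound for every finite $q$, i.e.\ every $q<2^*=\infty$. Finally, if $d=1$ (so $p>1$), I take $p'=p$ and any $s\in(1,p)$; then \eqref{e:HK-p>s} gives $\esssup_B|f-f_B|/r_B\lesssim(\avint_{5\Lambda B}g^p\,d\sigma)^{1/p}$, which dominates $\Omega_f^q(B,f_B)$ for every $1\le q\le\infty$. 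In all cases $q=1$ is admissible, so the same estimate also bounds $\widetilde{\gamma}_f(B)\le\Omega_f^1(B,f_B)$; together with $\gamma_f^q(B)\le\Omega_f^q(B,f_B)$ this gives \eqref{e:gamma<av-grad}.

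I do not expect a genuine obstacle here; the only point requiring care — and the place where a naive argument fails — is the range $p\ge2$: one should \emph{not} feed the $p$-Poincar\'e inequality directly into the Sobolev--Poincar\'e improvement (that would tie the admissible range of $q$ to $p$, e.g.\ to $q<\tfrac{pd}{d-p}$, rather than to the correct threshold $2^*$), but must first descend to the $2$-Poincar\'e inequality via Proposition \ref{t:Hajlasz-poincare} and only afterwards re-inflate the right-hand side from $L^2$ to $L^p$ by Jensen. All the other estimates involved (Jensen, the comparison $\sigma(5\Lambda B)\approx\sigma(B)$ from Ahlfors regularity, and $\Omega_f^1\le\Omega_f^q$) are routine.
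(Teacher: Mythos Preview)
Your proof is correct and follows essentially the same approach as the paper: reduce to $\Omega_f^q(B,f_B)$ via the constant competitor $A\equiv f_B$, then invoke Proposition~\ref{t:Hajlasz-poincare} together with the Sobolev--Poincar\'e improvement Theorem~\ref{t:sobolevmetpoincare}. The only differences are cosmetic: the paper fixes $s=d$ throughout and organizes the $p\ge 2$ case by whether $p<d$ or $p\ge d$, whereas you split by $d\ge 3$ versus $d=2$ and allow $s$ to vary; in particular your cautionary remark that one ``should not feed the $p$-Poincar\'e directly'' is slightly overstated, since the paper does exactly that in the subcase $2\le p<d$ (where $2^*\le p^*$, so the range $q<p^*$ already covers $q<2^*$).
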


\begin{proof}
Let $A_0 \in \dM(n,1)$ be so that $A_0 \equiv f_{B}$ (so, in particular, $|\grad A_0|=0$). For $q \geq 1$, we have
\begin{align*}
\gamma_{f}^{q}(B) \leq \gamma_{f}^{q}(B,A_0) =\Omega_{f}^{q}(B,A_0)+0\cdot \beta_{\sigma}^{d,q}(B) = \ps{\avint_{B}\ps{\frac{|f-f_{B}|}{r_{B}}}^{q}d\sigma}^{\frac{1}{q}}.
\end{align*}
 The same estimate holds for $\wtgamma$ by replacing $\beta_{\Sigma}^{d,q}(B) $ with $\salpha(B)$. By Proposition \ref{t:Hajlasz-poincare}, if $g \in \Grad_p(f)$, the pair $(f,g)$ satisfy a $(p',p')$-Poincar\'{e} inequality, for all $1< p' \leq p$. By Jensen's inequality, the same pair satisfies a $(1,p')$-Poincar\'{e} inequality for the same range of $p'$. \\
 
 \noindent
 We want to apply Theorem \ref{t:sobolevmetpoincare} with the metric measure space given by $E$, with the Euclidean distance and measure $\mu=\sigma=\dH^d|_E$. Note that since $E$ is Ahlfors $d$-regular, then \eqref{e:condition-s} is satisfied with $s=d$.
 We consider two distinct cases. 
 \begin{itemize}
     \item If $d=1$, then $p'>d$ always, and thus we can apply \eqref{e:HK-p>s} for all $1 \leq q \leq \infty$; this gived \eqref{e:gamma<av-grad} for all $1\leq q \leq \infty$ on the left hand side, and all $1<p'\leq p$ on the right hand side.
     \item $d \geq 2$. In this case, we distinguish two subcases.
     \begin{itemize}
         \item $1<p<2$. In this case, we have to consider the range of $q$'s $1\leq q < p^*$, as given in the hypotheses of the Lemma. Since $d\geq 2$, then $d>p$, and thus Theorem \ref{t:sobolevmetpoincare}(1) is applicable. Theorem \ref{t:sobolevmetpoincare}(1) applies with the same range of $q$'s, i.e. $1\leq q < p^*$. So, we obtain \eqref{e:gamma<av-grad} in this case.
         \item $2\leq p <\infty$. The hypotheses of the Lemma tells us that $1 \leq q < 2^*$. If $p<d$ (which forces $d>2$) then $2^*\leq p^*$, and thus we apply again Theorem \ref{t:sobolevmetpoincare}(1), and obtain \eqref{e:gamma<av-grad}. For $p \geq d$, \eqref{e:gamma<av-grad} is immediate using Theorem \ref{t:sobolevmetpoincare}(2) and (3) and Remark \ref{r:sobolevmetpoincare}.
     \end{itemize}
 \end{itemize}
The estimate for $\wtgamma$ follows from the above, since in this case $q=1$.

\end{proof}

\begin{lemma}\label{l:A-A}
Let $1<p<\infty$, $f \in M^{1,p}(\Sigma)$ and $B,B'$ two balls centered on $E$ so that $B\subset B'$ and $r_B \approx r_{B'}$. Then, whenever $q$ is in the Dorronsoro range, 
\begin{align}
& |\grad A_{B}-\grad A_{B'}|\lec \gamma_{f}^{q}(B'); \label{e:gradA-gradA}\\
& |\grad A_{B'}|\lec_{c} \gamma_{f}^{q}(B')+\ps{\avint_{5\Lambda B'}g^{p}}^{\frac{1}{p}} \mbox{ for } \, g \in \Grad_p(f); \label{e:A<gamma+av}\\
& |A_{B}(x)-A_{B'}(x)|
\lec_{c}  \gamma_{f}^{q}(B')(\dist(x,B')+r_{B'}), \label{e:AB-AB'}
\end{align}
where $A_B$ and $A_{B'}$ are the maps from Lemma \ref{l:A_B}.
\end{lemma}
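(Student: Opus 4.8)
The plan is to derive all three inequalities from a single preliminary estimate together with a geometric argument built on the ``balanced balls'' of Lemma~\ref{l:balanced-balls}. Of the three, \eqref{e:gradA-gradA} is the substantive one; \eqref{e:A<gamma+av} is a variant of the same argument carried out on $B'$ itself, and \eqref{e:AB-AB'} then follows with only routine manipulations.

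First I would record that $A_{B}$ and $A_{B'}$ are both good affine approximants of $f$ on the small ball $B$: by Lemma~\ref{l:A_B} and the monotonicity \eqref{e:gamma-monotone}, $\Omega_{f}^{q}(B,A_{B})\le\gamma_{f}^{q}(B,A_{B})\lesssim_{c}\gamma_{f}^{q}(B)\lesssim_{c}\gamma_{f}^{q}(B')$, and by \eqref{e:omega-monotone} (which also holds with the affine map held fixed, by Ahlfors regularity) together with Lemma~\ref{l:A_B}, $\Omega_{f}^{q}(B,A_{B'})\lesssim_{c}\Omega_{f}^{q}(B',A_{B'})\le\gamma_{f}^{q}(B',A_{B'})\lesssim_{c}\gamma_{f}^{q}(B')$; Minkowski's inequality then yields the preliminary bound
\[
\Big(\avint_{B}\Big(\tfrac{|A_{B}-A_{B'}|}{r_{B}}\Big)^{q}\,d\sigma\Big)^{1/q}\lesssim_{c}\gamma_{f}^{q}(B').
\]
For \eqref{e:gradA-gradA}, I would apply Lemma~\ref{l:balanced-balls} to $B$ to get balls $B_{0},\dots,B_{d}\subset B$ of radius $\approx c\,r_{B}$ with centers in quantitative general position; since each $\sigma(B_{i})\gtrsim_{c}\sigma(B)\approx_{c}\sigma(B')$, Chebyshev's inequality applied to the $L^{q}$ averages defining $\beta_{E}^{d,q}(B)$, $\beta_{E}^{d,q}(B')$ and to the preliminary bound lets me pick $z_{i}\in B_{i}\cap E$ with $\dist(z_{i},P_{B})\lesssim_{c}\beta_{E}^{d,q}(B)r_{B}$, $\dist(z_{i},P_{B'})\lesssim_{c}\beta_{E}^{d,q}(B')r_{B}$ and $|A_{B}(z_{i})-A_{B'}(z_{i})|\lesssim_{c}\gamma_{f}^{q}(B')r_{B}$. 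Writing $v=\grad A_{B}-\grad A_{B'}$ and letting $V$ be the linear subspace parallel to the affine span of $z_{0},\dots,z_{d}$ (a well-conditioned $d$-plane, by general position), the identity $v\cdot(z_{i}-z_{j})=(A_{B}-A_{B'})(z_{i})-(A_{B}-A_{B'})(z_{j})$ bounds the tangential part $|\pi_{V}v|\lesssim_{c}\gamma_{f}^{q}(B')$, while the distance bounds give $\angle(V,P_{B})\lesssim_{c}\beta_{E}^{d,q}(B)$, $\angle(V,P_{B'})\lesssim_{c}\beta_{E}^{d,q}(B')$, so that, using that $\grad A_{B}$ is parallel to $P_{B}$ and $\grad A_{B'}$ to $P_{B'}$ (Lemma~\ref{l:A_B}, \eqref{e:ABpiAB}) and then \eqref{e:ABbeta<gammaAB<gamma}, \eqref{e:gamma-monotone},
\[
|v-\pi_{V}v|\le|\grad A_{B}|\,\angle(V,P_{B})+|\grad A_{B'}|\,\angle(V,P_{B'})\lesssim_{c}|\grad A_{B}|\beta_{E}^{d,q}(B)+|\grad A_{B'}|\beta_{E}^{d,q}(B')\lesssim_{c}\gamma_{f}^{q}(B').
\]
Adding the two components proves \eqref{e:gradA-gradA}.

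For \eqref{e:A<gamma+av} I would run the same scheme directly on $B'$: take balanced balls $B'_{0},\dots,B'_{d}\subset B'$ and pick $w_{i}\in B'_{i}\cap E$ (again via Chebyshev) with $\dist(w_{i},P_{B'})\lesssim_{c}\beta_{E}^{d,q}(B')r_{B'}$, with $|f(w_{i})-A_{B'}(w_{i})|\lesssim_{c}\gamma_{f}^{q}(B')r_{B'}$ (using $\Omega_{f}^{q}(B',A_{B'})\le\gamma_{f}^{q}(B',A_{B'})\lesssim_{c}\gamma_{f}^{q}(B')$ from Lemma~\ref{l:A_B}), and with $g(w_{i})\lesssim_{c}\big(\avint_{5\Lambda B'}g^{p}\,d\sigma\big)^{1/p}$ (Chebyshev for $g^{p}$ over $B'$, then enlarging the ball by Ahlfors regularity). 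Decomposing $\grad A_{B'}$ into its parts tangent and orthogonal to the span of the $w_{i}$, the orthogonal part is $\lesssim_{c}|\grad A_{B'}|\beta_{E}^{d,q}(B')\lesssim_{c}\gamma_{f}^{q}(B')$ by \eqref{e:ABbeta<gammaAB<gamma}, and the tangential part is $\lesssim_{c}r_{B'}^{-1}\max_{i,j}|A_{B'}(w_{i})-A_{B'}(w_{j})|$, which I would estimate by splitting $A_{B'}(w_{i})-A_{B'}(w_{j})$ through $f(w_{i})$ and $f(w_{j})$ and invoking the Haj\l{}asz upper gradient inequality \eqref{e:upper-gradients}, $|f(w_{i})-f(w_{j})|\le|w_{i}-w_{j}|(g(w_{i})+g(w_{j}))\lesssim_{c}r_{B'}\big(\avint_{5\Lambda B'}g^{p}\big)^{1/p}$; this gives \eqref{e:A<gamma+av}. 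Finally, for \eqref{e:AB-AB'}, set $L=A_{B}-A_{B'}$: the preliminary bound gives $\avint_{B}|L|\,d\sigma\lesssim_{c}\gamma_{f}^{q}(B')r_{B}$, and comparing $L(x_{B'})$ with the barycentric average $\avint_{B}L\,d\sigma$ (the barycenter of $B\cap E$ lies within $\lesssim r_{B'}$ of $x_{B'}$ since $x_{B}\in B'$) and using \eqref{e:gradA-gradA} yields $|L(x_{B'})|\lesssim_{c}\gamma_{f}^{q}(B')r_{B'}$; then $L(x)=L(x_{B'})+\grad L\cdot(x-x_{B'})$ with $|x-x_{B'}|\lesssim\dist(x,B')+r_{B'}$ and \eqref{e:gradA-gradA} close the argument.

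The hard part will be \eqref{e:gradA-gradA}, precisely the component of $\grad A_{B}-\grad A_{B'}$ transverse to the span of the chosen points: the smallness of $A_{B}-A_{B'}$ on $E\cap B$ only controls the part of the gradient difference lying in the $d$-plane spanned by well-separated points of $E$, since $E$ need not span all $n$ directions, so the transverse part has to be absorbed separately. This is exactly where one exploits that $\grad A_{B}$ and $\grad A_{B'}$ already lie in the approximating planes $P_{B}$ and $P_{B'}$ (from \eqref{e:ABpiAB}), that these planes make angle $\lesssim_{c}\beta_{E}^{d,q}$ with the span of the chosen points, and that $|\grad A_{\cdot}|\,\beta_{E}^{d,q}\lesssim\gamma_{f}^{q}$ by \eqref{e:ABbeta<gammaAB<gamma}. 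Everything else is bookkeeping with the monotonicity lemmas and Chebyshev's inequality.
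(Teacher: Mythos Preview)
Your proposal is correct and follows the same geometric skeleton as the paper: balanced balls from Lemma~\ref{l:balanced-balls}, a tangential/normal decomposition of the relevant gradient, with the normal part handled via $A_{B}\circ\pi_{B}=A_{B}$ and \eqref{e:ABbeta<gammaAB<gamma}. There are, however, some stylistic differences worth recording.

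The paper proves \eqref{e:gradA-gradA} and \eqref{e:A<gamma+av} by splitting into two cases, $\beta_{E}^{d,q}(B')\ge\ve$ (where both follow trivially from \eqref{e:ABbeta<gammaAB<gamma}) and $\beta_{E}^{d,q}(B')<\ve$ (where the balanced-balls argument runs). You avoid this split by using that the angle estimates $\angle(V,P_{B})\lesssim_{c}\beta_{E}^{d,q}(B)$ hold uniformly (trivially when $\beta$ is of order one, since angles are bounded); this is a mild streamlining. Second, for the tangential part in \eqref{e:gradA-gradA} the paper routes through $f$ at the selected points (controlling $|A_{B}(y_i)-f(x_i)|$ and $|A_{B'}(y_i')-f(x_i)|$ separately, see Claim~\ref{c:balanced-points}), whereas you use your preliminary bound on $\Omega^{q}_{A_B-A_{B'}}(B,0)$ directly; both reach the same place, your route being a touch shorter. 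Third, for \eqref{e:A<gamma+av} the paper bounds the tangential part of $\grad A_{B'}$ through double averages $\avint_{B_i}\avint_{B_0}|A_{B'}(x)-A_{B'}(y)|$ and then invokes the Poincar\'e inequality (Proposition~\ref{t:Hajlasz-poincare}) on $\avint_{B'}|f-f_{B'}|$, while you use the pointwise Haj\l{}asz inequality \eqref{e:upper-gradients} at Chebyshev-selected points $w_i$; your version is fine provided one remembers that \eqref{e:upper-gradients} holds off a single $\sigma$-null set, so the $w_i$ can be chosen in its complement. Finally, for \eqref{e:AB-AB'} the paper chooses a single Chebyshev point $x_0\in B\cap\Sigma$ with $|A_B(x_0)-f(x_0)|$ and $|A_{B'}(x_0)-f(x_0)|$ both small, whereas your barycenter argument bypasses $f$ altogether; both are equally valid.
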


\begin{proof}
First we show \eqref{e:AB-AB'} assuming \eqref{e:gradA-gradA}. Let $x_0$ be a point in $B'\cap \Sigma$ and $x\in \R^{n}$. We compute
\begin{align}
& |A_{B}(x)-A_{B'}(x)| \nonumber \\
& \leq |\grad A_{B}(x-x_{0}) - \grad A_{B'}(x-x_{0})|+|A_{B}(x_{0})-A_{B'}(x_{0})| \nonumber \\
& \leq |\grad A_{B}-\grad A_{B'}||x-x_{0}|+|A_{B}(x_{0})-f(x_{B})|+|f(x_{B})-A_{B'}(x_{0})| \label{e:form2}
\end{align}
Using \eqref{e:gradA-gradA}, we can bound the first term in \eqref{e:form2} by $\gamma_f^q(B') |x-x_0| \lesssim \gamma_f^q(B') \dist(x, B')$. Using Chebyshev's inequality, we can choose $x_0 \in B \cap \Sigma$ such that the last two terms in \eqref{e:form2} are bounded by $\gamma_f^q(B)r_B$ and $\gamma_f^q(B')r_{B'}$, respectively. We then obtain \eqref{e:AB-AB'} using the quasi-monotonicty of $\gamma_f^q$, as per \eqref{e:gamma-monotone}.\\

\noindent
We now focus on proving \eqref{e:gradA-gradA} and \eqref{e:A<gamma+av}. Let $\ve>0$ to be chosen later. We look at two cases.\\

\noindent
\textit{\underline{Case 1.}} First, we assume that $\beta_{\Sigma}^{d,q}(B')\geq \ve$. Let $A_B$, $A_{B'} \in \dM(n,1)$ as in Lemma \ref{l:A_B}. Using the bound $|\nabla A_B|\beta_\Sigma^{d,q}(B) \lesssim \gamma_f^q(B)$ (and the equivalent for $B'$, see \eqref{e:ABbeta<gammaAB<gamma}) and the quasi-monotonicity of $\gamma_f^q$, see \eqref{e:gamma-monotone}, we compute
\begin{multline*}
|\grad A_{B}-\grad A_{B'}|
\leq |\grad A_{B}|+|\grad A_{B'}|\\
\leq \ve^{-1}  |\grad A_{B}| \beta_{\Sigma}^{d,q}(B)+\ve^{-1}|\grad A_{B'}| \beta_{\Sigma}^{d,q}(B')
\stackrel{\eqref{e:ABbeta<gammaAB<gamma}\atop \eqref{e:gamma-monotone}}{\lec} \ve^{-1} \gamma_{f}^{q}(B').
\end{multline*}
Moreover, we clearly have 
 \begin{equation*}
 |\grad A_{B'}|\leq \ve^{-1}  |\grad A_{B'}| \beta_{\Sigma}^{d,q}(B')
 \stackrel{\eqref{e:ABbeta<gammaAB<gamma}}{\lec} \ve^{-1} \gamma_{f}^{q}(B').    
 \end{equation*}
This proves \eqref{e:gradA-gradA} and \eqref{e:A<gamma+av} in this case. Note that there are no constraints on $q$ other than $q \geq 1$.\\

\noindent
Before looking into Case 2, we need the following auxiliary claim, which we will prove below. 
\begin{claim}\label{c:balanced-points}
	Keep the notation and assumptions form Lemma \ref{l:A-A}. Let $\{B_i\}_{i=0}^d$ be the $d+1$ balls found by applying Lemma \ref{l:balanced-balls} to the ball $B$ (as in the statement of Lemma \ref{l:A-A}). We can find $d+1$ points $x_{i}\in B_{i}\cap \Sigma$, $i=0,...,d$, so that if $y_{i}=\pi_{B}(x_i)$ and $y_i'=\pi_{B'}(x_{i})$,
	\begin{equation}
		\label{e:xyy'}
		|x_{i}-y_{i}| + |x_{i}-y_{i}'| \lec_{c} \beta_{\Sigma}^{d,q}(B)r_{B} +\beta_{\Sigma}^{d,q}(B')r_{B'},
	\end{equation}
	and
	\begin{equation}
		|f(x_{i})-A_{B}(y_{i})|  +|f(x_{i})-A_{B'}(y_{i}')|
		\lec_{c} \Omega_{f}^{q}(B,A_B)+\Omega_{f}^{q}(B',A_{B'}).
		\label{e:fxi-ABxi}
	\end{equation}
\end{claim}

\noindent
\textit{\underline{Case 2.}} Now suppose
\[
\beta_{\Sigma}^{d,q}(B)\lec_{c}\beta_{\Sigma}^{d,q}(B')< \ve.
\] 
Let $x_i$, $0\leq i\leq d$ be the points found in Claim \ref{c:balanced-points}, and recall the notation $\{y_i\}_{i=0}^d \subset P_B$ and $\{y'_i\}_{i=0}^d \subset P_{B'}$. Since $P_B \in \dA(n,d)$ and $y_0 \in P_B$, then $P_B-y_0 \in \dG(n,d)$ and $P_B^\perp-y_0$ is its orthogonal complement. Any $v \in \bB$ can be written as $\alpha_1 v_1 + \alpha_2 v_2$, where $v_1 \in P_B-y_0$, $v_2 \in P_B^\perp -y_0$, $|v_i|=1$, and $\alpha_i \leq 1$. Thus
\begin{align}
    |\nabla A_B - \nabla A_{B'}| & = \sup_{v \in \bB} \left|(\nabla A_B - \nabla A_{B'})v\right| \nonumber \\
    & \leq \sup_{\substack{\alpha_i\leq 1, \, i=1,2\\ |v_i|=1, i=1,2}} \sum_{i=1}^2 \left|(\nabla A_B - \nabla A_{B'})\alpha_i v_i\right| \nonumber \\
    & \leq \sup_{v \in \bB \cap P_B-y_0}\left|(\nabla A_B - \nabla A_{B'})v\right| \nonumber \\
    & \quad \quad \quad + \sup_{v \in \bB \cap P_B^\perp-y_0}\left|(\nabla A_B - \nabla A_{B'})v\right|. \label{e:form4}
\end{align}
We will bound the last two terms in \eqref{e:form4} separately.\\

\noindent
If we choose $\ve>0$ sufficiently small with respect to the constant $c$ in \eqref{e:d-dim-points-1}, Lemma \ref{l:balanced-balls}, then
\begin{equation}
\label{e:y-spread}
\dist(y_{i+1},{\rm span}\{y_{0},...,y_{i}\})\geq c'r_{B} \;\;\mbox{ and }\;\; \dist(y_{i+1}',{\rm span}\{y_{0}',...,y_{i}'\})\geq c'r_{B},
\end{equation}
where $c' \approx c$. We immediately relabel $c'$ as $c$ to keep a manageable notation.
In particular, the vectors $\{u_{i}=y_{i}-y_{0}:i=1,...,d\}$ form a basis for $P_{B}-y_{0}$, and also $|u_i| \approx r_B$. Hence there exists a $1 \leq j \leq d$ so that 
\begin{align}
\sup_{v\in P_{B}-y_{0}\atop |v|=r_{B}} |(\grad A_{B}-\grad A_{B'})v|
\lec_d |(\grad A_{B}-\grad A_{B'})(y_{j}-y_{0})|. \label{e:form3}
\end{align}
We continue computing:
\begin{align*}
\eqref{e:form3} &  \approx_d \max_{1 \leq i\leq d}|(A_{B}(y_{j})-A_{B}(y_{0}))-(A_{B'}(y_{j})-A_{B'}(y_{0}))|\\
& \quad \quad \quad \lec_d \left|\left(A_{B}(y_{j})-A_{B}(y_{0})\right)+ \left(A_{B'}(y_{0}')-A_{B'}(y_{j}'\right))\right| \\ &  \quad \quad \quad \quad  \quad \quad \quad+ \left|\grad A_{B'}\right|\left(|y_{j}-y_{j}'|+|y_{0}-y_{0}'|\right) \\
& \quad \quad \quad =: T_1 + T_2.
\end{align*}
Now, 
\begin{align*}
    T_1 &\leq \left| \left(A_B(y_j)-A_B(y_0)\right) - \left(f(x_j)-f(x_0)\right)\right| \\
    & \quad \quad \quad + \left|(f(x_j)-f(x_0)) - \left(A_{B'}(y'_j)- A_{B'}(y'_0) \right)\right|.
\end{align*}
Recall now that we chose the points $\{x_i\}$ appropriately, and so, by Claim \ref{c:balanced-points}, they satisfy \eqref{e:fxi-ABxi}. This gives the bound
\begin{align*}
    T_1 & \lesssim_{d} \Omega_f^q(B, A_B)r_B + \Omega_f^q(B', A_{B'})r_{B'} \\ & \stackrel{\eqref{e:gamma-monotone}}{\lesssim_{d,c}} \Omega_f^q(B', A_{B'})r_{B'} \stackrel{\eqref{e:ABbeta<gammaAB<gamma}}{\lesssim_{d,c}} \gamma_f^q(B')r_{B'}.
\end{align*}
On the other hand, again by the choice of $\{x_i\}$ \eqref{e:xyy'},
\begin{align*}
T_2 \lesssim |\grad A_{B'}|\beta_{\Sigma}^{d,q}(B')r_{B'}
 \stackrel{\eqref{e:ABbeta<gammaAB<gamma}}{\lec} \gamma_{f}^{q}(B')r_{B'}.
\end{align*}
This shows that
\begin{equation*}
    \sup_{\substack{v \in P_B-y_0 \\ |v|=1}} \left|( \nabla A_B - \nabla A_{B'})v\right| \lesssim \gamma_f^q(B').
\end{equation*}
We now bound the last term in \eqref{e:form4}. If $v \in P_B^\perp - y_0$, then 
\begin{equation*}
|\pi_{B'}(v)|
= |\pi_{B'}(v)-\pi_{B}(v)|
\leq \|\pi_{B'}-\pi_{B}\|_{{\rm op}}\cdot |v|\stackrel{\eqref{e:beta-angle} \atop {{\rm Lemma}\,  \ref{l:beta-properties}}}{\lec} \beta_{\Sigma}^{d,q}(B')|v|,
\end{equation*}
and so if $|v|=r_{B}$,
\[
|(\grad A_{B}-\grad A_{B'})v|
=|\grad A_{B'}v|
\leq |\grad A_{B'}|\cdot \beta_{\Sigma}^{d,q}(B')r_{B}.
\]
Combining the above estimates gives \eqref{e:gradA-gradA} also in Case 2 (small $\beta$'s).\\ 

\noindent
What we have left, is to prove \eqref{e:A<gamma+av} for Case 2 (small $\beta$'s).
By Chebychev, we can find points $z_i\in B_i\cap \Sigma$ so that 
\[
\frac{|A_{B'}(z_i)-A_{B'}(z_0)|}{|z_i-z_0|}
\lec 
\avint_{B_{i}}\avint_{B_{0}}\frac{|A_{B'}(x)-A_{B'}(y)|}{r_{B'}}d\sigma(x)d\sigma(y).
\]
Just as before with the $y_i$, \eqref{e:d-dim-points} implies that 
\[
|\grad A_{B'}|
\lec \max_{i=1,...,d}\frac{|A_{B'}(z_i)-A_{B'}(z_0)|}{|z_i-z_0|}.
\]
Combining these inequalities gives

\begin{align*}
|\grad A_{B'}|
& \lec  \max_{i}\avint_{B_{i}}\avint_{B_{0}}\frac{|A_{B'}(x)-A_{B'}(y)|}{r_{B'}}d\sigma(x)d\sigma(y)\\
& \lec \Omega_{f}^{1}(B,A_{B'})+\max_{i}\avint_{B_{i}}\avint_{B_{0}}\frac{|f(x)-f(y)|}{r_{B'}}d\sigma(x)d\sigma(y).
\end{align*}
Now, using \eqref{e:omega-monotone} and \eqref{e:ABbeta<gammaAB<gamma}, we obtain 
\begin{equation}
    \Omega_f^1(B', A_B') \lesssim \gamma_f^q(B', A_B') \lesssim \gamma_f^q(B'). 
\end{equation}
To deal with the other term, we compute, for each $0\leq i \leq d$, 
\begin{align*}
& \avint_{B_{i}}\avint_{B_{0}}\frac{|f(x)-f(y)|}{r_{B'}}d\sigma(x)d\sigma(y) \\
&\leq \avint_{B_{i}}\avint_{B_{0}}\frac{|f(x)-f_{B'}|}{r_{B'}}d\sigma(x)d\sigma(y) + \avint_{B_{i}}\avint_{B_{0}}\frac{|f_{B'}-f(y)|}{r_{B'}}d\sigma(x)d\sigma(y)\\
& \lesssim \avint_{B'}\frac{|f(x)-f_{B'}|}{r_{B'}} \, d \sigma(x) \lesssim \left(\avint_{5\Lambda B'} g^p d\sigma\right)^{\frac{1}{p}},
\end{align*}
where we used the fact that $g \in \Grad_p(f)$ and Theorem \ref{t:Hajlasz-poincare} together with Jensen's inequality.
This proves \eqref{e:A<gamma+av} in this case as well, and finishes the proof of the lemma.
\end{proof}

\begin{proof}[Proof of Claim \ref{c:balanced-points}]
It is easy to see that we can assume that the quantities in the upper bounds of \eqref{e:xyy'} and \eqref{e:fxi-ABxi} are positive, for otherwise the claim follows trivially. Let $\{B_i\}_{i=0}^d$ be the family of balls as in the statement of the Claim. Let $C>0$ and set
\begin{align*}
E_{i}=\big\{ & x\in B_{i}: \dist(x,P_{B})\geq C \beta_{\Sigma}^{d,q}(B)r_{B}, \\
&  \dist(x,P_{B'})\geq C \beta_{\Sigma}^{d,q}(B')r_{B'}, \\
& |f(x)-A_{B}(x)|>C\Omega_{f}^{q}(B,A_B)r_{B},\;\; \mbox{ or }\\  & |f(x)-A_{B'}(x)|>C\Omega_{f}^{q}(B',A_{B'})r_{B'}\big\}.
\end{align*}
If $E_{i}=B_{i}$, then by Chebychev's inequality,
\begin{align*}
1
& \leq \avint_{B_i} \frac{\dist(x,P_{B})}{ C \beta_{\Sigma}^{d,q}(B)r_{B}}d\sigma(x) + \avint_{B_i} \frac{\dist(x,P_{B'})}{ C \beta_{\Sigma}^{d,q}(B')r_{B'}}d\sigma(x) \\
& \qquad + \avint_{B_{i}} \frac{|f(x)-A_{B}(x)|}{C\Omega_{f}^{q}(B,A_B)r_{B}}d\sigma(x) +\avint_{B_{i}} \frac{|f(x)-A_{B'}(x)|}{C\Omega_{f}^{q}(B',A_{B'})r_{B'}}d\sigma(x)  \\
& \lec_{c} C^{-1}
\end{align*}
which is a contradiction for $C>0$ large enough (depending on $c$), thus we can find $x_{i}\in B_{i}\cap \Sigma\backslash E_{i}$ which satisfies the above properties (and recall that $A_{B}(x_i)=A_{B}\circ\pi_{B}(x_i)=A_{B}(y_{i})$). This proves the claim. 
\end{proof}

%
%
%
%
%

\newpage
\begin{center}
\part{}\label{p:Part1}

\Large\bfseries{Proof of Theorem \ref{t:main-A}.\\ The Haj\l{}asz upper gradients control the square function.} 
\end{center}

 \section{Proof of Theorem \ref{t:main-A} via a good $\lambda$-inequality}\label{s:tA-good-lambda}
 In the following two sections, we prove Theorem \ref{t:dorronsoro-I} below, of which Theorem \ref{t:main-A} is an immediate consequence. To state it, let us introduce some notation.
  For $q\geq 1$, and $Q_0 \in \dD(\Sigma)$, set
  \begin{equation}\label{e:G_q}
\G^q_{Q_0}f(x) =\ps{\sum_{Q_0 \supseteq Q\ni x} \gamma_{f}^{q}(B_{Q})^{2}}^\frac{1}{2} \,\, \mbox{ and } \,\, \widetilde{\G}_{Q_0}f(x) =\ps{\sum_{Q_0 \supseteq Q\ni x} \widetilde{\gamma}_{f}(B_{Q})^{2}}^\frac{1}{2}
 \end{equation} 
 \begin{theorem}
 \label{t:dorronsoro-I}
Fix $2 \leq d \leq n-1$ and $1<p<\infty$. Suppose that $\Sigma \subset \R^n$ is a uniformly $d$-rectifiable set and $Q_0\in \dD(\Sigma)$. Let $f \in M^{1,p}(\Sigma)$. Then, if $C_1\geq 1$ is chosen sufficiently large and if $1 \leq q< \min\{2^*, p^{*}\}$,
 \begin{equation}
 \|\G^q_{Q_{0}}  f\|_{L^p(Q_0)} \lec \left(\int_{2C_1 B_{Q_{0}}}g^{p} \, d \sigma \right)^{\frac{1}{p}}
 \end{equation}
 whenever $g \in \Grad_p(f)$. The same statement holds if we replace $\G^q_{Q_0}$ with $\widetilde{\dG}_{Q_0}$.
\end{theorem}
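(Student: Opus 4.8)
The plan is to deduce Theorem \ref{t:dorronsoro-I} from a good-$\lambda$ inequality comparing $\dG^q_{Q_0}f$ with a maximal function of a Haj\l{}asz upper gradient. Fix $g\in\Grad_p(f)$ and define $\dM g$ to be (roughly) the dyadic maximal function of $g^p$ raised to the power $1/p$, restricted to $Q_0$ (more precisely, averaged over balls $5\Lambda B_Q$, to match the constants appearing in Lemma \ref{l:gamma<gradf}). The key claim is: for every $\alpha>0$ there is $\ve=\ve(\alpha)>0$ such that for all $\lambda>0$,
\begin{equation*}
  \big|\{x\in Q_0:\ \dG^q_{Q_0}f(x)>\alpha\lambda,\ \dM g(x)\le\ve\lambda\}\big|\ <\ \tfrac{1}{2}\,\big|\{x\in Q_0:\ \dG^q_{Q_0}f(x)>\lambda\}\big|.
\end{equation*}
Granting this, one integrates in $\lambda$ against $p\lambda^{p-1}\,d\lambda$ in the standard way: the distributional inequality gives $\|\dG^q_{Q_0}f\|_{L^p(Q_0)}^p\le \alpha^p 2 \|\dG^q_{Q_0}f\|_{L^p(Q_0)}^p + C(\ve)\|\dM g\|_{L^p(Q_0)}^p$; choosing $\alpha$ small, the first term is absorbed (one must first verify $\|\dG^q_{Q_0}f\|_{L^p(Q_0)}<\infty$, e.g.\ by a truncation/limiting argument summing over finitely many generations and using Lemma \ref{l:gamma<gradf} to get a bound uniform in the truncation), and then the Hardy--Littlewood maximal inequality gives $\|\dM g\|_{L^p(Q_0)}\lesssim (\int_{2C_1B_{Q_0}}g^p\,d\sigma)^{1/p}$. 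The argument for $\widetilde{\dG}_{Q_0}$ is identical, using \eqref{e:gamma1<gammatilde}-type bounds and the $\wtgamma$ version of Lemma \ref{l:gamma<gradf}.

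\textbf{Proof of the good-$\lambda$ inequality.} Decompose $\{x\in Q_0:\dG^q_{Q_0}f(x)>\lambda\}$ into its maximal dyadic subcubes $\{R_j\}$; it suffices to prove, for each fixed $R=R_j$,
\begin{equation*}
  \big|\{x\in R:\ \dG^q_{Q_0}f(x)>\alpha\lambda,\ \dM g(x)\le\ve\lambda\}\big|\ <\ \tfrac{1}{2}|R|.
\end{equation*}
If the set on the left is empty there is nothing to do, so pick a point where $\dM g\le\ve\lambda$; then $\int_{5\Lambda B_R}g^p\,d\sigma\lesssim(\ve\lambda)^p|R|$, and more generally $g$ has small averages on all balls $5\Lambda B_Q$ with $Q\subseteq R$. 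By maximality of $R$, its dyadic parent $\widehat R$ satisfies $\dG^q_{Q_0}f\le\lambda$ somewhere; since $\sum_{Q_0\supseteq Q\supsetneq R}\gamma_f^q(B_Q)^2$ is constant on $R$, this ``outer'' part of the square function is $\le\lambda^2$ on all of $R$. Hence on the set in question we have $\sum_{R\supseteq Q\ni x}\gamma_f^q(B_Q)^2>(\alpha^2-1)\lambda^2$, i.e.\ the \emph{local} square function $\dG^q_R f$ exceeds $c\alpha\lambda$ there (taking $\alpha$ large, say $\alpha^2\ge 2$). So it is enough to bound $|\{x\in R:\dG^q_R f(x)>c\alpha\lambda\}|$ by $\tfrac12|R|$, which by Chebyshev follows from
\begin{equation*}
  \sum_{Q\subseteq R}\gamma_f^q(B_Q)^2\,|Q|\ \lesssim\ \ve^2\lambda^2\,|R|,
\end{equation*}
provided $\ve$ is chosen small depending on $\alpha$. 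This is the square function / Carleson-measure estimate that forms the technical heart of the argument.

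\textbf{The square function estimate.} To prove $\sum_{Q\subseteq R}\gamma_f^q(B_Q)^2|Q|\lesssim\ve^2\lambda^2|R|$: since $g$ is small in $L^p$-average at all scales inside $R$, the pair $(f,g)$ is effectively ``$\ve\lambda$-Lipschitz in the Poincar\'e sense'' on $R$. Using that $\sigma$ is Ahlfors regular and $g$ has small averages, one builds (via the Haj\l{}asz/maximal-function Lipschitz estimate of Theorem \ref{t:max-funct-est} applied on the good set $E_0=\{\dM g\le\ve\lambda\}$, which has $\sigma(E_0\cap R)\ge(1-\delta)\sigma(R)$ for $\delta$ as small as we like) a genuine Lipschitz function $F:\Sigma\to\R$ with $\Lip(F)\lesssim\ve\lambda$ that agrees with $f$ on $E_0\cap R$ up to controlled error. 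Apply David--Semmes' Proposition \ref{t:DS-Dorronsoro} to $F/(\ve\lambda)$ (a $C$-Lipschitz function after rescaling by Lemma \ref{l:gammacf=cgammaf}): this gives $\sum_{Q\subseteq R}\Omega_F^q(B_Q)^2|Q|\lesssim(\ve\lambda)^2|R|$, and, crucially, since $\Sigma$ is UR, also $\sum_{Q\subseteq R}\beta_\Sigma^{d,q}(B_Q)^2|Q|\lesssim|R|$ by the strong geometric lemma (Proposition \ref{p:strong-geometric-lemma}); combined with the bound $|\grad A_{B_Q}|\lesssim\ve\lambda$ coming from \eqref{e:A<gamma+av} and the $g$-average bound, these yield $\sum_{Q\subseteq R}\gamma_f^q(B_Q)^2|Q|\lesssim\sum(\Omega_F^q(B_Q)^2+(\ve\lambda)^2\beta_\Sigma^{d,q}(B_Q)^2)|Q|+(\text{error from }f\ne F)\lesssim(\ve\lambda)^2|R|$. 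The delicate point, and the one I expect to be the main obstacle, is transferring the square-function bound from the extension $F$ back to $f$ on the large-but-not-full set $E_0\cap R$: one must control $\Omega_f^q(B_Q)$ in terms of $\Omega_F^q(B_Q)$ plus a term measuring $\sigma(B_Q\setminus E_0)/\sigma(B_Q)$ times $\|f-F\|$-type quantities, and sum these error terms over all $Q\subseteq R$ — this requires a careful stopping-time decomposition relative to the density of $E_0$ (or a good estimate on how often $B_Q$ meets the complement of $E_0$) and is where the ``rather delicate estimates'' alluded to in the overview are concentrated. Everything else (the rescaling, the reduction to a single $R$, the passage from distributional inequality to $L^p$ bound) is routine.
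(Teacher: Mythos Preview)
Your overall strategy matches the paper's: reduce to a good-$\lambda$ inequality, decompose into maximal cubes $R$, note that $f$ is $\approx\ve\lambda$-Lipschitz on the good set $E_R=\{\dM g\le\ve\lambda\}$, extend to a global Lipschitz $F$, apply David--Semmes to $F$, and transfer the bound back to $f$ via a Whitney decomposition of $\Sigma\setminus E_R$. Two steps, however, are executed incorrectly and would make the argument fail as written.

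First, the maximal function exponent. If $\dM g=(Mg^p)^{1/p}$ as you write, then $\|\dM g\|_{L^p}^p=\|M(g^p)\|_{L^1}$, and Hardy--Littlewood is only weak-type at the endpoint, so the closing step fails. The paper fixes an exponent $s$ with $\tfrac{dq}{d+q}<s<\min\{q,p,d\}$ and uses $\dM^s=(Mg^s)^{1/s}$: the inequality $s<p$ gives $L^p$ boundedness, while the lower bound $s>\tfrac{dq}{d+q}$ (equivalently $q<s^*$) is exactly what is needed to apply Theorem~\ref{t:sobolevmetpoincare} when bounding $\int_{Q_j}|f-F|^q$ on each Whitney cube in the transfer step.

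Second, your bound $|\nabla A_{B_Q}|\lesssim\ve\lambda$ via \eqref{e:A<gamma+av} is circular: that estimate reads $|\nabla A_{B_Q}|\lesssim\gamma_f^q(B_Q)+(\avint g^p)^{1/p}$, and $\gamma_f^q(B_Q)$ is precisely the quantity you are trying to control (also, the $g$-average is not small for all $Q\subseteq R$, only for those whose ball contains a good point). The paper sidesteps this by \emph{not} using the map $A_{B_Q}$ from Lemma~\ref{l:A_B} here; instead it uses the minimiser $A_Q^F$ of $\Omega_F^{q,2C\ve\lambda}(B_Q)$, which by definition of the restricted coefficient satisfies $|\nabla A_Q^F|\le 2C\ve\lambda$ automatically, since $F$ is $C\ve\lambda$-Lipschitz. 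Then $\gamma_f^q(B_Q)\le\Omega_f^q(B_Q,A_Q^F)+2C\ve\lambda\,\beta_\Sigma^{d,q}(B_Q)$, and each piece is summable: the $\beta$-part by the strong geometric lemma, the $\Omega_f^q(\cdot,A_Q^F)$-part by $\Omega_F^q(\cdot,A_Q^F)$ (David--Semmes) plus the $L^q$ norm of $h=f-F$ estimated Whitney-cube by Whitney-cube. (Minor bookkeeping: in the good-$\lambda$ one takes $\alpha>1$ close to $1$, not ``small''; the paper uses $15/16$ in place of your $1/2$ for the same reason.)
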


\noindent
Before getting started, we need the following corollary of Proposition \ref{t:DS-Dorronsoro}.
 \begin{lemma}
 \label{c:DS-Dorronsoro}
 Let $E \subset \R^n$ be uniformly $d$-rectifiable and $f:E\rightarrow \R$ be $L$-Lipschitz with $L>0$. If $1\leq q < \frac{2d}{d-2}$ \textup{(}or $1\leq q \leq \infty$ if $d=1$\textup{)}. Then $\Omega_{f}^{q,2L}(x,r)^{2}\frac{dxdr}{r}$ is a $CL^2$-Carleson measure. 
 \end{lemma}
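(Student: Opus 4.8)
The plan is to deduce Lemma~\ref{c:DS-Dorronsoro} from Proposition~\ref{t:DS-Dorronsoro} by a simple rescaling argument, exploiting the homogeneity of the quantity $\Omega_f^q$ under multiplication of $f$ by a scalar. First I would observe that, by definition, for any affine map $A$ with $|\grad A|\leq N$ one has $\Omega_{f/L}^{q}(x,r,A/L)=L^{-1}\Omega_{f}^{q}(x,r,A)$, and $|\grad(A/L)|=L^{-1}|\grad A|$; hence taking infima over affine maps with the appropriate gradient bound gives
\begin{equation*}
\Omega_{f}^{q,2L}(x,r)=L\,\Omega_{f/L}^{q,2}(x,r).
\end{equation*}
Since $f$ is $L$-Lipschitz, $g:=f/L$ is $1$-Lipschitz, so Proposition~\ref{t:DS-Dorronsoro} applies to $g$ (with $N=2$) on the UR set $E$, yielding that $\Omega_{g}^{q,2}(x,r)^2\,\frac{dx\,dr}{r}$ is a $C$-Carleson measure, with $C=C(2)$ depending only on the UR constants of $E$, on $d$, $n$, and $q$ (and the fixed choice $N=2$).

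Then I would simply multiply through: for any ball $B$ centered on $E$ with $r_B\leq\diam(E)$,
\begin{equation*}
\int_{B}\int_{0}^{r_B}\Omega_{f}^{q,2L}(x,r)^2\,\frac{dr}{r}\,d\sigma(x)
= L^2\int_{B}\int_{0}^{r_B}\Omega_{g}^{q,2}(x,r)^2\,\frac{dr}{r}\,d\sigma(x)
\leq L^2\,C\,r_B^{d},
\end{equation*}
which is exactly the assertion that $\Omega_f^{q,2L}(x,r)^2\,\frac{dx\,dr}{r}$ is a $CL^2$-Carleson measure. The range of $q$ is inherited verbatim from Proposition~\ref{t:DS-Dorronsoro}, namely $1\leq q<\frac{2d}{d-2}$ for $d\geq 2$ and $1\leq q\leq\infty$ for $d=1$.

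There is essentially no obstacle here; the only point requiring a line of care is the interaction between the gradient constraint in the definition of $\Omega_f^{q,N}$ and the scaling, i.e.\ verifying that dividing $f$ by $L$ turns the constraint $|\grad A|\leq 2L$ into $|\grad A'|\leq 2$ under the substitution $A'=A/L$, so that the infimum defining $\Omega^{q,2L}_f$ corresponds precisely to the infimum defining $\Omega^{q,2}_{f/L}$. Once this bijection between competitors is noted, the identity $\Omega_f^{q,2L}=L\,\Omega_{f/L}^{q,2}$ is immediate and the Carleson bound follows. (If one prefers to avoid even this, one can alternatively replace $f$ by $f/L$ from the start, prove the $L=1$ case directly from Proposition~\ref{t:DS-Dorronsoro}, and record the general case as a remark; but the rescaling is clean enough to state directly.)
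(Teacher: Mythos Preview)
Your proof is correct and is essentially identical to the paper's own argument: apply Proposition~\ref{t:DS-Dorronsoro} to the $1$-Lipschitz function $f/L$ with $N=2$, then use the scaling identity $\Omega_f^{q,2L}=L\,\Omega_{f/L}^{q,2}$ to pull back the Carleson bound. If anything, you are slightly more explicit than the paper in spelling out the bijection $A\leftrightarrow A/L$ between competitors.
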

 
 \begin{proof}
 We apply Lemma \ref{t:DS-Dorronsoro} to the function $\frac{f}{L}$, which is now $1$-Lipschitz, and with $N=2$. Then any ball $B$ centered on $E$ with $r_{B}<\diam E$,
 \[
 \int_{B}\int_{0}^{r_{B}} \Omega_{f/L}^{q,2}(x,r)^{2}\frac{dr}{r}d\sigma(x)\lec r_{B}^{d}
 \]
 and so for 
\[
 \int_{B}\int_{0}^{r_{B}} \Omega_{f}^{q,2L}(x,r)^{2}\frac{dr}{r}d\sigma(x)
 = L^{2}\int_{B}\int_{0}^{r_{B}} \Omega_{f/L}^{q,2}(x,r)^{2}\frac{dr}{r}d\sigma(x)\lec L^{2}  r_{B}^{d} .
\]
 
 \end{proof}

 \noindent
\begin{remark}\label{r:notation-dorronsoro-I}
  We set some notation. Fix $n,d,p, \Sigma, Q_0$ and $f \in M^{1,p}(\Sigma)$ as in Theorem \ref{t:dorronsoro-I}. Fix also $C_1 > 2\Lambda$. These will remain fixed throughout the current section. Let $|\grad_H f| \in \Grad_p(f)$ be the minimal Haj\l{}asz upper gradient. For $1 \leq s<p$, set
  
\begin{align*}
    \dG^qf(x) :=\dG_{Q_0}^q f(x) \quad \mbox{ and } \quad \dM^{s} f(x) := \left(M_{C_1\ell(Q_0)} |\grad_H f|^{s} (x) \right)^{\frac{1}{s}},
\end{align*}
where for a function $u$
\begin{equation*}
    M_R u(x)= \sup_{0<r<R} \avint_{B(x,r)} |u| \, d\sigma.
\end{equation*}
\end{remark}
 \begin{lemma}\label{l:main-I}
Let $q$ be in the Dorronsoro's range and
\begin{equation}\label{e:s-range-pqd}
	\frac{dq}{d+q}< s <\min\{q,p, d\}
\end{equation}  
if $q>1$ or $s=1$ if $q=1$. For each $\alpha>1$ there exists an $\ve>0$ so that for each $\lambda >0$,
 \begin{equation}
 \label{e:G>a,M<vea}
 \big|\big\{x \in Q_0 \, |\, \dG^q f(x) > \alpha\lambda ,\; \dMi f(x) \leq \ve \lambda\big\}\big|
 < \frac{15}{16} \big|\big\{ x\in Q_0 \, |\, \dG^q f(x) > \lambda\big\}\big|.
 \end{equation}
 \end{lemma}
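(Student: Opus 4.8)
The plan is to deduce \eqref{e:G>a,M<vea} from a single-cube square function estimate by a Calderón--Zygmund decomposition of the level set, postponing the real work to Section~\ref{s:tA-square-function}.

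\emph{Step 1: dyadic decomposition of the level set.} Set $\Omega_\lambda:=\{x\in Q_0:\dG^q f(x)>\lambda\}$; we may assume $\Omega_\lambda\neq\varnothing$. For $x\in Q_0$, $\dG^q f(x)^2=\sum_{j\ge0}\gamma_{f}^{q}(B_{Q_x^j})^2$ is an increasing limit of partial sums over the ancestor chain $Q_0=Q_x^0\supsetneq Q_x^1\supsetneq\cdots\ni x$, and the $m$-th partial sum depends only on the cube $Q_x^m$; hence $\Omega_\lambda$ is a union of cubes of $\dD(\Sigma)$ inside $Q_0$, and I take $\{R_i\}$ to be its (pairwise disjoint) maximal such cubes. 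For $R_i\neq Q_0$, maximality forces the parent $\hat R_i\not\subseteq\Omega_\lambda$, so some $y\in\hat R_i$ has $\dG^q f(y)\le\lambda$; since every $Q$ with $\hat R_i\subseteq Q\subseteq Q_0$ contains $y$, this gives the tail bound $\sum_{R_i\subsetneq Q\subseteq Q_0}\gamma_{f}^{q}(B_Q)^2\le\lambda^2$. Writing, for $x\in R_i$, $\dG^q_{R_i}f(x):=\big(\sum_{R_i\supseteq Q\ni x}\gamma_{f}^{q}(B_Q)^2\big)^{1/2}$ and splitting the sum for $\dG^q f(x)$ at $R_i$ (the two copies of $\gamma_{f}^{q}(B_{R_i})^2$ cancel), I get $\dG^q f(x)^2\le\lambda^2+\dG^q_{R_i}f(x)^2$ for all $x\in R_i$.

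\emph{Step 2: reduction to one cube.} Let $\dB$ denote the set on the left of \eqref{e:G>a,M<vea}. Since $\dB\subseteq\Omega_\lambda=\bigsqcup_i R_i$, it suffices to show $\sigma(\dB\cap R_i)<\tfrac{15}{16}\sigma(R_i)$ for each $i$ and sum over the disjoint $R_i$. Fix $i$ and put $G_i:=\{x\in R_i:\dMi f(x)\le\ve\lambda\}$, so that $\dB\cap R_i\subseteq G_i$. If $\sigma(R_i\setminus G_i)>\tfrac1{16}\sigma(R_i)$, then $\sigma(\dB\cap R_i)\le\sigma(G_i)<\tfrac{15}{16}\sigma(R_i)$ and we are done. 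Otherwise $\sigma(G_i)\ge\tfrac{15}{16}\sigma(R_i)>\tfrac12\sigma(R_i)$, and for $x\in\dB\cap R_i$ Step~1 together with $\dG^q f(x)>\alpha\lambda$ gives $\dG^q_{R_i}f(x)>\sqrt{\alpha^2-1}\,\lambda>0$. The key input, which I isolate and prove in Section~\ref{s:tA-square-function}, is the square function estimate: if $R\in\dD(\Sigma)$ and $\sigma(\{x\in R:\dMi f(x)\le\ve\lambda\})\ge\tfrac12\sigma(R)$, then
\[
\sum_{Q\subseteq R}\gamma_{f}^{q}(B_Q)^2\,\sigma(Q)=\int_R(\dG^q_R f)^2\,d\sigma\;\lec\;(\ve\lambda)^2\,\sigma(R).
\]
Granting this, Chebyshev's inequality gives in the second case $\sigma(\{x\in R_i:\dG^q_{R_i}f(x)>\sqrt{\alpha^2-1}\,\lambda\})\lec\tfrac{\ve^2}{\alpha^2-1}\sigma(R_i)$, so choosing $\ve=\ve(\alpha)>0$ small enough makes the right-hand side $<\tfrac1{16}\sigma(R_i)$, whence $\sigma(\dB\cap R_i)<\tfrac1{16}\sigma(R_i)$. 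In either case $\sigma(\dB\cap R_i)<\tfrac{15}{16}\sigma(R_i)$, which proves \eqref{e:G>a,M<vea}.

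\emph{The main obstacle.} All of the difficulty is concentrated in the square function estimate of Step~2, and my plan for it follows the outline in the introduction. On the large set $G=\{\dMi f\le\ve\lambda\}\cap R$, the Poincaré/maximal-function machinery of Section~\ref{s:sobolev} (Proposition~\ref{t:Hajlasz-poincare}, Theorem~\ref{t:sobolevmetpoincare}) shows $f$ is essentially $\ve\lambda$-Lipschitz on $G$; I would extend it to an $\approx\ve\lambda$-Lipschitz function $F$ on $\Sigma$ (e.g. by a McShane extension), apply the David--Semmes Carleson bound of Lemma~\ref{c:DS-Dorronsoro} to obtain $\sum_{Q\subseteq R}\Omega_{F}^{q,N}(B_Q)^2\sigma(Q)\lec(\ve\lambda)^2\sigma(R)$, and absorb the geometric term via the strong geometric lemma (Proposition~\ref{p:strong-geometric-lemma}), $\sum_{Q\subseteq R}\beta_{E}^{d,q}(B_Q)^2\sigma(Q)\lec\sigma(R)$; together with Lemma~\ref{l:A_B} this bounds $\sum_{Q\subseteq R}\gamma_{f}^{q}(B_Q)^2\sigma(Q)$ on the cubes where $f$ and $F$ essentially agree. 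The delicate step is the transfer from $\dG^q F$ back to $\dG^q f$: on the cubes whose balls meet $\{\dMi f>\ve\lambda\}$ in non-negligible proportion, the minimizing affine maps may have large gradient and $f\neq F$ there, so these cubes must be packed against $\sigma(\{\dMi f>\ve\lambda\}\cap R)$, and passing between the $L^q$, $L^s$ and $L^p$ averages in that estimate (through Jensen's inequality and the Sobolev--Poincaré inequality of Theorem~\ref{t:sobolevmetpoincare}) is precisely what forces the range $\tfrac{dq}{d+q}<s<\min\{q,p,d\}$ and the largeness of $C_1$.
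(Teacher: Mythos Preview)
Your reduction to maximal cubes $R_i$ and the plan to conclude by one Chebyshev application to a local square-function bound are exactly the paper's strategy. The gap is in the square-function estimate you postulate,
\[
\int_R (\dG^q_R f)^2\,d\sigma \;\lesssim\; (\ve\lambda)^2\,\sigma(R),
\]
which is \emph{false} as written. Take $E=\R^d$, $R$ the unit cube, and $f=Mr_0\,\psi((\cdot-x_0)/r_0)$ for a fixed smooth nonaffine bump $\psi$; then $|\nabla f|\sim M$ on a ball $B_0$ of radius $r_0$ and vanishes elsewhere, so $\{\dMi f>\ve\lambda\}$ has measure $\approx M^s r_0^d$. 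With $r_0^d\sim (\ve\lambda)^s M^{-s}$ the hypothesis $|G|\ge\tfrac12|R|$ holds, yet by classical Dorronsoro $\int_R(\dG^q_R f)^2\approx\|\nabla f\|_{L^2(R)}^2\approx M^2 r_0^d\approx (\ve\lambda)^s M^{2-s}$, which is unbounded in $M$ since $s<2$ (and $s<2$ is forced whenever $p\le 2$, and always allowed). The flaw in your packing sketch is that cubes $Q$ lying \emph{entirely inside} the bad set $\{\dMi f>\ve\lambda\}$ occur at every scale and cannot be packed against its measure; for such $Q$ there is no control on $\gamma_f^q(B_Q)$ in terms of $\ve\lambda$.

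The repair is already implicit in your Step~2: since $\dB\cap R_i\subseteq G_i$, Chebyshev only needs the weaker estimate
\[
\int_{G_i}(\dG^q_{R_i} f)^2\,d\sigma \;\lesssim\; (\ve\lambda)^2\,\sigma(R_i),
\]
and this one \emph{is} provable by the method you outline. Integrating over $G_i=E_R$ forces every contributing cube $Q$ to satisfy $Q\cap E_R\neq\varnothing$, whence $\ell(Q)\gtrsim\ell(Q_j)$ for any Whitney cube $Q_j$ of $\Sigma\setminus E_R$ that meets $B_Q$; this is precisely what makes the transfer from $F$ back to $f$ sum as a convergent geometric series (this is the computation in Section~\ref{s:tA-square-function}). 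The paper organizes the same ingredients slightly differently: rather than a single integral bound over $E_R$, it applies Chebyshev three times---once to the Carleson estimate for $F$ (producing $E_R^1$), once to Lemma~\ref{l:aux-lemma-D-I} (producing $E_R^2$), and once to the strong geometric lemma for $\beta^{d,q}$ (producing $E_R^3$)---and obtains the \emph{pointwise} bound $\dG^q_R f\lesssim\ve\lambda$ on a set $E_R^3$ of measure $\ge\tfrac1{16}|R|$.
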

 \begin{remark}
 Before going any further we check that the choice of $s$ in the hypotheses of Lemma \ref{l:main-I} is in fact possible. It suffices to show that, with $1<p<\infty$ and $q$ is in the Dorronsoro range, then $dq/(d+q) < \min\{p, q, d\}$. Recall that the Dorronsoro range is given by 
 \begin{itemize}
 	\item If $d=1$, then $1 \leq q \leq \infty$.
 	\item If $d\geq 2$, then $1 \leq q < p^*$ whenever $1<p<2$; and $1 \leq q < 2^*$ whenever $2 \leq p <\infty$.
 \end{itemize}
That $dq/(d+q) < \min\{d, q\}$, is true in all cases.
 On the other hand, we see that
 \begin{itemize}
 	\item If $d=1$, then $q/(1+q)<1<p$, so that \eqref{e:s-range-pqd} is satisfied in this case.
 	\item If $d \geq 2$, we have the usual two cases: if $1<p<2$, then $q<p^*$. But note that $dp/(d+q)< p$ if and only if $q< p^*$. On the other hand, if $2\leq p <\infty$, then $q<2^*$, and $2^* \leq p^*$ since $p\geq 2$; thus \eqref{e:s-range-pqd} is satisfied in this case, too. 
 \end{itemize}    
 \end{remark}

\noindent
Now let us show how Lemma \ref{l:main-I} proves Theorem \ref{t:dorronsoro-I}. 
\begin{proof}[Proof of Theorem \ref{t:dorronsoro-I}]
Let $\alpha>1$ and let $\ve>0$ be as in Lemma \ref{l:main-I}. We compute
\begin{align*}
\int_{Q_0} \left(\dG^q f\right)^p \, d\sigma(x)
& =\int_{0}^{\infty} \left|\{x \in Q_0: \dG^q f(x) >\lambda\}\right|\lambda^{p-1}\, d\lambda\\
& =  \alpha^{p} \int_{0}^{\infty} \left|\{x \in Q_0:  \dG^q f(x)>\alpha \lambda\}\right|\lambda^{p-1}d\lambda\\
& \leq \alpha^{p}  \int_{0}^{\infty} \left|\left\{x \in Q_0:  \dG^q f(x)>\alpha \lambda \,\,  {\rm and} \,\, \dMi f(x) \leq \ve\lambda \right\}\right| \lambda^{p-1}d \, \lambda \\
& \quad \quad \quad \quad + \alpha^{p} \int_{0}^{\infty} \left|\left(\{ x \in Q_0: \dMi f(x) > \ve\lambda \}\right)\right| \lambda^{p-1}\, d\lambda \\
& := I_1 + I_2.
\end{align*}
Because Lemma \ref{l:main-I} holds for each $\alpha>1$, we apply it with $\alpha$ satisfying $\alpha^p \tfrac{15}{16}< \tfrac{16}{17}$. Then 
\begin{align}\label{e:Dorr-I-a}
	I_1 \stackrel{\eqref{e:G>a,M<vea}}{<} \alpha^p \frac{15}{16} \int_0^\infty \left|\left( \{ x \in Q_0: \dG^q f(x) > \lambda \} \right)\right| \lambda^{p-1} \, d\lambda \leq \frac{16}{17} \int_{Q_0} \left( \dG^q f \right)^p \, d \sigma.
\end{align}
On the other hand we trivially have
\begin{align}\label{e:Dorr-I-b}
	I_2 \leq a^p \ve^{-p} \int_{Q_0} \left( \dMi f\right)^p  \, d \sigma.
\end{align} 
Putting \eqref{e:Dorr-I-a} and \eqref{e:Dorr-I-b} together (and 'hiding' the upper bound for $I_1$ on the left hand side), 
\begin{align*}
\int_{Q_0} (\dG^q f)^{p}\,  d\sigma
& <\frac{34}{\ve^{p}} \int_{Q_0} \left(\dMi f\right)^{p} \,d\sigma
\lec_{\ve} \int (M_{C_1\ell(Q_0)}(\one_{2C_1 B_{Q_0}} \,g^{s}))^{\frac{p}{s}}\, d\sigma\\
& \lec_{p/s} \int_{2C_1B_{Q_{0}}} (|\grad_H f|^{s})^{\frac{p}{s}} d\sigma
 =\int_{2C_1 B_{Q_{0}}}  |\grad_H f|^{p} d\sigma
\end{align*}
where, in the last inequality, we used that the Hardy-Littlewood maximal function is bounded from $L^{p/s}(\sigma)$ to $L^{p/s}(\sigma)$, which holds since $s<p$ by assumption.
\end{proof}

\section{The proof of the good-$\lambda$ inequality}\label{s:tA-proof-lambda}
In this section we prove the good-$\lambda$ inequality in Lemma \ref{l:main-I}.
We start with some set up. Given $\lambda>0$, let $\cC_\lambda$ be the set of maximal cubes in 
 \begin{equation*}
\{x\in Q_0 \, :\,  \dG^q f (x) > \lambda\}.
\end{equation*}
To prove Lemma \ref{l:main-I}, it suffices to show that it holds for each $R \in \cC_\lambda$. That is, we aim to prove that for each $R\in \cC_\lambda$, the following is true: for each $\alpha>1$ there is $\ve>0$ so that
 \begin{equation}
 \label{e:G>a,M<vea-in-R}
  |\{x\in R \, :\,  \dG^q f(x)> \alpha\lambda ,\;\; \dMi f(x)\leq \ve \lambda\}|
  <\frac{15}{16} |R|.
  \end{equation}
 \noindent
Let now $R\in \cC_\lambda$, $\alpha>0$, and $\ve>0$ be a constant to be determined later and will only depend on $\alpha$. Let
 \begin{align}\label{e:E_R-I}
 E_R= \{x\in R \, :\,  \dMi f(x)\leq \ve \lambda\}.    
 \end{align}
 If $|E_R|<\frac{1}{2}|R|$, then
 \begin{equation*}
     |\{x\in R \, :\,  \dG^q f(x)> \alpha\lambda ,\;\; \dMi g(x)\leq \ve \lambda\}| \leq |E_R| \leq \frac{15}{16}|R|
 \end{equation*}
and we're done. We then assume that $|E_R|\geq \frac{1}{2}|R|$. Recall that the defining property of $|\grad_H f| \in \Grad_p(f)$ is that for $\sigma$-a.e. $x, y \in \Sigma$  we have that 
 \begin{align*}
|f(x)-f(y)| &\leq  |x-y| (|\grad_H f(x)| + |\grad_H f(y)|.
\end{align*}
Let $\delta>0$ be arbitrary, and pick $\delta_1$ so small so that $|x-y|\delta_1 < \delta$. Since $|\grad_H f| \in L^p$, we know that, if $\phi_t$ is an approximate identity, $\lim_{t \to 0} \phi_t * |\grad_H f|(x) = |\grad_H f|(x)$ for $\sigma$-a.e. $x$ (see \cite{duoandikoetxea2001fourier}, Corollary 2.9). Then we may pick $t$ so small, so that the following inequalities hold:
\begin{align*}
	|f(x)-f(y)| &\leq 2\delta + |x-y|(\phi_t*|\grad_H f|(x) + \phi_t * |\grad_H f|(y))\\
	& \leq 2 + \delta \left[ \left(\avint_{B(x,t)} |\grad_H f|^s \right)^{\frac{1}{s}} + \left(\avint_{B(y,t)} |\grad_H f|^s \right)^{\frac{1}{s}} \right]\\
	& \leq 2 \delta + |x-y|\left[ \left(\sup_{r\leq \Lambda |x-y|} \avint_{B(x,r)} |\grad_H f|^s \right)^{\frac{1}{s}} + \left(\sup_{r\leq \Lambda |x-y|} \avint_{B(y,r)} |\grad_H f|^s \right)^{\frac{1}{s}} \right]
\end{align*}
 If $x,y \in E_R$, then $|x-y| \leq \ell(Q_0)$, and so, since $C_1 \geq 2 \Lambda$, $M_{\Lambda|x-y|} g^s \leq M_{C_1\ell(Q_0)} |\grad_H f|^s$. Hence $f$ is $C\ve\lambda$-Lipschitz on $E_R$. By Kirszbraun's extension theorem, we can extend $f$ to a $C\ve\lambda$-Lipschitz function $F$ defined on all of $\Sigma$. Recall that $q < \tfrac{2d}{d-2}$. Thus we can apply Lemma \ref{c:DS-Dorronsoro} and with Chebychev's inequality we find $E^1_R\subseteq E_R$ with $|E^1_R|\geq \frac{1}{4}|R|$ so that,
 \begin{equation}
 \label{e:sum-over-E'}
\sum_{R\supseteq Q\ni x} \Omega_{F}^{q,2C\ve\lambda }(B_{Q})^{2}  \lec ( \ve \lambda)^{2} \;\; \mbox{ for }x\in E^1_R.
\end{equation}
Let $A_{Q}^{F}$ be the affine map minimizing $\Omega_{F}^{q,2C\ve \lambda }(B_{Q})$ (not to be confused with the affine map from Lemma \ref{l:A_B}). So, in particular, 
\begin{equation}
\label{e:AQF<2Ce}
|\grad A_{Q}^{F}|\leq 2C\ve\lambda\;\;\mbox{ if  $Q\subseteq R$ intersects $E_R^1$}.
\end{equation}
\begin{lemma}\label{l:aux-lemma-D-I}
Assume the notation of Remark \ref{r:notation-dorronsoro-I}. Suppose that $1 \leq q < \min\{2^*, p^*\}$ and $1\leq s < \min\{q,p,d\}$ if $q>1$ or $s=1$ if $q=1$. Then
\begin{equation}
\label{e:intE'Omega}
\avint_{E^1_R}\sum_{R\supseteq Q\ni x} \Omega_{f}^{q}(B_{Q},A_{Q}^{F})^{2}
\lec (\ve \lambda)^2.
\end{equation}
\end{lemma}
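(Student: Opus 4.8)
The plan is to replace $f$ by its Lipschitz extension $F$ via the triangle inequality, absorb the $F$-contribution using the hypothesis \eqref{e:sum-over-E'}, and then estimate the error measuring the $L^q$-discrepancy between $f$ and $F$, which lives, scale by scale, on the part of each ball lying outside $E_R$.

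\textbf{Step 1: reduction to an error sum.} For each $R\supseteq Q$, Minkowski's inequality gives $\Omega_f^q(B_Q,A_Q^F)\lec_q\Omega_F^{q,2C\ve\lambda}(B_Q)+\mathcal E_Q$, where $\mathcal E_Q:=\big(\avint_{B_Q}(|f-F|/r_{B_Q})^q\,d\sigma\big)^{1/q}$, hence $\Omega_f^q(B_Q,A_Q^F)^2\lec_q\Omega_F^{q,2C\ve\lambda}(B_Q)^2+\mathcal E_Q^2$. Summing over $Q\ni x$ and averaging over $E_R^1$, the first term is $\lec(\ve\lambda)^2$ by \eqref{e:sum-over-E'}, so it remains to show $\avint_{E_R^1}\sum_{R\supseteq Q\ni x}\mathcal E_Q^2\,d\sigma(x)\lec(\ve\lambda)^2$.

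\textbf{Step 2: pointwise bound on $f-F$.} Since $f=F$ $\sigma$-a.e.\ on $E_R$, the average defining $\mathcal E_Q$ is really over $B_Q\setminus E_R$, and in particular $\mathcal E_Q=0$ once $B_Q\subseteq E_R$. For a cube $Q$ occurring in the sum for some $x\in E_R^1$ we have $x\in Q\cap E_R^1$ and $f(x)=F(x)$, so for $y\in B_Q$ (where $|x-y|\le2\ell(Q)$) I would combine the $C\ve\lambda$-Lipschitz bound for $F$ with the Haj\l asz--maximal estimate (Theorem \ref{t:max-funct-est} / \cite{hajlasz2000sobolev}, used with exponent $s$ — the pair $(f,|\grad_Hf|)$ satisfies a $(1,s)$-Poincar\'e inequality by Proposition \ref{t:Hajlasz-poincare}) and the bound $(M_{\Lambda|x-y|}|\grad_Hf|^s(x))^{1/s}\le\dMi f(x)\le\ve\lambda$ (legitimate since $C_1>2\Lambda$) to get $|f(y)-F(y)|\lec r_{B_Q}\big(\ve\lambda+(M_{\Lambda|x-y|}|\grad_Hf|^s(y))^{1/s}\big)$. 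Then, with $\mu:=\big((\ve\lambda)^q+\dMi f^q\big)\,\one_{E\setminus E_R}\,d\sigma$, one has $\mathcal E_Q^2\lec(\mu(B_Q)/\sigma(B_Q))^{2/q}$, and the task becomes $\sum_{R\supseteq Q}(\mu(B_Q)/\sigma(B_Q))^{2/q}\,\sigma(Q\cap E_R^1)\lec(\ve\lambda)^2\sigma(R)$.

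\textbf{Step 3: summing the error — the hard part.} Only cubes straddling the boundary of $E_R$ (those with $B_Q\cap(E\setminus E_R)\ne\emptyset$ and $Q\cap E_R^1\ne\emptyset$) contribute, so for these $\ell(Q)\gtrsim\dist(x_Q,E\setminus E_R)$. On $R\setminus E_R$ one has $\dMi f>\ve\lambda$, so there $(\ve\lambda)^q+\dMi f^q\approx\dMi f^q$; I would then split the sum into (i) the large-values part $\{\dMi f\ge K\ve\lambda\}$, estimated — this is where $s<q$ (guaranteed by the Dorronsoro range) is essential — via the weak-type bound for the truncated maximal operator applied to $|\grad_Hf|^s$, the estimate $\avint|\grad_Hf|^s\le(\ve\lambda)^s$ on balls meeting $E_R^1$, and $K$ large; (ii) the $\ve\lambda$-part, handled by a Whitney/stopping-time decomposition of $R$ with respect to $E_R$ giving a packing bound for the stopped cubes against $\sigma(R\setminus E_R)\le\tfrac12\sigma(R)$, while on the surviving cubes $\sigma(B_Q\setminus E_R)/\sigma(B_Q)$ is small and can be summed by telescoping; (iii) the cubes $Q\subseteq R$ with $B_Q\not\subseteq R$, absorbed through the small-boundary condition of the dyadic lattice. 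The main obstacle is precisely carrying out (i)--(iii) so that each piece comes out $\lec(\ve\lambda)^2\sigma(R)$: since $q$ may exceed $p$, the maximal function on $E\setminus E_R$ need not lie in $L^q$, so one cannot crudely bound $\int\dMi f^q$ and must keep the localization coming from the straddling structure throughout.

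Once Lemma \ref{l:aux-lemma-D-I} is in hand, Lemma \ref{l:main-I} follows routinely: write $\gamma_f^q(B_Q)\le\Omega_f^q(B_Q,A_Q^F)+|\grad A_Q^F|\,\beta_E^{d,q}(B_Q)$, use $|\grad A_Q^F|\le2C\ve\lambda$ and the strong geometric lemma (Proposition \ref{p:strong-geometric-lemma}) to control $\sum_{R\supseteq Q\ni x}\beta_E^{d,q}(B_Q)^2$ on average, and conclude by Chebyshev.
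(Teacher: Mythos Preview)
Your Step 1 is fine and matches the paper. The gap is in Steps 2--3: the pointwise bound $|f(y)-F(y)|\lec r_{B_Q}\big(\ve\lambda+\dMi f(y)\big)$ forces you to control $\int_{B_Q\setminus E_R}\dMi f^q\,d\sigma$, and as you yourself note, this is precisely the obstacle since $q$ may exceed $p$. You then leave (i)--(iii) as an outline with an acknowledged ``main obstacle'' rather than a proof, so the argument is incomplete at exactly the point where the content lies.

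The paper sidesteps this difficulty by never passing to a pointwise maximal bound at points of $E\setminus E_R$. Instead: take a Whitney decomposition $\{Q_j\}$ of $E\setminus E_R^1$ (note $E_R^1$, not $E_R$), and for each $Q_j$ pick $x_j\in E_R^1$ with $|x_{Q_j}-x_j|\approx\dist(Q_j,E_R^1)$, so $Q_j\subset B_j:=B(x_j,C\ell(Q_j))$. Now bound $\int_{Q_j}|h|^q\,d\sigma$ \emph{directly}. Split $|f-F|\le|f-f_{B_j}|+|f_{B_j}-F|$; the second piece is handled by the Lipschitz bound on $F$ together with the Haj\l asz inequality evaluated at the fixed point $x_j\in E_R^1$ (where $\dMi f(x_j)\le\ve\lambda$), and the first piece by the Sobolev--Poincar\'e inequality (Theorem \ref{t:sobolevmetpoincare}) for the pair $(f,g)$ with exponents $q<s^*$ --- this is where the lower bound $dq/(d+q)<s$ from Lemma \ref{l:main-I} is used. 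The outcome is
\[
\int_{Q_j}|h|^q\,d\sigma\lec(\ve\lambda)^q\,\ell(Q_j)^{d+q},
\]
with the maximal function appearing only at $x_j\in E_R^1$, never at points off $E_R$. The remaining double sum then collapses by Whitney geometry: if $Q\cap E_R^1\ne\emptyset$ and $B_Q\cap Q_j\ne\emptyset$ then $\ell(Q)\gec\ell(Q_j)$, and since $q<2d/(d-2)$ the exponent $2(d/q+1)-d$ is positive, so the inner sum is a geometric series.

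In short, the idea you are missing is to estimate the $L^q$-mass of $h$ on Whitney pieces via Poincar\'e--Sobolev centred at a nearby point of $E_R^1$, rather than via a pointwise maximal bound at the bad points themselves. This is what makes the problem you flag in Step 3 disappear.
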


\noindent
Let's finish the proof of the good-$\lambda$ inequality assuming Lemma \ref{l:aux-lemma-D-I}.
\begin{proof}[Proof of Lemma \ref{l:main-I}]
Lemma \ref{l:aux-lemma-D-I} together with Chebychev's inequality implies that there is $E^2_R\subseteq E^1_R$ with $|E^2_R|\geq \frac{1}{2}|E^1_R|\geq \frac{1}{8} |R|$ so that,  for all $x\in E^2_R$,
\begin{equation}
\label{e:sumomegaQAQF<elambda}
\sum_{R\supseteq Q\ni x} \Omega_{f}^{q}(B_{Q}, A_{Q}^{F})^{2}
\lec (\ve\lambda)^{2}
.\end{equation}
Since $\Sigma$ is uniformly $d$-rectifiable we can use Proposition \ref{p:strong-geometric-lemma}: there is $E^3_R\subseteq E^2_R$ with $|E^3_R|\geq \frac{1}{16}|R|$ so that for all $x\in E^3_R$
 \begin{equation}
 \label{e:sumbetaQ<1}
\sum_{R \supseteq Q \ni x}\beta_{\Sigma}^{d,q}(B_Q)^{2} \lec 1,
\end{equation}
where the constant behind the symbol $\lesssim$ depends on the uniform rectifiability constants of $\Sigma$. 
Denote by $\wh{R}$ the unique parent cube of $R$. Then, for $x\in E^2_R$ and $0<r\leq \ell(\widehat{R})$, if $x\in Q\subseteq \widehat{R}$ and $q\geq1$,
\begin{multline}\label{e:form10}
\gamma_{f}^{q}(B_{Q})
\leq \gamma_{f}^{q}(B_{Q},A_{Q}^{F})
= \Omega_{f}^{q}(B_{Q},A_{Q}^{F})+|\grad A_{Q}^{F}|\cdot \beta_{\sigma}^{d,q}(B_{Q})\\
\stackrel{\eqref{e:AQF<2Ce}}{\lec}  \Omega_{f}^{q}(B_{Q},A_{Q}^{F})+\ve\lambda \cdot \beta_{\sigma}^{d,q}(B_{Q}),
\end{multline}
which gives 
\begin{align}
 \sum_{R\supseteq Q\ni x}
\gamma_{f}^{q}(B_{Q})^2
  & \stackrel{\eqref{e:form10}}{\lesssim}  \sum_{R \supseteq Q \ni x} \Omega_f^q(B_Q, A^F_Q)^2 + (\ve\lambda)^2 \sum_{R \subseteq Q \ni x} \beta_\Sigma^q(B_Q)^2 \nonumber \\
& \quad \quad \quad \quad \quad \quad \quad  \stackrel{\eqref{e:sumomegaQAQF<elambda}, \eqref{e:sumbetaQ<1}}{\lec} (\ve\lambda)^{2} \;\; \mbox{ for all} \;\; x\in E^3_R. \label{e:form12}
\end{align}
For $\widetilde \gamma_f$ we obtain the very same estimate, since \eqref{e:sumbetaQ<1} also holds for $\alpha_\sigma^d$.
Because $R \in \cC_\lambda$ is a maximal cube of $\{x \in Q_0 \,|\, \dG^g f(x) > \lambda\}$, then
\begin{align}\label{e:form11}
    \sum_{\wh{R}\subseteq Q\subseteq Q_0}
\gamma_{f}^{q}(B_{Q})^2\leq \lambda^2.
\end{align}
Hence,
\begin{align*}
    \dG^q f(x)^2=\sum_{x\in Q\subseteq Q_0}
\gamma_{f}^{q}(B_{Q})^2
\stackrel{\eqref{e:form12}\atop \eqref{e:form11}}{\leq} \lambda^2+(C\ve\lambda)^2.
\end{align*}
Note that the constant $C$ in the last display depends only on $d,n$ and on $\Lambda$, which in turn depends only on $d$, the Ahlfors regularity constant, $p$ and $q$. Thus we can pick $\ve>0$ sufficiently small so that $C\ve +1 < \alpha$, to obtain
\begin{equation}
    \dG^q f(x)^2 \leq (\alpha \lambda)^2 \, \,  \mbox{ for each }  x \in E^3_R, \, R \in \cC_\lambda, 
\end{equation}
which implies that
\begin{equation*}
|\{x\in R: \dG^q f(x)\leq \alpha \lambda\}|
\geq |E^3_R|\geq \frac{1}{16} |R|,
\end{equation*}
and so
\begin{align*}
& |\{x\in R: \dG^q f(x)>\alpha \lambda,\;\; \dMi f(x)\leq \ve\lambda  \}| \\
&  \quad \quad \qquad \qquad 
\leq 
|\{x\in R: \dG^q f(x)>\alpha \lambda \}|
\leq \frac{15}{16}|R|.
\end{align*}
\end{proof}
\noindent 
This proves \eqref{e:G>a,M<vea-in-R} and finishes the proof. The next section is dedicated to proving the claim in \eqref{e:intE'Omega}.

\section{Proof of the square function estimate \eqref{e:intE'Omega}}\label{s:tA-square-function}

\begin{proof}[Proof of Lemma \ref{l:aux-lemma-D-I}]
By Jensen's inequality, it suffices to prove Lemma \ref{l:aux-lemma-D-I} assuming $q\geq 2$. Remark that this is consistent with the upper bound $q< \min\{2^*, p^*\}$, since, if the smaller number is $2^*$, we can find $2 \leq q < 2^*$; if $p^*$ is the smaller number, it holds nonetheless that $p^*>2$, since $p>1$. Recall also that we picked $s$ so that $dq/(d+q)<s<\min\{d,p, q\}$.
Now let 
\begin{align*}
    h=f-F,
\end{align*}
where recall that $F$ is a $C\ve \lambda$-Lipschitz function on $\Sigma$ which coincide with $f$ on $E_R$. Minkowski's inequality  gives
\begin{align*}
\Omega_{f}^{q}(B_{Q},A_{Q}^{F})
& =\ps{\avint_{B_{Q}} \ps{\frac{|f-A_{Q}^{F}|}{\ell(Q)}}^{q}d\sigma}^{\frac{1}{q}}
\\
& \leq \ps{\avint_{B_{Q}} \ps{\frac{|F-A_{Q}^{F}|}{\ell(Q)}}^{q}d\sigma}^{\frac{1}{q}} + \ps{\avint_{B_{Q}}\ps{\frac{h}{\ell(Q)}}^{q}}^{\frac{1}{q}},
\end{align*}
and so we get
\begin{align*}
& \int_{E^1_R}\sum_{\wh{R} \supseteq Q\ni x}   \Omega_{f}^{q}(B_{Q},A_{Q}^{F})^{2} d\sigma (x)
 \\ & \lesssim \int_{E^1_R} \sum_{\wh{R}\supseteq Q\ni x}   \Omega_{F}^{q}(B_{Q},A_{Q}^{F})^{2}d\sigma(x)   + \int_{E_R^1}\sum_{\wh{R}\supseteq Q\ni x}  \ps{\avint_{B_{Q}}\ps{\frac{|h|}{\ell(Q)}}^{q}}^{\frac{2}{q}}d\sigma(x) \\
& 
\stackrel{\eqref{e:sum-over-E'}}{\lec} (\ve\lambda)^{2}|E_R^1|
+\sum_{Q\subseteq R\atop Q\cap E_R\neq\emptyset} \ps{\avint_{B_{Q}}\ps{\frac{|h|}{\ell(Q)}}^{q}}^{\frac{2}{q}}|Q|\\
& \lec  (\ve\lambda)^{2}|R|+I.
\end{align*}
Now we estimate $I$. Let $\dW_R:=\{Q_j\}_{j \in \mathbb{N}} \subset \dD(\Sigma)$ be a Whitney decomposition of $\Sigma\backslash E_R^1$, that is, a family of maximal cubes $Q_j$ for which $10B_{Q_{j}}\cap E^1_R=\emptyset$. For each $Q_j \in \dW_R$, let $x_j \in E_R^1$ be so that 
\begin{align*}
    |x_{Q_j}-x_j| \approx \dist(Q_j, E_R^1).
\end{align*}
(Recall that $x_{Q}$ denotes the center of the cube $Q$). This in particular implies that we can find a ball $B_j$ centered on $x_j$ such that $r(B_j) \approx \ell(Q_j)$ and so that $Q_j \subset B_j$. We now compute
\begin{align*}
    \int_{Q_j} |f(y)- F(y)|^q \, d \sigma(y) & \lesssim \int_{B_j} |f(y) - f_{B_j}|^q \, d\sigma(y) \\
    & \qquad \qquad + \int_{B_j} |f_{B_j} - F(y)|^q \, d \sigma(y) := I_1(Q_j) + I_2(Q_j).
\end{align*}
We first estimate $I_2(Q_j)$.
We have \begin{align*}
   \int_{B_j} \left| f_{B_j} - F(y)\right|^q & = \int_{B_j} \left|\avint_{B_j} f(z) - F(y) \, d\sigma(z) \right|^q d \sigma(y) \\
    & \lesssim \int_{B_j}\left| \avint_{B_j} f(z)-F(z) d\sigma(z) \right|^q \, d \sigma(y)+ \int_{B_j} \left| \avint_{B_j} F(z)-F(y) d \sigma(z)\right|^q\, d \sigma(y)\\
    & =: I_{2,1}(Q_j) + I_{2,2}(Q_j).
\end{align*}
Since $F$ is $C\ve \lambda$-Lipschitz and $|B_j| \approx |Q_j| \approx \ell(Q_j)^d$, we have 
\begin{equation}\label{e:I22Q_j}
I_{2,2}(Q_j) \lesssim (\ve \lambda)^q \ell(Q_j)^{q+d}.
\end{equation}
On the other hand, since $F(x_j)=f(x_j)$ as $x_j \in E_R$, we have
\begin{align*}
    I_{2,1}(Q_j) & \lesssim \ell(Q_j)^d\left| \avint_{B_j} f(z) - f(x_j) \, d\sigma(z) \right|^q + \ell(Q_j)^d\left| \avint_{B_j} F(x_j) - F(z) \, d \sigma(z) \right|^q \\
    & :=I_{2,1,1}(Q_j) + I_{2,1,2}(Q_j).
\end{align*}
Again using that $F$ is $C\ve \lambda$ Lipschitz, we bound
\begin{equation}\label{e:I222Q_j}
    I_{2,1,2}(Q_j) \lesssim (\ve \lambda)^q \ell(Q_j)^{q+d}. 
\end{equation}
On the other hand, using that $g$ is an Haj\l{}asz upper gradient of $f$, we have
\begin{align*}
    \ell(Q_j)^d\left| \avint_{B_j} f(z)-f(x_j) \, d\sigma(z) \right|^q \lesssim \ell(Q_j)^{q+d} \left[\left( \avint_{B_j} g(z)\, d\sigma\right)^q + g(x_j)^q\right].
\end{align*}
By Lebesgue differentiation Theorem and Jensen's inequality, it is easy to see that $g(x_j) \lesssim \dM^s f(x_j)$. On the other hand, we have
\begin{align*}
    \avint_{B_j} g(z) d\sigma(z) \lesssim \left( \avint_{B_j}g(z)^s \, d\sigma(z) \right)^{\frac{1}{s}} \lesssim \dM^s f(x_j).
\end{align*}
Since $x_j \in E_R^1$, $\dM^sf (x_j) \lesssim \ve \lambda$. All in all we obtain
\begin{equation}\label{e:I221Q_j}
    I_{2,1,1}(Q_j) \lesssim (\ve \lambda)^q \ell(Q_j)^{q+d}.
\end{equation}
We estimate $I_1(Q_j)$. Note that since we chose $s<p$ and $f \in M^{1,p}(\Sigma)$, then $(f, g)$ satisfy a $(1,s)$-Poincar\'{e} inequality. This follows from Proposition \ref{t:Hajlasz-poincare}. Furthermore, since also $s<d$, we can apply Theorem \ref{t:sobolevmetpoincare}. By this theorem, since $q < s^*$, which hold because $qd/(q+d)<s$, we get that
\begin{align}\label{e:I1Q_j}
   I_1(Q_j) & \approx \ell(Q_j)^d \avint_{B_j} |f(y)-f_{B_j}|^q \, d\sigma(y) \notag\\
   & \qquad \lesssim \ell(Q_j)^{d+q} \left( \avint_{5 \Lambda B_j} g(z)^s \, d \sigma(z) \right)^{\frac{q}{s}} \lesssim \ell(Q_j)^{d+q} (\ve \lambda)^q,
\end{align}
as $x_j \in E_R^1$ and $C_1 \geq 2 \Lambda$. Gathering together the bounds \eqref{e:I221Q_j}, \eqref{e:I222Q_j}, \eqref{e:I22Q_j} and \eqref{e:I1Q_j}, we obtian that
\begin{equation}\label{e:estimate-h}
    \int_{Q_j} |h|^2 \, d\sigma \lesssim (\ve\lambda)^q \ell(Q_j)^{d+q}.
\end{equation}

\noindent
Recall that, since $q\geq 2$, then for $a_j\geq 0$, 
\begin{equation}\label{e:form20}
    \left(\sum_j a_{j}\right)^{\frac{2}{q}}\leq \sum_j a_{j}^{\frac{2}{q}}.
\end{equation}
Thus we get
\begin{align*}
\mathcal{I} &\approx \sum_{Q \subset R \atop Q \cap E_R \neq \emptyset} \left(\ell(Q)^{-d}\int_{B_Q} \left( \frac{|h(x)|}{\ell(Q)} \right)^q \, d\sigma (x) \right)^{\frac{2}{q}} |Q| \\
& \leq \sum_{Q\subseteq R\atop Q\cap E_R\neq\emptyset} \ps{\sum_{Q_j \in \dW_R \atop Q_{j}\cap B_{Q}\neq\emptyset}\frac{1}{\ell(Q)^{d+q}}\int_{Q_{j}} |h(x)|^q d \sigma(x)}^{\frac{2}{q}}|Q|\\
& \stackrel{\eqref{e:estimate-h}}{\lec} (\ve\lambda)^{2} \sum_{Q\subseteq R\atop Q\cap E_R\neq\emptyset} \ps{\sum_{Q_j \in \dW_R \atop Q_{j}\cap B_{Q}\neq\emptyset}\ps{\frac{\ell(Q_{j})}{\ell(Q)}}^{d+q} }^{\frac{2}{q}}|Q|\\
& \stackrel{\eqref{e:form20}}{\lesssim} (\ve\lambda)^{2}\sum_{Q\subseteq R\atop Q\cap E_R\neq\emptyset} \sum_{Q_j \in \dW_R \atop Q_{j}\cap B_{Q}\neq\emptyset}\ps{\frac{\ell(Q_{j})}{\ell(Q)}}^{\frac{2}{q}(d+q)}|Q|\\
& \lec (\ve\lambda)^{2} \sum_{Q_j \in \dW_R \atop Q_{j}\cap B_{R}\neq\emptyset}\sum_{Q\subseteq R\atop B_Q\cap Q_j\neq\emptyset, Q\cap E_R\neq\emptyset }\frac{\ell(Q_{j})^{2\ps{\frac{d}{q}+1}}}{\ell(Q)^{2\ps{\frac{d}{q}+1}-d}}\\
& \lec(\ve\lambda)^{2}  \sum_{Q_j \in \dW_R \atop Q_{j}\cap B_{R}\neq\emptyset}|Q_j|
 \lec (\ve\lambda)^{2} |R|.
\end{align*}
The last inequality follows from the fact that $q< \frac{2d}{d-2}$ (and recall $d>1$) which implies
\[
2\ps{\frac{d}{q}+1}-d
>2\ps{\frac{d(d+2)}{2d}+1}-d
=2\ps{\frac{d}{2}-1+1}-d=0
\]
and that, if $B_Q\cap Q_{j}\neq\emptyset$ and $Q\cap E_R\neq\emptyset$, then $\ell(Q)\gec \ell(Q_{j})$, and there can only be boundedly many such cubes $Q$ of the same size, so the interior sum on the penultimate line is essentially a geometric series. \\

\noindent
Combining these estimates, we get 
\begin{equation*}
\int_{E^1_R}\sum_{\wh{R} \supseteq Q\ni x}   \Omega_{f}^{q}(B_{Q},A_{Q}^{F})^{2}
\lec   (\ve\lambda)^{2}|R| \lec (\ve\lambda)^{2}|E^1_R|.
\end{equation*}
This finishes the proof of Lemma \ref{l:aux-lemma-D-I}. 
\end{proof}

\newpage

\begin{center}
	\part{}\label{p:part2}
	
	\Large\bfseries{Proof of Theorem \ref{t:main-B}.\\ The square function controls the tangential gradient.}
\end{center}

\section{Detour on tangential gradients and other preliminaries}\label{s:detour}

 We introduce tangential differentiability and the tangential gradient.
	Let $E \subset \R^n$ be a $d$-rectifiable set. If $x \in E$, recall that the approximate tangent space $T_xE$ is the $d$-dimensional linear subspace parallel to the approximate tangent to $E$ at $x$. The tangent space $T_xE$ then exists for $\dH^d$-almost all points in $E$. 
	\begin{definition}[Tangential differentiability and tangential gradient]\label{d:tan-grad}\
		\begin{itemize}
		\item We say that $f: \R^n \to \R$ is \textit{tangentially differentiable} with respect to $E$ at $x$ if the restriction of $f$ to $x+T_x E$ is differentiable at $x$. That is to say, there exists a continuous linear map $d_xf: T_x E \to \R$ so that
		\begin{align}\label{e:differentiability}
			\lim_{y \to x} \frac{ f(x) - f(y) - d_xf (y-x)}{|y-x|} =0.
		\end{align} 
		Then $d_x f$ is uniquely defined by the formula
		\begin{align*}
			\lim_{t \to 0} \frac{ f(x+t v) - f(x)}{t} = d_xf(v) \,\mbox{ if }\, v \in T_xE. 
		\end{align*}
		\item If $E \subset \R^n$ is $d$-rectifiable, $x \in E$, $f:\R^n \to \R$ is tangentially differentiable with respect to $E$ at $x$, its gradient at $x$ is the vector $\grad_t f(x)$ characterised by the condition that
		\begin{align}
			\langle \grad_t f(x), v\rangle = d_x f (v) \mbox{ for each } v \in T_x E. 
		\end{align} 
	\end{itemize}
\end{definition}
\begin{remark}\label{r:tangential-derivative}
	If $f : E \to \R$ is Lipschitz, then, taking a Lipschitz extension $F$ of $f$ to $\R^n$, we define
	\begin{align}
		\grad_t f := \grad_t F.
	\end{align}
	This definition is independent of the extension. This follows from \cite[Lemma 11.5]{maggi2012sets}. 
\end{remark}

\noindent
The following theorem below is the fact that Rademacher's theorem holds for Lipschitz functions defined on a $d$-rectifiable subset of $\R^n$.
\begin{theorem}\label{t:tangential-diffbar}
	Let $E \subset \R^n$ be $d$-rectifiable and $f: E \to \R$ be Lipschitz. Then $d_x f$, and so $\grad_t f$, exists for $\dH^d$-almost every point in $E$. 
\end{theorem}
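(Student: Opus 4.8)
The plan is to follow Federer's argument (\cite{federer2014geometric}), reducing everything to Rademacher's theorem on $C^1$ submanifolds. By Remark \ref{r:tangential-derivative} it is enough to fix a Lipschitz extension $F:\R^n\to\R$ of $f$ and show that $F$ is tangentially differentiable with respect to $E$ at $\dH^d$-a.e.\ point; the value $\grad_t f(x)=\grad_t F(x)$ is then well defined. First I would use $d$-rectifiability: there are Lipschitz maps $g_i:\R^d\to\R^n$ with $\dH^d\bigl(E\setminus\bigcup_i g_i(\R^d)\bigr)=0$, and, by the standard Lusin-type approximation of Lipschitz maps by $C^1$ maps together with the area formula, one may replace the sets $g_i(\R^d)$ by a countable family of embedded $C^1$ $d$-dimensional submanifolds $M_i\subset\R^n$ with $\dH^d\bigl(E\setminus\bigcup_i M_i\bigr)=0$. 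Setting $E_i:=(E\cap M_i)\setminus\bigcup_{j<i}M_j$ gives a disjoint decomposition $E=N\sqcup\bigsqcup_i E_i$ with $\dH^d(N)=0$, and it suffices to treat each $E_i$.

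Fix $i$. Since $E$ has finite $\dH^d$-measure, it has density one $\dH^d$-a.e., and for $\dH^d$-a.e.\ $x\in E_i$ the piece $E_i$ exhausts $E$ at infinitesimal scale around $x$; at such $x$ the approximate tangent space $T_xE$ exists and equals the classical tangent plane $T_xM_i$. Moreover, covering $M_i$ by countably many $C^1$ charts $\phi:U\subset\R^d\to M_i$ and noting that each $F\circ\phi$ is Lipschitz on $U$, Rademacher's theorem in $\R^d$ and the area formula show that for $\dH^d$-a.e.\ $x\in M_i$ the restriction $F|_{M_i}$ is differentiable at $x$ in the manifold sense: there is a linear map $L:T_xM_i\to\R$ with
\begin{equation*}
F(z)-F(x)=L\bigl(\pi_{T_xM_i}(z-x)\bigr)+o(|z-x|)\qquad(z\in M_i,\ z\to x).
\end{equation*}

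Now fix $x\in E_i$ at which all of the above hold, put $V:=T_xE=T_xM_i$, and let $L$ be the map above. Near $x$ the $C^1$ submanifold $M_i$ is a graph over the affine plane $x+V$: there is a $C^1$ map $h$, defined near $x$ in $x+V$ with values in $V^\perp$, such that $h(x)=0$, $Dh(x)=0$, and $\psi(y):=y+h(y)\in M_i$. For $y\in x+V$ close to $x$ split
\begin{equation*}
F(y)-F(x)=\bigl(F(y)-F(\psi(y))\bigr)+\bigl(F(\psi(y))-F(x)\bigr).
\end{equation*}
For the first term, $|F(y)-F(\psi(y))|\le\Lip(F)\,|h(y)|=o(|y-x|)$ since $h(x)=0$ and $Dh(x)=0$. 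For the second, $\psi(y)\in M_i$ with $|\psi(y)-x|=|(y-x)+h(y)|\lesssim|y-x|$ and $\pi_V(\psi(y)-x)=y-x$ because $h(y)\in V^\perp$; hence the manifold differentiability of $F|_{M_i}$ gives $F(\psi(y))-F(x)=L(y-x)+o(|y-x|)$. Adding the estimates yields $F(y)-F(x)=L(y-x)+o(|y-x|)$ for $y\in x+T_xE$, which is exactly \eqref{e:differentiability} with $d_xF=L$, so $\grad_t f(x)$ is the vector in $T_xE$ representing $L$.

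The main obstacle is the last step: passing from differentiability of $F$ along the curved manifold $M_i$ (equivalently, along the approximate tangent structure of $E$) to genuine differentiability of the extension $F$ along the \emph{flat} affine plane $x+T_xE$. What makes it go through is precisely that $F$ is Lipschitz on all of $\R^n$, not merely on $E$, so that moving from a point $y$ of the tangent plane to the nearby point $\psi(y)\in M_i$ costs only $O(|h(y)|)=o(|y-x|)$. The preliminary reduction to $C^1$ submanifolds is routine but depends on the Lusin-type $C^1$-approximation of Lipschitz maps and the area formula; once one is on a $C^1$ manifold, the identification $T_xE=T_xM_i$ at a.e.\ point and Rademacher in charts are standard.
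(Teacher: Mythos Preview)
Your sketch is correct and follows precisely the Federer argument that the paper cites; the paper itself gives no proof of this theorem, only the references \cite[3.2.19]{federer2014geometric}, \cite[Theorem 2.90]{ambrosio2000functions}, \cite[Theorem 11.4]{maggi2012sets}. The one step worth flagging explicitly is that the definition of $d$-rectifiable in this paper already includes $\dH^d(E)<\infty$, so your appeal to density and the area formula is legitimate without further assumptions.
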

\noindent
A proof can be found in \cite[3.2.19]{federer2014geometric}, \cite[Theorem 2.90]{ambrosio2000functions} or \cite[Theorem 11.4]{maggi2012sets}. When taking limits, we will need the following two facts.
\begin{lemma}[{\cite[Theorem 1.4(T2)]{villa2020tangent}}]\label{l:tan1}
	Let $E \subset \R^n$ be $d$-rectifiable and lower $d$-regular with also $\dH^d(E)<+\infty$. Let $x\in E$ be so that $E$ has a tangent at $x$, denoted by $V_x$. Then 
	\begin{align}\label{e:beta-tan-0}
		\lim_{r \to 0} \frac{1}{r^d} \int_{B(x,r) \cap E} \frac{\dist(y, V_x)}{r} \, d \dH^d(y) = 0. 
	\end{align}
\end{lemma}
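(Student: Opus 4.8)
The plan is to read off \eqref{e:beta-tan-0} from what it means for $E$ to have a tangent $V_x$ at $x$. Concretely, this gives two facts. First, the local mass bound $\dH^d(E\cap B(x,\rho))\le C\rho^d$ for all $\rho$ below some $\rho_1$ (this is built into the notion of tangent through the convergence of the blow-up measures of $\dH^d|_E$ at $x$ to $\dH^d|_{V_x}$; alternatively, since $\dH^d(E)<\infty$, the upper density of $\dH^d|_E$ is bounded at $\dH^d$-a.e.\ point). Second, the cone condition: for every $s>0$,
\[
\dH^d\big(E\cap B(x,\rho)\cap\{y:\dist(y,V_x)\ge s|y-x|\}\big)=o(\rho^d)\qquad(\rho\to 0),
\]
since the limiting measure $\dH^d|_{V_x}$ assigns no mass to any such cone complement. (Lower $d$-regularity is used only to guarantee that the tangent is a genuine $d$-plane, so that these statements are meaningful.) Writing $I(r):=r^{-(d+1)}\int_{B(x,r)\cap E}\dist(y,V_x)\,d\dH^d(y)$, we must show $I(r)\to 0$.

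Fix $\ve>0$ and choose a small $s>0$. Split $B(x,r)\cap E$ into the near part $G_r=\{y:\dist(y,V_x)<s|y-x|\}$ and the far part $F_r=\{y:\dist(y,V_x)\ge s|y-x|\}$. On $G_r$ we have $\dist(y,V_x)<s|y-x|\le sr$, so the mass bound gives
\[
\int_{G_r}\dist(y,V_x)\,d\dH^d\ \le\ sr\,\dH^d(E\cap B(x,r))\ \le\ C\,s\,r^{d+1},
\]
a contribution $\le Cs$ to $I(r)$. For $F_r$ we decompose dyadically in the radius: $F_r=\bigcup_{j\ge0}F_r^j$ with $F_r^j=F_r\cap\{2^{-j-1}r\le|y-x|<2^{-j}r\}$. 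On $F_r^j$ one has $\dist(y,V_x)\le|y-x|<2^{-j}r$, while $F_r^j\subseteq E\cap B(x,2^{-j}r)\cap\{\dist(\cdot,V_x)\ge s|\cdot-x|\}$, so by the cone condition there are $\delta>0$ and $r_0=r_0(s,\delta)>0$ such that $\dH^d(F_r^j)\le\delta(2^{-j}r)^d$ for all $j$ whenever $r<r_0$. Summing the resulting geometric series,
\[
\int_{F_r}\dist(y,V_x)\,d\dH^d\ \le\ \sum_{j\ge0}2^{-j}r\cdot\delta(2^{-j}r)^d\ =\ \frac{\delta}{1-2^{-(d+1)}}\,r^{d+1},
\]
a contribution $\le C_d\,\delta$ to $I(r)$.

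Hence $I(r)\le Cs+C_d\delta$ for every $r<r_0$; picking $s$ small and then $\delta$ (hence $r_0$) small forces $I(r)<\ve$, which proves \eqref{e:beta-tan-0}. The point I expect to need the most care is that $\dist(\cdot,V_x)$ is unbounded on the scale of $B(x,r)$, so one cannot simply feed it into the weak convergence of the blow-up measures; the dyadic-in-radius decomposition above is precisely what plays the role of a tightness/uniform-integrability argument, and it is also what one uses to upgrade the "$\dist\ge s\rho$" form of the cone condition to the "$\dist\ge s|y-x|$" form employed here. Everything else is routine bookkeeping with the two extracted facts.
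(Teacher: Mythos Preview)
The paper does not prove this lemma; it is quoted from \cite[Theorem~1.4(T2)]{villa2020tangent} and used as a black box. So there is no in-paper proof to compare against.

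Your argument is correct. The two ingredients you extract---an upper mass bound $\dH^d(E\cap B(x,\rho))\le C\rho^d$ for small $\rho$, and the cone condition $\dH^d\big(E\cap B(x,\rho)\setminus X(x,V_x,s)\big)=o(\rho^d)$---are exactly the content of $V_x$ being an approximate tangent plane, and your near/far split followed by the dyadic summation over annuli is the standard way to handle the unbounded integrand $\dist(\cdot,V_x)$. One small remark: the upper density bound at the \emph{specific} point $x$ is not literally part of every definition of approximate tangent, so your parenthetical ``alternatively, since $\dH^d(E)<\infty$, the upper density is bounded at $\dH^d$-a.e.\ point'' is the safer justification; in practice the lemma is only ever applied at a.e.\ points, so this suffices.
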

\begin{lemma}[{\cite[Lemma 2.2]{mourgoglou2021regularity}}]
	Let $E \subset \R^{n}$ be $d$-uniformly $d$-rectifiable, and let $f:\R^n \to \R$ be $L$-Lipschitz. Then
	\begin{align}\label{e:gradtf-leq-L}
		|\grad_t f(x)| = \limsup_{y \to x \atop y \in E} \frac{f(y)-f(x)}{|x-y|} \leq L.
	\end{align}
\end{lemma}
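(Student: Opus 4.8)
The plan is to prove the displayed equality at $\dH^{d}$-a.e. $x\in E$; the bound $\le L$ is then immediate from $|f(y)-f(x)|\le L|x-y|$. Fix a point $x\in E$ — a.e. such $x$ will do — at which: $f$ is tangentially differentiable with respect to $E$ (Theorem \ref{t:tangential-diffbar}); the approximate tangent $d$-plane $V_{x}=x+T_{x}E$ exists and Lemma \ref{l:tan1} holds, i.e. $\lim_{r\to 0}\frac{1}{r^{d}}\int_{B(x,r)\cap E}\frac{\dist(y,V_{x})}{r}\,d\dH^{d}(y)=0$; and the rescaled measures $r^{-d}\dH^{d}|_{(E-x)/r}$ converge weakly-$*$, as $r\to 0$, to $\theta\,\dH^{d}|_{T_{x}E}$ with $\theta=\theta(x)>0$ — the last being standard for Ahlfors $d$-regular $d$-rectifiable sets, which uniformly $d$-rectifiable sets are, after localizing $E$ to a large ball to make the measure finite. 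Write $\pi_{x}$ for the orthogonal projection onto $V_{x}$, and recall $\grad_{t}f(x)\in T_{x}E$.

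\textbf{Step 1 (a one-sided tangent estimate).} I would first show $\dist(y,V_{x})/|y-x|\to 0$ as $y\to x$ with $y\in E$. Suppose not: choose $\delta\in(0,1)$ and $y_{k}\in E$ with $y_{k}\to x$ and $\dist(y_{k},V_{x})\ge\delta|y_{k}-x|$. Put $\rho_{k}=\tfrac{\delta}{2}|y_{k}-x|$ and $r_{k}=3|y_{k}-x|$. Since $y_{k}\in E$, lower Ahlfors regularity gives $\dH^{d}(E\cap B(y_{k},\rho_{k}))\gtrsim\rho_{k}^{d}$; moreover every $y\in B(y_{k},\rho_{k})$ satisfies $\dist(y,V_{x})\ge\delta|y_{k}-x|-\rho_{k}=\rho_{k}$ and $|y-x|\le r_{k}$. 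Hence
\[
\frac{1}{r_{k}^{d}}\int_{B(x,r_{k})\cap E}\frac{\dist(y,V_{x})}{r_{k}}\,d\dH^{d}(y)\ \ge\ \frac{\rho_{k}}{r_{k}^{d+1}}\,\dH^{d}\big(E\cap B(y_{k},\rho_{k})\big)\ \gtrsim\ \Big(\frac{\rho_{k}}{r_{k}}\Big)^{d+1}\ \approx\ \delta^{d+1},
\]
contradicting Lemma \ref{l:tan1}. (This is the familiar fact that, for an Ahlfors regular $d$-rectifiable set, the approximate tangent plane is a genuine one-sided tangent plane $\dH^{d}$-a.e.)

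\textbf{Step 2 (upper bound) and Step 3 (lower bound).} For $y\in E$ near $x$, set $\bar y=\pi_{x}(y)\in V_{x}$, so $\bar y-x\in T_{x}E$ and $|\bar y-x|\le|y-x|$. Writing $f(y)-f(x)=(f(y)-f(\bar y))+(f(\bar y)-f(x))$, the first term is $\le L\dist(y,V_{x})=o(|y-x|)$ by Step 1, while by tangential differentiability and the identity $\langle\grad_{t}f(x),\cdot\rangle=d_{x}f(\cdot)$ on $T_{x}E$ (Definition \ref{d:tan-grad}), $f(\bar y)-f(x)=\langle\grad_{t}f(x),\bar y-x\rangle+o(|\bar y-x|)\le|\grad_{t}f(x)|\,|y-x|+o(|y-x|)$. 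Dividing by $|y-x|$ and letting $y\to x$ gives $\limsup_{y\to x,\,y\in E}\frac{f(y)-f(x)}{|x-y|}\le|\grad_{t}f(x)|$; applying this also to $-f$ settles the case $\grad_{t}f(x)=0$ (the limit exists and equals $0$). If $\grad_{t}f(x)\ne0$, put $v=\grad_{t}f(x)/|\grad_{t}f(x)|\in T_{x}E$, so $\lim_{t\to0^{+}}\frac{f(x+tv)-f(x)}{t}=\langle\grad_{t}f(x),v\rangle=|\grad_{t}f(x)|$ by Definition \ref{d:tan-grad}. Fix small $\ve,\eta>0$. Since $v\in T_{x}E$, the weak-$*$ limit measure charges the open ball $B(v,\eta)$, so $E\cap B(x+tv,\eta t)\ne\emptyset$ for all small $t>0$; pick $y_{t}$ there. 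Then $(1-\eta)t\le|y_{t}-x|\le(1+\eta)t$ and, for $t$ small,
\[
f(y_{t})-f(x)\ \ge\ \big(f(x+tv)-f(x)\big)-L|y_{t}-(x+tv)|\ \ge\ (|\grad_{t}f(x)|-\ve)t-L\eta t,
\]
whence $\frac{f(y_{t})-f(x)}{|y_{t}-x|}\ge\frac{|\grad_{t}f(x)|-\ve-L\eta}{1+\eta}$; letting $t\to0$ (so $y_{t}\to x$) and then $\ve,\eta\to0$ gives $\limsup_{y\to x,\,y\in E}\frac{f(y)-f(x)}{|x-y|}\ge|\grad_{t}f(x)|$.

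Combining Steps 2 and 3 yields the equality $\dH^{d}$-a.e. \textbf{The main obstacle} is Step 1: promoting the $L^{1}$-in-measure control of the tangent plane furnished by Lemma \ref{l:tan1} to a bound along \emph{every} sequence in $E$ tending to $x$. This is precisely where Ahlfors (lower) regularity of the uniformly rectifiable set $E$ is used, since it converts a hypothetical bad sequence $y_{k}\to x$ sitting at a definite angle from $V_{x}$ into a genuinely thick piece of $E$ at that angle, contradicting the approximate-tangent behaviour. The remaining steps are routine manipulations of the Lipschitz bound and the definition of $\grad_{t}f$, the only mildly delicate ingredient being the weak-$*$ convergence of blow-ups used in Step 3 to ensure that $E$ actually populates its tangent plane near $x$.
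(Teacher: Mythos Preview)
The paper does not prove this lemma; it is quoted verbatim from \cite[Lemma 2.2]{mourgoglou2021regularity} and used as a black box. So there is no ``paper's own proof'' to compare against.

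Your argument is correct and self-contained. A couple of minor remarks. In Step~1 you are reproving, in the Ahlfors-regular setting, the standard fact that the approximate tangent plane is a true tangent cone in the strong sense $\dist(y,V_x)=o(|y-x|)$ for $y\in E$; the paper has the $L^1$ version of this available as Lemma~\ref{l:tan1}, and your contradiction argument upgrading it via lower regularity is the right move. In Step~3 the weak-$*$ blow-up is used only to guarantee $E\cap B(x+tv,\eta t)\neq\varnothing$ for small $t$; you could equally well extract this from the bilateral flatness $b\beta_{E,\infty}\to 0$ which the paper invokes elsewhere (e.g.\ around \eqref{e:beta-alpha-to-0}), but your route is cleaner. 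The rest (Steps~2 and~3 proper) is a direct manipulation of tangential differentiability and the Lipschitz bound, and goes through as written.
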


\subsection{Some convergence lemmas}
\noindent
Throughout this subsection, assume that $E \subset \R^n$ is Ahlfors $d$-regular. For $x \in \Sigma$, set 
\begin{align}\label{e:def-L_x}
	L_xf(y) := f(x) + \langle \grad_t f(x), y-x\rangle.
\end{align}
\begin{lemma}\label{l:average-Lxf-to-0}
	Let $E \subset \R^n$ be $d$-rectifiable and Ahlfors $d$-regular. Suppose that $f: E \to \R$ is Lipschitz. Then 
	\begin{align*}
		\lim_{r\to 0} \avint_{B(x,r)}  \frac{|f(y)-L_x f(y)|}{r} \, d \sigma(y)=0.
	\end{align*}
\end{lemma}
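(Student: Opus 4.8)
The statement is a quantitative differentiability fact: the $L^1$-averaged deviation of $f$ from its tangential linearization $L_xf$ vanishes as $r\to 0$, at $\sigma$-a.e.\ (in fact every tangentially differentiable) point $x$. Since $E$ is $d$-rectifiable and $f$ is Lipschitz, Theorem~\ref{t:tangential-diffbar} tells us that $\grad_t f(x)$, and indeed $d_x f$, exists for $\sigma$-a.e.\ $x$; fix such an $x$, and also assume $x$ is a point where $E$ has an approximate tangent plane $V_x = T_x E$ (again true $\sigma$-a.e.). The first thing I would do is split the integrand using the triangle inequality through the ``extrinsic'' affine map $\ell_x(y) := f(x) + \langle \grad_t f(x), \pi_{V_x}(y-x)\rangle$ (or equivalently compare directly to $L_xf$, noting $\langle\grad_t f(x),\cdot\rangle$ annihilates $V_x^\perp$ by definition, so $L_xf(y) = f(x) + \langle \grad_t f(x), \pi_{V_x}(y-x)\rangle + \langle \grad_t f(x), \pi_{V_x^\perp}(y-x)\rangle$). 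The point is that the second term is controlled by $|\grad_t f(x)|\,\dist(y, x+V_x)$, which after averaging over $B(x,r)$ and dividing by $r$ is $\lesssim |\grad_t f(x)|\,\beta$-type quantity that $\to 0$ by Lemma~\ref{l:tan1}.

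So after that reduction it suffices to show
\[
\lim_{r\to 0}\,\avint_{B(x,r)} \frac{|f(y) - f(x) - \langle \grad_t f(x), \pi_{V_x}(y-x)\rangle|}{r}\,d\sigma(y) = 0.
\]
Here I would use tangential differentiability \eqref{e:differentiability} in the form: for every $\eta>0$ there is $\rho>0$ such that $|f(y) - f(x) - d_xf(\pi_{V_x}(y-x))| \le \eta|y-x| + (\text{error from the }V_x^\perp\text{ part})$ for $y\in E\cap B(x,\rho)$. More carefully: $d_x f$ is defined on $V_x$; extend the estimate by writing $d_xf(\pi_{V_x}(y-x))$ and controlling $f(y) - f(x) - d_xf(\pi_{V_x}(y-x))$. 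The cleanest route is to apply the defining limit \eqref{e:differentiability} to the Lipschitz extension $F$ of $f$ restricted to $x + V_x$: for $y' \in (x+V_x)\cap B(x,\rho)$, $|F(y') - F(x) - d_xf(y'-x)| \le \eta|y'-x|$. Then for a general $y \in E \cap B(x,r)$ with $r < \rho$, let $y' = x + \pi_{V_x}(y-x)$ be its projection onto $x+V_x$; using that $F$ is $L$-Lipschitz, $|F(y) - F(y')| \le L|y - y'| = L\dist(y, x+V_x)$, so
\[
|f(y) - f(x) - d_xf(\pi_{V_x}(y-x))| \le \eta|y'-x| + L\,\dist(y, x+V_x) \le \eta r + L\,\dist(y,x+V_x).
\]
Dividing by $r$ and averaging over $B(x,r)\cap E$ gives a bound of $\eta + L\cdot\frac{1}{r}\avint_{B(x,r)}\dist(y,x+V_x)\,d\sigma(y)$; the second term tends to $0$ as $r\to 0$ by Lemma~\ref{l:tan1} (after accounting for the Ahlfors-regular normalization, the average $\avint_{B(x,r)}$ differs from $\frac{1}{r^d}\int_{B(x,r)\cap E}$ only by the bounded factor $\sigma(B(x,r))/r^d \approx 1$). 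Letting $r\to 0$ first and then $\eta\to 0$ finishes it.

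\textbf{Main obstacle.} The genuine content is bookkeeping about which affine object to compare against: $L_xf$ is defined intrinsically on $x+T_xE$ via $\grad_t f$, whereas the hypothesis Lemma~\ref{l:tan1} and the Lipschitz bound \eqref{e:gradtf-leq-L} are stated extrinsically on $\R^n$, so one must be careful that $L_xf(y)$ (a function of $y\in\R^n$, using the full vector $y-x$) and the ``honest'' tangential linearization (using $\pi_{V_x}(y-x)$) differ only by the $V_x^\perp$-component, which is exactly $\dist(y, x+V_x)$-controlled. The second mild subtlety is justifying the use of \eqref{e:differentiability} for the \emph{restriction} $F|_{x+V_x}$ rather than for points of $E$ directly — but this is precisely the definition of tangential differentiability in Definition~\ref{d:tan-grad}, so it is available. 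No deep idea is needed beyond combining Definition~\ref{d:tan-grad}, Theorem~\ref{t:tangential-diffbar}, and Lemma~\ref{l:tan1}; the proof should be short.
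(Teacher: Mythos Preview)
Your proposal is correct and follows essentially the same approach as the paper: split off the component of $y-x$ orthogonal to $T_xE$ (controlled by $|\grad_t f(x)|\,\dist(y,x+T_xE)$ and hence by Lemma~\ref{l:tan1} together with $|\grad_t f(x)|\le L$), then handle the tangential part via the defining differentiability limit \eqref{e:differentiability} applied on $x+T_xE$. Your explicit use of the Lipschitz extension $F$ and the projected point $y'=x+\pi_{V_x}(y-x)$ makes transparent the passage from $y\in E$ to $y'\in x+T_xE$ that the paper's short proof leaves implicit.
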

\begin{proof}
Denote by $\pi_x: \R^n \to x+ T_x E$ the orthogonal projection onto $x+T_x E$. Then we have 
\begin{align}\label{e:form600}
	|f(x) - L_xf(y)|& =|\langle \grad_t f(x), y-x\rangle| \nonumber\\
	& \leq |\langle \grad_t f(x), y-\pi_x(y) \rangle| + |\langle \grad_t f(x), \pi_x(y)-x \rangle|.
\end{align} 
Since $f$ is Lipschitz, say with constant $L$, and $E$ is Ahlfors regular, we have 
\begin{align*}
	\avint_{B(x,r)} \frac{|\langle \grad_t f(x), y-\pi_x(y) \rangle|}{r} \, d\sigma(y) \lesssim_L \avint_{B(x,r)} \frac{\dist(y, x+T_x E)}{r} \, d\sigma(y) \stackrel{\eqref{e:beta-tan-0}}{\to} 0\, \,  \mbox{ as }\, r \to 0.
\end{align*}
On the other hand, since $\pi_x$ is $1$-Lipschitz, we see that for any $\ve>0$, if $r>0$ is sufficiently small, then 
\begin{align*}
	\avint_{B(x,r)} \frac{|\langle \grad_t f(x), \pi_x(y)-x \rangle|}{r} \, d \sigma(y) =\avint_{B(x,r)} \frac{|f(x)-L_xf(\pi_x(y))|}{r} \, d \sigma(y) \stackrel{\eqref{e:differentiability}}{<} \ve.
\end{align*}
\end{proof}
\noindent
Now, recall from Lemma \ref{l:A_B} that, given a ball $B$ centered on $\Sigma$, we can find an affine map $A_B:\R^n \to \R$ so that 
\begin{align}\label{e:gradAB-beta}
	|\grad A_B| \beta_{\Sigma}^{d,q}(B) \lesssim \gamma_f^q(B)
\end{align}
and so that $A_B \circ \pi_B = A_B$, where $\pi_B$ denotes the orthogonal projection onto the plane $P_B$ infimising $\beta_\Sigma^{d,q}(B)$. 
\begin{lemma}\label{l:gradAB-fLip}
	Let $E \subset \R^n$ be Ahlfors $d$-regular and $d$-rectifiable. Let $f: E \to \R$ be $L$-Lipschitz. Then for each ball $B$ centered on $E$, we have
	$
		|\grad A_B| \lesssim L. 
	$
\end{lemma}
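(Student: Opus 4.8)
The plan is to show that the affine map $A_B$ from Lemma \ref{l:A_B} has gradient controlled by the Lipschitz constant $L$ of $f$. Recall from \eqref{e:ABbeta<gammaAB<gamma} that $|\grad A_B|\beta_\Sigma^{d,q}(B) \lesssim \gamma_f^q(B)$, and from Lemma \ref{l:gamma<gradf} (applicable since $f$ Lipschitz implies $f \in M^{1,p}_{\loc}$ with $|\grad_H f| \lesssim L$ locally, or more directly since we may argue with $g \equiv L$) that $\gamma_f^q(B) \lesssim L$. So the only difficulty is the regime where $\beta_\Sigma^{d,q}(B)$ is small — when $\beta$ is bounded below, say $\beta_\Sigma^{d,q}(B) \geq \ve_0$ for a fixed $\ve_0$, we immediately get $|\grad A_B| \lesssim \ve_0^{-1}\gamma_f^q(B) \lesssim_{\ve_0} L$, so the content is the small-$\beta$ case.

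First I would reduce, exactly as in Case 2 of the proof of Lemma \ref{l:A-A}, to estimating $|\grad A_B|$ through differences $A_B(z_i) - A_B(z_0)$ at a well-spread set of points. Concretely: apply Lemma \ref{l:balanced-balls} to $B$ to get balls $B_0,\dots,B_d$ with centers in $2$-general position; by Chebyshev choose points $z_i \in B_i \cap \Sigma$ realizing (up to a constant) the averages $\avint_{B_i}\avint_{B_0} \frac{|A_B(x)-A_B(y)|}{r_B}\,d\sigma(x)d\sigma(y)$, and also so that $|f(z_i) - A_B(y_i)| \lesssim \Omega_f^q(B,A_B) r_B$ where $y_i = \pi_B(z_i)$ (this uses $A_B\circ\pi_B = A_B$ and the same Chebyshev argument as in Claim \ref{c:balanced-points}, but now only for the single ball $B$, so it is simpler). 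When $\beta_\Sigma^{d,q}(B) < \ve_0$ with $\ve_0$ small relative to the spreading constant of Lemma \ref{l:balanced-balls}, the projected points $y_i$ remain $d$-general in $P_B$, so $|\grad A_B| \lesssim_d \max_i \frac{|A_B(y_i)-A_B(y_0)|}{r_B}$.

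Then I would estimate $|A_B(y_i) - A_B(y_0)|$ by inserting $f$:
\begin{align*}
|A_B(y_i) - A_B(y_0)| &\leq |A_B(y_i) - f(z_i)| + |f(z_i) - f(z_0)| + |f(z_0) - A_B(y_0)| \\
&\lesssim \Omega_f^q(B,A_B) r_B + L|z_i - z_0| \lesssim (\Omega_f^q(B,A_B) + L)\, r_B,
\end{align*}
using the Lipschitz bound on $f$ and $|z_i - z_0| \lesssim r_B$. Dividing by $r_B$ and combining with the general-position bound gives $|\grad A_B| \lesssim \Omega_f^q(B,A_B) + L \lesssim \gamma_f^q(B,A_B) + L \lesssim \gamma_f^q(B) + L$ (the last step by the second inequality in \eqref{e:ABbeta<gammaAB<gamma}). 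Finally, Lemma \ref{l:gamma<gradf} applied with any $g \in \Grad_p(f)$ — e.g.\ taking $g$ comparable to $L$ on a large ball — gives $\gamma_f^q(B) \lesssim L$, whence $|\grad A_B| \lesssim L$ as claimed.

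I expect the main obstacle to be purely bookkeeping: making sure the constant $\ve_0$ separating the two cases is chosen after, and in terms of, the geometric spreading constant $c$ of Lemma \ref{l:balanced-balls}, and checking that the small-$\beta$ condition really does force the $y_i = \pi_B(z_i)$ to stay in $d$-general position in $P_B$ (this is the analogue of \eqref{e:y-spread}, obtained because $|z_i - \pi_B(z_i)| \lesssim \beta_\Sigma^{d,q}(B) r_B$ by the Chebyshev choice, so a small $\beta$ perturbs the spread configuration negligibly). There is no analytic difficulty beyond what already appears in the proof of Lemma \ref{l:A-A}; in fact the present statement is strictly simpler since it involves only one ball. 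One subtlety worth flagging: to invoke Lemma \ref{l:gamma<gradf} one needs $f$ to be in a Hajłasz–Sobolev class with a controlled upper gradient, which is automatic for Lipschitz $f$ on an Ahlfors regular set by taking $g \equiv \Lip(f)$ (restricted to a suitable ball to ensure $L^p$ membership), so this costs nothing.
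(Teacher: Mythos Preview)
Your proof is correct and follows essentially the same two-case strategy as the paper: bound $|\grad A_B|$ trivially when $\beta$ is bounded below, and in the small-$\beta$ regime use balanced points from Lemma \ref{l:balanced-balls} together with the Lipschitz bound on $f$, exactly as in Case 2 of Lemma \ref{l:A-A}. The only simplification the paper makes is to bound $\gamma_f^q(B)$ directly by testing against the constant map $A_0\equiv f_B$ (so $\gamma_f^q(B)\le\Omega_f^q(B,f_B)\lesssim L$), which sidesteps the $L^p$-membership bookkeeping you flag when invoking Lemma \ref{l:gamma<gradf}.
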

\begin{proof}
	Note that, since $A\equiv f_B$ is affine and has $\grad f_B=0$, then, for $1\leq q<\infty$,
	\begin{align*}
		\gamma_f^q(B) \leq \Omega_f^q(B, f_B)\leq L.
	\end{align*}
	Fix some numerical constant $c<1$. If $\beta_\Sigma^{d,q}(B)> c$, then 
	\begin{align*}
		|\grad A_B| \leq c^{-1} |\grad A_B| \beta_{\Sigma}^{d,q} (B) \stackrel{\eqref{e:gradAB-beta}}{\lesssim}_c \gamma_f^q(B) \lesssim L.
	\end{align*}
For the case where $\beta_\Sigma^{d,q}(B)< c$, the proof is as in Lemma \ref{l:A-A}, and specifically for Case 2 of \eqref{e:A<gamma+av}. Note that there, the Poincar\'{e} inequality is only used to bound a term of the form $\avint_B \frac{|f(x)-f_B|}{r_B} \, d \sigma(x)$, which now can be bounded simply by $L$, since $f$ is $L$-Lipschitz. We leave the details to the reader. 
\end{proof}

\begin{lemma}\label{l:gammaBQj-to-0}
	Let $E \subset \R^n$ be $d$-rectifiable and lower $d$-regular. Let $f: E \to \R$ be $L$-Lipschitz and $x \in E$ so that $T_x E$ exists. Suppose that $\{Q_j\} \subset \dD_E$ is a sequence of cubes so that $x \in Q_j$ for each $j$ and $\ell(Q_j) \to 0$ as $j \to \infty$. Then if $M \geq 1$ and $1\leq q\leq \infty$,  
	\begin{align}\label{e:gamma-to-0}
	\lim_{j \to \infty} \gamma_f^q(MB_{Q_j}) =0.
	\end{align}
\end{lemma}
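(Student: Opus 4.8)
The plan is to show that $\gamma_f^q(MB_{Q_j})$ is bounded by a quantity of the form $\Omega_f^1$-type deviation of $f$ from an affine approximation plus a $\beta$-type flatness term, both of which vanish along the sequence. First I would use Lemma \ref{l:A_B}: for each ball $B = MB_{Q_j}$ there is an affine map $A_B$ with $|\grad A_B|\beta_\Sigma^{d,q}(B)\lesssim \gamma_f^q(B,A_B)\lesssim \gamma_f^q(B)$ and $A_B\circ\pi_B = A_B$; moreover by Lemma \ref{l:gradAB-fLip} we have $|\grad A_B|\lesssim L$ uniformly in $j$. This uniform gradient bound is the key point that makes the $\beta$-term harmless: since $x\in Q_j$ we have $MB_{Q_j}\subset B(x, CM\ell(Q_j))$ with $\ell(Q_j)\to 0$, and by Jensen $\beta_\Sigma^{d,1}(MB_{Q_j})\lesssim \beta_\Sigma^{d,q}(MB_{Q_j})$, but to handle the full range $1\le q\le\infty$ I would instead bound $\gamma_f^q(MB_{Q_j})\lesssim \Omega_f^q(MB_{Q_j}, L_xf) + |\grad A_B|\,\beta_\Sigma^{d,q}(MB_{Q_j})$ after replacing the optimizing affine map by the tangential linearization $L_xf$ from \eqref{e:def-L_x} — note $L_xf$ is itself affine with $|\grad L_xf| = |\grad_t f(x)|\le L$, so it is a legitimate competitor for the infimum defining $\gamma_f^q$.

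The remaining work is then to show both terms vanish. For the $\beta$ term: $E$ is $d$-rectifiable with a tangent $T_xE$ at $x$, so by Lemma \ref{l:tan1} (applied with $V_x = x + T_xE$) the $L^1$ flatness $\frac{1}{r^d}\int_{B(x,r)\cap E}\frac{\dist(y,V_x)}{r}\,d\dH^d(y)\to 0$ as $r\to 0$. For $1\le q<\infty$ one upgrades this to $L^q$ flatness using the uniform bound $\dist(y,V_x)/r \lesssim 1$ on $B(x,r)$ (an $L^\infty$ bound times the $L^1$ convergence, via interpolation $\|\cdot\|_q^q \le \|\cdot\|_\infty^{q-1}\|\cdot\|_1$), which shows $\beta_\Sigma^{d,q}(B(x,r))\to 0$; since $MB_{Q_j}$ is comparable to such a ball, $\beta_\Sigma^{d,q}(MB_{Q_j})\to 0$, and multiplying by the bounded factor $|\grad A_B|$ kills this term. (For $q=\infty$ one uses Lemma \ref{l:beta-properties}\eqref{e:beta<beta} to pass from the $L^1$ flatness of a slightly larger ball to the $L^\infty$ flatness.) For the $\Omega$ term: by the affine-invariance/Minkowski estimates as in Lemma \ref{l:A-A} it suffices to control $\avint_{MB_{Q_j}}\frac{|f(y)-L_xf(y)|}{\ell(Q_j)}\,d\sigma(y)$, and since $MB_{Q_j}\subset B(x,CM\ell(Q_j))$ with $\sigma(B(x,CM\ell(Q_j)))\approx \ell(Q_j)^d$, this is dominated by a constant (depending on $M$) times $\avint_{B(x,CM\ell(Q_j))}\frac{|f(y)-L_xf(y)|}{CM\ell(Q_j)}\,d\sigma(y)$, which tends to $0$ as $j\to\infty$ by Lemma \ref{l:average-Lxf-to-0}. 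Combining, $\gamma_f^q(MB_{Q_j})\to 0$.

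I expect the main obstacle — really the only non-routine point — to be the bookkeeping needed to make the $L^\infty$ endpoint $q=\infty$ work, since Lemma \ref{l:average-Lxf-to-0} and Lemma \ref{l:tan1} are stated in $L^1$ form, so one needs the extra step of invoking Lemma \ref{l:beta-properties}\eqref{e:beta<beta} (Chebyshev/shrinking-ball argument) and an analogous sup-bound for the $\Omega^\infty$ term, which requires the Lipschitz regularity of $f$ together with the tangential differentiability \eqref{e:differentiability} to upgrade the $L^1$-average decay to a genuine pointwise/uniform decay on shrinking balls. Everything else is a direct concatenation of already-proved lemmas and Jensen's inequality. I would therefore structure the proof as: (1) reduce via $L_xf$ as competitor; (2) kill the $\beta$-term using Lemma \ref{l:gradAB-fLip} (or the direct bound $|\grad L_xf|\le L$) and Lemma \ref{l:tan1} plus interpolation; (3) kill the $\Omega$-term using Lemma \ref{l:average-Lxf-to-0} and Ahlfors regularity to absorb the change of ball; (4) handle $q=\infty$ separately with Lemma \ref{l:beta-properties}.
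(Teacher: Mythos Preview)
Your proposal is correct and follows essentially the same route as the paper: both use $L_xf$ as the competitor in $\gamma_f^q$, bound the $\Omega$-term via Lemma \ref{l:average-Lxf-to-0} together with the pointwise bound $|f-L_xf|\lesssim_L \ell(Q_j)$ for interpolation to higher $q$, and bound the $\beta$-term via the tangent-plane flatness from \cite{villa2020tangent}. The only cosmetic differences are that the paper invokes $\beta_E^{d,\infty}\to 0$ directly and then uses $\beta_E^{d,q}\le\beta_E^{d,\infty}$ rather than interpolating up from $\beta_E^{d,1}$, and your opening detour through $A_B$ and Lemma \ref{l:gradAB-fLip} is unnecessary since $|\nabla L_xf|\le L$ already gives the needed gradient bound.
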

\begin{proof}
	Note that 
	$
		\gamma_f^q(MB_{Q_j}) \leq \gamma_f^q(MB_{Q_j}, L_x f)
		 $.
		Using Lemma \ref{l:average-Lxf-to-0}, it follows that for any $\ve>0$, there is a $j$ sufficiently large so that
		\begin{align*}
			\Omega_f^q(MB_{Q_j}, L_x f) = \left( \avint_{MB_{Q_j}} \left(\frac{|f(y) - L_xf(y)|}{\ell(Q_j)} \right)^q \, d \sigma(y) \right)^{\frac{1}{q}} < \ve.
		\end{align*}
	This is clear if $q=1$. If $q>1$, note that $|f(y)-L_xf(y))|\lesssim_L \ell(Q_j)$. Thus 
	\begin{equation*}
	\Omega^q_f(MB_{Q_j}, L_x f)^q \lesssim \Omega^1_f(MB_{Q_j}, L_x f)^q \lesssim_L \ve^q.
	\end{equation*} 
	On the other hand, note that $|\grad L_x f(y)| \lesssim |\grad_t f(x)| \leq L$. To finish the proof of the lemma is then enough to show that
	\begin{equation}\label{e:beta-alpha-to-0} \beta_E^{d, q}(MB_{Q_j}) \to 0 \quad  \mbox{ and } \quad  \alpha_\sigma^d(MB_{Q_j}) \to 0
\end{equation}
 as $j \to \infty$. To see the first, by \cite[Theorem 1.4 (T2))]{villa2020tangent} (which can be applied because we assume lower regularity), we see that $\beta_{E}^{d, \infty}(MB_{Q_j}) \leq \beta_E^{d, \infty}(MB_{Q_j}, T_x E) \to 0$. Since clearly $\beta_{E}^{d,q} \leq \beta_{E}^{d,\infty}$ for any $q\geq 1$, we are done with the decay of the $\beta$-coefficients. On the other hand, that $x+T_xE$ is the approximate tangent at $x$, implies that $T_{x,r}[\sigma]r^{-d} \warrow c \dH^d|_{x+T_x E}$ (see for example \cite[Theorem 4.8]{delellis2008rectifiable}). But that a sequence of Radon measures $\mu_j \warrow \mu$ implies that $F_B(\mu_j, \mu) \to 0$ for all balls $B$ (see \cite[Lemma 14.13]{Mattila}). Hence the weak convergence of $T_{x,r}[\sigma]r^{-d}$ to $c \dH^d|_{x + T_x E}$ in fact implies that also $\alpha_\sigma^d(MB_{Q_j}) \to 0$.
\end{proof}

\begin{proposition}\label{p:grad-lim-lem}
Let $\Sigma\subset \R^n$ be uniformly $d$-rectifiable. Let $M>1$ be sufficiently large, $f:\Sigma\rightarrow  \R$ an $L$-Lipschitz and $Q_0\in \dD(\Sigma)$. The following holds for $\sigma$-a.e. $x\in Q_0$: if  $\{Q_j\}_{j=1}^\infty \subset \dD(E)$ is a sequence of cubes so that $x \in Q_j$ for each $j$, and $\ell(Q_j^x)\downarrow 0$, then
\begin{equation*}
\lim_{j \to \infty}  \left\| \frac{A_{MB_{Q_{j}^x}}-L_x f}{\ell(Q_{j})}\right\|_{L^{\infty}(MB_{Q_{j}^x})}=0.
\end{equation*}
In particular, $\grad A_{MB_{Q_{j}}}\rightarrow \grad_t f(x)$ for $\sigma$-a.e. $x \in Q_0$.
\end{proposition}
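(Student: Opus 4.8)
The plan is to establish the displayed convergence at every $x\in Q_0$ belonging to the full-$\sigma$-measure set on which: (i) the approximate tangent plane $T_xE$ exists; (ii) $f$ is tangentially differentiable at $x$, so that $\grad_t f(x)$ exists and, by \eqref{e:gradtf-leq-L}, $|\grad_t f(x)|\le L$ (Theorem \ref{t:tangential-diffbar}); (iii) $\avint_{B(x,\rho)}\rho^{-1}|f-L_xf|\,d\sigma\to 0$ as $\rho\to0$ (Lemma \ref{l:average-Lxf-to-0}, with $L_xf$ as in \eqref{e:def-L_x}); and (iv) for every sequence $\{Q_j\}_{j}\subset\dD(E)$ with $x\in Q_j$ and $\ell(Q_j)\downarrow0$ we have $\gamma_f^q(MB_{Q_j})\to0$ \emph{and} $\beta_E^{d,\infty}(MB_{Q_j},x+T_xE)\to0$, both of which are what is shown inside the proof of Lemma \ref{l:gammaBQj-to-0} (recall $E$, being Ahlfors regular, is lower regular). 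Now fix such an $x$ and such a sequence; write $r_j:=\ell(Q_j)$, $B_j:=MB_{Q_j}$ (a genuine ball in $\R^n$ of radius $Mr_j$), let $A_j:=A_{B_j}$ be the affine map supplied by Lemma \ref{l:A_B} (so $A_j\circ\pi_{B_j}=A_j$, with $\pi_{B_j}$ the orthogonal projection onto $P_{B_j}$), and set $g_j:=A_j-L_xf$. Since $g_j$ is affine and $B_j$ is a ball of radius $Mr_j$, we have $|\grad g_j|\le (Mr_j)^{-1}\|g_j\|_{L^\infty(B_j)}$; hence the whole proposition follows once we prove that $\|g_j\|_{L^\infty(B_j)}=o(r_j)$, for then the displayed limit is exactly this statement, and the ``in particular'' clause follows from $\grad A_{B_j}=\grad g_j+\grad_t f(x)\to\grad_t f(x)$.

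\emph{Step 1 (an averaged bound).} We show $\avint_{B_j}|g_j|\,d\sigma=o(r_j)$. By the triangle inequality $|g_j|\le|f-A_j|+|f-L_xf|$. For the first summand, $\avint_{B_j}|f-A_j|\,d\sigma=r(B_j)\,\Omega_f^1(B_j,A_j)\le r(B_j)\,\Omega_f^q(B_j,A_j)\le r(B_j)\,\gamma_f^q(B_j,A_j)\lec r(B_j)\,\gamma_f^q(B_j)$ by \eqref{e:ABbeta<gammaAB<gamma}, and this is $o(r_j)$ by (iv). For the second summand: since $x\in Q_j$, $B_j\subseteq B(x,C_Mr_j)$ for a constant $C_M$ depending only on $M$ and the dyadic-lattice constants, and $\sigma(B_j)\approx\sigma(B(x,C_Mr_j))$ by Ahlfors regularity, whence $\avint_{B_j}|f-L_xf|\,d\sigma\lec\avint_{B(x,C_Mr_j)}|f-L_xf|\,d\sigma=o(r_j)$ by (iii). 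Write $\eta_j:=r_j^{-1}\avint_{B_j}|g_j|\,d\sigma$, so that $\eta_j\to0$.

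\emph{Step 2 (passing from $L^1$ to $L^\infty$).} By Lemma \ref{l:gradAB-fLip} and \eqref{e:gradtf-leq-L}, $|\grad g_j|\le|\grad A_j|+|\grad_t f(x)|\lec L$. Apply Lemma \ref{l:balanced-balls} to $B_j$ to get balls $B_j^0,\dots,B_j^d\subseteq B_j$ centered on $E$ with radii $\approx r_j$ and with $\gtrsim r_j$-affinely-spread centers. For each $i$, using Chebyshev's inequality, Ahlfors regularity (so $\sigma(B_j^i)\approx\sigma(B_j)$), and Jensen's inequality to pass from $\beta_E^{d,q}(B_j)=\beta_E^{d,q}(B_j,P_{B_j})$ to an $L^q$-average of $\dist(\,\cdot\,,P_{B_j})$ over $B_j^i$, choose $z_i\in B_j^i\cap E$ with $|g_j(z_i)|\lec\eta_jr_j$ and $\dist(z_i,P_{B_j})\lec\beta_E^{d,q}(B_j)\,r_j=o(r_j)$; note also that $\dist(z_i,x+T_xE)\le\beta_E^{d,\infty}(B_j,x+T_xE)\,r(B_j)=o(r_j)$ holds automatically, and that --- exactly as in Claim \ref{c:balanced-points} and its use in Case 2 of Lemma \ref{l:A-A} --- the points $z_0,\dots,z_d$ remain $\gtrsim r_j$-affinely spread. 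Because each $z_i$ is $o(r_j)$-close to both of the $d$-planes $P_{B_j}$ and $x+T_xE$ while the $z_i$ are spread, it follows that $\angle(P_{B_j},T_xE)=o(1)$. Now $\grad A_j$ annihilates $P_{B_j}^\perp$ (since $A_j=A_j\circ\pi_{B_j}$) and $\grad_t f(x)\in T_xE$, so writing $\grad g_j=u_j+w_j$ with $u_j:=\pi_{T_xE}\grad g_j$ and $w_j\perp T_xE$ we obtain $|w_j|\lec|\grad A_j|\,\angle(P_{B_j},T_xE)\lec L\,\angle(P_{B_j},T_xE)=o(1)$. Finally, the affine identity $g_j(z_i)-g_j(z_0)=u_j\cdot\pi_{T_xE}(z_i-z_0)+w_j\cdot(z_i-z_0)$ yields $|u_j\cdot\pi_{T_xE}(z_i-z_0)|\lec\eta_jr_j+|w_j|r_j=o(r_j)$ for $i=1,\dots,d$, and since $\{\pi_{T_xE}(z_i-z_0)\}_{i=1}^d$ spans $T_xE$ with a condition number bounded in terms of $d$ only, $|u_j|=o(1)$; hence $|\grad g_j|\le|u_j|+|w_j|=o(1)$, and therefore $\|g_j\|_{L^\infty(B_j)}\le|g_j(z_0)|+2r(B_j)|\grad g_j|\lec\eta_jr_j+o(1)\,r_j=o(r_j)$, as required.

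The routine ingredients --- the Chebyshev selections, the comparison of measures of comparable balls via Ahlfors regularity, and the elementary linear algebra by which $d+1$ points that are $\gtrsim r_j$-spread and lie within $o(r_j)$ of a $d$-plane determine affine functions on that plane with condition number $O_d(1)$ --- I would dispatch briefly. The genuinely delicate part, and the main obstacle, is Step 2: one must carry out the bookkeeping of Claim \ref{c:balanced-points} so that the selected $z_i$ are simultaneously spread, close to the relevant planes, and points where $|g_j|$ is small, and one must control the component of $\grad g_j$ orthogonal to $T_xE$, which hinges on the decay $\angle(P_{B_j},T_xE)\to0$. This decay is where the rectifiable structure of $E$ enters, through the vanishing of all the coefficients $\gamma_f^q$, $\beta_E^{d,q}$ and $\alpha_\sigma^d$ along the sequence $B_j$ supplied by Lemmas \ref{l:gammaBQj-to-0} and \ref{l:tan1}, which in turn rests on $E$ having an approximate tangent plane at $x$.
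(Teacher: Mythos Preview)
Your proof is correct. It takes a genuinely different route from the paper's, so a short comparison is in order.

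The paper proceeds via two auxiliary lemmas. Lemma~\ref{l:Aj-f} upgrades the $L^1$ control $\Omega_f^q(B_j,A_j)\lec\gamma_f^q(B_j)\to0$ to a uniform bound $\sup_{E\cap 3B_{Q_j}}|A_j-f|/\ell(Q_j)\to0$, exploiting that both $f$ and $A_j$ are $O(L)$-Lipschitz. Lemma~\ref{l:gradA-gradf} then proves $|\grad A_j-\grad_t f(x)|\to0$ by splitting along $P_{B_j}^0$ and $(P_{B_j}^0)^\perp$: the parallel part is handled by integrating $\langle\grad A_j-\grad_t f(x),\cdot\rangle$ against a smooth bump on the shifted plane $P_{B_j}^x$ and transferring the integral to $\sigma$ via $\alpha_\sigma^d(MB_{Q_j})\to0$; the perpendicular part uses the angle decay $\angle(P_{B_j},T_xE)\to0$, itself deduced from $\alpha_\sigma^d\to0$. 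The proposition is then a one-line Taylor expansion combining the two lemmas.

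You instead work with the single affine function $g_j=A_j-L_xf$ throughout: Step~1 bounds its $\sigma$-average on $B_j$, and Step~2 upgrades to $L^\infty$ via the balanced-points mechanism of Lemma~\ref{l:balanced-balls} and Claim~\ref{c:balanced-points}, essentially replaying the Case~2 linear algebra of Lemma~\ref{l:A-A}. You split $\grad g_j$ along $T_xE$ rather than $P_{B_j}^0$, and you obtain the angle decay $\angle(P_{B_j},T_xE)\to0$ directly from the fact that the selected points $z_i$ lie $o(r_j)$-close to both planes --- so you never integrate against test functions and never invoke $\alpha_\sigma^d$ explicitly (only $\gamma_f^q\to0$ and $\beta_E^{d,\infty}(\cdot,x+T_xE)\to0$ are needed). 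This makes your argument shorter and more self-contained. The paper's version is more modular (Lemma~\ref{l:gradA-gradf} is reused verbatim in Part~III, see the handling of $I_{2,2}$ there) and illustrates the $\alpha$-number technique, at the cost of a longer proof.
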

\noindent
Recall that $A_{MB_{Q_j}}$ is the affine map given in Lemma \ref{l:A_B}. To prove this proposition, we will first need the two lemmas below.

\begin{lemma}\label{l:Aj-f}
	Same hypotheses and notation as in Proposition \ref{p:grad-lim-lem}. Let $A_j^x:= A_{MB_{Q_j}}$. For $\sigma$-almost all $x \in E$, we have 
	\begin{equation}\label{e:Aj-f-lim}
		\lim_{j \to \infty} \sup_{z \in B(x, 3\ell(Q_j)) \cap E} \frac{|A_j^x (z) - f(z)|}{\ell(Q_j)} = 0. 
	\end{equation}
\end{lemma}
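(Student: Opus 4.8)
The plan is as follows. Set $h_j:=A_j^x-f$ on $E$, so that \eqref{e:Aj-f-lim} is the statement that $\ell(Q_j)^{-1}\sup_{z\in B(x,3\ell(Q_j))\cap E}|h_j(z)|\to 0$ for $\sigma$-a.e.\ $x$. Two facts reduce this to a routine estimate. First, $h_j$ is $C_0L$-Lipschitz on $E$, with $C_0$ depending only on $n,d$ and the Ahlfors regularity constant: indeed $f$ is $L$-Lipschitz and, by \Lemma{gradAB-fLip}, $|\grad A_j^x|=|\grad A_{MB_{Q_j}}|\lesssim L$. Second, for $\sigma$-a.e.\ $x\in E$ the normalized average of $|h_j|$ over $MB_{Q_j}$ decays: by \Lemma{A_B}, see \eqref{e:ABbeta<gammaAB<gamma},
\[
\eta_j:=\frac{1}{\ell(Q_j)}\avint_{MB_{Q_j}}|h_j|\,d\sigma = M\,\Omega_f^1(MB_{Q_j},A_j^x)\le M\,\gamma_f^q(MB_{Q_j},A_j^x)\lesssim M\,\gamma_f^q(MB_{Q_j}),
\]
and by \Lemma{gammaBQj-to-0} the right-hand side tends to $0$ as $j\to\infty$ at every $x$ where $T_xE$ exists, i.e.\ at $\sigma$-a.e.\ $x$, since a uniformly rectifiable set is rectifiable. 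Fix such an $x$ and take $M\ge 5$ (Proposition \ref{p:grad-lim-lem} permits $M$ large); since $x\in Q_j\subseteq B_{Q_j}$ we have $|x-x_{Q_j}|\le\ell(Q_j)$, so $B(x,3\ell(Q_j))\subseteq MB_{Q_j}$.

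The remaining step is the elementary "$L^1$-small $+$ Lipschitz $\Rightarrow$ $L^\infty$-small on a slightly smaller ball'' principle. Put $s_j:=\ell(Q_j)^{-1}\sup_{z\in B(x,3\ell(Q_j))\cap E}|h_j(z)|$. Averaging $|h_j(z)|\le|h_j(w)|+C_0L|z-w|$ over $w\in MB_{Q_j}\cap E$ and using $|z-w|\le 2M\ell(Q_j)$ there gives $s_j\lesssim\eta_j+LM$, so $(s_j)_j$ is bounded for $j$ large. Suppose toward a contradiction that $s_j\not\to 0$; passing to a subsequence, $s_j\ge\delta$ for some $\delta\in(0,4C_0L]$. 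Choose $z_j\in B(x,3\ell(Q_j))\cap E$ with $|h_j(z_j)|\ge\tfrac12 s_j\ell(Q_j)\ge\tfrac{\delta}{2}\ell(Q_j)$ and set $\rho_j:=\tfrac{\delta}{4C_0L}\ell(Q_j)\le\ell(Q_j)$; by the Lipschitz bound $|h_j|\ge\tfrac{\delta}{4}\ell(Q_j)$ on $B(z_j,\rho_j)\cap E$, and since $z_j\in B(x_{Q_j},4\ell(Q_j))$ we get $B(z_j,\rho_j)\subseteq B(x_{Q_j},5\ell(Q_j))\subseteq MB_{Q_j}$. As $\ell(Q_j)\to 0$, for $j$ large Ahlfors regularity \eqref{e:ADR} gives $\sigma(B(z_j,\rho_j)\cap E)\gtrsim\rho_j^d$ and $\sigma(MB_{Q_j})\lesssim (M\ell(Q_j))^d$, whence
\[
\eta_j \;\ge\; \frac{1}{\ell(Q_j)\,\sigma(MB_{Q_j})}\int_{B(z_j,\rho_j)\cap E}|h_j|\,d\sigma \;\gtrsim\; \frac{1}{M^d}\Bigl(\frac{\delta}{C_0L}\Bigr)^d\,\delta \;>\;0,
\]
contradicting $\eta_j\to 0$. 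Hence $s_j\to 0$, which is exactly \eqref{e:Aj-f-lim}.

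I do not anticipate a genuine obstacle: the argument is just the combination of \Lemma{A_B}, \Lemma{gradAB-fLip} and \Lemma{gammaBQj-to-0} with a standard Ahlfors-regularity volume comparison. The single point that needs care is the a priori bound $s_j\lesssim\eta_j+LM$: without it one cannot guarantee that the auxiliary ball $B(z_j,\rho_j)$ stays inside $MB_{Q_j}$ (equivalently, that $\rho_j\lesssim\ell(Q_j)$), which is precisely what legitimizes the volume comparison in the last display.
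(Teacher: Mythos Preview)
Your proof is correct and uses essentially the same approach as the paper: both combine the Lipschitz bound $|\nabla A_j^x|\lesssim L$ from \Lemma{gradAB-fLip} with the smallness of $\Omega_f^q(MB_{Q_j},A_j^x)\lesssim\gamma_f^q(MB_{Q_j})\to 0$ from \Lemma{A_B} and \Lemma{gammaBQj-to-0}, and then pass from a small $L^q$-average of $h_j=A_j^x-f$ to a small $L^\infty$ bound using Ahlfors regularity. The only cosmetic difference is that the paper carries out this last step directly by averaging over a ball of radius $\lambda\ell(Q_j)$ and optimizing in $\lambda$, obtaining the quantitative rate $\sup_z|h_j(z)|/\ell(Q_j)\lesssim \gamma_f^q(MB_{Q_j})^{q/(d+q)}$, whereas you phrase the same volume comparison as a contradiction argument; the paper's explicit rate is not needed elsewhere, so nothing is lost.
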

\begin{proof}
	Since $f$ is Lipschitz by Lemma \ref{l:gradAB-fLip} we have  
	$
	| \grad A_j^x |  \lesssim L. 
	$
	The following estimate is similar to the proof of \cite[Eq. III (4.4)]{david-semmes93}. Set $B_j^x:= B(x,3 \ell(Q_j))$. Let $z \in 3B_j^x \cap E$.
	For each $0<\lambda<1$ and $y \in  B(z, 3\lambda \ell(Q_j))\cap E=: 3\lambda B^z_j\cap E$, we write
	\begin{align*}
		|A_j^x (z) - f(z)| &  \leq |f(z) - f(y)| + |A_j^x(z)- A_j^x(y)| + |A_j^x(y)- f(y)| \\
		& \lesssim L \lambda \ell(Q_j) + |A_j^x(y) - f(y)|.
	\end{align*}
	Averaging over $3\lambda B_j^z$, we get 
	\begin{align*}
		|A_j^x(z) - f(z)| &= \left(\avint_{3\lambda B_j^z} |A_j^x(z) - f(z)|^q\, d \sigma(y) \right)^{\frac{1}{p}}\\
		& \lesssim L\lambda\ell(Q_j) + \left( \frac{1}{\lambda^d \ell(Q_j)^d} \int_{3\lambda B_j^z} |A_j^x (y) - f(y)|^q \, d\sigma(y) \right)^{\frac{1}{q}}\\
		& \lesssim L \lambda \ell(Q_j) + \lambda^{-d/q} \Omega_f^q(3B_j^x, A_j^x) \ell(Q_j).
	\end{align*}
	If $\Omega_f^q(3B_j^x, \ell(Q_j))\leq 1$, then we pick $\lambda=\Omega_f^q(3B_j^x, \ell(Q_j))^{q/(d+q)}$ and so we obtain
	\begin{equation*}
		|A_j^x(z)-f(z)|/\ell(Q_j) \lesssim \Omega_f^q(3B_j^x,A_j^x)^{\frac{q}{d+q}}.
	\end{equation*}
	If, on the other hand, $\Omega_f^q(3B_j^x, A_j^x) > 1$, then 
	$
	\sup_{z \in 3B_j^x \cap \Sigma} |f(x)-A_j^x(x)|/\ell(Q_j) \lesssim 1 \lesssim \Omega_f^q(3B_j^x, A_j^x)^{\frac{q}{d+q}}.
	$
	If $M$ is sufficiently large, we obtain
	\begin{equation}\label{e:Ajx-fx}
		\left(|A_j^x(z) - f(z)|/\ell(Q_j)\right)^{\frac{d+q}{q}} \stackrel{\eqref{e:ABbeta<gammaAB<gamma}}{\lesssim} \gamma_f^q(MB_{Q_j}) \stackrel{\eqref{e:gamma-to-0}}{\to} 0.
	\end{equation}	
\end{proof}

\begin{lemma}\label{l:gradA-gradf}
Same hypotheses and notation as in Proposition \ref{p:grad-lim-lem}. For $x \in Q_0$ and $j \in \mathbb{N}$, set $A_j^x := A_{MB_{Q_j}}$. For $\sigma$-almost all $x \in E$ we have
\begin{equation}\label{e:A_j-gradf}
    \lim_{j \to \infty} \left| \nabla A_j^x - \nabla_t f (x)\right| = 0.
\end{equation}
\end{lemma}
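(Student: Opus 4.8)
The plan is to deduce the convergence of the gradients from the uniform (rescaled) convergence of the affine maps $A_j^x$ to the linear map $L_xf$ already established in Lemma \ref{l:Aj-f}. Fix a point $x$ at which the conclusion of Lemma \ref{l:Aj-f} holds, at which $E$ has an approximate tangent plane $T_xE$, at which $\grad_t f(x)$ exists, and at which the conclusion of Lemma \ref{l:balanced-balls} can be iterated at all small scales; this is $\sigma$-a.e.\ $x \in E$ since each of these is a full-measure condition (Theorem \ref{t:tangential-diffbar}, $d$-rectifiability, and Ahlfors regularity). Recall $A_j^x = A_{MB_{Q_j^x}}$ satisfies $A_j^x \circ \pi_j = A_j^x$, where $\pi_j$ is the orthogonal projection onto the infimising plane $P_j := P_{MB_{Q_j^x}}$ of $\beta_E^{d,q}(MB_{Q_j^x})$; also $|\grad A_j^x| \lesssim L$ by Lemma \ref{l:gradAB-fLip}, so $\{\grad A_j^x\}_j$ is a bounded sequence in $\R^n$ and it suffices to show every convergent subsequence has limit $\grad_t f(x)$.

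First I would exploit the balanced balls. Apply Lemma \ref{l:balanced-balls} to the ball $B(x, 3\ell(Q_j))$ to obtain $d+1$ balls $B_0^j, \dots, B_d^j$ of radius $\approx \ell(Q_j)$ contained in $B(x,3\ell(Q_j))$, centered on $E$, whose centers $x_i^j$ are quantitatively in ``general position'' as in \eqref{e:d-dim-points-1}. By Lemma \ref{l:Aj-f}, $|A_j^x(x_i^j) - f(x_i^j)| = o(\ell(Q_j))$ for each $i$, uniformly; and since $f$ is $L$-Lipschitz, $|f(x_i^j) - L_xf(x_i^j)| \le |f(x_i^j) - f(x)| + |f(x) - L_xf(x_i^j)|$, where the first term is $\lesssim L\ell(Q_j)$ and more to the point, combining with Lemma \ref{l:Aj-f} and the triangle inequality through $f$, we get $|A_j^x(x_i^j) - L_xf(x_i^j)| = o(\ell(Q_j))$. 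Now $A_j^x - L_xf$ is an affine map; subtracting the $i=0$ relation from the others, $(\grad A_j^x - \grad_t f(x))$ applied to the vectors $x_i^j - x_0^j$ (which have length $\approx \ell(Q_j)$ and, by \eqref{e:d-dim-points-1}, span $\R^n$ with quantitative bounds on the spanning constant independent of $j$) is $o(\ell(Q_j))$; hence $|\grad A_j^x - \grad_t f(x)| = o(1)$, exactly as in the $T_1$-type computation inside the proof of Lemma \ref{l:A-A}.

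Alternatively, and perhaps more cleanly, one can argue by extracting a subsequence: if $\grad A_{j_k}^x \to v$ for some $v \in \R^n$, then since $A_{j_k}^x \circ \pi_{j_k} = A_{j_k}^x$ and $d(P_{j_k} \cap B, T_xE \cap B) \to 0$ (because $\beta_E^{d,1}(MB_{Q_{j_k}^x}) \to 0$ by \eqref{e:beta-alpha-to-0}, together with \eqref{e:PBPB'} and the tangent-plane convergence), the limiting linear functional $y \mapsto v \cdot y$ factors through the orthogonal projection onto $T_xE$, i.e.\ $v \in T_xE$. On the other hand, for $y$ in any fixed compact subset of $T_xE$, using Lemma \ref{l:Aj-f} at the points $x + t y$ (with $t \approx \ell(Q_{j_k})$, so that these points lie in $B(x,3\ell(Q_{j_k})) \cap E$ up to a negligible error controlled by $\beta_E^{d,\infty} \to 0$), one reads off $v \cdot y = \langle \grad_t f(x), y\rangle$. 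Since both $v$ and $\grad_t f(x)$ lie in $T_xE$, this forces $v = \grad_t f(x)$, so the whole sequence converges.

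The main obstacle I anticipate is the point just used implicitly: the points $x_i^j$ (or $x+ty$) at which Lemma \ref{l:Aj-f} is invoked lie on $E$ but not on $P_j$ or on $x+T_xE$, so one must transfer information between $f$, $A_j^x$ (which is constant in directions orthogonal to $P_j$), and $L_xf$ (constant orthogonal to $T_xE$) across three slightly different planes. This is exactly the bookkeeping already carried out in the proof of Lemma \ref{l:A-A} (the decomposition \eqref{e:form4} into the $P_B - y_0$ and $P_B^\perp - y_0$ parts, and the angle bound \eqref{e:beta-angle}), with $P_B, P_{B'}$ replaced by $P_j, T_xE$ and the role of $\gamma_f^q(B')$ played by the quantities $\gamma_f^q(MB_{Q_j^x}) \to 0$ from \eqref{e:gamma-to-0} and $\Omega_f^q(3B_j^x, A_j^x) \to 0$ from \eqref{e:Ajx-fx}; the terms involving $\beta_E^{d,q}(MB_{Q_j^x})$ and $\alpha_\sigma^d(MB_{Q_j^x})$ vanish in the limit by \eqref{e:beta-alpha-to-0}. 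So the proof should amount to running the Lemma \ref{l:A-A} computation ``in the limit,'' with all error terms now genuinely tending to zero rather than merely being bounded by $\gamma_f^q(B')$.
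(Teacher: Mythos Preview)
Your overall plan is correct and would yield the lemma, but there is a genuine slip in the first argument: the vectors $x_i^j - x_0^j$, $i=1,\dots,d$, do \emph{not} span $\R^n$. By \eqref{e:d-dim-points-1} they span a $d$-dimensional subspace (close to $P_j$ and to $T_xE$), and since $1\le d\le n-1$ this never exhausts $\R^n$. So from $(\grad A_j^x - \grad_t f(x))(x_i^j-x_0^j)=o(\ell(Q_j))$ alone you cannot conclude $|\grad A_j^x-\grad_t f(x)|=o(1)$; the component of $\grad A_j^x-\grad_t f(x)$ orthogonal to ${\rm span}\{x_i^j-x_0^j\}$ is uncontrolled at this stage. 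You implicitly recognise this in your ``Alternatively'' paragraph and in the last paragraph (the reference to the $P_B^\perp-y_0$ part of \eqref{e:form4}), and indeed the fix is exactly what you say: use $A_j^x\circ\pi_j=A_j^x$ to force $\grad A_j^x\in P_j^0$, use $\grad_t f(x)\in T_xE$, and invoke the angle convergence $\angle(P_j^0,T_xE)\to 0$ to kill the orthogonal contribution. One further gap to be explicit about in the first approach: the bound $|f(x_i^j)-L_xf(x_i^j)|=o(\ell(Q_j))$ does not follow from Lipschitzness alone; you must \emph{choose} the $x_i^j$ inside the balanced balls $B_i^j$ via Chebyshev against the average in Lemma \ref{l:average-Lxf-to-0}, exactly as in Claim \ref{c:balanced-points}.

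With those corrections your route is valid, and it is genuinely different from the paper's. The paper does not use balanced points or a subsequence argument: it decomposes $|\grad A_j-\grad_t f(x)|$ into a $P_j^0$-part $I_1(j)$ and a $(P_j^0)^\perp$-part $I_2(j)$; for $I_1$ it introduces a smooth bump $\phi_j$ and rewrites the supremum as an integral against $\dH^d|_{P_j^0}$, then transfers successively to $\dH^d|_{P_j^x}$, to $c_{Q_j}\dH^d|_{P_j}$, and finally to $\sigma$ via the $\alpha_\sigma^d$ coefficient (this is where the paper essentially uses weak convergence of the blow-ups of $\sigma$ to flat measure), after which Lemmas \ref{l:Aj-f} and \ref{l:average-Lxf-to-0} finish the job. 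Your balanced-points/compactness approach avoids the $\alpha$-number machinery and the bump-function integration entirely, replacing them by the pointwise bookkeeping of Lemma \ref{l:A-A}; the paper's approach trades that bookkeeping for a measure-theoretic transfer. Both need exactly the same four ingredients (Lemmas \ref{l:Aj-f} and \ref{l:average-Lxf-to-0}, the factorisation $A_j^x\circ\pi_j=A_j^x$, and $\angle(P_j,T_xE)\to 0$), so neither is strictly stronger.
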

\begin{proof}
	Let $\mathcal{L}_j := \mathcal{L}_{MB_{Q_j}}$, recalling the notation set in \eqref{e:def-L}: here $\mathcal{L}_{MB_{Q_j}}= c \dH^d|_{P_{Q_j}}$, where $c, P_{MB_{Q_j}}$ minimise $\alpha_\sigma^d(MB_{Q_j})$. Set also $P_j :=P_{MB_{Q_j}}$, and define $P_j^x$ to be the affine $d$-plane parallel to $P_j$ containing $x$, and $P_j^0= P_j^x - x$. Because $E$ is uniformly rectifiable, $\alpha_\sigma^d(MB_{Q_j}) \to 0$ as $j \to \infty$, as was established in the proof of Lemma \ref{l:gammaBQj-to-0}. This in particular implies that
	\begin{equation}\label{e:angle-to-0}
		\angle (P_j, x+T_x E) = \angle (P_j^0, T_x E) \to 0 \quad \mbox{ as } \quad  j \to \infty
	\end{equation}
	Note also that 
	\begin{equation}\label{e:form605}
		\dist(P_j, P_j^x)/\ell(Q_j) \leq \beta_E^{d,\infty}(MB_{Q_j}) \to 0
	\end{equation}
	when $j \to \infty$. This was also established in the proof of Lemma \ref{l:gammaBQj-to-0}. For each $j$, we compute
	\begin{align}\label{e:form19213}
		|(\grad A_{j}-\grad_t f(x))| 
	&	= \sup_{|y|=1} |\langle \grad A_{j}-\grad_t f(x), y\rangle |  \notag \\
		& \leq  \left(\sup_{y\in P_{j}^0\atop |y|=1} |\langle \grad A_{j}-\grad_t f(x)), y\rangle | +\sup_{y\in (P_{j}^0)^{\perp}\atop |y|=1} |\langle \grad A_{j}-\grad_t f(x)), y\rangle | \right)\notag  \\
		& =:I_1(j)+I_2(j).
	\end{align}
	We will compute the two limits $\lim_{j \to \infty} I_i(j), i = 1,2$ separately.\\ 
	
	\noindent
	We start with $I_1(j)$.
	Let $\phi$ be a $C^{\infty}$-bump function that is identically $1$ on $\bB$ and $0$ outside $2\bB$ and let 
	\[
	\phi_{j}(y) = \ell(Q_{j})^{-d}\phi_{j}\left(\frac{y-x}{\ell(Q_{j})}\right).\]
	Note that $y \mapsto \langle \grad A_j^x- \grad_t f(x), y\rangle$ is Lipschitz with constant depending only on $L$ - this can be seen using Lemma \ref{l:gradAB-fLip}; in particular the Lipschitz constant is uniform in $j$. Hence
	\begin{align*}
		I_1(j) & \lesssim \int_{\bB} |\langle \grad A_{j}^x  -\grad_t f(x),  y \rangle |  \phi(y)  d\dH^{d}|_{P_{j}^0}(y)\\
		& =  \int \langle \grad A_{j}^x -\grad_t f(x)),   (y-x)/\ell(Q_{j}) \rangle \phi_{j}(y) d \, \dH^{d}|_{P_{j}^x}(y)\\
		&  = \frac{cL}{\ell(Q_j)^{d+1}} \int \Phi_j^x (y)\, d \dH^d|_{P_j^x}(y), 
	\end{align*}
	where
	\begin{equation*}
		\Phi_j^x(y):= \frac{1}{cL}\left\langle \grad A_{j}^x -\grad_t f(x),   y-x \right\rangle \phi\left(\frac{y-x}{\ell(Q_j)} \right)
	\end{equation*} 
	and $c \approx 1$ is a numerical constant appropriately chosen so that $\Phi_j^x$ is $1$-Lipschitz. That this can be done is easily checked. Moreover, for sufficiently large $M$, it is  supported on $MB_{Q_j}$. With this in mind, we compute
	\begin{align*}
		I_1(j) &\lesssim_L \ell(Q_j)^{-d-1}  \int \Phi_j^x(y) \left( d\dH^d|_{P_j^x} - d\dH^d|_{P_j} \right)(y) \\
		& \qquad \qquad + \ell(Q_j)^{-d-1}\int \Phi_j^x (y) \left(d\dH^d|_{P_j^x} - d \sigma \right)(y) + \ell(Q_j)^{-d-1} \int \Phi_j^x(y)\, d \sigma(y) \\
		& := I_{1,1}(j) + I_{1,2}(j) + I_{1,3}(j).
	\end{align*} 
	We see that
	\begin{equation}\label{e:limI11j}
		I_{1,1}(j) \lesssim \frac{1}{\ell(Q_j)^{d+1}} F_{MB_{Q_j}} (\dH^d|_{P_j^x}, \dH^d|_{P_j}) \stackrel{\eqref{e:form605}}{\to} 0 \mbox{ as } j \to \infty.
	\end{equation}
	Similarly
	\begin{equation}\label{e:limI12j}
		I_{1,2}(j) \leq \alpha_{\sigma}^d(MB_{Q_j}) \to 0, 
	\end{equation}
	as seen in the proof of Lemma \ref{l:gammaBQj-to-0}. We are left to estimate $I_{1,3}(j)$. We see that
	\begin{align*}
	I_{1,3}(j)& =	\ell(Q_j)^{-d} \int \ell(Q_j)^{-1}\left| \left\langle \grad A_j^x, \, y-x \right\rangle - \langle  \grad_t f(x), y-x \rangle \right| \phi\left( \tfrac{y-x}{\ell(Q_j)} \right)\,  d \sigma(y)\\
	& \leq \ell(Q_j)^{-d} \int \ell(Q_j)^{-1}\left| \langle \grad A_j^x, y-x \rangle + f(x) - f(y) \right| \phi\left( \tfrac{y-x}{\ell(Q_j)} \right)\, d\sigma(y) \\
	& \qquad \qquad + \ell(Q_j)^{-d} \int \ell(Q_j)^{-1}\left| f(y) - L_x f(y) \right| \phi\left( \tfrac{y-x}{\ell(Q_j)} \right)\, d \sigma(y)   =: I_{1,3,1}(j) + I_{1,3,2}(j). 
	\end{align*} 
By Lemma \ref{l:average-Lxf-to-0}, we immediately see that 
\begin{equation}\label{e:form606}
	\lim_{j \to \infty} I_{1,3,2}(j) = 0. 
\end{equation}
As for $I_{1,3,1}(j)$, we see that $\langle \grad_j^x, y-x \rangle = A_j^x (y) - A_j^x(x)$, and so 
\begin{align}\label{e:lim131j}
	\lim_{j \to 0} I_{1,3,1}(j) & \leq \lim_{j\to \infty} \ell(Q_j)^{-d}\int |f(x) - A_j^x(x)|/\ell(Q_j) \,  \phi\left( \tfrac{y-x}{\ell(Q_j)} \right) \, d\sigma(y) \notag \\
	& \qquad \qquad + \lim_{j\to \infty}\ell(Q_j)^{-d} \int |f(x) - A_j^x(x)|/\ell(Q_j)\, \phi\left( \tfrac{y-x}{\ell(Q_j)} \right) \, d\sigma(y) \stackrel{\eqref{e:Aj-f-lim}}{=} 0
\end{align}
Putting together \eqref{e:limI11j}, \eqref{e:limI12j}, \eqref{e:form606} and \eqref{e:lim131j}, we see that $I_1(j) \to 0$ as $j \to \infty$.  \\

	\noindent
	Now we focus on $I_2(j)$, as defined in \eqref{e:form19213}. Recall from Lemma \ref{l:A_B} that $A_{j}\circ \pi_{P_j}=A_j$, and so $\grad A_{j}\in P_{j}^0$. This and the fact that $\grad_t f\in T_x E$ imply, for $y \in (P_{j}^0)^{\perp}$,
	\begin{align*}
		|\langle A_j(x)-\grad_t f(x),  y \rangle | 
		&  = |\langle \grad_t f(x),  y\rangle|
		= |\langle \grad_t f(x),  \pi_x(y) \rangle|
		\leq |\grad_t f(x)| |\pi_x( y)| \\ 
		& =|\grad_t f(x)| |(\pi_x-\pi_{P_{j}^0})(y)|,
	\end{align*} 
where here $\pi_x$ is the orthogonal projection onto $T_x E$ and $\pi_{P_j^0}$ the orthogonal projection onto $P_j^0$. Then we see that
	\begin{align*}
		I_2(j)
		&  \leq \sup_{y\in (P_{j}^0)^{\perp}\atop |y|=1}|\grad_t f(x)| \cdot  |(\pi_x-\pi_{P_{j}^0})(y)|  \\
		& \qquad \leq |\grad f(x)| \left( \angle(T_x E,P_j^0)\right) \stackrel{\eqref{e:gradtf-leq-L}}{\leq} L \left(\angle(T_x E, P_j^0)\right) \stackrel{\eqref{e:angle-to-0}}{\rightarrow }0.
	\end{align*}
	Thus we see that $\lim_{j \to \infty}I_{i}(j) = 0$, $i=1,2,$. This proves \eqref{e:A_j-gradf} and finishes the proof. 
\end{proof}

\begin{proof}[Proof of Propostion \ref{p:grad-lim-lem}]

By \eqref{e:Ajx-fx}, for any $\ve>0$, we can find a $j$ so that $\tfrac{|A_j^x(x) - f(x)|}{\ell(Q_j)}< \ve$. For this $j$ and an arbitrary $y \in MB_{Q_j}$, and using the Taylor expansion $A_j^x(y) = A_j^x(x) + \langle \grad A_j^x(x), y-x \rangle$, we compute
\begin{align*}
 \frac{| A_{j}^x(y)-L_xf(y)|}{\ell(Q_{j})} & \leq  \frac{\left|\langle\nabla A_j^x(x) - \nabla_t f(x), x-y\rangle\right|}{\ell(Q_j)} + \frac{|A_j^x(x) - f(x)|}{\ell(Q_j)}  \\
& \stackrel{\eqref{e:Ajx-fx}}{<} \ve + \frac{| \langle\grad A_{j}^x-\grad_t f(x), x-y\rangle|}{\ell(Q_{j})}
\end{align*}
Since $x \in Q_j$, $|x-y| \lesssim_M \ell(Q_j^x)$. Thus 
\begin{align*}
     \frac{|A_j(y) - L_x f(y)|}{\ell(Q_j)} \lesssim_M \ve + |\nabla A_j^x - \nabla_t f(x)|.
\end{align*}
Since this holds for any $y \in MB_{Q_j} \cap E$ uniformly, we can take the supremum over such $y$'s. Then letting $j \to \infty$, and applying Lemma \ref{l:gradA-gradf}, we conclude the proof of Proposition \ref{p:grad-lim-lem}.
\end{proof}

\section{The proof of Theorem \ref{t:dorronsoro-II} via a good-$\lambda$ inequality}\label{s:tB-proof-via-good}
Recall that $\dG^1 f \lesssim \dG^q f$ for any $q \geq 1$, and also $\dG^1 f \lesssim \widetilde{\dG} f$. Hence, to verify Theorem \ref{t:main-B}, it suffices to prove the theorem below.
\begin{theorem}
	\label{t:dorronsoro-II}
	For $1<p<\infty$, let $\Sigma\subseteq \R^{n}$ be an uniformly $d$-rectifiable and $f: E \to \R$ be Lipschitz. Then,
	\begin{equation}
		\label{e:||f||<||G||}
		\|\grad_t f\|_{L^p(\Sigma)}
		\lec \|\G^{1}f\|_{L^p(\Sigma)}. 
	\end{equation}
\end{theorem}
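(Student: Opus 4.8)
\textbf{Proof strategy for Theorem \ref{t:dorronsoro-II}.} The plan is to follow the good-$\lambda$ scheme sketched in the introduction, dualizing the inequality \eqref{e:good-lambda-intro}: instead of controlling the square function by the gradient, we now control (a maximal function of) the tangential gradient by $\dG^1 f$. Concretely, I would fix $Q_0 \in \dD(\Sigma)$, work with $\dG^1_{Q_0} f$ and $\dM (\grad_t f)$ a suitable (restricted, $L^s$-averaged for small $s<p$) maximal function of $|\grad_t f|$, and prove: for every $\alpha>1$ there is $\ve>0$ so that for all $\lambda>0$,
\[
\big|\big\{x\in Q_0:\ \dM(\grad_t f)(x)>\alpha\lambda,\ \dG^1 f(x)\le \ve\lambda\big\}\big| < \tfrac{15}{16}\,\big|\big\{x\in Q_0:\ \dM(\grad_t f)(x)>\lambda\big\}\big|.
\]
As in Section \ref{s:tA-good-lambda}, once this is established the $L^p$ bound follows by the standard distribution-function/maximal-function argument, using boundedness of Hardy--Littlewood on $L^{p/s}$ and then letting the fixed parameters decay; the passage from $\|\grad_t f\|_p$ to $\|\dM(\grad_t f)\|_p$ is harmless since $\grad_t f \le \dM(\grad_t f)$ pointwise a.e.

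The heart of the argument is the stopping-time construction, which I would carry out cube-by-cube over the maximal cubes $R$ of $\{\dM(\grad_t f)>\lambda\}$. Fix such an $R$ and set $G_R = \{x\in R:\ \dG^1 f(x)\le \ve\lambda\}$; we may assume $|G_R|\ge\tfrac12|R|$, else we are done. Run a greedy stopping algorithm from $\widehat R$ downward: stop at a cube $Q\subseteq\widehat R$ the first time $|\grad A_Q|\ge C\alpha\lambda$ (with $A_Q$ the minimizer from Lemma \ref{l:A_B}, so that $|\grad A_Q|\beta^{d,1}_E(B_Q)\lesssim\gamma^1_f(B_Q)$). Call $\Stop(R)$ the stopped cubes and $\Tree(R)$ the non-stopped ones. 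On $\Tree(R)$ one builds an approximating Lipschitz function $F$ à la David--Semmes--Léger: using the affine maps $A_Q$, which vary slowly from scale to scale by \eqref{e:gradA-gradA}–\eqref{e:AB-AB'} (quantified by $\gamma^1_f$), one glues a Lipschitz graph $\Gamma$ over a $d$-plane and a Lipschitz $F$ on $\Sigma$ agreeing (roughly) with $f$ at the top and having $\Lip(F)\lesssim\alpha\lambda$. Crucially, $F$ also has small square function: $\sum_{Q\in\Tree(R)}\Omega^1_F(B_Q)^2\ell(Q)^d$ and the $\beta$-sum are both controlled, the first because $F$ is adapted to $A_Q$, the second by the strong geometric lemma (Proposition \ref{p:strong-geometric-lemma}) and the fact that $E$ is UR. The goal is the packing bound $\sum_{Q\in\Stop(R)}|Q|\le\tfrac12|R|$; combined with $|G_R|\ge\tfrac12|R|$ and the observation that the set on the left of the good-$\lambda$ inequality, intersected with $R$, is covered by $\bigcup_{Q\in\Stop(R)}Q$ up to a null set (a stopped cube witnesses $|\grad A_Q|$ large, hence $\gamma^1_f(B_Q)$ large unless $\beta^{d,1}_E(B_Q)$ is large, in which case the UR Carleson estimate absorbs it), yields the claim.

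To get the packing bound I would argue by contradiction, or more cleanly by a direct energy estimate: if $Q\in\Stop(R)$, then either $\gamma^1_f(B_Q)\gtrsim\alpha\lambda$ (so $\sum_{Q\in\Stop(R)}\gamma^1_f(B_Q)^2\ell(Q)^d \gtrsim (\alpha\lambda)^2\sum|Q|$, and this sum is $\lesssim (\ve\lambda)^2|R| + (\text{Carleson } \beta\text{-term})\lesssim(\ve\lambda)^2|R|+|R|$ — wait, the $\beta$-term is only $\lesssim\ell(R)^d=|R|$, not $\ve$-small, so one must instead use that on $G_R$, which is most of $R$, the full $\dG^1 f \le \ve\lambda$; the stopped cubes with large $\gamma^1_f$ therefore cannot pack more than $(\ve/\alpha)^2|R|$ of $R$ that meets $G_R$), or $\beta^{d,1}_E(B_Q)$ is not small, and the strong geometric lemma caps the total measure of those by a fixed fraction of $|R|$ once we also know (via the approximating Lipschitz graph $\Gamma$, whose own Dorronsoro/David--Semmes Carleson estimate from Proposition \ref{t:DS-Dorronsoro} and Lemma \ref{c:DS-Dorronsoro} is controlled) that the geometry seen by $\Stop(R)$ is quantitatively flat. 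Balancing $\ve$ against $\alpha$ and the UR constants gives $\sum_{Q\in\Stop(R)}|Q|\le\tfrac12|R|$.

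\textbf{Main obstacle.} The delicate point is the construction of the approximating Lipschitz function $F$ (equivalently, the Lipschitz graph $\Gamma$) over $\Tree(R)$ together with the two simultaneous estimates $\Lip(F)\lesssim\alpha\lambda$ and $\sum_{\Tree(R)}\Omega^1_F(B_Q)^2\ell(Q)^d\lesssim(\ve\lambda)^2|R|$, and then transferring smallness of $\dG^1 F$ back to $\dG^1 f$ on a large subset of $G_R$ — exactly where Lemma \ref{c:DS-Dorronsoro} (David--Semmes for Lipschitz functions on UR sets, applied to $F$ and to $\Gamma$) and the slow-variation estimates of Lemma \ref{l:A-A} must be combined carefully. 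This is the analogue of the $h=f-F$ error analysis in Section \ref{s:tA-square-function}, but run in the reverse direction and complicated by the stopping time, so the bookkeeping of which cubes meet $G_R$ versus which are stopped, and the geometric-series summation over Whitney-type cubes of $R\setminus(\text{good set})$, is where the real work lies. I expect Proposition \ref{p:grad-lim-lem} (that $\grad A_{MB_{Q_j}}\to\grad_t f(x)$ $\sigma$-a.e.) to be the tool that legitimizes identifying $\dM(\grad_t f)$ with a limit of $|\grad A_Q|$ and hence makes the stopping condition meaningful at the level of the pointwise inequality we ultimately want.
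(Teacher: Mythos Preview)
Your overall architecture—good-$\lambda$ inequality, stopping time on a maximal cube $R$, David--Semmes--L\'eger Lipschitz approximation—is the paper's. But your packing argument for the stopped cubes has a genuine gap, and fixing it forces several changes you have not anticipated.

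The flaw is in the sentence ``if $Q\in\Stop(R)$, then either $\gamma^1_f(B_Q)\gtrsim\alpha\lambda$ \ldots''. This is false. A stopped cube in $\BG(R)$ has a child $Q'$ with $|\nabla A_{Q'}|>\alpha_0|\nabla A_R|$ or $|\nabla A_{Q'}|<\alpha_0^{-1}|\nabla A_R|$; since $|\nabla A_R|\approx\lambda$, this means $|\nabla A_Q-\nabla A_R|\gtrsim\lambda$, \emph{not} that $\gamma^1_f(B_Q)$ is large. In fact, for $Q$ meeting $G_{\ve\lambda}$ one has $\gamma^1_f(B_Q)\lesssim\ve\lambda$, which is small. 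Your own ``wait'' parenthetical already detects that the $\gamma$-sum cannot carry the argument; you are right to be suspicious, but the repair is not via $\beta$'s either.

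The paper's mechanism is different. One adds three further stopping rules besides bad gradient: big square function ($\BSF$), big angle ($\BA$), and small side length ($\SSL$). The first two pack trivially (Lemmas~\ref{l:BSF-packing}--\ref{l:BA-packing}) using $|G_{\ve\lambda}\cap R|\ge(1-\rho)|R|$ and the strong geometric lemma; their role is to guarantee that on $\Tree(R)$ both the angles $\angle(P_Q,P_R)<\delta_0$ and $\gamma^1_f(M^2B_Q)<C_0\ve\lambda$ are uniformly small. With those in hand one builds, not a function on $\Sigma$, but a function $F:\R^d\to\R$ (over the plane $P_R$) by gluing the maps $A_{Q_i}\circ b_i$ via a Whitney partition, where $b_i$ parametrizes $P_{Q_i}$ over $\R^d$; see \eqref{e:def-F}. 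The two crucial facts are Lemma~\ref{l:gradF-gradAR}, giving $|\nabla F(x)-\nabla A_R|\gtrsim\lambda$ on the Whitney cubes $I_j\subset\R^d$ corresponding to $\BG(R)$, and the square-function estimate $\sum_I\Omega^1_F(3B_I)^2|I|\lesssim(\lambda\delta_0)^2|R|$ (Lemma~\ref{l:OmegaF-sim}). Applying the \emph{classical} Dorronsoro theorem on $\R^d$ (Theorem~\ref{t:dorronsoro-classic}, not Lemma~\ref{c:DS-Dorronsoro}) to $F-A_R$ then yields $\|\nabla F-\nabla A_R\|_2^2\lesssim(\lambda\delta_0)^2|R|$, and combining with the lower bound gives $\sum_{\BG(R)}|Q|\lesssim\delta_0^2|R|$. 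The $\delta_0$ here is the angle parameter, chosen small after everything else; it is not $\ve$.

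One smaller point: the paper does not run the good-$\lambda$ on a maximal function of $|\nabla_t f|$. Instead it sets $E_\lambda=\bigcup\{R:|\nabla A_R|>\lambda,\ R\text{ maximal}\}$ and shows via Proposition~\ref{p:grad-lim-lem} that $|\nabla_t f(x)|\le\sup\{\lambda:x\in E_\lambda\}$ $\sigma$-a.e., so that $\|\nabla_t f\|_p^p\le\int_0^\infty|E_\lambda|\lambda^{p-1}d\lambda$. The good-$\lambda$ inequality (Proposition~\ref{p:EcapG<E}) is then $|E_{\kappa\lambda}\cap G_{\ve\lambda}|<\theta|E_\lambda|$. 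Working directly with $|\nabla A_R|$ rather than $\dM(\nabla_t f)$ is what makes the stopping condition and the set $E_\lambda$ line up cleanly.
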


\subsection{First reductions and the proof of Thereom \ref{t:dorronsoro-II}}

Let $M$ be a large constant; we will adjust its value as we go along. We denote by $L$ the Lipschitz constant of $f$, and we will be careful that our estimates do not depend on it. For $Q\in\dD(E)$, let $A_{Q} = A_{MB_{Q}}$, where $A_{MB_Q}$ is the affine map from Lemma \ref{l:A_B}.  For $\lambda>0$,
\begin{equation}\label{e:def-Clambda}
    \mbox{let } \cC_\lambda = \cC_{\lambda}(f) \mbox{ be a family of maximal cubes } R\in \dD \mbox{ such that } |\grad A_{R}|>\lambda.
\end{equation} 
Recall from Lemma \ref{l:gradAB-fLip}, that $|\grad A_R| \lesssim L$ for any $R \in \dD(E)$. We conclude that $\cC_\lambda$ is well defined. Now set\footnote{We will usually write just $E_\lambda$ because the function $f$ will be fixed. We write $E_\lambda (f)$ whenever we are dealing with two distinct functions.}
\begin{equation}\label{e:E-lamdba2}
E_{\lambda}: = E_\lambda(f) :=\bigcup_{R\in \cC_{\lambda}}R.
\end{equation}
Let $x \in E$. 
\begin{itemize}
	\item If there is a cube $Q \in \dD(E)$ so that $Q \ni x$ and $|\grad A_Q|> \lambda$, then $x \in E_\lambda$, simply because then either $Q$ or an ancestor of $Q$ belongs to $\cC_\lambda$.
	\item By Lemma \ref{l:gradA-gradf}, it holds for $\sigma$-almost all $x \in E$ that, if we have a sequence of cubes $Q_j$ all containing $x$ and such that $\ell(Q_j)\to 0$ as $j\to \infty$, then $\lim_{j \to \infty} |\grad A_{Q_j}(x) - \grad_t f(x)| = 0$ (whenever $f$ is Lipschitz).
	\item The previous two bullet points imply that
	for $\sigma$-almost every $x\in \Sigma$, if $0<\lambda< |\grad_t f(x)|$, then $x\in E_{\lambda}$. In particular
	\begin{equation}\label{e:form22}
		|\grad_t f(x)|\leq \sup\{\lambda>0: x\in E_{\lambda}\}.
	\end{equation}
\end{itemize} 
Note that $E_{\lambda_2} \subseteq E_{\lambda_1}$ whenever $\lambda_1 \leq \lambda_2$. Thus, if $t>0$, 
\begin{equation}\label{e:form23}
\{x:  \sup\{\lambda>0: x\in E_{\lambda}\}>t\}
\subseteq E_{t}.
\end{equation}
We conclude that
\begin{align}
\int |\grad_t f|^{p}d\sigma
& =\int_{0}^{\infty}|\{x: |\grad_t f|>t\}t^{p-1}dt \notag \\
& \stackrel{\eqref{e:form22}}{\leq} \int_{0}^{\infty}|\{x:  \sup\{\lambda>0: x\in E_{\lambda}\} >t\}t^{p-1}dt \notag \\
& \stackrel{\eqref{e:form23}}{\leq} \int_{0}^{\infty}|E_{t}|t^{p-1}dt. \label{e:form21}
\end{align}
So, to prove Theorem \ref{t:dorronsoro-II}, it suffices to bound the right hand side in \eqref{e:form21}.

 \begin{proposition}[Good-$\lambda$ inequality]
 \label{p:EcapG<E}
Hypotheses as in Theorem \ref{t:dorronsoro-II}. There is $\theta>0$ so that for each $\kappa>1$ sufficiently close to 1, there is $\ve = \ve(\kappa)>0$ small enough \textup{(}depending on $\kappa$\textup{)}, such that the following holds. For any $\lambda>0$, set
 \begin{align*}
 G_{\ve \lambda}^q := G_{\ve \lambda}^q(f) := \left\{x \in E \, |\, \G^q f(x) \leq \ve\lambda\right\} .
 \end{align*}
Then
 \begin{equation}
 \label{e:Ecap G<E}
 |E_{\kappa \lambda}\cap G_{\ve\lambda}^q|<\theta |E_{\lambda}|.
 \end{equation}
 \end{proposition}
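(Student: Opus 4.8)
The strategy is the standard good-$\lambda$ mechanism adapted to the stopping-time setting described in the overview: we must show that on each maximal ``bad-gradient'' cube $R \in \cC_\lambda$, the portion where $\G^q f$ is tiny but the gradient jumps up to level $\kappa\lambda$ occupies only a small fraction of $R$. Since $E_{\kappa\lambda} \subseteq E_\lambda = \bigcup_{R \in \cC_\lambda} R$ and the cubes in $\cC_\lambda$ are pairwise disjoint, it suffices to prove, for a fixed $R \in \cC_\lambda$,
\begin{equation}\label{e:per-R-goal}
|E_{\kappa\lambda} \cap G_{\ve\lambda}^q \cap R| < \theta\,|R|.
\end{equation}
First I would fix $R \in \cC_\lambda$ and run the greedy stopping-time algorithm sketched in the overview: starting from $R$, descend through subcubes $Q$ and stop at the maximal $Q \subseteq R$ for which $|\grad A_Q| > \kappa\lambda$ (with $A_Q = A_{MB_Q}$ as in Lemma \ref{l:A_B}); call the resulting family $\Stop(R)$. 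Every point of $E_{\kappa\lambda} \cap R$ lies in some $Q \in \Stop(R)$ (by the first bullet preceding \eqref{e:form22}, applied inside $R$), so $E_{\kappa\lambda} \cap R \subseteq \bigcup_{Q \in \Stop(R)} Q$, and hence \eqref{e:per-R-goal} will follow once we establish the packing estimate
\begin{equation}\label{e:packing-goal}
\sum_{Q \in \Stop(R)} |Q| \leq \theta\,|R|.
\end{equation}
The point of intersecting with $G_{\ve\lambda}^q$ is that on $G_{\ve\lambda}^q \cap R$ the full square function $\G^q f$ is at most $\ve\lambda$; for cubes $Q \in \Stop(R)$ that fail to meet $G_{\ve\lambda}^q$ we can bound their total measure directly by $|R \setminus G_{\ve\lambda}^q|$, but this piece needs to be absorbed elsewhere — so really one splits $\Stop(R)$ into those cubes meeting $G_{\ve\lambda}^q$ and those that don't, and it is the former family for which one proves \eqref{e:packing-goal} with the square-function input.

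The heart of the argument — and the main obstacle — is proving the packing condition \eqref{e:packing-goal} for the stopped cubes meeting $G_{\ve\lambda}^q$. The plan here follows the David--Semmes--Léger philosophy referenced in the overview: build an approximating Lipschitz function $F$ on (a neighborhood of) $R$, adapted to the non-stopped cubes, with Lipschitz constant $\lesssim \kappa\lambda$ and with a controlled square function $\G^q F$. Roughly, $F$ should agree with the affine pieces $A_Q$ at the scale where we have not yet stopped, glued together via a partition of unity subordinate to the non-stopped Whitney-type structure; because along the non-stopped cubes $|\grad A_Q| \leq \kappa\lambda$ and consecutive $A_Q$'s are close (via \eqref{e:gradA-gradA}, controlled by $\gamma_f^q(B_Q)$, which is small in $L^2$-sum since $\G^q f \leq \ve\lambda$ on $G_{\ve\lambda}^q$), the function $F$ is genuinely Lipschitz with the right constant. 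Then at each stopped cube $Q \in \Stop(R)$ the gradient of the \emph{smooth model} $F$ is $\lesssim \kappa\lambda$ while $|\grad A_Q| > \kappa\lambda$, forcing $\gamma_f^q(B_Q)$ (equivalently $\Omega_f^q(B_Q, A_Q)$ plus a $\beta$-term) to be bounded below by a definite multiple of $\kappa\lambda$ — because $A_Q$ is the near-minimizer and it cannot simultaneously be close to $f$ in the $\Omega$ sense and have gradient far from $\grad F$ when $F$ itself approximates $f$ well. This converts the packing sum into a sum of $\gamma_f^q(B_Q)^2 \ell(Q)^d$, which is then controlled: on one hand by the strong geometric lemma / Carleson estimate for $\beta$'s (Proposition \ref{p:strong-geometric-lemma}), and on the other hand by $\int_{R} \G^q f \,d\sigma$-type quantities which are $\lesssim (\ve\lambda)^2 |R|$ on $G_{\ve\lambda}^q$ — giving a bound of the form $C\ve^2 |R|$, and choosing $\ve$ small (after $\kappa$ is fixed) makes this $< \theta|R|$.

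One subtlety I would be careful about is the interplay between $\kappa$ (close to $1$) and $\ve$: the construction of $F$ must have Lipschitz constant that does \emph{not} degrade as $\kappa \to 1$ — indeed $F$ should be $\lesssim \lambda$-Lipschitz (using only the non-stopped bound $|\grad A_Q| \leq \kappa\lambda \lesssim \lambda$), while the \emph{gap} we exploit at stopped cubes is the difference between $\kappa\lambda$ and the actual gradient of $F$ there, which is $\lesssim\lambda + C\ve\lambda$; for this gap to be usable we need $\kappa$ strictly larger than $1 + C\ve$, which is why the quantifier order in the statement is ``for each $\kappa>1$, there is $\ve = \ve(\kappa)$''. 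The other technical points — the quasi-monotonicity of $\gamma_f^q$ (already established, \eqref{e:gamma-monotone}), the comparison $|\grad A_B - \grad A_{B'}| \lesssim \gamma_f^q(B')$ (Lemma \ref{l:A-A}), and the fact that $\grad A_{Q_j} \to \grad_t f$ pointwise a.e.\ (Lemma \ref{l:gradA-gradf}) — are all available from earlier in the paper and slot in at the indicated places. I expect the construction and Lipschitz estimate for $F$, and the clean extraction of the lower bound $\gamma_f^q(B_Q) \gtrsim \lambda$ at stopped cubes, to be the steps requiring the most care.
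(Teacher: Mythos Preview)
Your overall shape is right --- reduce to a fixed $R\in\cC_\lambda$, run a stopping time, and prove a packing estimate for the stopped cubes --- but two central pieces of the argument are not what you describe, and as written your mechanism does not go through.

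\textbf{The lower bound $\gamma_f^q(B_Q)\gtrsim\lambda$ at stopped cubes is false.} Take $E$ flat near $B_Q$ (so $\beta_E^{d,q}(B_Q)\approx 0$) and $f$ close to an affine map with large gradient there; then $|\grad A_Q|$ is large while $\gamma_f^q(B_Q)\approx 0$. More to the point, if your $F$ really approximates $f$ well at scale $Q$ with $|\grad F|\lesssim\lambda$, that gives a competitor in the infimum defining $\gamma_f^q(B_Q)$ with \emph{small} value, forcing $\gamma_f^q(B_Q)$ to be small, not large --- the opposite of what you claim. So the step ``packing sum $\to \sum_Q\gamma_f^q(B_Q)^2\ell(Q)^d$'' collapses. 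The paper's mechanism is different: one builds $F:\R^d\to\R$ on the plane $P_R$ (via the David--Semmes Lipschitz graph), shows $|\grad F(x)-\grad A_R|\gtrsim(\kappa-1)\lambda$ at the projections of stopped cubes, and then applies the \emph{classical} Dorronsoro theorem on $\R^d$ to $F-A_R$ to get $\|\grad F-\grad A_R\|_{L^2(\R^d)}^2\lesssim(\delta_0\lambda)^2|R|$. The packing for bad-gradient cubes follows by dividing by $\lambda^2$.

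\textbf{A single one-sided gradient stopping condition is not enough.} The paper runs four simultaneous stopping rules: big square function ($\BSF$), big angle ($\BA$), two-sided bad gradient ($\BG$: $|\grad A_{Q'}|\notin[\alpha_0^{-1}|\grad A_R|,\alpha_0|\grad A_R|]$), and small side length. Each is essential. Without $\BA$ you cannot project $E\cap B_R$ to $P_R$ as a graph and hence cannot define $F$ on $\R^d$ at all. Without the \emph{two-sided} $\BG$ you only know $|\grad A_Q|\leq\kappa\lambda$ along the tree, not $|\grad A_Q|\approx\lambda$; the latter is what lets you convert $\beta$-control into $\gamma$-control via $\lambda\,\beta_E^{d,1}(MB_Q)\lesssim|\grad A_Q|\,\beta_E^{d,1}(MB_Q)\leq\gamma_f^1(MB_Q)$, which is used both to pack $\BA$ and to control the errors in constructing $F$. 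And $\BSF$ is what guarantees $\gamma_f^1(MB_Q)\lesssim\ve\lambda$ uniformly along the tree; without it the gluing errors in $F$ are uncontrolled. The packing of $\BSF$ and $\BA$ is handled separately (and easily) before one even gets to the hard $\BG$ estimate.
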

 
 \begin{proof}[Proof of Theorem \ref{t:dorronsoro-II}]
 Let us explain how to finish the proof assuming this lemma. For $\lambda>0$,
 \begin{align*}
\kappa^{-p}  \int_{0}^{\infty}|E_{ \lambda}| \lambda^{p-1}d\lambda
& =   \int_{0}^{\infty}|E_{ \kappa \lambda}| \lambda^{p-1}d\lambda
 \\
& \leq  \int_{0}^{\infty} (|E_{\kappa \lambda}\cap G_{\ve\lambda}^q| +|(G_{\ve\lambda}^q)^{c}|)\lambda ^{p-1}d\lambda \\
& \stackrel{\eqref{e:Ecap G<E}}{\leq}   \int_{0}^{\infty} \theta |E_{ \lambda}|\lambda^{p-1}\, d \lambda+ \ve^{-p}\|\G_q f\|_{L^p(\Sigma)}^{p}
\end{align*}
and so for $\kappa>1$ such that, say, $\kappa^{p}-\theta > \tfrac{1}{2}$, 
\[
\|\grad_t f\|_{L^p(\Sigma)}^{p}
\stackrel{\eqref{e:form21}}{\leq}  \int_{0}^{\infty}|E_{ \lambda }| \lambda ^{p-1}d\lambda
\leq \frac{\ve^{-p}}{\kappa^{p}-\theta} \|\G_q f\|_{L^p(\Sigma)}^{p} \leq 2 \ve^{-p} \|\G_q f\|_{L^p(\Sigma)}^p.
\]

This proves the theorem, so now we focus on proving Proposition \ref{p:EcapG<E}.
\end{proof}

\section{The stopping time procedure and the proof of the good-$\lambda$ inequality}\label{s:tB-stopping-time}
In this subsection, we prove Proposition \ref{p:EcapG<E} with Main Lemma \ref{l:m2} (see below). We first prove an easy reduction.
\begin{lemma}
To prove Proposition \ref{p:EcapG<E}, we just need to verify \eqref{e:Ecap G<E} for one value of $\lambda>0$. 
\end{lemma}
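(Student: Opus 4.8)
The plan is to use a standard scaling/dilation argument, exploiting the homogeneity of all the objects involved. The key observation is that the coefficients $\gamma_f^q$ and $\widetilde\gamma_f$, and hence the square functions $\dG^q f$ and $\widetilde\dG f$, are $1$-homogeneous in $f$: by Lemma \ref{l:gammacf=cgammaf} we have $\gamma_{cf}^q(B) = c\,\gamma_f^q(B)$ for any $c>0$, and the analogous identity for $\widetilde\gamma_f$. Likewise, the affine maps $A_{MB_Q}$ from Lemma \ref{l:A_B} scale linearly: if $\ve>0$ and $A$ attains (or nearly attains) the infimum defining $\gamma_f^q(MB_Q)$, then $\ve A$ nearly attains the infimum defining $\gamma_{\ve f}^q(MB_Q)$, so one may take $A_{MB_Q}^{\ve f} = \ve\, A_{MB_Q}^{f}$ (up to the usual $\ve$-approximation argument exactly as in the proof of Lemma \ref{l:gammacf=cgammaf}), and therefore $|\grad A_{MB_Q}(cf)| = c\,|\grad A_{MB_Q}(f)|$.

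First I would fix the one value of $\lambda>0$ for which \eqref{e:Ecap G<E} is assumed, call it $\lambda_0$, and let $\lambda>0$ be arbitrary. Set $c := \lambda_0/\lambda$ and consider the Lipschitz function $\tilde f := c f$. From the homogeneity just discussed, $|\grad A_R(\tilde f)| > \kappa\lambda \iff |\grad A_R(f)| > \kappa\lambda_0 \cdot (c\lambda)/\lambda_0$; more cleanly, since $|\grad A_R(\tilde f)| = c\,|\grad A_R(f)|$, we get that $R$ is a maximal cube with $|\grad A_R(\tilde f)| > \kappa\lambda$ precisely when it is a maximal cube with $|\grad A_R(f)| > \kappa\lambda/c = \kappa\lambda_0$. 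Hence $\cC_{\kappa\lambda}(\tilde f) = \cC_{\kappa\lambda_0}(f)$ as families of cubes, and consequently $E_{\kappa\lambda}(\tilde f) = E_{\kappa\lambda_0}(f)$ and similarly $E_\lambda(\tilde f) = E_{\lambda_0}(f)$. Likewise $\dG^q \tilde f = c\, \dG^q f$, so $G_{\ve\lambda}^q(\tilde f) = \{x : \dG^q f(x) \le \ve\lambda/c\} = \{x : \dG^q f(x) \le \ve\lambda_0\} = G_{\ve\lambda_0}^q(f)$. Thus
\[
    |E_{\kappa\lambda}(\tilde f) \cap G_{\ve\lambda}^q(\tilde f)| = |E_{\kappa\lambda_0}(f) \cap G_{\ve\lambda_0}^q(f)| < \theta\, |E_{\lambda_0}(f)| = \theta\, |E_\lambda(\tilde f)|,
\]
using the assumed case of \eqref{e:Ecap G<E} at $\lambda_0$.

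Finally I would observe that the conclusion of Proposition \ref{p:EcapG<E} for the function $\tilde f = cf$ is what we want for $f$ itself, once we note that the statement is really about the \emph{pair} (a Lipschitz function, a threshold): replacing $f$ by $cf$ and $\lambda_0$ by $\lambda$ simultaneously is an allowed rescaling because none of the hypotheses of Theorem \ref{t:dorronsoro-II} single out a normalization of the Lipschitz constant — indeed the whole point of the remark preceding \eqref{e:def-Clambda} is that the estimates do not depend on $L$. Concretely: applying the one verified instance to $\tilde f$ and dividing through by $c$ (equivalently, renaming $\tilde f$ back to $f$ and $\lambda$ to its original value) yields \eqref{e:Ecap G<E} for $f$ at the arbitrary level $\lambda$, with the same constants $\theta$, $\kappa$, $\ve$. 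Since $\lambda>0$ was arbitrary, Proposition \ref{p:EcapG<E} follows. The only mild subtlety — the ``main obstacle,'' though it is minor — is bookkeeping the $\ve$-approximation in the claim that $A_{MB_Q}$ scales linearly, but this is exactly the argument already carried out in the proof of Lemma \ref{l:gammacf=cgammaf} and needs no new idea; one simply records that the infimising affine map for $\gamma_{cf}^q$ may be taken to be $c$ times that for $\gamma_f^q$, so that the sets $E_\lambda$ and the square-function sublevel sets transform correctly under $f \mapsto cf$.
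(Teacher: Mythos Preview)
Your approach is exactly the one the paper uses: exploit the homogeneity $\gamma_{cf}^q = c\,\gamma_f^q$ (Lemma \ref{l:gammacf=cgammaf}) and the corresponding scaling $A_{MB_Q}^{cf} = c\,A_{MB_Q}^{f}$, so that the sets $E_\lambda$ and $G^q_{\ve\lambda}$ transform predictably under $f\mapsto cf$. The strategy is correct.

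However, your bookkeeping is off in the displayed identity. With $c=\lambda_0/\lambda$ and $\tilde f = cf$, one has $|\grad A_R(\tilde f)| > \kappa\lambda_0 \iff |\grad A_R(f)| > \kappa\lambda_0/c = \kappa\lambda$, so the correct identification is
\[
E_{\kappa\lambda_0}(\tilde f) = E_{\kappa\lambda}(f),\qquad
G^q_{\ve\lambda_0}(\tilde f) = G^q_{\ve\lambda}(f),\qquad
E_{\lambda_0}(\tilde f) = E_{\lambda}(f),
\]
and \emph{not} $E_{\kappa\lambda}(\tilde f) = E_{\kappa\lambda_0}(f)$ as you wrote (indeed, $|\grad A_R(\tilde f)|>\kappa\lambda$ translates to $|\grad A_R(f)|>\kappa\lambda^2/\lambda_0$, which is $\kappa\lambda_0$ only when $\lambda=\lambda_0$). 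Once you swap the subscripts correctly, applying the assumed inequality at level $\lambda_0$ to the function $\tilde f$ directly yields $|E_{\kappa\lambda}(f)\cap G^q_{\ve\lambda}(f)|<\theta\,|E_\lambda(f)|$, which is what you want --- no further ``renaming'' or ``dividing through by $c$'' is needed. This is precisely how the paper's proof proceeds (there with the notation $g = f\lambda/\lambda'$).
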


\begin{proof}
Suppose we have shown \eqref{e:Ecap G<E} for some value $\lambda >0$.  Now let $\lambda'$ be any positive number. 
Since \eqref{e:Ecap G<E} holds with {\it any} Lipschitz function $f$, it also holds for $f$ replaced with $g=\frac{f\lambda }{\lambda'}$. Let 
$A_{Q}^{g} = \frac{\lambda}{\lambda'}A_{Q}$.
Then all our previous results regarding $f$ and the affine maps $A_{Q}$ also hold for the function $g$ and associated affine maps $A_{Q}^{g}$. Thus, if $E_{\lambda}(g)$ is constructed the same way as $E_{\lambda}(f)$ with $g$ in place of $f$ and $\lambda'$ in place of $\lambda$, we have that $E_{\lambda}(g) = E_{\lambda'}(f)$. Let 
\[
G_{\ve\lambda}^q(g) = \{x\in \Sigma \,|\,  \G^{q} g(x)\leq \ve \lambda\}.
\]
Since $\G^{q}g = \frac{\lambda}{\lambda'}\G^{q}f$ by Lemma \ref{l:gammacf=cgammaf}, we get that 
\[
G_{\ve\lambda}^{q}(g)  = G_{\ve\lambda'}^q(f)
\]
and so
\[
|E_{\kappa \lambda'}(f)\cap G_{\ve\lambda'}^q(f)|
=|E_{\kappa\lambda}(g) \cap G_{\ve\lambda}^{q}(g)|
<\theta |E_{\lambda}(g)|=\theta |E_{\lambda'}(f)|.
\]
This concludes the proof.
\end{proof}

\begin{remark}\label{r:lambda-fixed}
We now assume $\lambda>0$ is fixed, but we will indicate what value we need it to be later in the proof. 
\end{remark}
\noindent
Take $\lambda$ fixed as in Remark \ref{r:lambda-fixed}, let $\cC_\lambda$ the relative maximal family, as in \eqref{e:def-Clambda}, and $R\in \cC_{\lambda}$. A further immediate reduction is that it suffices to find a $\theta>0$ (not depending on $R$) so that for any $\kappa>0$ there is $\ve>0$ such that 
 \begin{equation}
 \label{e:EcapGcapR<thetaR}
 |E_{\kappa \lambda}\cap G_{\ve\lambda}^q\cap R|<\theta |R|.
 \end{equation}
This inequality will be our goal for the remainder of the section. \\

\subsection{Definition of the stopping cubes}\label{ss:stopping-cubes}

The proof of \eqref{e:EcapGcapR<thetaR} will be done via a stopping time argument. Before defining the stopping criteria, let us spell out some simple facts that will be extensively used later. 
\begin{itemize}
	\item First, note that
 \begin{align}
 \label{e:gamma<ve}
 \gammaq(M^2 B_{Q}) \stackrel{\eqref{e:gamma-monotone}}{\lec}_M & \G^q f(x)  \leq C(M) \ve \lambda \;\; \notag \\
  & \mbox{ for }\;\; Q\subseteq R, \;\; x\in Q\cap G_{\ve\lambda}^q\cap R \; \; \mbox{ and } \; R \in \cC_\lambda.
\end{align}
\item 
Let $\rho>0$, to be fixed later. Let us say immediately that the $\theta$ we are looking for will satisfy $\theta\in (1-\rho,1)$. If $|G_{\ve\lambda}^q\cap R|<(1-\rho)|R|$, then \eqref{e:EcapGcapR<thetaR} follows immediately, so assume
\begin{equation}
\label{e:GcapR>1/2R}
|G_{\ve\lambda}^q\cap R |\geq (1-\rho)|R|.
\end{equation}
This is the second fact, which in particular implies that
 $G_{\ve\lambda}^q\neq\emptyset$. 
 \item Now, since $R\in \cC_{\lambda}$, if $\wh{R}$ is the parent of $R$, we know that $|\grad A_{\wh{R}}|\leq \lambda$. Thus, for some $C>0$,
\begin{equation}
\label{e:gradA-simLambda}
\lambda < |\grad A_{R}|
\stackrel{\eqref{e:gradA-gradA}}{\leq} |\grad A_{\wh{R}}|+C\gammaq(M^2B_{\wh{R}})
\stackrel{\eqref{e:gamma<ve}}{<}|\grad A_{\wh{R}}|+C\ve\lambda
\leq (1+C\ve)\lambda.
\end{equation}
And this is the third fact. 
\end{itemize}

\vspace{1cm}

\noindent
We now define the stopping time criteria. Fix $\delta_0,C_0,\ve_0>0$, and set
\begin{align}\label{e:def-alpha_0}
    \alpha_0 := 2\kappa - 1> 1
\end{align}
and $\Stop(R)$ be those maximal cubes $Q$ which contain a child $Q'$ such that either
\begin{itemize}
\item $\sum_{Q'\subseteq T\subseteq R}\gammaq(M^2B_{T})^{2}\geq C_0(\ve\lambda)^{2}$, call these cubes $\BSF(R)$ (`big square function'), or
\item $\angle(P_{Q'},P_{R})\geq \delta_0$, and call these cubes $\BA(R)$ (`big angle'), or
\item  $|\grad A_{Q'}|<\alpha_0^{-1} |\grad A_{R}|$ or $|\grad A_{Q'}|>\alpha_0|\grad A_{R}|$, call these cubes $\BG(R)$ (`bad gradient'), or
\item $\ell(Q')<\ve_0\ell(R)$, call these cubes $\SSL(R)$ (`small side length'). 
\end{itemize}
Let $\Tree(R)$ be those cubes in $R$ that are not properly contained in a cube from $\Stop(Q)$. Then $\Tree(R)$ is a {\it stopping-time region}, meaning that is a collection of cubes satisfying the following:
\begin{enumerate}
\item It has a maximal cube $Q_{\Tree(R)}$ containing all cubes from $\Tree(R)$. In our case, the maximal cube is just $R$.
\item For all $Q\in \Tree(R)$ and $Q\subseteq T\subseteq Q_{\Tree(R)}$ we have $T\in \Tree(R)$. 
\item If $Q\in \Tree(R)$, then all siblings of $Q$ are in $\Tree(R)$.
\end{enumerate}

\noindent
A few remarks. First, note that every $x\in R$ is contained in a cube from $\Stop(R)$, since at the least every $x\in R$ is contained in a cube $Q$ with $\ell(Q)<\ve_0\ell(R)$.
Second, if $Q\in \Tree(R)$, then 
\begin{equation}
\label{e:gamma-small}
\Omega_{f}^{1}(M^{2}B_{Q})^{2}\leq \gammaq(M^2B_{Q})^2
\leq 
\sum_{Q\subseteq T\subseteq R}\gammaq(M^2B_{T})^{2}< C_0(\ve\lambda)^{2}, 
\end{equation}
\begin{equation}
\label{e:angle-S}
\angle(P_{Q},P_{R})< \delta_0,
\end{equation}
and 
\begin{equation}
\label{e:AQ<alphaAR}
\alpha_0^{-1}\lambda \stackrel{R \in \cC_\lambda}{<}\alpha_0^{-1} |\grad A_{R}|\leq |\grad A_{Q}|\leq \alpha_0|\grad A_{R}|
\stackrel{\eqref{e:gradA-simLambda}}{\leq} \alpha_0(1+C\ve)\lambda \leq 2\lambda .
\end{equation}
From \eqref{e:gamma<ve} and \eqref{e:gradA-simLambda}, note that the constant $C$ in \eqref{e:AQ<alphaAR} depends on $M$. Hence we can choose $\ve>0$ sufficiently small (depending on $M$ and $\alpha_0$), so that $\alpha_0(1+C\ve)\leq 2$. 

\subsection{Packing estimates for the stopping cubes, statement of Main Lemma \ref{l:m2}}
In this subsection, we prove that, with the criteria as above, our algorithm doesn't stop too often. Most packing estimates will be fairly easy, except the one right below, whose proof will require more work.
\begin{mainlemma}
\label{l:m2}
With notation as in Subsection \ref{ss:stopping-cubes} and assumptions as in Proposition \ref{p:EcapG<E}, we have the packing estimate
\begin{equation}
\label{e:m2}
\sum_{Q\in \BG(R)}|Q|\leq \frac{1}{4}|R|.
\end{equation}
\end{mainlemma}
\noindent
We now finish the proof of Proposition \ref{p:EcapG<E} via two further easy lemmas, which will be proven immediately. The last sections of Part II will be devoted to the proof of Main Lemma \ref{l:m2}.

\begin{lemma}\label{l:BSF-packing}
\begin{equation*}
 \sum_{Q \in \BSF(R)} |Q| \leq \frac{1}{4} |R|.
 \end{equation*}
\end{lemma}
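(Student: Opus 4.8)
Lemma \ref{l:BSF-packing} is the statement that the "big square function" stopping cubes pack. The plan is to exploit the standard Carleson-packing mechanism: the square function $\dG^q f$ is small on a large portion of $R$ (namely on $G^q_{\ve\lambda}\cap R$, which by \eqref{e:GcapR>1/2R} has measure at least $(1-\rho)|R|$), and the stopping criterion for $\BSF(R)$ forces the square function to be large on each such stopped cube. Concretely, for $Q\in\BSF(R)$ there is a child $Q'$ with $\sum_{Q'\subseteq T\subseteq R}\gamma_f^q(M^2 B_T)^2\geq C_0(\ve\lambda)^2$; hence for every $x\in Q'$ (and so for a definite fraction of $x\in Q$),
\[
\dG^q f(x)^2 \;\geq\; \sum_{Q'\subseteq T\subseteq R}\gamma_f^q(B_T)^2 \;\gtrsim\; \sum_{Q'\subseteq T\subseteq R}\gamma_f^q(M^2 B_T)^2 \;\geq\; C_0(\ve\lambda)^2,
\]
using the quasi-monotonicity \eqref{e:gamma-monotone} to pass from $M^2 B_T$ down to $B_T$ (more precisely, $\gamma_f^q(B_T)\gtrsim_M \gamma_f^q(M^2B_T)$ — one must be a touch careful here about the direction and constants, but $B_Q\subset M^2B_Q$ with comparable radii once we note $\ell(Q)\le r_{B_Q}$, and the quasi-monotonicity lemma is stated for $B\subseteq B'$ with comparable radii). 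Thus $\dG^q f(x) \gtrsim \sqrt{C_0}\,\ve\lambda$ on $Q'$, so if $C_0$ is chosen large enough (depending only on the implicit constants, in particular on $M$), then $\dG^q f(x) > \ve\lambda$ on $Q'$, i.e. $Q'\cap G^q_{\ve\lambda}=\emptyset$.

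From here the packing is immediate. The cubes in $\BSF(R)$ are pairwise disjoint (they are maximal stopped cubes). For each $Q\in\BSF(R)$, the child $Q'$ witnessing the stopping satisfies $|Q'|\approx |Q|$ by property (c) of the dyadic lattice, and $Q'\subseteq Q\setminus(G^q_{\ve\lambda}\cap R)$ — actually $Q'\subseteq R\setminus G^q_{\ve\lambda}$. Hence
\[
\sum_{Q\in\BSF(R)}|Q| \;\lesssim\; \sum_{Q\in\BSF(R)}|Q'| \;\leq\; |R\setminus G^q_{\ve\lambda}| \;\stackrel{\eqref{e:GcapR>1/2R}}{\leq}\; \rho\,|R|.
\]
Choosing $\rho$ small enough (relative to the absolute constant coming from $|Q'|\approx|Q|$, which is fine since $\rho$ is ours to fix and $\theta$ is allowed to be taken in $(1-\rho,1)$), we get $\sum_{Q\in\BSF(R)}|Q|\leq\frac14|R|$.

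The main subtlety — and really the only place to be careful — is bookkeeping the order of quantifiers: $C_0$ must be fixed (large) before $\ve$, and $\ve$ depends on $\kappa$ (hence on $\alpha_0$) and on $M$, but none of these depend on $\lambda$ or $R$, so the argument is uniform. A second minor point: strictly, one wants $Q'$ itself (not just a sub-point) to lie outside $G^q_{\ve\lambda}$, which follows because the sum $\sum_{Q'\subseteq T\subseteq R}\gamma_f^q(B_T)^2$ is a lower bound for $\dG^q f(x)^2=\sum_{T\ni x,\,T\subseteq Q_0}\gamma_f^q(B_T)^2$ for \emph{every} $x\in Q'$, since every such $x$ lies in every $T$ with $Q'\subseteq T\subseteq R$. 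I would write this as a short paragraph: reduce to showing $Q'\cap G^q_{\ve\lambda}=\emptyset$ for a suitable choice of $C_0$, then sum disjointly. I do not anticipate a genuine obstacle here; the lemma is "easy" in the sense the authors indicate, and the real work of Part II is reserved for Main Lemma \ref{l:m2} ($\BG(R)$).
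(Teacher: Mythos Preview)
Your proposal is correct and follows essentially the same route as the paper: show that the witnessing child $Q'$ of each $Q\in\BSF(R)$ lies in $R\setminus G^q_{\ve\lambda}$, then sum disjointly and absorb the constant into $\rho$. One caveat: the pointwise inequality you wrote, $\gamma_f^q(B_T)\gtrsim_M \gamma_f^q(M^2B_T)$, has the direction backwards (quasi-monotonicity \eqref{e:gamma-monotone} gives $\gamma_f^q(B_T)\lesssim_M \gamma_f^q(M^2B_T)$, not $\gtrsim$); the correct mechanism---which the paper also leaves implicit behind the phrase ``$C_0$ sufficiently large with respect to $M$''---is to re-index each $M^2B_T$ to $B_{T^*}$ for an ancestor $T^*\ni x$ with $M^2B_T\subseteq B_{T^*}$ and comparable radii, so that $\sum_{Q'\subseteq T\subseteq R}\gamma_f^q(M^2B_T)^2\leq C(M)\,\dG^q f(x)^2$.
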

\begin{proof}
 Let $Q \in \BSF(R)$. Then by definition, for each such a $Q$, there is a cube $Q' \in \Child(Q)$ so that $$\sum_{Q' \subseteq T \subseteq R} \gammaq(M^2 B_T)^2 \geq C_0 (\ve \lambda)^2.$$
 If we pick $C_0$ sufficiently large with respect to $M$, this gives us
\begin{equation*}
(\ve\lambda)^{2} 
\leq C_0^{-1} \sum_{Q'\subseteq T\subseteq R}\gammaq(M^2B_{T})^{2}
<  \sum_{T \ni x \atop T \subseteq R} \gammaq(B_{T})^{2} 
=\G_{q}f(x)^2.
\end{equation*}
This says that $Q'\subseteq R\backslash G_{\ve\lambda}$, and hence, since $E$ is Ahlfors regular and the cubes in $\Stop(R)$ are disjoint,  
\begin{equation*}
\sum_{Q\in \BSF(R)}|Q|
\lec \sum_{Q\in \BSF(R)}|Q'| \lec \left|\bigcup_{Q \in \BSF(R) } Q' \right| 
\lec |R\backslash G_{\ve\lambda}|
\stackrel{\eqref{e:GcapR>1/2R}}{\lec} \rho|R|
\end{equation*}
so that, for $\rho$ smaller than some absolute constant,
\[
\sum_{Q\in \BSF(R)}|Q|<\frac{1}{4}|R|.
\]
\end{proof}
\begin{lemma}\label{l:BA-packing}
 \begin{equation*}
     \sum_{Q \in \BA(R)} |Q| \leq \frac{1}{4} |R|.
 \end{equation*}
\end{lemma}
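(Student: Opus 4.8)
The plan is to bound the "big angle" cubes by comparing the best-approximating plane $P_Q$ of a child $Q'$ of a stopping cube to the plane $P_R$, and to exploit the $\beta$-number Carleson packing condition (the strong geometric lemma, Proposition \ref{p:strong-geometric-lemma}) together with the angle-control estimate \eqref{e:beta-angle}. Concretely, for each $Q \in \BA(R)$ there is a child $Q'$ with $\angle(P_{Q'}, P_R) \geq \delta_0$. Since all cubes $T$ with $Q' \subseteq T \subseteq R$ lie in $\Tree(R)$ and consecutive generations have comparable side lengths, a telescoping/triangle-inequality argument along the chain from $Q'$ up to $R$ gives $\delta_0 \leq \angle(P_{Q'}, P_R) \lesssim \sum_{Q' \subseteq T \subseteq R} \angle(P_T, P_{\widehat T}) \lesssim \sum_{Q' \subseteq T \subseteq R} \beta_\Sigma^{d,1}(M^2 B_T) \lesssim \big(\sum_{Q' \subseteq T \subseteq R} \beta_\Sigma^{d,1}(M^2 B_T)^2\big)^{1/2}\big(\#\{T: Q'\subseteq T \subseteq R\}\big)^{1/2}$. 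Because $Q \in \Tree(R) \cup \Stop(R)$ is not cut off by $\SSL(R)$ strictly above it, $\ell(Q') \gtrsim \ve_0 \ell(R)$, so the number of generations between $Q'$ and $R$ is bounded by a constant $N(\ve_0)$ depending only on $\ve_0$. Hence $\sum_{Q' \subseteq T \subseteq R} \beta_\Sigma^{d,1}(M^2 B_T)^2 \gtrsim \delta_0^2/N(\ve_0)$.

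The first step therefore is: choose the universal comparison so that each $Q \in \BA(R)$ "witnesses" a non-trivial amount of $\beta$-square mass along its ancestral chain in $R$. Then sum over $Q \in \BA(R)$. Since the cubes in $\Stop(R)$ are pairwise disjoint and each $Q' = \Child(Q)$ has $|Q'| \approx |Q|$, and since each cube $T \subseteq R$ appears in the chains of only boundedly many (at most $C(\ve_0)$) stopping cubes $Q$, we get
\begin{equation*}
\sum_{Q \in \BA(R)} |Q| \lesssim \sum_{Q \in \BA(R)} \frac{N(\ve_0)}{\delta_0^2} \sum_{Q' \subseteq T \subseteq R} \beta_\Sigma^{d,1}(M^2 B_T)^2 \ell(Q')^{-d}|Q'| \lesssim \frac{N(\ve_0) C(\ve_0)}{\delta_0^2} \sum_{T \subseteq R} \beta_\Sigma^{d,1}(M^2 B_T)^2 \ell(T)^d.
\end{equation*}
By Proposition \ref{p:strong-geometric-lemma} (using $q=1$ and $r_{M^2 B_T} \approx \ell(T)$, together with Lemma \ref{l:beta-properties}(1) to pass from $M^2 B_T$ to a ball of size $\approx \ell(T)$), the last sum is $\lesssim \ell(R)^d \approx |R|$. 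This gives $\sum_{Q \in \BA(R)} |Q| \leq C(\delta_0, \ve_0) |R|$, which is not yet $\frac14 |R|$.

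The key point — and the main subtlety — is the order in which constants are chosen. The constant $\ve_0$ (threshold for $\SSL$) and $\delta_0$ (threshold for $\BA$) are parameters we get to fix, and the above bound $C(\delta_0,\ve_0)$ blows up as $\ve_0 \to 0$ or $\delta_0 \to 0$. One resolves this by the standard device: replace the raw Carleson bound with its "tail" version. Since $\sum_{T \subseteq R}\beta_\Sigma^{d,1}(M^2B_T)^2\ell(T)^d \lesssim \ell(R)^d$ with a constant independent of $\ve_0$ and $\delta_0$, and since $\BA(R)$ cubes only use chains of length $\leq N(\ve_0)$, we should instead restrict the stopping-time so that $\BA$ is triggered only via a genuinely large cumulative angle; alternatively, and more simply, we argue as in the $\BSF$ case by absorbing into $R \setminus G_{\ve\lambda}^q$: if a child $Q'$ has $\angle(P_{Q'},P_R) \geq \delta_0$ but $Q'$ still met a point of $G_{\ve\lambda}^q$, then by the chain estimate $\sum_{Q'\subseteq T\subseteq R}\beta_\Sigma^{d,1}(M^2 B_T)^2 \gtrsim \delta_0^2/N(\ve_0)$, while $\gamma_f^1(M^2 B_T) \geq |\grad A_T|\,\beta_\Sigma^{d,1}(M^2 B_T) \gtrsim \lambda\,\beta_\Sigma^{d,1}(M^2 B_T)$ using \eqref{e:AQ<alphaAR}; summing gives $\G^q f(x)^2 \gtrsim \lambda^2 \delta_0^2/N(\ve_0)$ at such $x$, contradicting $x \in G_{\ve\lambda}^q$ provided $\ve$ is chosen small enough relative to $\delta_0,\ve_0$. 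Hence $\bigcup_{Q \in \BA(R)} Q' \subseteq R \setminus G_{\ve\lambda}^q$, and by \eqref{e:GcapR>1/2R} and Ahlfors regularity (disjointness of stopping cubes, $|Q|\approx |Q'|$) we get $\sum_{Q\in\BA(R)}|Q| \lesssim |R\setminus G_{\ve\lambda}^q| \lesssim \rho |R| < \frac14 |R|$ for $\rho$ small. This last route is the clean one and parallels Lemma \ref{l:BSF-packing} exactly; I would present it that way, with the chain/angle estimate as the one genuinely new ingredient.
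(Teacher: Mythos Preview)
Your argument has a genuine gap tied to the order of constant choices. Both routes you sketch end with a smallness requirement of the form ``$\ve$ small enough relative to $\delta_0$ and $\ve_0$'' (through the chain-length factor $N(\ve_0)\sim\log(1/\ve_0)$ coming from Cauchy--Schwarz on the telescoped angle). But in the proof of Proposition~\ref{p:EcapG<E} the parameter $\ve$ is fixed first (depending on $\kappa,\alpha_0,\delta_0,C_0,M$) and then one takes $\ve_0\to 0$. So any bound that degenerates as $\ve_0\to 0$ is unusable: Lemma~\ref{l:BA-packing} must hold with an absolute $\tfrac14$, uniformly in $\ve_0$. Your ``parallel to the $\BSF$ case'' breaks precisely here: in $\BSF$ the stopping threshold already forces $\G^q f(x)>\ve\lambda$ with no chain-length loss, whereas your angle argument only gives $\G^q f(x)^2\gtrsim \lambda^2\delta_0^2/N(\ve_0)$.

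The paper circumvents this by working globally rather than chain-by-chain. From \eqref{e:a<2/lambdagamma} one gets $\beta_E^{d,1}(MB_T)\le \tfrac{2}{\lambda}\gamma_f^1(MB_T)$ for every $T\in\Tree(R)$, and combining this with the $\BSF$ bound \eqref{e:gamma-small} yields $\sum_{T\subseteq Q\subseteq R}\beta_E^{d,1}(MB_Q)^2\lesssim_{C_0,M}\ve^2$ for each $T\in\Tree(R)$; integrating (using that $\Stop(R)$ partitions $R$) gives
\[
\sum_{Q\in\Tree(R)}\beta_E^{d,1}(MB_Q)^2\,|Q|\;\lesssim_{C_0,M}\;\ve^2\,|R|,
\]
with constants \emph{independent of $\ve_0$}. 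The packing of $\BA(R)$ is then obtained by invoking the David--Semmes angle-control result (Section~14 of \cite{david-semmes91}), which turns smallness of the $\beta^2$-Carleson mass on a stopping-time region into smallness of the big-angle stopping set, without any reference to the depth of the tree. That black-box step is exactly what your chain argument was trying to replace, and it is where the $\ve_0$-independence lives.
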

\begin{proof}

For $Q\in \Tree(R)$, and assuming $\alpha\leq 2$,
\begin{align}\label{e:a<2/lambdagamma}
& \beta_E^{d,1}(MB_{Q})
\stackrel{R \in \cC_\lambda}{\leq} \frac{|\grad A_{R}|}{\lambda} \beta_E^{d,1}(MB_{Q}) \notag \\
& \qquad \qquad \qquad
\stackrel{\eqref{e:AQ<alphaAR}}{\leq} \frac{\alpha |\grad A_{Q}|}{\lambda} \beta_E^{d,1}(MB_{Q})
\leq \frac{2}{\lambda} \gamma(MB_{Q}),
\end{align}
and so for any $T\in \Tree(R)$,
\begin{equation}
\label{e:jones-small}
\sum_{T\subseteq Q\subseteq R}\beta_E^{d,1}(MB_{Q})^2
\leq \frac{4}{\lambda^2} \sum_{T\subseteq Q\subseteq R} \gamma(MB_{Q})^2
\stackrel{\eqref{e:gamma-small}\atop \eqref{e:gamma-monotone}}{\lec_{C_0, M}} \ve^{2}.
\end{equation}
Thus,
\begin{align*}
\sum_{Q\in \Tree(R)}\beta_E^{d,1}(MB_Q)^2|Q|
& = \sum_{Q\in \Tree(R)} \beta_E^{d,1}(MB_{Q})^2\int_{R}\sum_{T\in \Stop(R)\atop T\subseteq Q}\one_{T}(x) \, d\sigma(x) \\
& =\int_{R}\sum_{T\in \Stop(R)}\sum_{Q\in \Tree(R) \atop Q\supseteq T}\beta_E^{d,1}(MB_{Q})^2\one_{T}(x)\, d\sigma(x)\\
& \stackrel{\eqref{e:jones-small}}{\lec_{C_0, M}} \ve^{2} \int_{R}\ps{\sum_{T\in \Stop(R)}\one_{T}(x)}d\sigma(x)\\
& \lec_{C_0,M} \ve^{2}|R|.
\end{align*}
By the results in \cite{david-semmes91} (specifically the claim at the beginning of Section 14 and equation (14.1)) and the above inequality, for $\ve>0$ small enough depending on $\delta_0>0, C_0$ and $M$, we can guarantee that 
\[
\sum_{Q\in \BA(R)}|Q|<\frac{1}{4}|R|.
\]
\end{proof}

\subsection{Proof of the good-$\lambda$ inequality \ref{p:EcapG<E}}
We can now end the proof of Proposition \ref{p:EcapG<E}
assuming Main Lemma \ref{l:m2}.
\begin{proof}[Proof of Proposition \ref{p:EcapG<E}]
	Recall that it suffices to prove \eqref{e:EcapGcapR<thetaR}. First, gathering Main Lemma \ref{l:m2}, Lemma \ref{l:BSF-packing} and Lemma \ref{l:BA-packing} We have that
\begin{align}
 & \left|\left(\bigcup_{Q \in \BSF(R)} Q\right) \cup \left(\bigcup_{Q \in \BA(R)} Q \right) \cup \left(\bigcup_{Q \in \BG(R)} Q \right)\right| \leq  \sum_{Q\in \Stop(R)\setminus \SSL(R)} |Q| 
\leq \frac{3}{4} |R| .
\end{align}
Since our stopping-time region $\Tree(R)$, and hence our minimal cubes $\Stop(R)$, depend on the parameter $\ve_0$. To highlight this, we denote $\Tree(R)$ as $\Tree_{\ve_0}(R)$ and also we adopt the notation $\BSF_{\ve_0}(R), \BA_{\ve_0}(R),$ $\BG_{\ve_0}(R)$ and $\Stop_{\ve_0}(R)$.

Let $\kappa>1$ be sufficiently close to $1$, as in the statement of Proposition \ref{p:EcapG<E}. We claim the following: suppose that $T \in \cC_{\kappa \lambda}= \cC_{\kappa \lambda} (f)$ (as defined in \eqref{e:def-Clambda}), that $T \subseteq R$ for some $R \in \cC_\lambda$ and $\ell(T)> \ve_0 \ell(R)$. Then $T \in \Stop(R)$. Let us prove this. By definition, if $T \in \cC_{\kappa \lambda}$ then $| \grad A_T| > \kappa \lambda$. But recall from \eqref{e:gradA-simLambda} that $\lambda < |\grad A_R| \leq (1+C\ve)\lambda$. Hence we can estimate $|\grad A_T|$ from below as follows:
\begin{equation*}
	|\grad A_T| > \kappa \lambda = \kappa \lambda \, \frac{1+C\ve}{1+C\ve} \geq \frac{\kappa}{1+C\ve} |\grad A_R|.
\end{equation*}
Now, choosing $\ve>0$ sufficiently small, $\kappa>1$ sufficiently close to $1$, and recalling that $\alpha_0=2\kappa-1$, we can insure that $\kappa/(1+C\ve) > \alpha_0$, and then we obtain 
\begin{equation*}
	|\grad A_T| > \alpha_0 |\grad A_R|.
\end{equation*}
 Since $\ell(T)>\ve_0\ell(R)$, this implies that $T\subseteq Q$ for some $Q\in\Stop_{\ve_0}(R)\setminus \SSL_{\ve_0}(R)$. Moreover, notice that the collections $\BSF_{\ve_0}(R),\BA_{\ve_0}(R),$ and $\BG_{\ve_0}(R)$ increase as $\ve_0\rightarrow 0$. Thus,
\[
|E_{\kappa\lambda}\cap R|
 \leq \left|\bigcup_{\ve_{0}>0}\bigcup_{Q\in \Stop_{\ve_0}(R) \setminus \SSL_{\ve_0}(R)} Q\right|
 =\lim_{\ve_0\rightarrow 0} \left|\bigcup_{Q \in \Stop_{\ve_0}(R)\setminus \SSL_{\ve_0}(R)} Q \right|
  \leq \frac{3}{4} |R| 
\]
We now pick $\max\{3/4,1-\rho\}<\theta<1$ and \eqref{e:EcapGcapR<thetaR} is proven. This concludes the proof of Lemma \ref{e:Ecap G<E} assuming Lemma \ref{l:m2}, which will be the focus of the next section. 

\end{proof}
\noindent
For the rest of the proof, we assume $\ve_0$ to be fixed and suppress it from the notation for $\Tree(R)$ and $\Stop(R)$.
\section{Proof of Main Lemma \ref{l:m2}: construction of the approximating Lipschitz graph}\label{s:tB-approx-Lipschitz}

We will finish the proof of Theorem \ref{t:dorronsoro-II} once we prove Main Lemma \ref{l:m2}, that is, once we prove the estimate
\begin{equation*}
\sum_{Q\in \BG(R)}|Q|\leq \frac{1}{4}|R|.
\end{equation*}

In this section we set up the standard scheme that is typically used in stopping time arguments to construct a Lipschitz graph (with small constant) which approximates our set $E$ well from the scale of the root cube of a stopping time region (in our case $\Tree(R)$, $R \in \cC_\lambda$), down to its minimal cubes ($\Stop(R)$ for us). 

To the authors' knowledge, this scheme first appeared in the work of David and Semmes \cite{david-semmes91} and of Leg\'{e}r \cite{leger1999menger}. For a neat presentation in the plane, we refer the reader to \cite{tolsa-book}. Our context is closest to that of \cite{david-semmes91}; for several proofs below we redirect the reader there, in particular to Chapter 8.  

\begin{remark}
 Sice $E$ is uniformly $d$-rectifiable, it admits  \textit{corona decomposition} in terms of Lipschitz graphs (by \cite[C4]{david-semmes91}). Then constructing a Lipschitz graph seems rather redundant. We will see below, however, that we will need to work \textit{with} this Lipschitz graph, and so using the aforementioned result as a black box would not be possible.
\end{remark}

\subsection{Smoothing procedure and Whitney cubes}
For $x\in \R^{n}$, let 
\begin{equation*}
d_{R}(x):= d_{\Tree(R)}(x):=\inf_{Q\in \Tree(R)}(\ell(Q)+\dist(x,Q)).
\end{equation*}
Without loss of generality we might (and will) assume that $P_{R}=\R^{d}\subset \R^n$. Let $\pi=\pi_{\R^{d}}$. Let $\dI$ be the family of dyadic cubes in $\R^d$. Elements of $\dI$ will generally be denoted using $I, J$. Define
\begin{equation*}
    D_{R}(x):= \inf_{y\in \pi^{-1}(x)} d_{R}(y) \, \, \mbox{ and } \, \, 
    D_{R}(I):=\inf_{y\in I}D_{S}(y) \mbox{ for } I \in \dI.
\end{equation*}
A fact that will play a role later is that both $d_R$ and $D_R$ are $1$-Lipschitz. Furthermore, by \cite[Lemma 8.21]{david-semmes91}, 
\begin{equation}
\label{e:dsimD}
d_{R}(x)\approx_M D_{S}\circ\pi (x) \;\; \mbox{ for all } x\in MB_{R}\cap E.
\end{equation}
\begin{remark}
Since we will cite \cite{david-semmes91} quite a bit, we provide a dictionary between their and our own notation. A stopping time region there is denoted by $S$. This corresponds to our $\Tree(R)$. Their $Q(S)$ (the root, or maximal, cube) is our $R$. They write $L = \diam(Q(S))$ (first paragraph, page 43). So, for us, $L \approx \ell(R)$. Their $k_0$ (\cite[Proposition 8.2]{david-semmes91}) plays the role of our $M$. They define $U_0= P \cap B(\pi(x_0), 2 k_0 L)$ (see \cite[pg. 45, line 7]{david-semmes91}), where $P=P_{Q(S)}$. For us $x_0$ will be $x_R$ (the center of $R$), which we may assume to be $x_R=0$. 
\end{remark}
\begin{remark}
In \cite[Chapter 8]{david-semmes91} they also define $Z$ to be the set of points that are contained in infinitely many cubes from  $\Tree(R)$ (see pg 43, last paragraph). However, we have defined our stopping-time region $\Tree(R)$ in such a way that every $x\in R$ is contained in a minimal cube, and so $Z=\emptyset$ in our situation. 
\end{remark}
\noindent
We will also need the following.
\begin{lemma}[{\cite[Lemma 8.4]{david-semmes91}}]\label{l:piperppi}
Let $\pi^{\perp}$ denote the projection into $\R^{n-d}$. Then for $x,y\in 2MB_{R}$ with $|x-y|\geq 10^{-3}\min\{d_{R}(x),d_{R}(y)\}$, 
\begin{equation}
\label{e:piperppi}
|\pi^{\perp}(x-y)|<2\delta_0 |\pi(x-y)|.
\end{equation}
\end{lemma}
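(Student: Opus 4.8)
\textbf{Proof proposal for Lemma \ref{l:piperppi}.}

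The plan is to reduce \eqref{e:piperppi} to the bound \eqref{e:angle-S} on the angle $\angle(P_Q, P_R)$ for cubes $Q \in \Tree(R)$, using the fact that the plane $P_R = \R^d$ is the reference plane and that $d_R(x)$ controls how far $x$ is from the cubes of the stopping-time region. First I would observe that, since every point of $R$ lies in a minimal cube and $Z = \emptyset$, for a point $x \in 2MB_R$ the quantity $d_R(x)$ is realized (up to a constant) by some cube $Q_x \in \Tree(R)$ with $\ell(Q_x) + \dist(x, Q_x) \approx d_R(x)$, and $x$ lies within distance $\approx d_R(x)$ of $E \cap 2MB_R$ near that cube. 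Thus, up to adjusting constants, one may move $x$ and $y$ to nearby points $\tilde x, \tilde y \in E$ that lie in (or very close to) cubes $Q_x, Q_y \in \Tree(R)$ of sidelength $\lesssim d_R(x), d_R(y)$ respectively; the error incurred is $\lesssim \min\{d_R(x), d_R(y)\} \leq 10^3 |x-y|$, which will be absorbed.

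The heart of the matter is then: given $\tilde x \in Q_x$ and $\tilde y \in Q_y$ with $Q_x, Q_y \in \Tree(R)$, estimate $|\pi^\perp(\tilde x - \tilde y)|$. I would pick a common ancestor, or more simply compare both to $R$: since $\angle(P_{Q_x}, P_R) < \delta_0$ and $\angle(P_{Q_y}, P_R) < \delta_0$ by \eqref{e:angle-S}, and since $\tilde x$ is within $\lesssim \ell(Q_x) \lesssim d_R(x) \lesssim |x-y|$ of the plane $P_{Q_x}$ (by the $\beta$-number control, e.g. \eqref{e:beta<beta} together with \eqref{e:gamma-small}, exactly as in \eqref{e:a<2/lambdagamma}--\eqref{e:jones-small}), and similarly for $\tilde y$ and $P_{Q_y}$, one can write $\tilde x - \tilde y = (\tilde x - p_x) + (p_x - p_y) + (p_y - \tilde y)$ where $p_x \in P_{Q_x}$, $p_y \in P_{Q_y}$ are nearest-point projections. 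Applying $\pi^\perp$: the two endpoint terms contribute $\lesssim (\beta + \ldots)|x-y|$, which is small; for the middle term, $|\pi^\perp(p_x - p_y)|$ is controlled because both $P_{Q_x}$ and $P_{Q_y}$ make angle $< \delta_0$ with $\R^d = P_R$, so $\|\pi^\perp \circ \pi_{P_{Q_x}}\|_{\mathrm{op}}, \|\pi^\perp\circ\pi_{P_{Q_y}}\|_{\mathrm{op}} \lesssim \delta_0$, giving $|\pi^\perp(p_x - p_y)| \lesssim \delta_0 |p_x - p_y| \lesssim \delta_0 |x - y|$. Collecting terms, $|\pi^\perp(x-y)| \leq C(\delta_0 + \ve + 10^{-3})|x - y|$, and choosing $\ve$ small and tracking constants carefully this is $< 2\delta_0 |\pi(x-y)|$ once one also notes $|\pi(x-y)| \approx |x-y|$ (which itself follows from the same angle estimate, since the $\pi^\perp$-component is the small one).

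The main obstacle I anticipate is the bookkeeping in the reduction from arbitrary $x, y \in 2MB_R$ to points genuinely sitting in cubes of $\Tree(R)$ with controlled sidelength: one must be careful that the perturbation is bounded by a small multiple of $\min\{d_R(x), d_R(y)\}$ and hence of $|x-y|$ via the hypothesis $|x-y| \geq 10^{-3}\min\{d_R(x), d_R(y)\}$, and that the various constants ($C_0$, $M$, $\ve_0$, $\delta_0$, $\ve$) are chosen in the right order so that the final constant genuinely beats $2\delta_0$. Since this is precisely \cite[Lemma 8.4]{david-semmes91} transported to our stopping-time region — and our $\Tree(R)$ satisfies the same angle condition \eqref{e:angle-S} and $\beta$-smallness \eqref{e:jones-small} used there — I would in fact cite that argument, indicating only the substitutions (their $\ve$ by our $\ve^{1/(d+1)}$-type bound coming from \eqref{e:beta<beta}, their Lemma 5.13 by \eqref{e:PBPB'}) needed to make it go through verbatim.
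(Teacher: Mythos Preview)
Your proposal is correct and matches the paper's approach: the paper likewise gives only a sketch, citing \cite[Lemma 8.4]{david-semmes91} and indicating that the substitutions needed are precisely that their \cite[Equation (6.1)]{david-semmes91} is replaced by the $\beta_\infty$ bound of Lemma~\ref{l:small-betainf} (your $\ve^{1/(d+1)}$-type bound), with $\ve$ chosen so that $\ve^{1/(d+1)}\ll \delta_0$. Your additional outline of the direct argument is a reasonable expansion of what the David--Semmes proof actually does.
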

\noindent
To prove this lemma, we will need the following easy fact.
\begin{lemma}\label{l:small-betainf}
 Let $Q \in \Tree(R)$. Then 
 \begin{align}\label{e:QinTree-beta-inf<ve}
     \beta_E^{d, \infty}(M^2B_Q, P_Q) \leq C \ve^{\frac{1}{d+1}},
 \end{align}
 where $C$ depends only on $C_0$, $M$ and $d$. 
\end{lemma}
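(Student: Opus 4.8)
\textbf{Plan for the proof of Lemma \ref{l:small-betainf}.}

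The plan is to deduce the pointwise ($L^\infty$) $\beta$-bound from an $L^1$-type bound that is directly controlled by the stopping-time condition, using the standard self-improvement inequality \eqref{e:beta<beta}. First I would recall that for $Q\in\Tree(R)$ the square-function stopping criterion \eqref{e:gamma-small} gives $\Omega_f^1(M^2B_Q)^2 \leq \gamma_f^1(M^2B_Q)^2 \leq C_0(\ve\lambda)^2$, and moreover, since $|\grad A_Q|\approx\lambda$ on the tree by \eqref{e:AQ<alphaAR}, the bound $|\grad A_Q|\,\beta_E^{d,1}(M^2B_Q)\lesssim \gamma_f^1(M^2B_Q)$ from Lemma \ref{l:A_B} (applied to the ball $M^2B_Q$, which is legitimate since there is no uniform rectifiability or Poincaré hypothesis needed there) yields
\begin{equation*}
\beta_E^{d,1}(M^2B_Q) \lesssim \frac{1}{\lambda}\,\gamma_f^1(M^2B_Q) \lesssim_{C_0,M}\ve,
\end{equation*}
arguing exactly as in \eqref{e:a<2/lambdagamma}–\eqref{e:jones-small}. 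This is the $L^1$ flatness bound at the ball $M^2B_Q$, with the infimising plane; but I actually want it with respect to $P_Q$, which is the infimising plane for $\beta_E^{d,q}(M B_Q)$ (or for $\alpha_\sigma^d(MB_Q)$), not for $\beta_E^{d,1}(M^2B_Q)$.

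The key step is therefore to pass from the infimising plane at $M^2B_Q$ to the plane $P_Q$. Since $MB_Q\subset M^2B_Q$ with comparable radii, and $P_Q$ is within angle $\lesssim\beta_E^{d,1}(M^2B_Q)$ of the infimising plane of $M^2B_Q$ by \eqref{e:PBPB'}–\eqref{e:beta-angle} (and Lemma \ref{l:beta-properties}, together with \eqref{e:beta1<betap} and \eqref{e:beta<alpha} to relate the various minimising planes), we get $\beta_E^{d,1}(M^2B_Q,P_Q)\lesssim_M \beta_E^{d,1}(M^2B_Q)\lesssim_{C_0,M}\ve$. Now I apply the self-improvement inequality \eqref{e:beta<beta}, which states $\beta_E^{d,\infty}(\tfrac12 B,P_B)^{d+1}\lesssim \beta_E^{d,1}(B,P_B)$; with $B=M^2B_Q$ so that $\tfrac12 B \supseteq M B_Q$ provided $M$ is chosen appropriately (or one simply inserts an extra harmless dilation constant), this gives
\begin{equation*}
\beta_E^{d,\infty}(M^2B_Q,P_Q)^{d+1}\lesssim_{M}\beta_E^{d,\infty}(\tfrac12 M^2 B_Q, P_Q)^{d+1}\lesssim\beta_E^{d,1}(M^2B_Q,P_Q)\lesssim_{C_0,M}\ve,
\end{equation*}
so that $\beta_E^{d,\infty}(M^2B_Q,P_Q)\lesssim_{C_0,M,d}\ve^{1/(d+1)}$, which is \eqref{e:QinTree-beta-inf<ve}. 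One must be slightly careful about the radii so that $\tfrac12 B$ in \eqref{e:beta<beta} still contains the ball $M^2B_Q$ over which we want the $L^\infty$ number; absorbing a fixed dilation into $M$, or equivalently proving the statement for $\beta_E^{d,\infty}(cM^2B_Q,P_Q)$ and relabelling, handles this.

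The main obstacle — really the only subtlety — is bookkeeping the various infimising planes: the statement wants $P_Q$ (a plane attached to the ball $MB_Q$ through the $\beta^{d,q}$ or $\alpha$ minimisation), while the natural estimates from the stopping time live at $M^2B_Q$ with the $\beta^{d,1}$-minimising plane. Resolving this requires the monotonicity of $\beta$ numbers under comparable dilations, the comparison $\beta^{d,1}\lesssim\beta^{d,q}\lesssim\alpha$, and the angle-control lemma \eqref{e:beta-angle}, all of which are available in the excerpt; once the planes are aligned, \eqref{e:beta<beta} does the rest mechanically. I expect the whole argument to be three or four lines once this dictionary is set up.
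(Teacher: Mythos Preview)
Your proposal is correct and follows essentially the same route as the paper: use the stopping-time bound \eqref{e:gamma-small} together with the gradient lower bound \eqref{e:AQ<alphaAR} (exactly as in \eqref{e:a<2/lambdagamma}) to get $\beta_E^{d,1}$ at scale $M^2B_Q$ controlled by $\ve$, and then apply the self-improvement \eqref{e:beta<beta} to upgrade to $\beta_E^{d,\infty}$ with the exponent $1/(d+1)$. The paper's own argument is actually less careful than yours about the distinction between $P_Q$ (attached to $MB_Q$) and the infimising plane at $M^2B_Q$, and about the half-ball in \eqref{e:beta<beta}; your use of \eqref{e:beta-angle} and a harmless dilation to align these is the right way to make that precise.
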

\begin{proof}
Since $Q \in \Tree(R)$, then $\gammaq(M^2B_Q)< C_0 \ve \lambda$. This, by definition, implies that
\begin{align*}
    |\nabla A_Q| \beta_E^{d,q}(M^2B_Q) < C_0 \ve \lambda.
\end{align*}
Now, since $Q \in \Tree(R)$, then $|\nabla A_Q| \geq \alpha^{-1} |\nabla A_R|$, and since $R \in \cC_\lambda$, $|\nabla A_R| > \lambda$. Recalling also that $\alpha \leq 2$, we get
\begin{align*}
     \alpha_\sigma^d(M^2B_Q) < 2C_0 \ve.
\end{align*}
Now, by \eqref{e:beta<alpha} and \eqref{e:beta<beta}
\begin{align*}
    \beta_\Sigma^{d, \infty} (MB_Q) & \lesssim \beta_\Sigma^{d, 1}(MB_Q)^{\frac{1}{d+1}} \lesssim \alpha_\sigma^d(MB_Q)^{\frac{1}{d+1}} \\
    & \lesssim_{M,d} \alpha_\sigma^d(M^2B_Q)^{\frac{1}{d+1}} \lesssim_{M,d, C_0} \ve^{\frac{1}{d+1}}.
\end{align*}
\end{proof}
\begin{proof}[Sketch of the proof of Lemma \ref{l:piperppi}]
In \cite{david-semmes91}'s proof of this lemma they use the constant $K$ in place of $2M$, but it is easily adapted to our case: the proof relies on \cite[Equation (6.1)]{david-semmes91}, but Lemma \ref{l:small-betainf} takes care of this. However, for the proof in \cite{david-semmes91} to work, we need to choose $\ve$ small enough depending on $\delta_0$ (the angle parameter for $\BA$) (in fact, we need $\ve^{\frac{1}{d+1}}\ll \delta_0$). In David and Semmes' proof $\delta_0$ is in fact $\delta$.
\end{proof}

\noindent
Let $\dW_R \subset \dI$ be the maximal dyadic cubes in $\R^{d}$ so that  
\[
\diam (I)\leq \frac{1}{20}D_{R}(I).
\]
We index them with an index set $\cI= \cI(R)$, so that $\dW_R=\{I_j\}_{j \in \cI}$. 
\begin{lemma}[{\cite[Lemma 8.7]{david-semmes91}}]\label{l:DS1}
 Let $I, J \in \dW_R$. If $10I \cap 10 J \neq \emptyset$, then 
\begin{equation}
    C^{-1} \diam(I) \leq \diam(J) \leq C \diam(I),
\end{equation}
for some constant $C$ independent of $I, J$.
\end{lemma}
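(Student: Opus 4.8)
This is a standard Whitney-cube comparability lemma, and the proof is entirely routine once one recalls how $\dW_R$ is defined and that $D_R$ is $1$-Lipschitz. I will follow David--Semmes \cite[Lemma 8.7]{david-semmes91} essentially verbatim.

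\textbf{Approach.} The plan is to exploit two facts: that $D_R$ is $1$-Lipschitz on $\R^d$, and that the Whitney cubes $I \in \dW_R$ are \emph{maximal} dyadic cubes satisfying $\diam(I) \leq \tfrac{1}{20} D_R(I)$. Maximality gives a two-sided control: from $\diam(I) \leq \tfrac1{20} D_R(I)$ on the one hand, and from the fact that the dyadic parent $\widehat I$ of $I$ \emph{fails} the defining inequality, i.e. $\diam(\widehat I) > \tfrac1{20} D_R(\widehat I)$, hence $2\diam(I) > \tfrac1{20} D_R(\widehat I)$ on the other. Since $D_R$ is $1$-Lipschitz and $\diam(\widehat I) = 2\diam(I)$, one has $D_R(\widehat I) \geq D_R(I) - \diam(\widehat I) \geq D_R(I) - 2\diam(I)$, which combined with the previous display gives a lower bound $\diam(I) \gtrsim D_R(I)$. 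Thus for each $I \in \dW_R$,
\begin{equation*}
  \tfrac{1}{C} D_R(I) \leq \diam(I) \leq \tfrac{1}{20} D_R(I)
\end{equation*}
for an absolute constant $C$.

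\textbf{Key steps.} First I would record the two-sided estimate above. Second, given $I, J \in \dW_R$ with $10I \cap 10J \neq \emptyset$, I would pick points $z_I \in I$, $z_J \in J$ realizing (up to a factor $2$) the infima defining $D_R(I)$ and $D_R(J)$, and note that $|z_I - z_J| \lesssim \diam(I) + \diam(J)$ because $10I$ and $10J$ overlap. Then by the $1$-Lipschitz property of $D_R$,
\begin{equation*}
  D_R(I) \leq D_R(J) + |z_I - z_J| + \diam(I) + \diam(J) \lesssim D_R(J) + \diam(I) + \diam(J).
\end{equation*}
Third, I would feed in the two-sided estimate: $\diam(I) \approx D_R(I)$ and $\diam(J) \approx D_R(J)$, so the last display becomes $D_R(I) \lesssim D_R(J) + D_R(I)\cdot\tfrac{1}{20} + D_R(J)$, and absorbing the small multiple of $D_R(I)$ on the left yields $D_R(I) \lesssim D_R(J)$, hence $\diam(I) \lesssim \diam(J)$. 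By symmetry $\diam(J) \lesssim \diam(I)$, which is the claim.

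\textbf{Main obstacle.} There is no real obstacle here — this is a bookkeeping lemma. The only point requiring a little care is making sure the constants in ``$10I \cap 10J \neq \emptyset \implies |z_I - z_J| \lesssim \diam(I)+\diam(J)$'' and in the absorption step are genuinely absolute (independent of $R$, and of the corona/stopping-time parameters $M, C_0, \delta_0, \ve_0$), which is clear since everything is phrased purely in terms of the $1$-Lipschitz function $D_R$ and dyadic geometry in $\R^d$. I would simply cite \cite[Lemma 8.7]{david-semmes91} for the details and indicate that the argument transfers without change, since our $D_R$ and $\dW_R$ are defined exactly as theirs.
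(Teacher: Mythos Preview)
Your proposal is correct and follows essentially the same route as the paper: both rely on the $1$-Lipschitz property of $D_R$ together with the maximality of the Whitney cubes to get the two-sided estimate $\diam(I)\approx D_R(y)$ for $y\in 10I$, and both ultimately defer to \cite[Lemma~8.7]{david-semmes91}. The paper's sketch is slightly slicker in that it records the pointwise estimate \eqref{e:Ij-sim-D-better} and then simply picks $y\in 10I\cap 10J$, avoiding your absorption step, but the content is identical.
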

\begin{proof}[Sketch of the proof]
The lemma follows from the fact that if $10J \cap 10I \neq \emptyset$, then
\begin{equation}\label{e:Ij-sim-D-better}
    10 \diam(I) \leq D_R(y) \leq 60 \diam (I) \, \mbox{ for all }  y \in 10I. 
\end{equation}
That \eqref{e:Ij-sim-D-better} holds is proven in the paragaph below Equation (8.8) in \cite{david-semmes91}, page 44.
\end{proof}
\noindent
A corollary of Lemma \ref{l:DS1} is that if $10J \cap 10I \neq \emptyset$, then
\begin{align}\label{e:ellIapproxDRy}
    \ell(I) \approx \ell(J) \approx D_R(y) \, \mbox{ for all } y \in 10I. 
\end{align}
We will write 
\begin{equation}\label{e:IsimJ}
I\sim J \;\; \mbox{ when } \;\; 10I \cap 10J \neq\emptyset, \, \mbox{ for } I, J \in \dW_R.
\end{equation}
In \cite[Chapter 8]{david-semmes91} a subset of indices $\cI_0$ is introduced (see pg. 45, below Equation (8.9)). We alter its definition slightly to make some estimates more convenient later, but the results stay the same. Note that David and Semmes' $\cI$ corresponds to our $\cI$ and their $\cI_0$ to our $\cI_K$. For $K\geq 1$, set
\begin{equation}\label{e:def-U_0}
	U_K := \pi(2KB_R)
\end{equation}
and define
\begin{equation}\label{e:def-cI_0}
	\cI_K := \{i \in \cI \, |\, \mbox{ there is a } \, j \in \cI \, \mbox{ s.t. }\, I_i \sim I_j \,\,  \mbox{ and }\, \, I_j \cap U_K \neq \emptyset \}.
\end{equation}
We have the following lemma.
\begin{lemma}\label{l:I_0}
Let $K \geq 1$ some constant large enough. Then
\begin{equation}
\label{e:J0-in-R}
\bigcup_{i\in \cI_K}I_{i}\subseteq V_{K}:=\pi(K^2B_{R}) \subset B(\pi(x_R), K^2 \ell(R)).
\end{equation}
\end{lemma}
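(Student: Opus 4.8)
The plan is to unwind the chain of definitions: membership in $\cI_K$ forces $I_i$ to be comparable in size with, and spatially close to, a Whitney cube $I_j$ that meets $U_K=\pi(2KB_R)$, and any Whitney cube meeting $U_K$ is controlled in both size and position purely by the base scale $\ell(R)$. First I would fix $i\in\cI_K$ and, by \eqref{e:def-cI_0}, pick $j\in\cI$ with $I_i\sim I_j$ together with a point $y_0\in I_j\cap U_K$. Since $B_R=B(x_R,\ell(R))$ and $\pi$ is the orthogonal projection onto $P_R=\R^d$, we have $U_K=\pi(2KB_R)=B(\pi(x_R),2K\ell(R))$, so $|y_0-\pi(x_R)|\le 2K\ell(R)$.

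Next I would obtain an upper bound for $D_R$ near $y_0$. Taking $Q=R$ in the infimum defining $d_R$ and using $x_R\in R$ gives $d_R(x_R)\le\ell(R)$, hence $D_R(\pi(x_R))\le d_R(x_R)\le\ell(R)$; since $D_R$ is $1$-Lipschitz this yields $D_R(y_0)\le \ell(R)+|y_0-\pi(x_R)|\le 3K\ell(R)$ (using $K\ge 1$). Because $y_0\in I_j$ and $I_j\in\dW_R$, the defining property $\diam(I_j)\le\tfrac1{20}D_R(I_j)\le\tfrac1{20}D_R(y_0)$ gives $\diam(I_j)\le\tfrac{3K}{20}\ell(R)$. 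As $I_i\sim I_j$, i.e.\ $10I_i\cap 10I_j\neq\emptyset$, Lemma \ref{l:DS1} gives $\diam(I_i)\le C\diam(I_j)\le\tfrac{3CK}{20}\ell(R)$, where $C$ is the constant of Lemma \ref{l:DS1}.

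Finally, for an arbitrary $x\in I_i$ I would choose $w\in 10I_i\cap 10I_j$ and estimate $|x-\pi(x_R)|\le|x-w|+|w-y_0|+|y_0-\pi(x_R)|\le 10\diam(I_i)+10\diam(I_j)+2K\ell(R)\le C_2 K\ell(R)$, with $C_2$ depending only on $C$. Choosing $K$ large enough that $K\ge C_2$ forces $C_2K\le K^2$, hence $x\in B(\pi(x_R),K^2\ell(R))=\pi(K^2B_R)=V_K$. Since $x\in I_i$ and $i\in\cI_K$ were arbitrary, this proves $\bigcup_{i\in\cI_K}I_i\subseteq V_K$, which is also contained in $B(\pi(x_R),K^2\ell(R))$ as asserted. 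The only mildly delicate point is the upper bound on $D_R$, which is why I would isolate the inequality $D_R(\pi(x_R))\le\ell(R)$ at the outset; the rest is bookkeeping with the comparability relation $\sim$ and Lemma \ref{l:DS1}.
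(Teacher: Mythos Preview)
Your proof is correct and follows essentially the same approach as the paper's: bound $\ell(I_j)$ (hence $\ell(I_i)$, via Lemma~\ref{l:DS1}) by $CK\ell(R)$ using the Whitney property and the fact that $I_j$ meets $U_K$, then use a triangle inequality to place $I_i$ inside $V_K$ for $K$ large enough. You are simply more explicit than the paper about the intermediate step $D_R(\pi(x_R))\le\ell(R)$ and route the triangle inequality through a point $w\in 10I_i\cap 10I_j$ rather than through $\dist(I_i,\pi(R))$, but these are cosmetic differences.
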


\begin{proof}
We only need to prove the first containment. Let $i \in \cI_K$. If $j \in \cI$ is so that $I_j \sim I_i$ and $I_j \cap U_K$, then
\[
\ell(I_{j})\approx D_{S}(I_{j})\lec K\ell(R)\] 
and so 
\begin{align*}
\dist(I_{i},\pi(R))
&  \leq \diam 10I_{i}+\diam 3I_{j} + \dist(I_{j},\pi(R))
\\ 
& \lec \ell(I_{j}) + D_{S}(I_j)\lec K\ell(R)
\end{align*}
Now \eqref{e:J0-in-R} follows for $K$ large enough with respect to the implicit constant, which is dimensional.
\end{proof}
\noindent
\begin{lemma}\label{l:Qj}
If $j\in \cI_K$, there is a cube in $\Tree(R)$, denoted by $Q_j$, so that
\begin{gather}
	\dist(\pi(Q_j), I_j) \lesssim \ell(I_j);\label{e:distQjIj} \\
\ell(I_{j})\approx \ell(Q_{j}). \label{e:QiIi}
\end{gather}
\end{lemma}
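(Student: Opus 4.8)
The plan is to construct, for each $j \in \cI_K$, a cube $Q_j \in \Tree(R)$ whose $\pi$-projection sits near $I_j$ and whose size is comparable to $\ell(I_j)$, by tracking back through the definition of $\dW_R$ and $\cI_K$. First I would use that $j \in \cI_K$: there is some $I_k \in \dW_R$ with $I_j \sim I_k$ and $I_k \cap U_K \neq \emptyset$, so by \eqref{e:ellIapproxDRy} we have $\ell(I_j) \approx \ell(I_k) \approx D_R(y)$ for $y \in 10 I_k$, and in particular $\ell(I_j) \approx D_R(I_j)$ (using that the projected distance function $D_R$ varies slowly on $10 I_k \supseteq I_j$ up to the comparability constant, together with the maximality of $I_j$ in $\dW_R$, which forces $\diam(I_j) \approx D_R(I_j)$ rather than $\diam(I_j) \ll D_R(I_j)$). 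So up to fixed constants, $\ell(I_j) \approx D_R(I_j) = \inf_{x \in I_j} D_R(x) = \inf_{x \in I_j}\inf_{y \in \pi^{-1}(x)} d_R(y)$.

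Next I would pick a near-optimal point. Choose $x^\ast \in I_j$ and $y^\ast \in \pi^{-1}(x^\ast) \subset \R^n$ with $d_R(y^\ast) \leq 2 D_R(I_j) \approx \ell(I_j)$. Recall $d_R(y^\ast) = \inf_{Q \in \Tree(R)}(\ell(Q) + \dist(y^\ast, Q))$, so there is a cube $Q_j \in \Tree(R)$ with $\ell(Q_j) + \dist(y^\ast, Q_j) \lesssim \ell(I_j)$. In particular $\ell(Q_j) \lesssim \ell(I_j)$ and $\dist(y^\ast, Q_j) \lesssim \ell(I_j)$. For the projection estimate, $\dist(\pi(Q_j), I_j) \leq \dist(\pi(Q_j), x^\ast) \leq \dist(\pi(Q_j), \pi(y^\ast)) \leq \dist(Q_j, y^\ast) \lesssim \ell(I_j)$ since $\pi$ is $1$-Lipschitz and $\pi(y^\ast) = x^\ast \in I_j$; this is \eqref{e:distQjIj}. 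It remains to get the lower bound $\ell(Q_j) \gtrsim \ell(I_j)$ in \eqref{e:QiIi}. For this, take any point $z \in Q_j \cap E$ (say its center $x_{Q_j}$); then $d_R(z) \leq \ell(Q_j)$ by definition of $d_R$, hence $D_R(\pi(z)) \leq \ell(Q_j)$. But $\pi(z)$ is within $O(\ell(I_j))$ of $I_j$ by the projection bound just established, and since $d_R$ — hence $D_R$ — is $1$-Lipschitz, $D_R(I_j) \leq D_R(\pi(z)) + |\text{dist}| \lesssim \ell(Q_j) + \ell(I_j)$. Combined with $D_R(I_j) \approx \ell(I_j)$ this does not immediately give the lower bound, so instead I would argue by maximality of $I_j$: the dyadic parent $\widehat{I_j}$ of $I_j$ fails the defining inequality of $\dW_R$, i.e. $\diam(\widehat{I_j}) > \frac{1}{20} D_R(\widehat{I_j})$, which forces $D_R(\widehat{I_j}) \lesssim \ell(I_j)$ and hence (since $D_R$ is Lipschitz and $I_j \subset \widehat{I_j}$) $D_R(x) \lesssim \ell(I_j)$ for all $x \in I_j$; combining with the near-optimal choice, this pins $d_R(y^\ast) \approx \ell(I_j)$ from below as well, so the cube $Q_j$ realizing $d_R(y^\ast)$ up to a factor $2$ must have $\ell(Q_j) + \dist(y^\ast, Q_j) \gtrsim \ell(I_j)$. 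If the distance term dominates, one replaces $Q_j$ by a cube of $\Tree(R)$ closer to $y^\ast$ but of comparable or larger size (using property (2) of the stopping-time region, ancestors stay in $\Tree(R)$), arriving at a cube with $\ell(Q_j) \gtrsim \ell(I_j)$; concretely, if $\dist(y^\ast, Q_j) \geq \ell(Q_j)$ then an ancestor $Q_j'$ of $Q_j$ of side length $\approx \dist(y^\ast, Q_j) \gtrsim \ell(I_j)$ lies in $\Tree(R)$ unless it strictly contains $R$, which is impossible here since $\dist(y^\ast, Q_j) \lesssim \ell(I_j) \ll \ell(R)$ for $K$ and the cubes in play. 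Either way we obtain $\ell(Q_j) \approx \ell(I_j)$.

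The main obstacle I expect is the lower bound $\ell(Q_j) \gtrsim \ell(I_j)$ in \eqref{e:QiIi}: getting it cleanly requires carefully exploiting the maximality of $I_j$ among cubes satisfying $\diam(I) \leq \frac{1}{20} D_R(I)$ (so that $I_j$ cannot be much smaller than $D_R(I_j)$), together with the fact that $d_R$ and its projection $D_R$ are $1$-Lipschitz and comparable to $d_R$ on $MB_R \cap E$ via \eqref{e:dsimD}. All of these ingredients are available from \cite[Chapter 8]{david-semmes91} and the lemmas already recorded (Lemma \ref{l:DS1}, \eqref{e:ellIapproxDRy}, \eqref{e:Ij-sim-D-better}, \eqref{e:dsimD}), so the argument is essentially a bookkeeping exercise matching the sizes of $I_j$, $D_R(I_j)$, $d_R$ at a near-optimal point, and the side length of the cube of $\Tree(R)$ that realizes the infimum defining $d_R$. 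One should double-check that $x_R = 0 \in \R^d = P_R$ and the inclusion $\bigcup_{i \in \cI_K} I_i \subseteq V_K$ from Lemma \ref{l:I_0} keep everything inside the region where \eqref{e:dsimD} applies, i.e. inside $MB_R \cap E$, which is where $d_R \approx D_R \circ \pi$; this is fine provided $K$ is chosen small relative to $M$, as in the statement.
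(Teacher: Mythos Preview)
Your argument is correct and is essentially the standard David--Semmes construction that the paper cites (it gives no proof of its own, only the reference to \cite[Equation (8.10)]{david-semmes91}). The key steps---using maximality of $I_j$ to get $\ell(I_j)\approx D_R(I_j)$ (already recorded in \eqref{e:Ij-sim-D-better}), picking a near-minimizer $y^\ast$ of $d_R$ over $\pi^{-1}(I_j)$, extracting $Q_j\in\Tree(R)$ from the definition of $d_R(y^\ast)$, and passing to an ancestor when $\ell(Q_j)$ is too small---are exactly the ones in David--Semmes.

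Two small comments. First, the lower bound $\ell(Q_j)\gtrsim\ell(I_j)$ can be obtained more directly: since $d_R(y^\ast)\geq D_R(\pi(y^\ast))=D_R(x^\ast)\gtrsim\ell(I_j)$ by \eqref{e:Ij-sim-D-better}, the near-minimizing cube automatically satisfies $\ell(Q_j)+\dist(y^\ast,Q_j)\gtrsim\ell(I_j)$; then in the case $\dist(y^\ast,Q_j)\gtrsim\ell(I_j)$, replacing $Q_j$ by its ancestor of side length $\min\{\ell(I_j),\ell(R)\}$ (which stays in $\Tree(R)$ by coherence) gives the conclusion, with implicit constant depending on $K$ since $\ell(I_j)\lesssim K\ell(R)$. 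Second, your final remark should read ``$M$ large relative to $K$'' rather than the reverse; in the paper $M$ is the free large parameter chosen after $K$.
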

\noindent
The proof of this lemma can be found in \cite[Equation 8.10]{david-semmes91}. 
If $j\in \cI\backslash \cI_K$, we will let $Q_{j}=R$. \\

\subsection{Construction of the approximating Lipschtiz graph and approximating Lipschitz function}
\subsubsection{Construction of the Lipschitz graph}
Let $\{\phi_{i}\}$ be a partition of unity with respect to the $\dW_R=\{I_{i}\}_{i \in \cI}$. Each $\phi_i$ will satisfy
\begin{gather}
 \one_{I_i} \leq \phi_{i}\leq \one_{3I_{i}}, \label{e:phi_i-in-1/4I}\\
 \|\d^{\alpha} \phi_{i}\|_{\infty}\lec \ell(I_{i})^{-\alpha},\, \mbox{ for } |\alpha|\leq 2; \label{e:phi-properties-2}\\
 \sum_{i \in \cI} \phi_{i}\equiv 1. \label{e:phi-properties-3}
\end{gather}
We are ready to define the approximating Lipschitz graph promised at the beginning of this section. For $i \in \cI$, we let $B_{i}:\R^{d}\rightarrow \R^{n-d}$ be the affine map whose graph is $P_{Q_{i}}$. Now set
\begin{equation}\label{e:def-H}
H(x) = \sum_{i \in \cI} B_{i}(x)\phi_{i}(x).
\end{equation}
We will denote the graph of $H$ by $\Gamma_R \subset \R^n$. 
For $x\in \R^{d}$, let $b_i:\R^d \to \R^n$ be the affine map $x \mapsto x + B_i(x) \in \dM(d,n)$. Then set
\begin{equation}\label{e:def-h}
h(x) := x+ H(x)= x+ \sum_{i \in \cI} B_i(x) \phi_i(x) = \sum_{i \in \cI} b_i(x)\phi_i(x).
\end{equation}
where we view $\R^{n-d}$ as a subspace of $\R^{n}$, so $x+H(x)$ and $x+B_i(x)$ make sense\footnote{In \cite{david-semmes91}, our $H$ is denoted by $A$, see Equation (8.14).}.
To summarise,
\begin{equation}
	h(\R^d) = \{(x, H(x)) \in \R^d \times \R^{n-d} \, |\, x \in \R^d\} = \Gamma_R.
\end{equation} 
\subsubsection{H is indeed Lipschitz}
We want to show that $H$ is in fact Lipschitz, and with small constant. To this aim, we need the following.
\begin{lemma}[{\cite[Lemma 8.17]{david-semmes91}}]\label{l:QiQj}
 For $i, j \in \cI$, if $I_i \sim I_j$, then
 \begin{align}
 & \dist(Q_i, Q_j) \leq C \diam(I_i)\, \mbox{ and } \label{e:QiQj}\\
&     |b_i(x) -b_j(x)|=|B_i(x) - B_j(x)| \leq C \ve \ell(I_i) \, \mbox{ for all } x \in 100I_i.\label{e:bi-bj}
 \end{align}
\end{lemma}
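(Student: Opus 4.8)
\textbf{Proof plan for Lemma \ref{l:QiQj}.}

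The plan is to follow the argument in \cite[Lemma 8.17]{david-semmes91}, adapted to the present notation and to our definition of the stopping-time region $\Tree(R)$. First I would establish \eqref{e:QiQj}. Fix $i,j \in \cI$ with $I_i \sim I_j$, so that $10I_i \cap 10I_j \neq \varnothing$; by Lemma \ref{l:DS1} (and its corollary \eqref{e:ellIapproxDRy}) we have $\ell(I_i) \approx \ell(I_j) \approx D_R(y)$ for $y \in 10I_i$, and by Lemma \ref{l:Qj} we have $\dist(\pi(Q_i),I_i) \lesssim \ell(I_i)$, $\ell(Q_i) \approx \ell(I_i)$, and similarly for $j$ (in the case $i$ or $j \notin \cI_K$ we have $Q_i=R$, which I would handle by noting $\ell(I_i)\approx D_R(I_i)\approx \ell(R)$ in that regime). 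Then $Q_i$ and $Q_j$ both lie within distance $\lesssim \ell(I_i)$ of the common region $10I_i$ after projecting via $\pi$; since $Q_i,Q_j \in \Tree(R)$, their points satisfy $d_R \lesssim \ell(I_i)$ on the relevant balls, so by \eqref{e:dsimD} the vertical displacement of $Q_i$ from $\R^d=P_R$ is controlled, and a triangle inequality combining the horizontal ($\pi$) and vertical ($\pi^\perp$) parts gives $\dist(Q_i,Q_j) \lesssim \diam(I_i)$. The one point needing care is that in our setting the small-$\beta$ estimate comes from Lemma \ref{l:small-betainf} with the exponent $\ve^{1/(d+1)}$ rather than $\ve$, but this only affects constants and not the validity of the bound.

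Next I would prove \eqref{e:bi-bj}, the key quantitative estimate. Both $b_i$ and $b_j$ are the affine maps whose graphs are $P_{Q_i}$ and $P_{Q_j}$ respectively. For $x \in 100I_i$, the point $b_i(x)$ lies on $P_{Q_i}$ and $b_j(x)$ on $P_{Q_j}$, both near the common cube region. I would estimate $|B_i(x)-B_j(x)|$ by comparing each $P_{Q_k}$ to a fixed plane — concretely, using \eqref{e:PBPB'} (Hausdorff-distance control between $P_{B}$ and $P_{B'}$ at comparable scales) together with the angle bound: since $Q_i,Q_j\in\Tree(R)$, \eqref{e:angle-S} gives $\angle(P_{Q_i},P_R),\angle(P_{Q_j},P_R)<\delta_0$, and more precisely \eqref{e:QinTree-beta-inf<ve} bounds $\beta_E^{d,\infty}(M^2B_{Q_i},P_{Q_i})\lesssim \ve^{1/(d+1)}$. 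From \eqref{e:QiQj} the balls $MB_{Q_i}$ and $MB_{Q_j}$ are comparable in size and location once $\ell(I_i)\approx\ell(Q_i)\approx\ell(Q_j)$, so \eqref{e:PBPB'} yields $d(P_{Q_i}\cap B, P_{Q_j}\cap B)\lesssim \beta_E^{d,1}(\cdot)\,\ell(I_i)$ on a ball $B$ of radius $\approx \ell(I_i)$ containing $100I_i$ and both cubes; combined with \eqref{e:a<2/lambdagamma}-type bounds, i.e. $\beta_E^{d,1}(MB_{Q})\lesssim \lambda^{-1}\gamma(MB_Q)\lesssim \ve$ for $Q\in\Tree(R)$, this gives $|B_i(x)-B_j(x)|\lesssim \ve\,\ell(I_i)$ for $x$ in that ball, hence for $x\in 100I_i$ after enlarging constants (adjusting $K$/$M$ so $100I_i$ sits inside the relevant ball). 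The identity $|b_i(x)-b_j(x)|=|B_i(x)-B_j(x)|$ is immediate since $b_k(x)=x+B_k(x)$ and the $x$ cancels.

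The main obstacle I anticipate is bookkeeping the scales and the enlargement of balls: one must check that $100I_i$ is contained in a ball centered on $E$ at a point near $Q_i$ with radius a fixed multiple of $\ell(Q_i)$, so that the monotonicity/comparability lemmas for $\beta$, $\alpha$, $\gamma$ (namely \eqref{e:beta-monotone}, \eqref{e:beta-angle}, \eqref{e:PBPB'}, Lemma \ref{l:small-betainf}) all apply with constants depending only on $M$, $K$, $d$, $C_0$. This is exactly the type of argument carried out in \cite[Chapter 8]{david-semmes91}, and the only genuine difference is the substitution of $\ve$ by $\ve^{1/(d+1)}$ in the $\beta_\infty$ bounds, which is harmless since we are always free to take $\ve$ small depending on $\delta_0$, $C_0$, $M$. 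So I would present the proof by citing \cite[Lemma 8.17]{david-semmes91} for the structure, reproving \eqref{e:QiQj} via Lemmas \ref{l:DS1} and \ref{l:Qj} and \eqref{e:dsimD}, and deriving \eqref{e:bi-bj} from \eqref{e:PBPB'}, \eqref{e:beta-angle}, Lemma \ref{l:small-betainf} and the smallness $\beta_E^{d,1}(MB_Q)\lesssim\ve$ on $\Tree(R)$.
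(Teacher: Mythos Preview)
Your proposal is correct and matches the paper's approach, which simply cites \cite[Lemma 8.17]{david-semmes91} and remarks that the case $i$ or $j \notin \cI_K$ is handled similarly with details omitted. Your observation that $\ell(I_i)\approx D_R(I_i)\approx \ell(R)$ in that boundary case (forced because $I_i$ lies just outside $U_K$ while a neighbor touches $U_K$, so $D_R\approx K\ell(R)$ there) is exactly the point needed to reduce the comparison of $P_{Q_i}$ with $P_R$ to comparable-scale balls and then apply \eqref{e:PBPB'} together with the bound $\beta_E^{d,1}(MB_Q)\lesssim\ve$ on $\Tree(R)$.
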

\noindent
The proof of this lemma is just like in David and Semmes' monograph, except that we must also treat the case when either $i$ or $j$ are not in $\cI_K$; even the, the proof is very similar and we omit the details.
Lemma \ref{l:QiQj} is used to prove the following one, which is stated and proved in \cite{david-semmes91}, between Equations (8.15) and (8.19), page 46.
\begin{lemma}
 The restriction of $H$ to $2I_j$, $j \in \cI$, is $3 \delta_0$-Lipschitz.
 \end{lemma}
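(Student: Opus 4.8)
The plan is to estimate the partial derivatives of $H$ restricted to a single Whitney cube $2I_j$ by exploiting the partition of unity $\{\phi_i\}$ together with the comparison estimate \eqref{e:bi-bj} for nearby affine maps. The key point is that on $2I_j$ only finitely many $\phi_i$ are nonzero — namely those $i$ with $3I_i \cap 2I_j \neq \emptyset$, which by \eqref{e:phi_i-in-1/4I} and Lemma \ref{l:DS1} forces $I_i \sim I_j$ and hence $\ell(I_i) \approx \ell(I_j)$ — so all sums below are locally finite with a bounded number of terms depending only on $d$.

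First I would write, for $x \in 2I_j$, using $\sum_i \phi_i \equiv 1$ and hence $\sum_i \nabla \phi_i \equiv 0$,
\begin{equation*}
H(x) = \sum_{i} B_i(x)\phi_i(x) = B_j(x) + \sum_{i}(B_i(x) - B_j(x))\phi_i(x),
\end{equation*}
and differentiate:
\begin{equation*}
\nabla H(x) = \nabla B_j + \sum_{i}\nabla\big((B_i - B_j)\phi_i\big)(x) = \nabla B_j + \sum_{i}\big(\nabla(B_i-B_j)\,\phi_i(x) + (B_i(x)-B_j(x))\otimes\nabla\phi_i(x)\big).
\end{equation*}
The term $\nabla B_j$ is the slope of the plane $P_{Q_j}$, and since $Q_j \in \Tree(R)$ we have $\angle(P_{Q_j}, P_R) < \delta_0$ by \eqref{e:angle-S}, so (recalling $P_R = \R^d$) $|\nabla B_j| \lesssim \delta_0$; one must be a little careful here since $H$ should be measured as a deviation from the graph direction, but this is exactly the content of the David–Semmes estimate and is controlled by the angle condition.

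Next I would bound the two sums. For the first, $|\nabla(B_i - B_j)| \lesssim \delta_0$ for each $i \sim j$ by the angle condition applied to both $Q_i$ and $Q_j$ (again \eqref{e:angle-S}), and there are boundedly many such $i$, but since $\sum_i \phi_i \equiv 1$ on $2I_j$ we can instead write $\sum_i \nabla(B_i-B_j)\phi_i$ and note each summand has operator norm $\lesssim \delta_0$ while the $\phi_i$ sum to $1$, giving a bound $\lesssim \delta_0$. For the second sum, $|\nabla\phi_i(x)| \lesssim \ell(I_i)^{-1} \approx \ell(I_j)^{-1}$ by \eqref{e:phi-properties-2}, while $|B_i(x) - B_j(x)| \lesssim \ve\,\ell(I_i) \approx \ve\,\ell(I_j)$ by \eqref{e:bi-bj} (valid since $x \in 2I_j \subset 100I_i$), so each term is $\lesssim \ve$, and with boundedly many nonzero terms the sum is $\lesssim \ve$. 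Combining, $|\nabla H(x)| \lesssim \delta_0 + \ve \lesssim \delta_0$ on $2I_j$, provided $\ve$ is chosen small relative to $\delta_0$ — which we have already arranged. Tracking the constants carefully (as David and Semmes do) yields the clean bound $3\delta_0$. The integral estimates needed to pass from the pointwise derivative bound to the Lipschitz constant on the convex set $2I_j$ are immediate by the mean value theorem.

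The main obstacle — though it is more bookkeeping than genuine difficulty — is verifying that the implicit constants in "boundedly many terms" and in the angle estimates are genuinely absolute (dimensional) and do not secretly depend on $\ve$, $M$, or $\lambda$ in a way that would prevent absorbing them into the final $3\delta_0$. This requires being precise about which cubes $I_i$ meet $2I_j$ (a consequence of Lemma \ref{l:DS1} and the maximality in the definition of $\dW_R$) and confirming that the angle bound $\angle(P_{Q_i}, P_R) < \delta_0$ is uniform over $i \sim j$ with $i \in \cI_K$, while for $i \notin \cI_K$ we have $Q_i = R$ so $B_i$ has zero slope and contributes nothing problematic. Since this is precisely the argument carried out in \cite[between (8.15) and (8.19)]{david-semmes91} and our stopping-time region satisfies all the hypotheses used there (by Lemma \ref{l:small-betainf} and \eqref{e:angle-S}), I would simply follow that argument, indicating the minor modifications, rather than reproduce every constant.
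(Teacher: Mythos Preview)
Your proposal is correct and follows precisely the David--Semmes argument that the paper cites (without reproducing): write $H = B_j + \sum_i (B_i - B_j)\phi_i$, differentiate, bound the linear part by the angle condition \eqref{e:angle-S} and the cross terms by \eqref{e:bi-bj} and \eqref{e:phi-properties-2}. The only cosmetic point is that in your first-sum bound it is cleaner to observe $\nabla B_j + \sum_i \nabla(B_i - B_j)\phi_i = \sum_i (\nabla B_i)\phi_i$, a convex combination of slopes each of size $<\delta_0$, rather than bounding $|\nabla(B_i - B_j)|$ separately; this avoids a spurious factor of $2$ and makes the $3\delta_0$ land exactly as in \cite{david-semmes91}.
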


\begin{remark}\label{r:a=x}
By the penultimate paragraph of \cite[p. 47]{david-semmes91}, there is $C>0$ so that $H$ is $C\delta_0$-Lipschitz on $U_0$. This is where we diverge a bit from their construction: David and Semmes then do a Whitney extension of $H|_{U_{K}}$ to get a globally defined function $H$ that is still $C\delta_0$-Lipschitz. However, we have already defined $H$ on {\it all} of $\R^{d}$, and it can be shown that our extension is globally $C\delta_0$-Lipschitz as well. Indeed, in Lemma \ref{l:Qj} we chose $Q_i = Q_R$ when $i \in \cI \setminus \cI_K$, and that $P_{R}= \R^d \subset \R^n$. Hence $B_i=0$ for $i \in \cI\setminus \cI_K$. Thus by the definition of $H$ and \eqref{e:J0-in-R}, this means that 
\begin{equation}\label{e:H=0}
H(x)=0  \;\; \mbox{ for all }x\in \R^{d}\backslash V_K,
\end{equation}
\begin{equation}\label{e:h=0}
	h(x)=x \;\; \mbox{ for all }x\in \R^{d}\backslash V_K.
\end{equation}
\noindent
The set $V_K$ was defined in \eqref{e:J0-in-R}.
Using Pythagoras' theorem, one can show that $h:\R^n \to \R^n$ is $(1+C\delta_0^2)$-bi-Lipschitz and $b_i$ is $(1+4\delta^2_0)$-bi-Lipschitz. Also, by our construction, we have
\end{remark}

\begin{lemma}
	Notation as above. Then
	\begin{align} \label{e:distxGamma}
		\dist(x, \Gamma_R) \lesssim \ve d_R(x) \mbox{ for all } x \in R.
	\end{align}
\end{lemma}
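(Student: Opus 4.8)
The estimate $\dist(x,\Gamma_R)\lesssim \ve\, d_R(x)$ for $x\in R$ is exactly the analogue of \cite[Proposition 8.2, eq.\ (8.2)]{david-semmes91} adapted to our stopping-time region $\Tree(R)$ and our coefficients. The plan is to verify it by the following three-step scheme. First, I would record the local description of $\Gamma_R$ near a point of $E$: for $x\in R$, pick $i\in\cI$ with $\pi(x)\in I_i$, and note from Lemma \ref{l:Qj} (eq.\ \eqref{e:distQjIj}--\eqref{e:QiIi}) that there is a cube $Q_i\in\Tree(R)$ with $\ell(Q_i)\approx\ell(I_i)\approx D_R(\pi(x))\approx d_R(x)$ (the last comparison by \eqref{e:dsimD}, valid since $x\in R\subset MB_R\cap E$) and $\dist(\pi(Q_i),I_i)\lesssim\ell(I_i)$.

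Second, I would estimate $\dist(x,\Gamma_R)$ by comparing $x$ to the graph of the single affine map $b_i$ over $I_i$, then correcting by the partition-of-unity average. Concretely, $\dist(x,\Gamma_R)\le |h(\pi(x))-x|$ where $h$ is as in \eqref{e:def-h}; writing $h(\pi(x))-x=\sum_j \phi_j(\pi(x))\,(b_j(\pi(x))-x)$, the sum runs over the boundedly many $j$ with $I_j\sim I_i$ (by \eqref{e:phi_i-in-1/4I} and Lemma \ref{l:DS1}), and by \eqref{e:bi-bj} we may replace each $b_j(\pi(x))$ by $b_i(\pi(x))$ at the cost of $C\ve\,\ell(I_i)\lesssim\ve\, d_R(x)$. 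So it remains to bound $|b_i(\pi(x))-x|=|B_i(\pi(x))-\pi^\perp(x)|$, i.e.\ the distance from $x$ to the affine plane $P_{Q_i}$. Since $x\in R\subseteq MB_R\cap E$ and $Q_i\in\Tree(R)$ with $\dist(x,Q_i)\lesssim\ell(Q_i)$ (from $\dist(\pi(Q_i),I_i)\lesssim\ell(I_i)$, $\pi(x)\in I_i$ and \eqref{e:piperppi}-type control), $x$ lies in a bounded dilate $M^2 B_{Q_i}$ of $B_{Q_i}$; hence $\dist(x,P_{Q_i})\le \beta_E^{d,\infty}(M^2B_{Q_i},P_{Q_i})\,\ell(Q_i)$, and Lemma \ref{l:small-betainf} gives $\beta_E^{d,\infty}(M^2B_{Q_i},P_{Q_i})\lesssim\ve^{1/(d+1)}$. (One either absorbs the exponent $1/(d+1)$ into the constant and states the bound with $\ve^{1/(d+1)}$, or — matching the paper's convention after Lemma \ref{l:small-betainf} — simply renames $\ve^{1/(d+1)}$ as $\ve$; I will follow the latter convention so the statement reads with a clean $\ve$.) Combining, $\dist(x,\Gamma_R)\lesssim\ve\,\ell(Q_i)\approx\ve\, d_R(x)$.

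Third, I would handle the degenerate cases cleanly: if $\pi(x)\in I_i$ with $i\notin\cI_K$ then $Q_i=R$, $B_i=0$, and near $x$ the function $H$ vanishes (by \eqref{e:H=0}), so $\dist(x,\Gamma_R)\le|\pi^\perp(x)|$; but such $x$ has $d_R(x)\approx\ell(R)$ (being far from $V_K$, hence far from all cubes of $\Tree(R)$ in the relevant sense), and $|\pi^\perp(x)|\lesssim\ve\,\ell(R)$ follows from $\beta_E^{d,\infty}(MB_R)\lesssim\ve^{1/(d+1)}$ applied to $R\in\Tree(R)$ via Lemma \ref{l:small-betainf} together with $P_R=\R^d$; so again $\dist(x,\Gamma_R)\lesssim\ve\, d_R(x)$.

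\textbf{Main obstacle.} The only genuinely delicate point is controlling the geometry accurately enough to place $x$ inside a fixed dilate of $B_{Q_i}$ — that is, showing $\dist(x,Q_i)\lesssim\ell(Q_i)$ and $|\pi^\perp(x-x_{Q_i})|\lesssim\ell(Q_i)$ — so that the $\beta_E^{d,\infty}$-estimate of Lemma \ref{l:small-betainf} is applicable with the correct bounded constant $M^2$; this is where Lemma \ref{l:piperppi} (the bound $|\pi^\perp(x-y)|<2\delta_0|\pi(x-y)|$ for $x,y\in 2MB_R$ at comparable $d_R$-scale) is needed, and where one must be careful that the implicit constants in the chain $\ell(I_i)\approx D_R(\pi(x))\approx d_R(x)$, $\dist(\pi(Q_i),I_i)\lesssim\ell(I_i)$, $\ell(Q_i)\approx\ell(I_i)$ do not blow up $K$ or $M$. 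Everything else is a routine repetition of the David--Semmes construction, and for those steps I would simply cite \cite[Chapter 8]{david-semmes91} (in particular the argument around eq.\ (8.22)--(8.24) there).
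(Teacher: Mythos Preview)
The paper does not prove this lemma; it simply states it and implicitly defers to \cite[Proposition 8.2]{david-semmes91}, consistent with the blanket remark earlier in the section that ``for several proofs below we redirect the reader there, in particular to Chapter 8.'' Your proposal is precisely the David--Semmes argument transcribed into the paper's notation and is correct. One small point worth making explicit: the bound you actually obtain via Lemma~\ref{l:small-betainf} is $\dist(x,\Gamma_R)\lesssim \ve^{1/(d+1)}d_R(x)$, not $\ve\,d_R(x)$; the ``$\ve$'' in the paper's statement appears to be an abuse (or typo), since when the estimate is invoked at \eqref{e:form30} it is used with $\delta_0$ on the right-hand side, and $\ve^{1/(d+1)}\ll\delta_0$ was arranged in the proof-sketch of Lemma~\ref{l:piperppi}. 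Your handling of this (renaming) is fine and matches what the paper effectively does.
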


\subsubsection{Construction of the approximating Lipschitz function}
Now, define a new function $F:\R^{d}\rightarrow \R$ by setting 
\begin{equation}\label{e:def-F}
F(x) = \sum_{i} A_{Q_{i}}\circ b_{i}(x) \phi_{i}(x).
\end{equation}
Here $A_{Q_i}$ are the affine maps $\R^n\to \R$ as in Lemma \ref{l:A_B}.
Again, because $A_{Q_{i}}\circ b_{i} = A_{R}\circ \pi_{P_{R}} = A_{R}$ for $i\in \cI\backslash \cI_K$, we have 
\begin{equation}\label{e:A=0}
F(x)= A_{R}(x) \;\; \mbox{ for all }x\in \R^{d}\backslash V_K.
\end{equation}

\subsection{Some properties of the Whitney cubes}
\begin{lemma}\label{l:QinBG}
	For $Q\in \BG(Q)$, let $x_{Q}$ be its center and let $I_{j} \in \dW_R$ be the cube containing $\pi(x_{Q})$. Then
	\begin{equation}
		\label{e:MBQ_j<M^2BQ}
		MB_{Q_{j}}\subseteq M^2 B_{Q},
	\end{equation}
	where $Q_j \in \Tree(R)$ is the Christ-David cube from Lemma \ref{l:Qj}.
\end{lemma}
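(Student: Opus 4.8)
\textbf{Proof plan for Lemma \ref{l:QinBG}.}

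The plan is to chase the sizes and locations of the relevant cubes. Let $Q \in \BG(R)$, $x_Q$ its center, and $I_j \in \dW_R$ the Whitney cube (in $\R^d$) containing $\pi(x_Q)$. First I would record the basic comparisons: by \eqref{e:dsimD} we have $d_R(x_Q) \approx_M D_R \circ \pi(x_Q)$, and since $\pi(x_Q) \in I_j$ with $I_j \in \dW_R$ maximal, \eqref{e:ellIapproxDRy} (or the maximality defining $\dW_R$) gives $\diam(I_j) \approx D_R(\pi(x_Q)) \approx D_R(I_j)$. On the other hand, because $Q \in \Tree(R)$ (indeed $Q$ is a stopping cube, hence its children or $Q$ itself lie in $\Tree(R)$; in any case $Q \subseteq R$ and $Q$ meets $\Tree(R)$), and $x_Q \in Q$, the definition of $d_R$ gives $d_R(x_Q) \leq \ell(Q) + \dist(x_Q, Q) = \ell(Q)$, while conversely $d_R(x_Q) \gtrsim \ell(Q)$ since any cube $T \in \Tree(R)$ either contains $x_Q$ (forcing $\ell(T) \gtrsim \ell(Q)$ as $Q$ is essentially minimal) or is disjoint from a definite neighborhood of $x_Q$ by the small boundary condition. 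Thus $d_R(x_Q) \approx \ell(Q)$, and combining, $\diam(I_j) \approx_M \ell(Q)$. Then \eqref{e:QiIi} of Lemma \ref{l:Qj} gives $\ell(Q_j) \approx \ell(I_j) \approx \ell(Q)$, so $Q_j$ and $Q$ have comparable side lengths with implicit constant depending only on $M$ (and the AR and UR constants).

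Next I would control the distance between $Q_j$ and $Q$. We have $\pi(x_Q) \in I_j$, and by \eqref{e:distQjIj}, $\dist(\pi(Q_j), I_j) \lesssim \ell(I_j)$, so $\dist(\pi(Q_j), \pi(x_Q)) \lesssim \ell(I_j) \approx_M \ell(Q)$. To lift this to $\R^n$, I would use that both $Q$ and $Q_j$ lie close to the graph $\Gamma_R$: by \eqref{e:distxGamma}, $\dist(x_Q, \Gamma_R) \lesssim \ve\, d_R(x_Q) \approx \ve\, \ell(Q)$, and similarly $\dist(x_{Q_j}, \Gamma_R) \lesssim \ve\, d_R(x_{Q_j}) \lesssim \ve\, \ell(Q)$ (since $x_{Q_j}$ lies near $Q_j \in \Tree(R)$ and $d_R(x_{Q_j}) \approx \ell(Q_j) \approx_M \ell(Q)$). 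Since $\Gamma_R$ is a $C\delta_0$-Lipschitz graph over $\R^d$ with small constant, points on or near $\Gamma_R$ whose $\pi$-images are within $O_M(\ell(Q))$ of each other are themselves within $O_M(\ell(Q))$ of each other. Therefore $\dist(x_{Q_j}, x_Q) \lesssim_M \ell(Q)$, and hence any point of $MB_{Q_j}$ is within $M\ell(Q_j) + \dist(x_{Q_j}, x_Q) \lesssim_M \ell(Q)$ of $x_Q$.

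Finally, choosing $M$ large enough (depending only on the implicit constants above, which depend on the AR/UR constants of $E$ but not on $M$ once $M$ exceeds a threshold — the usual bootstrap), this last bound is at most $M^2 \ell(Q)$, which gives $MB_{Q_j} \subseteq M^2 B_Q$ as claimed. The main obstacle I anticipate is making the comparison $\dist(x_{Q_j}, x_Q) \lesssim_M \ell(Q)$ fully rigorous through the graph $\Gamma_R$: one must be careful that $x_{Q_j}$ and $x_Q$ are genuinely near $\Gamma_R$ at the right scale and that the Lipschitz-graph geometry is invoked correctly (this is essentially the content of \cite[Chapter 8]{david-semmes91} and the estimates \eqref{e:dsimD}, \eqref{e:distxGamma}, Lemma \ref{l:Qj}); the rest is routine bookkeeping of scales.
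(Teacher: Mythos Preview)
Your proposal is correct and follows essentially the same route as the paper: first establish $d_R(x_Q)\approx\ell(Q)$, then chain this via \eqref{e:dsimD}, \eqref{e:ellIapproxDRy}, \eqref{e:QiIi} to get $\ell(Q_j)\approx\ell(Q)$, and finally control $|x_Q-x_{Q_j}|$ by projecting to $\R^d$ and lifting back through the Lipschitz graph $\Gamma_R$. One small correction: the lower bound $d_R(x_Q)\gtrsim\ell(Q)$ does not use the small boundary condition but rather the Christ--David property that $B(x_Q,c_1\ell(Q))\cap\supp\sigma\subset Q$, so any $T\in\Tree(R)$ disjoint from $Q$ satisfies $\dist(x_Q,T)\gtrsim\ell(Q)$ (and no $T\in\Tree(R)$ is properly contained in $Q$ since $Q\in\Stop(R)$).
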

\begin{proof}
	Note that necessarily we have $j\in \cI_K$, since $Q \in \BG(Q)$ implies that $Q \subset B_R$, and then $\pi(x_Q) \in U_0$, as defined in Lemma \ref{l:I_0}. First we claim that 
	\begin{equation}\label{e:dRapproxellQ}
		d_{R}(x_Q)\approx \ell(Q).
	\end{equation}
	The upper bound is immediate: 
	\begin{equation*}
		d_R(x_Q) = \inf_{Q' \in \Tree(R)} \left( \dist(x_Q, Q') + \ell(Q') \right) \leq \ell(Q).
	\end{equation*}
	We prove the lower bound. Let $T\in \Tree(R)$: if $T\supseteq Q$, then $\ell(T) +\dist(x_Q,T)= \ell(T) \geq \ell(Q)$; otherwise, $T\cap Q=\emptyset$, so in particular, $0.1 B_{Q}\cap 0.1 B_{T}=\emptyset$. This implies that $\ell(T)+\dist(x_{Q},T)\geq 0.1 \ell(Q)$. Infimizing over all such $T$'s gives $d_{R}(x_Q)\geq 0.1 \ell(Q)$, which proves \eqref{e:dRapproxellQ}. Recalling that, by hypothesis, $\pi(x_Q) \in I_j$ for some $j \in \cI_0$, we use this to conclude that
	\begin{equation}
		\label{e:QsimdsimDsimI_j}
		\ell(Q) \stackrel{\eqref{e:dRapproxellQ}}{\approx} d_{R}(x_{Q})\stackrel{\eqref{e:dsimD}}{\approx} D_{R}(\pi(x_{Q})) \stackrel{\eqref{e:ellIapproxDRy}}{\approx} \ell(I_{j})\stackrel{\eqref{e:QiIi}}{\approx}\ell(Q_{j}).
	\end{equation}
	To conclude the proof of the lemma we need to estimate $\dist(Q,Q_j)$. Note that for $x\in \Sigma \cap 2MB_R$, if $x'\in \Gamma_R$ is a closest point to $x$, then, recalling that $h$ is $(1+C\delta_0^2)$-Lipschitz (and hence $2$-Lipschitz for $\delta_0$ small enough),
	\begin{align}\label{e:form30}
		|x-h\circ \pi(x)| &
		\leq |x-x'|+|h\circ \pi(x')-h\circ \pi(x)|\nonumber \\ 
		& \leq 3|x-x'| \approx \dist(x, \Gamma_R) \stackrel{\eqref{e:distxGamma}}{\lec } \delta_0 d_{R}(x).
	\end{align}
	(By definition, if $x' \in \Gamma_R$, then $h\circ \pi(x')=x'$). In particular, if $x \in Q$ for some $Q \in \Tree(R)$, then
	\begin{align}\label{e:form31}
		|x-h\circ \pi(x)| \lesssim \delta_0 \ell(Q).
	\end{align}
	Thus
	\begin{align}
		\dist(Q,Q_{j})
		& \leq |x_{Q}-x_{Q_{j}}|\leq
		|h\circ \pi(x_{Q})-a\circ \pi(x_{Q_{j}})|+C\delta_0 \ell(Q) \notag  \\
		& \stackrel{\eqref{e:form31}}{\leq}  |\pi(x_{Q})- \pi(x_{Q_{j}})|+\delta_0  \ell(Q) \notag  \\
		& \stackrel{\pi(x_Q) \in I_j}{\lesssim} \ell(I_j) + \dist(I_j,\pi(x_{Q_{j}}))+\delta_0  \ell(Q) \notag  \\
		& \leq  \ell(I_j) + \dist(I_j,\pi(Q_{j}))+\ell(Q_j)+\delta_0 \ell(Q) \notag  \\
		&  \stackrel{\eqref{e:QiIi}}{\lec} \ell(I_{j})+\ell(Q_j) + \delta_0 \ell(Q)
		\stackrel{\eqref{e:QsimdsimDsimI_j}}{\lesssim} \ell(Q).
		\label{e:d(Q,Q_j)}
	\end{align}
	So for $M$ large enough, \eqref{e:MBQ_j<M^2BQ} follows.
\end{proof}

\begin{lemma}
	Let 
	\begin{equation}\label{e:def-alpha-1}
		\alpha_1=(1-\alpha_0^{-1})/2.
	\end{equation} 
	For $Q\in \BG(R)$,
	\begin{equation}
		\label{e:a<gradAQ-gradAR}
		|\grad A_{Q}-\grad A_{R}|\geq \alpha_1\lambda 
	\end{equation}
	
\end{lemma}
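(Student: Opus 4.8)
The plan is to derive $\eqref{e:a<gradAQ-gradAR}$ from the defining property of the bad-gradient cubes, combined with the smallness of the square function along the stopping-time region. Fix $Q\in\BG(R)$ and let $Q'\in\Child(Q)$ be the child of $Q$ that triggered the stopping criterion, so that either $|\grad A_{Q'}|<\alpha_0^{-1}|\grad A_{R}|$ or $|\grad A_{Q'}|>\alpha_0|\grad A_{R}|$ (here $A_{Q'}:=A_{MB_{Q'}}$, the map from Lemma \ref{l:A_B}). The first step is to show that $|\grad A_{Q}|$ and $|\grad A_{Q'}|$ are close. Since $\BG(R)\subseteq\Stop(R)\subseteq\Tree(R)$, the cube $Q$ lies in $\Tree(R)$, so $\eqref{e:gamma-small}$ gives $\gammaq(M^2B_{Q})<\sqrt{C_0}\,\ve\lambda$, and the quasi-monotonicity $\eqref{e:gamma-monotone}$ then yields $\gammaq(MB_{Q})\lec_{M}\sqrt{C_0}\,\ve\lambda$. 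As $Q'$ is a child of $Q$ we have $MB_{Q'}\subseteq MB_{Q}$ with comparable radii (for $M\geq 2$), so one application of $\eqref{e:gradA-gradA}$ gives
\begin{equation*}
|\grad A_{Q}-\grad A_{Q'}|\lec\gammaq(MB_{Q})\lec_{M}\sqrt{C_0}\,\ve\lambda=:C_{2}\ve\lambda .
\end{equation*}
(As noted in the proof of Lemma \ref{l:gradAB-fLip}, $\eqref{e:gradA-gradA}$ remains valid for Lipschitz $f$, the Poincar\'e inequality there being needed only for a term that is trivially $\lec L$.)

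Next I would use that $R\in\cC_{\lambda}$ gives $|\grad A_{R}|>\lambda$, while $\eqref{e:gradA-simLambda}$ gives the matching upper bound $|\grad A_{R}|\leq(1+C\ve)\lambda$. If $|\grad A_{Q'}|>\alpha_0|\grad A_{R}|$, then $|\grad A_{Q}|>\alpha_0\lambda-C_2\ve\lambda$, hence
\begin{equation*}
|\grad A_{Q}-\grad A_{R}|\geq|\grad A_{Q}|-|\grad A_{R}|>(\alpha_0-1)\lambda-(C_2+C)\ve\lambda ;
\end{equation*}
since $\alpha_0=2\kappa-1>1$ by $\eqref{e:def-alpha_0}$ and $\alpha_1=(\alpha_0-1)/(2\alpha_0)<\alpha_0-1$, choosing $\ve$ small (depending on $\kappa$, $M$, $C_0$) gives $|\grad A_{Q}-\grad A_{R}|\geq\alpha_1\lambda$. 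If instead $|\grad A_{Q'}|<\alpha_0^{-1}|\grad A_{R}|$, then $|\grad A_{Q}|<\alpha_0^{-1}(1+C\ve)\lambda+C_2\ve\lambda$, so
\begin{equation*}
|\grad A_{Q}-\grad A_{R}|\geq|\grad A_{R}|-|\grad A_{Q}|>(1-\alpha_0^{-1})\lambda-(\alpha_0^{-1}C+C_2)\ve\lambda=2\alpha_1\lambda-(\alpha_0^{-1}C+C_2)\ve\lambda ,
\end{equation*}
which is again $\geq\alpha_1\lambda$ once $\ve$ is small; this second case is exactly the one for which the value $\alpha_1=(1-\alpha_0^{-1})/2$ is calibrated.

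I do not expect a genuine obstacle here: the argument is just a triangle inequality through the bad child together with the two already-established facts $\eqref{e:gradA-gradA}$ (comparability of gradients of the $A_{B}$'s at nearby scales in terms of $\gammaq$) and $\eqref{e:gamma-small}$ (smallness of $\gammaq$ on $\Tree(R)$). The only point that needs care is the order of quantifiers and the bookkeeping of constants: $\ve$ must be chosen after $M$, $C_0$ and $\kappa$ so that all the error terms of the form $(\cdot)\ve\lambda$ are dominated by the gain $\min\{\alpha_0-1,\,1-\alpha_0^{-1}\}\lambda$, which is permissible within the scheme of Proposition \ref{p:EcapG<E}.
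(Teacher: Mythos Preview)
Your proof is correct and follows essentially the same approach as the paper: both arguments pass through the bad child $Q'$, bound $|\grad A_Q-\grad A_{Q'}|$ by $C\ve\lambda$ via \eqref{e:gradA-gradA} and the smallness of $\gammaq$ on $\Tree(R)$, and then split into the two cases $|\grad A_{Q'}|>\alpha_0|\grad A_R|$ and $|\grad A_{Q'}|<\alpha_0^{-1}|\grad A_R|$ using \eqref{e:gradA-simLambda}. Your remark that \eqref{e:gradA-gradA} applies to Lipschitz $f$ (since the Poincar\'e step in Lemma~\ref{l:A-A} is only needed for \eqref{e:A<gamma+av}) is a nice point of care that the paper leaves implicit.
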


\begin{proof}
	If $Q\in \BG(R)$, the by definition it has a child $Q'$ for which either 
	\begin{enumerate}[(a)]
		\item $|\grad A_{Q'}|> \alpha_0|\grad A_{R}|$, or
		\item $|\grad A_{Q'}|< \alpha_0^{-1}|\grad A_{R}|$.
	\end{enumerate} 
	Consider first case (a). First, from Lemma \ref{l:A-A}  and \eqref{e:gamma-monotone} it follows that $|\nabla A_Q - \nabla A_{Q'}| \lesssim \gamma(MB_{Q})$. Then recall that for if $Q \subset R$, $\gamma(M^2 B_Q) \lesssim \ve \lambda$ (this is \eqref{e:gamma<ve}). Thus we get
	\begin{align*}
		|\grad A_{Q}-\grad A_{R}|
		&  \geq |\grad A_{Q'}-\grad A_{R}|-|\grad A_{Q}-\grad A_{Q'}|\\
		&  \stackrel{\eqref{e:gamma-monotone} \atop \eqref{e:gamma<ve}}{ \geq} |\grad A_{Q'}|-|\grad A_{R}| - C\ve\lambda.
	\end{align*}
	Recall also that $\lambda < |\nabla A_R| \leq (1+C \ve) \lambda$ (by \eqref{e:gradA-simLambda}). Thus, if $\ve>0$ is chosen sufficiently small with respect to $\alpha_0-1$ (and $M$, since $C$ here depends on it), we get 
	\begin{align*}
		|\nabla A_Q - \nabla A_R| \stackrel{\eqref{e:gradA-simLambda}}{\geq} (\alpha_0-1-C\ve) \lambda \geq\frac{(\alpha_0-1)\lambda}{2}.
	\end{align*}
	Given that $1-\alpha^{-1} = (\alpha_0-1)/\alpha_0 <\alpha_0-1$, we see \eqref{e:a<gradAQ-gradAR} holds in this case. In the latter case (b), for $\ve$ small enough
	\begin{align*}
		|\grad A_{Q}-\grad A_{R}|
		&  \geq |\grad A_{Q'}-\grad A_{R}|-|\grad A_{Q}-\grad A_{Q'}|\\
		& \stackrel{\eqref{e:gamma-monotone} \atop \eqref{e:gamma<ve}}{ \geq} |\grad A_{R}|-|\grad A_{Q'}| - C\ve\lambda \\
		& \stackrel{\eqref{e:gradA-simLambda}}{\geq} (1-\alpha_0^{-1} -C\ve) \lambda \geq\frac{(1-\alpha_0^{-1})\lambda}{2}
	\end{align*}
	and so have \eqref{e:a<gradAQ-gradAR} holds in this case as well.

\end{proof}

\section{Proof of Main Lemma \ref{l:m2} via a square function estimate}\label{s:tB-proof-main-lemma-via-square}
In this section, we prove Lemma \ref{l:m2} using the square function estimate below.

\begin{lemma}[Square function estimate]\label{l:OmegaF-sim}
	Let $E, R$ be as in Main Lemma \ref{l:m2} and $F$ be defined as in \eqref{e:def-F}. Then
\begin{equation}
\sum_{I\in \dI} \Omega_{F}^{1}(3B_I)^{2} |I|
\lec (\lambda \delta_0)^{2}.
\label{e:OmegaF-sim}
\end{equation}
Here $3B_I=B(x_I, 3 \ell(I))$, and $x_I \in \R^d$ is the center of $I$. 
\end{lemma}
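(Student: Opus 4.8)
The plan is to split the sum over $I\in\dI$ according to how $I$ sits relative to the Whitney family $\dW_R=\{I_j\}_{j\in\cI}$, exploiting that $F$ is a partition‑of‑unity assembly of the affine pieces $A_{Q_i}\circ b_i$. Two preliminary reductions: since $F\equiv A_R$ off $V_K$ by \eqref{e:A=0} and $A_R$ is affine, $\Omega_F^1(3B_I)=0$ whenever $3B_I\cap V_K=\emptyset$; and $F$ is globally $C\lambda$‑Lipschitz, because each $A_{Q_i}\circ b_i$ has gradient $\lesssim\lambda$ (by \eqref{e:AQ<alphaAR} and the bi‑Lipschitz bound on $b_i$), the $\phi_i$ form a partition of unity with $\|\d^\alpha\phi_i\|_\infty\lesssim\ell(I_i)^{-|\alpha|}$, and neighbouring pieces agree to $O(\lambda\ell(I_i))$ by Lemma \ref{l:QiQj}. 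Thus we may restrict to $I$ with $3B_I\cap V_K\neq\emptyset$ and treat separately the cubes $I\subseteq I_j$ (\emph{small cubes}) and the cubes $I$ properly containing some $I_j$ (\emph{large cubes}).

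\textbf{Small cubes.} If $3I\subseteq I_j\setminus\bigcup_{i\neq j}3I_i$ then $\phi_j\equiv1$ on $3B_I$, so $F=A_{Q_j}\circ b_j$ is affine there and $\Omega_F^1(3B_I)=0$. Otherwise $3I$ meets $3I_i$ only for finitely many $i$ with $I_i\sim I_j$, hence $\ell(I_i)\approx\ell(I_j)$ by \eqref{e:ellIapproxDRy}; using \eqref{e:bi-bj}, Lemma \ref{l:QiQj} and Lemma \ref{l:A-A} (applicable since $f$ Lipschitz lies in $M^{1,p}$ with gradient $\approx L$) one gets $|\grad(A_{Q_i}\circ b_i)-\grad(A_{Q_j}\circ b_j)|\lesssim\ve\lambda$ and $\|A_{Q_i}\circ b_i-A_{Q_j}\circ b_j\|_{L^\infty(3I_j)}\lesssim\ve\lambda\,\ell(I_j)$. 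A direct computation with the partition of unity then yields $|\grad F(x)-\grad F(y)|\lesssim\ve\lambda\,\ell(I)/\ell(I_j)$ for $x,y\in3B_I$, so approximating $F$ on $3B_I$ by its first‑order Taylor polynomial at the centre of $I$,
\[
\Omega_F^1(3B_I)\lesssim\ve\lambda\,\frac{\ell(I)}{\ell(I_j)}.
\]
For fixed $j$ there are $\lesssim(\ell(I_j)2^{m})^{d-1}$ cubes $I\subseteq I_j$ of side $2^{-m}$ in the transition region, so $\sum_{I\subseteq I_j}\Omega_F^1(3B_I)^2|I|\lesssim(\ve\lambda)^2|I_j|$; summing over $j$ (only the $I_j$ meeting $V_K$, together with their neighbours, contribute, and their measures sum to $\lesssim\ell(R)^d$) the small‑cube total is $\lesssim(\ve\lambda)^2\ell(R)^d$.

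\textbf{Large cubes.} Fix $I$ with $\ell(I)>\ell(I_j)$ for every Whitney cube inside it and $3B_I\cap V_K\neq\emptyset$. If $\ell(I)\gtrsim\ell(R)$ compare $F$ with $A_R$: since $F=A_R$ off $V_K$, and for each active $i$ the point $b_i(x)=(x,B_i(x))$ lies on the plane $P_{Q_i}$, which stays within $\lesssim\delta_0\ell(R)$ of $\R^d=P_R$ over $MB_R$ (angles drift by at most $\delta_0$ in $\Tree(R)$, cf. \eqref{e:angle-S}), while $|\grad A_{Q_i}|\lesssim\lambda$ and $A_{Q_i}-A_R$ telescopes to $\lesssim\ve\lambda\ell(R)$ along the chain from $Q_i$ to $R$ via \eqref{e:gradA-gradA} and \eqref{e:gamma-small}, one obtains $\|F-A_R\|_{L^\infty(V_K)}\lesssim\delta_0\lambda\,\ell(R)$; hence $\Omega_F^1(3B_I)\lesssim\delta_0\lambda\,(\ell(R)/\ell(I))^{d+1}$, which is square‑summable over such $I$ with total $\lesssim(\delta_0\lambda)^2\ell(R)^d$. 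If $\ell(I_j)<\ell(I)<c\ell(R)$, pick $Q(I)\in\Tree(R)$ with $\ell(Q(I))\approx\ell(I)$ lying over (a neighbourhood of) the part of $I$ meeting $V_K$ — it exists as an ancestor of the $Q_j$ attached by Lemma \ref{l:Qj} to a Whitney cube inside $3B_I$ — and compare $F$ with $A_{Q(I)}$. Writing $F-A_{Q(I)}=\sum_i(A_{Q_i}\circ b_i-A_{Q(I)})\phi_i$, split each summand into a \emph{tilt} part $A_{Q_i}\circ b_i-A_{Q_i}$, bounded by $|\grad A_{Q_i}|\,|B_i|\lesssim\delta_0\lambda\,\ell(I)$, and a \emph{drift} part $A_{Q_i}-A_{Q(I)}$, which telescopes to $\lesssim\bigl(\sum_{Q_i\subseteq T\subseteq Q(I)}\gammaq(M^2B_T)\bigr)\ell(I)$ plus first‑order errors $\lesssim\Omega_f^1\ell(I_i)$. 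Squaring, multiplying by $|I|$, summing over $I$, and using the Carleson packing $\sum_{Q\in\Tree(R)}\gammaq(M^2B_Q)^2\ell(Q)^d\lesssim(\ve\lambda)^2\ell(R)^d$ (obtained from \eqref{e:gamma-small} by integrating against $\sum_{T\in\Stop(R)}\one_T$, exactly as in Lemma \ref{l:BA-packing}) together with $\sum_{Q\in\Tree(R)}\beta_E^{d,1}(MB_Q)^2\ell(Q)^d\lesssim\ve^2\ell(R)^d$ from \eqref{e:jones-small}, the total over medium‑scale cubes is $\lesssim(\delta_0\lambda)^2\ell(R)^d+(\ve\lambda)^2\ell(R)^d$.

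Adding the three contributions and recalling $\ve\ll\delta_0$ gives $\sum_{I\in\dI}\Omega_F^1(3B_I)^2|I|\lesssim(\delta_0\lambda)^2\ell(R)^d$, which is \eqref{e:OmegaF-sim} (up to the harmless normalisation $\ell(R)^d\approx|R|$). The main obstacle is the medium‑scale large cubes: one must name the right comparison cube $Q(I)$ inside $\Tree(R)$, control the two genuinely different error sources — the geometric $\delta_0$‑tilt of the anchoring planes $P_{Q_i}$ and the accumulated $\gammaq$‑errors from passing through many scales — and crucially avoid the logarithmic loss coming from pointwise telescoping by performing the $\ell^2$‑summation over $I$ first and only then invoking the Carleson estimates; by comparison the small‑cube bookkeeping (counting transition slabs) and the verification that $F$ is genuinely $C\lambda$‑Lipschitz are routine, but they must be carried out with all constants independent of the Lipschitz constant of $f$.
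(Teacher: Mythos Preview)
Your three-regime decomposition matches the paper's split into $\Delta_1,\Delta_2,\Delta_3$, and your arguments for the small cubes ($I\subseteq I_j$) and the very large cubes ($\ell(I)\gtrsim\ell(R)$) are essentially correct; the paper handles the small cubes more directly by applying the Carleson form of Dorronsoro to $F-A_{Q_j}\circ b_j$ on $3I_j$ via the Lipschitz bound from Lemma~\ref{c:claim30}, but your transition-slab counting is an acceptable substitute.

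The genuine gap is in the medium-scale regime. Your comparison function is $A_{Q(I)}$ restricted to $\R^d$, and you bound the ``tilt'' $A_{Q_i}\circ b_i-A_{Q_i}$ by $|\grad A_{Q_i}|\,|B_i|\lesssim\delta_0\lambda\,\ell(I)$. This is not right: $|B_i(x)|$ is the height of the plane $P_{Q_i}$ over $x\in 3I_i\subset\R^d$, which is governed by $|\pi^\perp(x_{Q_i})|$ and hence, via \eqref{e:piperppi}, by $\delta_0$ times the horizontal distance of $Q_i$ from $x_R$ --- of order $\delta_0\ell(R)$, not $\delta_0\ell(I)$. The tilt therefore does not localise to scale $\ell(I)$; what is controlled at scale $\ell(I)$ is only its \emph{oscillation} relative to an affine function, and that oscillation is precisely the oscillation of the graph map $H=\sum_iB_i\phi_i$. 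Similarly, for the drift, telescoping $A_{Q_i}-A_{Q(I)}$ gives at best $(\sum_{Q_i\subseteq T\subseteq Q(I)}\gamma^2)^{1/2}\ell(I)\lesssim\ve\lambda\,\ell(I)$ uniformly; summing $(\ve\lambda)^2|I|$ over all medium $I$ produces the logarithmic loss you flag, and ``$\ell^2$-summation first then Carleson'' does not rescue this unless the per-$I$ bound is a \emph{single-scale} quantity.

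The paper fixes both issues by comparing not with $A_{Q(I)}|_{\R^d}$ but with $A_{Q_I}\circ A_I$, where $A_I$ is the best affine approximation to $h$ on $C_1B_I$. The resulting $T_3$ term isolates $\lambda\,\Omega_h(C_1B_I)$, and a second application of Dorronsoro --- this time to the compactly supported $C\delta_0$-Lipschitz function $H$ --- gives $\sum_I\lambda^2\Omega_H(C_1B_I)^2|I|\lesssim(\delta_0\lambda)^2|R|$; this $S_1$ term is the mechanism your sketch is missing. The $T_1$ term (the analogue of your drift) is handled not by telescoping but by passing from $d\dH^d|_{P_{Q_i}}$ to $d\sigma$ via $\alpha$-numbers and using $f$ as intermediary, $|A_{Q_I}-f|+|f-A_{Q_i}|$, which yields the single-scale bound $\gamma(MB_{Q_I})\ell(Q_I)^{d+1}$ and then sums via the Carleson packing of $\gamma$.
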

Let us first prove a technical claim.
\begin{lemma}\label{l:gradF-gradAR}
Let $j\in \cI_K$ and $x\in I_j$. For an appropriate choice of $\delta_0$ and $\ve$ \textup{(}depending only on $\alpha_1$, as defined \eqref{e:def-alpha-1}\textup{)}, we have
\begin{equation}
\label{e:F-A_R>lambda}
|\grad F(x)- \grad A_{R}(x))|\geq \frac{\alpha_1}{4}\lambda . 
\end{equation}
\end{lemma}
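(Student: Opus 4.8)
The goal is to show that on the Whitney cube $I_j$ (with $j \in \cI_K$), the gradient of the approximating Lipschitz function $F$ stays close to $\grad A_{Q_j}$, which in turn is far from $\grad A_R$ because $Q \in \BG(R)$. The plan is to first establish that on $I_j$ the function $F$ is, up to an error of size $O(\delta_0 + \ve)\lambda$ in the gradient, equal to the single affine map $A_{Q_j} \circ b_j$. This is the standard estimate in this kind of construction: differentiating \eqref{e:def-F}, we write
\begin{equation*}
\grad F(x) = \sum_{i \in \cI} \grad\big(A_{Q_i}\circ b_i\big)(x)\,\phi_i(x) + \sum_{i \in \cI} \big(A_{Q_i}\circ b_i\big)(x)\,\grad\phi_i(x),
\end{equation*}
and observe that only indices $i$ with $I_i \sim I_j$ contribute on $I_j$ by \eqref{e:phi_i-in-1/4I}. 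For the first sum, using $\sum_i \phi_i \equiv 1$, it suffices to bound $|\grad(A_{Q_i}\circ b_i) - \grad(A_{Q_j}\circ b_j)|$ for $I_i \sim I_j$; this is controlled by $|\grad A_{Q_i} - \grad A_{Q_j}|$ (which is $\lesssim \ve\lambda$ by \eqref{e:gradA-gradA}, \eqref{e:gamma-monotone}, \eqref{e:gamma<ve} once one checks $Q_i, Q_j$ lie in comparable balls, using \eqref{e:QiQj}) plus a term measuring the difference of the linear parts $B_i - B_j$, which is $\lesssim \delta_0$ in operator norm (this follows from \eqref{e:bi-bj}, dividing by $\ell(I_i)$, or directly from the angle bound \eqref{e:angle-S} since the graphs of $B_i, B_j$ are $P_{Q_i}, P_{Q_j}$, both within angle $\delta_0$ of $P_R = \R^d$). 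For the second sum, the key cancellation is that $\sum_i \grad\phi_i \equiv 0$, so we may subtract $A_{Q_j}\circ b_j(x)$ from each $(A_{Q_i}\circ b_i)(x)$; then $|(A_{Q_i}\circ b_i)(x) - (A_{Q_j}\circ b_j)(x)| \lesssim |\grad A_{Q_j}|\,|b_i(x) - b_j(x)| + |A_{Q_i}(b_i(x)) - A_{Q_j}(b_i(x))|$, which by \eqref{e:bi-bj}, \eqref{e:AB-AB'} (applied to the balls around $Q_i$ and $Q_j$, which are comparable by \eqref{e:QiQj} and \eqref{e:QiIi}) and $|\grad A_{Q_j}| \lesssim \lambda$ (from \eqref{e:AQ<alphaAR}) is $\lesssim (\ve + \delta_0)\lambda\,\ell(I_i)$; multiplying by $\|\grad\phi_i\|_\infty \lesssim \ell(I_i)^{-1}$ and summing over the boundedly many $i \sim j$ gives $\lesssim (\ve + \delta_0)\lambda$.

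Combining, $|\grad F(x) - \grad(A_{Q_j}\circ b_j)(x)| \lesssim (\ve + \delta_0)\lambda$ for $x \in I_j$. Since $b_j$ is the graph map $x \mapsto x + B_j(x)$ with $\grad B_j$ of operator norm $\lesssim \delta_0$, we also have $|\grad(A_{Q_j}\circ b_j) - \grad A_{Q_j}| = |\grad A_{Q_j}\cdot \grad B_j| \lesssim |\grad A_{Q_j}|\,\delta_0 \lesssim \lambda\delta_0$ (here I am slightly abusing notation: $\grad A_{Q_j}$ composed with $\pi_{P_{Q_j}}$ equals $\grad A_{Q_j}$ by \eqref{e:ABpiAB}, and the discrepancy between differentiating along $P_{Q_j}$ versus along $\R^d$ is again the angle $\delta_0$). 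Therefore
\begin{equation*}
|\grad F(x) - \grad A_{Q_j}| \lesssim (\ve + \delta_0)\lambda \quad \text{for } x \in I_j.
\end{equation*}

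\textbf{Conclusion.} By Lemma \ref{l:QinBG}, $Q_j$ lies in $\Tree(R)$ with $MB_{Q_j} \subseteq M^2 B_Q$, and from \eqref{e:a<gradAQ-gradAR} we have $|\grad A_Q - \grad A_R| \geq \alpha_1\lambda$; combined with $|\grad A_Q - \grad A_{Q_j}| \lesssim \gamma(M^2 B_Q) \lesssim \ve\lambda$ (via \eqref{e:gradA-gradA}, \eqref{e:gamma-monotone}, \eqref{e:gamma<ve}), we get $|\grad A_{Q_j} - \grad A_R| \geq \alpha_1\lambda - C\ve\lambda$. Also $\grad A_R$ is constant, and the difference between $\grad A_R$ as a functional on $\R^n$ and $\grad A_R(x)$ (its restriction) introduces no error here. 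Hence
\begin{equation*}
|\grad F(x) - \grad A_R(x)| \geq |\grad A_{Q_j} - \grad A_R| - |\grad F(x) - \grad A_{Q_j}| \geq \alpha_1\lambda - C(\ve + \delta_0)\lambda.
\end{equation*}
Choosing $\delta_0$ and $\ve$ small enough that $C(\ve + \delta_0) \leq \tfrac{3}{4}\alpha_1$ yields $|\grad F(x) - \grad A_R(x)| \geq \tfrac{\alpha_1}{4}\lambda$, as claimed.

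\textbf{Main obstacle.} The routine part is the partition-of-unity differentiation; the only genuinely delicate point is bookkeeping the various $\delta_0$-versus-$\ve$ errors and making sure the comparisons of affine maps $A_{Q_i}$ at nearby Whitney cubes are legitimate — i.e., that $Q_i$ and $Q_j$ (for $I_i \sim I_j$) really do sit in balls of comparable radius at comparable locations so that Lemma \ref{l:A-A} applies. This is handled by \eqref{e:QiQj}, \eqref{e:QiIi} and the fact (from Lemma \ref{l:DS1}) that $\sim$-related Whitney cubes have comparable sizes, so that all the cubes $Q_i$ with $I_i \sim I_j$ fit inside a fixed multiple of $B_{Q_j}$. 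Once that is in place, all estimates are of the stated form and the choice of parameters closes the argument.
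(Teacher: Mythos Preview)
Your proof is correct and takes essentially the same approach as the paper: bound $|\grad F(x)-\grad(A_{Q_j}\circ b_j)(x)|$ by $C(\delta_0+\ve)\lambda$ (the paper forward-references this as Lemma~\ref{c:claim30}, you sketch the same partition-of-unity computation inline), then use $|\grad A_Q-\grad A_R|\ge\alpha_1\lambda$ from \eqref{e:a<gradAQ-gradAR} together with $MB_{Q_j}\subset M^2B_Q$ and \eqref{e:gradA-gradA} to show the composition's gradient is far from $\grad A_R$. The only cosmetic difference is that you route the final triangle inequality through $\grad A_{Q_j}$ (with the noted $\R^d$/$\R^n$ identification), whereas the paper works with $\grad(A_{Q_j}\circ b_j)$ throughout, which keeps all gradients in $\R^d$ and avoids that bookkeeping.
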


\begin{proof}
	Once again, recall that $F(x)= \sum_{i \in \cI} A_{Q_i} \circ b_i(x) \phi_i(x)$, where $b_i(x) = x+ B_i$, $B_i$ is an affine map $\R^d\to \R^{n-d}$ whose graph is $P_{Q_i}$, and that $A_{Q_i}$ is the map defined in Lemma \ref{l:A_B}, that is, essentially the affine map that best approximates $f$ at $Q_i$.
	
	Let $x \in I_i$, $i \in \cI_K$. If we show that
	\begin{equation}\label{e:gradAQi-gradAR}
		|\grad (A_{Q_i} \circ b_i) (x) - \grad A_R(x)| \geq \frac{\alpha_1}{2}\lambda,
	\end{equation} 
	then we are done, since
	\begin{align*}
		& |\grad F(x) - \grad A_{R}(x))| \\
		& \geq | \grad (A_{Q_i} \circ b_i)(x) - \grad A_R(x)| - |\grad F(x) - \grad A_{Q_i} \circ b_i(x)| \\
		& \stackrel{\eqref{e:gradF-gradAQi} \atop \eqref{e:gradAQi-gradAR}}{\geq} \frac{\alpha_1}{2}\lambda - \frac{\alpha_1}{4}\lambda = \frac{\alpha_1}{4}\lambda. 
	\end{align*}
	We focus on proving \eqref{e:gradAQi-gradAR}. We compute
	\begin{align*}
		& |\grad (A_{Q_{i}}\circ b_{i})(x)-\grad A_{R}(x)| 
		= |\grad A_{Q_{i}}\cdot \grad b_{i}-\grad A_{R}|
		\\
		& =|(\grad  A_{Q_{i}} -\grad A_{R})\cdot \grad b_{i} + \grad A_{R}\cdot (\grad b_{i}-{\rm Id})| \\
		& \geq |\grad A_{Q_{i}} -\grad A_{R}|-|\grad A_{R}|\cdot |\grad B_{i}| \\
		& \stackrel{\eqref{e:gradA-gradA} \atop \eqref{e:MBQ_j<M^2BQ}}{\geq} (|\grad A_{Q} -\grad A_{R}|-\gamma(M^2B_{Q}))-2\delta_0 |\grad A_{R}| \\
		& \stackrel{\eqref{e:a<gradAQ-gradAR}\atop \eqref{e:gradA-simLambda},\eqref{e:gamma<ve}}{\geq} \alpha_1\lambda -C\ve\lambda -2\delta_0 (1+C\ve)\lambda 
		\geq \frac{\alpha_1}{2}\lambda.
	\end{align*}
\end{proof}

\begin{proof}[Proof of Main Lemma \ref{l:m2}]
Note that \eqref{e:QsimdsimDsimI_j} and \eqref{e:d(Q,Q_j)} (and the fact that disjoint cubes $Q$ and $T$ satisfy $c_{0}B_{Q}\cap c_{0}B_{T}=\emptyset$ by the definition of the Christ-David cubes) imply that there are only boundedly many $Q\in \BG(R)$ for which $\pi(x_Q)\in I_{j}$ for any given $j \in \cI_K$; that is, if we set
\begin{equation*}
	\BG_j (R) := \{Q\in \BG(R): \pi(x_Q)\in I_{j}\},
\end{equation*}
then
\begin{equation}
\label{e:QtoI_h}
\# \BG_j(R) \lec 1 \;\;\; \mbox{ for all }j\in \cI_K.
\end{equation} 
Conversely, if $Q \in \BG(R)$, then $B_Q \subset 2KB_R$, and therefore $\pi(x_Q) \in I_j$ for some $j \in \cI_K$. 
Note also that $F-A_R$ is compactly supported, since, by \eqref{e:A=0}, $F(x)=A_R(x)$ whenever $x \in \R^d \setminus V_K$. Then, using Dorronsoro's Theorem \ref{t:dorronsoro-classic}, the affine invariance of $\Omega$ \eqref{e:omega-affine-invariant} and Lemma \ref{l:OmegaF-sim}, we obtain
\begin{equation}\label{e:form700}
\|\grad F-\grad A_{R})\|_{2}^{2} \lec 
\sum_{I \in \dI}\Omega_{F-A_{R}}(3B_I)^{2}|I|
=\sum_{I \in \dI}\Omega_{F}(3B_I)^{2}|I|
\stackrel{\eqref{e:OmegaF-sim}}{\lec} (\lambda \delta_0 )^{2} \ell(R)^{d}.
\end{equation}
\noindent
We conclude that
\begin{align*}
\sum_{Q\in \BG(R)}|Q| & = \sum_{j \in \cI_K} \sum_{Q \in \BG_j(R)} |Q|\\
& \approx \sum_{j \in \cI_K} \sum_{Q \in \BG_j(R)} \ell(Q)^d \\
& \stackrel{\eqref{e:QsimdsimDsimI_j}}{\approx} \sum_{j \in \cI_0} \#\BG_j(R) \ell(I_j)^d \\
& \stackrel{\eqref{e:QtoI_h}}{\lesssim} \sum_{j \in \cI_K} \int_{I_j}dx \\ 
& \stackrel{\eqref{e:F-A_R>lambda}}{\lesssim} \lambda^{-2} \sum_{j \in \cI_K} \int_{I_{j}} |\grad (F(x)-A_{R}(x))|^2 dx\\
& \leq C \lambda^{-2} \|\grad F-\grad A_{R}\|_{2}^{2} 
\stackrel{\eqref{e:form700}}{\lec} \delta_0^{2}|R|.
\end{align*}
\noindent
For $\delta_0>0$ sufficiently small, this proves \eqref{e:m2}, and finishes the proof of Main Lemma \ref{l:m2}, and thus the proof of  Theorem \ref{t:dorronsoro-II}. 
\end{proof}

\def\I{\mathcal{I}}
\section{Proof of the square function estimate Lemma \ref{l:OmegaF-sim}.}\label{s:tB-proofLemmaOmegaF-sim}
This last section is devoted to the proof of Lemma \ref{l:OmegaF-sim}. For reader's sake, we report here its statement. 
\begin{lemma}
	Let $E, R$ be as in Main Lemma \ref{l:m2} and $F$ be defined as in \eqref{e:def-F}. Then
	\begin{equation}\label{e:OmegaF-sim2}
		\sum_{I\subseteq \R^{d}} \Omega_{F}^{1}(3B_I)^{2} |I|
		\lec (\lambda \delta_0)^{2}.
	\end{equation}
\end{lemma}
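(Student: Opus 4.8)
The key observation is that $F$ was built as a convex combination of the affine maps $A_{Q_i}\circ b_i$ glued by the partition of unity $\{\phi_i\}$ associated with the Whitney cubes $\dW_R=\{I_i\}$. On a Whitney cube $I_j$, $F$ is close to the single affine map $A_{Q_j}\circ b_j$, so that $\Omega_F^1(3B_I)$ can only be large when $I$ meets the "transition" regions between neighbouring Whitney cubes, and there the error is controlled by how much the planes $P_{Q_i}$ and the affine maps $A_{Q_i}$ differ between $I_i\sim I_j$. The plan is therefore: (i) reduce the sum over all dyadic $I\subset\R^d$ to a sum over $I$ comparable to some Whitney cube $I_j$ meeting $V_K$, discarding the trivial cubes where $F\equiv A_R$ or where $F$ is already affine; (ii) on each such $I$, estimate $\Omega_F^1(3B_I)$ by $\delta_0$-type quantities coming from $|B_i-B_j|$ and by $\Omega_f^q(M^2B_{Q_j})$-type quantities coming from $|A_{Q_i}\circ b_i - A_{Q_j}\circ b_j|$; (iii) sum these using the packing/finite-overlap properties of the $I_j$ and the fact that $\sum_{T\subseteq Q\subseteq R}\gamma_f^q(M^2B_T)^2<C_0(\ve\lambda)^2$ along the stopping-time tree, together with the comparison $\ell(I_j)\approx\ell(Q_j)$.

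More concretely, fix a dyadic cube $I\subset\R^d$ with $3B_I$ meeting $\supp(F-A_R)\subset \overline{V_K}$. If $\ell(I)\gtrsim \ell(R)$ the contribution is bounded by finitely many terms of size $(\lambda\delta_0)^2\ell(R)^d$ (using that $|\grad H|\lesssim\delta_0$ and $|\grad A_{Q_i}|\lesssim\lambda$, so $F-A_R$ is $C\lambda\delta_0$-Lipschitz-ish on $V_K$, hence $\Omega_F^1(3B_I)\lesssim \lambda\delta_0$). If $\ell(I)\ll\ell(R)$, pick $I_j\in\dW_R$ with $I\cap I_j\neq\emptyset$; by the Whitney property $\ell(I)\lesssim\ell(I_j)$, and by Lemma~\ref{l:DS1} all Whitney cubes $I_i$ with $3B_I\cap 3I_i\neq\emptyset$ satisfy $I_i\sim I_j$ and $\ell(I_i)\approx\ell(I_j)$. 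On $3B_I$ we then write, using $\sum_i\phi_i\equiv1$,
\begin{equation*}
F(x)-A_{Q_j}\circ b_j(x)=\sum_{i:\,I_i\sim I_j}\big(A_{Q_i}\circ b_i(x)-A_{Q_j}\circ b_j(x)\big)\phi_i(x),
\end{equation*}
and choose $A_{Q_j}\circ b_j$ (an affine map on $\R^d$) as the competitor in $\Omega_F^1(3B_I)$. Using $|b_i(x)-b_j(x)|=|B_i(x)-B_j(x)|\lesssim\ve\,\ell(I_j)$ from Lemma~\ref{l:QiQj}, $|\grad A_{Q_i}|\lesssim\lambda$, $|\grad(A_{Q_i}-A_{Q_j})|\lesssim\gamma_f^q(M^2B_{Q_j})$ (via Lemma~\ref{l:A-A}, \eqref{e:gradA-gradA}, since $Q_i,Q_j\in\Tree(R)$ are comparable cubes), and the bound on $|A_{Q_i}(x_{Q_i})-f(x_{Q_i})|$-type errors, one gets
\begin{equation*}
\Omega_F^1(3B_I)\lesssim \delta_0\,|\grad A_R|\cdot\frac{\ell(I_j)}{\ell(I_j)}+\gamma_f^q(M^2B_{Q_j})\lesssim \delta_0\lambda+\gamma_f^q(M^2B_{Q_j}),
\end{equation*}
where the $\delta_0\lambda$ term encodes the angle-between-planes contribution (the planes $P_{Q_i}$ are within $\delta_0$ of $P_R=\R^d$, so the $b_i$ are $(1+C\delta_0^2)$-bi-Lipschitz perturbations of the identity).

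For the summation, split $\sum_I\Omega_F^1(3B_I)^2|I|$ into the part from $I$ comparable to $I_j$. Since for each $j$ there are only boundedly many dyadic $I$ with $\ell(I)\approx\ell(I_j)$ and $3B_I\cap 3I_j\neq\emptyset$, this is $\lesssim\sum_{j\in\cI_K}\big(\delta_0^2\lambda^2+\gamma_f^q(M^2B_{Q_j})^2\big)\ell(I_j)^d$. The first piece is $\delta_0^2\lambda^2\sum_j\ell(I_j)^d\lesssim\delta_0^2\lambda^2\,\ell(R)^d$ because the $I_j$ have bounded overlap and lie in $V_K\subset B(\pi(x_R),K^2\ell(R))$. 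For the second piece, use $\ell(I_j)^d\approx\ell(Q_j)^d\approx|Q_j|\approx\sigma(Q_j)$ and that distinct $Q_j$ have bounded overlap, then dominate $\gamma_f^q(M^2B_{Q_j})^2$ by $\sum_{Q_j\subseteq T\subseteq R}\gamma_f^q(M^2B_T)^2$ and reorganize as an integral over $R$: $\sum_j\gamma_f^q(M^2B_{Q_j})^2\sigma(Q_j)\lesssim\int_R\sum_{x\in T\subseteq R}\gamma_f^q(M^2B_T)^2\,d\sigma(x)$. But a cube $T$ with $x\in T\subseteq R$ and $T\in\Tree(R)$ contributes, and along any such chain the sum is $<C_0(\ve\lambda)^2$ by \eqref{e:gamma-small}; there is a subtlety in that $T$ need not be in $\Tree(R)$, but $Q_j\in\Tree(R)$ and we may bound $\gamma_f^q(M^2B_{Q_j})^2\le\sum_{Q_j\subseteq T\subseteq R}\gamma_f^q(M^2B_T)^2$ directly, where every such $T$ is in $\Tree(R)$. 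Hence $\sum_j\gamma_f^q(M^2B_{Q_j})^2\sigma(Q_j)\lesssim (\ve\lambda)^2\,\sigma(R)\approx(\ve\lambda)^2\ell(R)^d\lesssim(\delta_0\lambda)^2\ell(R)^d$ after choosing $\ve\le\delta_0$. Combining the two pieces gives \eqref{e:OmegaF-sim2}.

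\textbf{Main obstacle.} The delicate point is the transition-region estimate: bounding $\Omega_F^1(3B_I)$ when $I$ straddles several Whitney cubes $I_i\sim I_j$, and making sure the affine competitor $A_{Q_j}\circ b_j$ — which lives naturally on $P_{Q_j}$, not on $\R^d=P_R$ — yields an honest affine map on $\R^d$ with the right bounds after composing with $b_j$. One must carefully track three separate sources of error (the difference of the underlying planes $P_{Q_i}$, controlled by $\delta_0$ via the angle bound; the difference $|B_i-B_j|\lesssim\ve\ell(I_j)$ from Lemma~\ref{l:QiQj}; and the difference of the $A_{Q_i}$ themselves, controlled by $\gamma_f^q$ via Lemma~\ref{l:A-A}), and verify that the $\phi_i$-weighted sum over $I_i\sim I_j$ does not lose the finite-overlap. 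The bookkeeping with the index sets $\cI_K$, $V_K$ and the cubes $Q_j$ (including the convention $Q_j=R$ for $j\notin\cI_K$, which makes those terms vanish) is the part most likely to require care to state cleanly. Everything else is a routine application of the already-established lemmas in Sections~\ref{s:alpha} and \ref{s:tB-approx-Lipschitz}.
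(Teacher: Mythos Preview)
Your proposal has a genuine gap: the claim ``pick $I_j\in\dW_R$ with $I\cap I_j\neq\emptyset$; by the Whitney property $\ell(I)\lesssim\ell(I_j)$'' is false. The Whitney cubes $I_j$ are \emph{maximal} dyadic cubes with $\diam(I_j)\le \tfrac{1}{20}D_R(I_j)$, but a generic dyadic cube $I$ appearing in the sum can be of any size. If $I\cap I_j\neq\emptyset$ and $I\not\subset I_j$, then necessarily $I_j\subsetneq I$, i.e.\ $\ell(I)>\ell(I_j)$. Consequently, for a dyadic cube $I$ with $\ell(I_j)<\ell(I)\le\ell(R)$ (this is exactly the paper's family $\Delta_2$), the ball $3B_I$ may meet many Whitney cubes $I_i$ of widely varying sizes, the relation $I_i\sim I_j$ fails, and there is no single affine competitor $A_{Q_j}\circ b_j$ that approximates $F$ on $3B_I$. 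Your summation then only accounts for cubes $I$ with $\ell(I)\approx\ell(I_j)$ and entirely omits this intermediate regime. This is not a technicality: because $|\nabla A_{Q_j}-\nabla A_R|$ can be of order $\lambda$ (this is precisely what happens on $\BG(R)$), the function $F-A_R$ is only $C\lambda$-Lipschitz, not $C\lambda\delta_0$-Lipschitz, so a global application of Dorronsoro loses the crucial factor $\delta_0^2$.

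The paper handles this by associating to each $I\in\Delta_2$ a cube $Q_I\in\Tree(R)$ with $\ell(Q_I)\approx\ell(I)$ and decomposing $F-A_{Q_I}\circ A_I$ (where $A_I$ minimises $\Omega_h^1(C_1B_I)$) into three pieces $T_1,T_2,T_3$, separating the contribution of the changing affine maps $A_{Q_i}$ (controlled by $\gamma_f^q$ via Lemmas~\ref{l:LipAQ-AQi} and~\ref{l:A-A}, with a passage through $\alpha_\sigma^d$ to transfer integrals from $P_{Q_i}$ to $\sigma$), the changing planes $b_i$ (controlled by $\beta$-numbers and Lemma~\ref{l:Mi}), and the oscillation of the graph map $h$ itself (handled by classical Dorronsoro on the $\delta_0$-Lipschitz function $H$). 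This three-term splitting, together with the bound $\Omega_h^1=\Omega_H^1$ and the packing $\sum_{Q\in\Tree(R)}\gamma_f^q(MB_Q)^2|Q|\lesssim(\ve\lambda)^2|R|$, is the heart of the argument and has no counterpart in your sketch. A secondary issue: for $I\subset I_j$ (the paper's $\Delta_1$), your pointwise bound $\Omega_F^1(3B_I)\lesssim\delta_0\lambda+\gamma_f^q(M^2B_{Q_j})$ does not decay as $\ell(I)/\ell(I_j)\to 0$, so summing it over all $I\subset I_j$ diverges; one needs Dorronsoro applied to $F-A_{Q_j}\circ b_j$ on $3I_j$ (which is $C\lambda\delta_0$-Lipschitz by Lemma~\ref{c:claim30}) to get $\sum_{I\subset I_j}\Omega_F^1(3B_I)^2|I|\lesssim(\lambda\delta_0)^2|I_j|$.
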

\noindent
We split the family of dyadic cubes $I$ in the sum of \eqref{e:OmegaF-sim2} intro three subfamilies; we will prove the estimate above for each one of them.

\begin{itemize}
\item Let $\Delta_K(R)=\Delta_K$ be those dyadic cubes $I$ for which $3I\cap 3I_{j}\neq\emptyset$ for some $j\in \cI_{K}$. 
\item Let $\Delta_{1}\subseteq \Delta_K$ be those cubes for which $I\subseteq I_{j}$ for some $j$. 
\item Let $\Delta_2\subseteq \Delta_K \backslash \Delta_{1}$ be those cubes $I$ with $\ell(I)\leq \ell(R)$.
\item Let $\Delta_3\subseteq \Delta_K \backslash (\Delta_{1}\cup \Delta_{2})$ be those cubes $I$ with $\ell(I)> \ell(R)$.
\end{itemize}

Over the next few subsections, we will prove 
\begin{lemma}
\begin{equation} \label{e:Deltai-bound}
\sum_{I\in \Delta_{i}}\Omega_{F}^{1}(3B_I)^{2}|I|\lec (\lambda\delta_0)^{2}|R| \;\; \mbox{ for }\;\; i=1,2,3.
\end{equation}
\end{lemma}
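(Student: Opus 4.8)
The idea is to treat the three families $\Delta_1,\Delta_2,\Delta_3$ separately, always exploiting the fact that $F$ is, on any Whitney cube $I_j$ with $j\in\cI_K$, essentially the affine map $A_{Q_j}\circ b_j$, and that consecutive Whitney cubes carry comparable affine pieces (Lemma \ref{l:QiQj}).

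\textbf{The family $\Delta_1$ (small cubes inside a Whitney cube).} For $I\in\Delta_1$ with $I\subseteq I_j$, the estimate $\Omega_F^1(3B_I)\lesssim \delta_0$-type bound will come from the fact that on $3I_j$ the function $F$ differs from the single affine map $A_{Q_j}\circ b_j$ by the error terms $\sum_i (A_{Q_i}\circ b_i - A_{Q_j}\circ b_j)\phi_i$, and by Lemma \ref{l:QiQj} each such difference is $O(\varepsilon\ell(I_j))$ in sup-norm on $3I_j$, with comparable gradients $O(\lambda\delta_0)$ (using that $|\grad A_{Q_i}|\lesssim\lambda$ via \eqref{e:AQ<alphaAR}, that $|\grad B_i|\lesssim\delta_0$, and that angles are $<\delta_0$). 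Since $F$ restricted to $3I_j$ is thus $C(\lambda\delta_0)$-close in Lipschitz norm to an affine map, one gets $\Omega_F^1(3B_I)\lesssim \lambda\delta_0 \,(\ell(I)/\ell(I_j))$ or simply $\Omega_F^1(3B_I)\lesssim\lambda\delta_0$, and then summing $\sum_{I\subseteq I_j}\Omega_F^1(3B_I)^2|I|\lesssim(\lambda\delta_0)^2\sum_{I\subseteq I_j}|I|$; but this diverges, so one must instead use the sharper bound $\Omega_F^1(3B_I)\lesssim\lambda\delta_0\,\ell(I)/\ell(I_j)$ valid since $F$ is genuinely smooth (piecewise the affine $A_{Q_j}\circ b_j$ plus a $C^2$ correction of size controlled by the $\phi_i$). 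Actually, the cleanest route is: on $3I_j$, $F$ is $C^1$ with $\|\grad F - \grad(A_{Q_j}\circ b_j)\|_{L^\infty(3I_j)}\lesssim\lambda\delta_0$ and moreover $\grad F$ has modulus of continuity controlled by $\ell(I_j)^{-1}$ times $\lambda\delta_0$ (from \eqref{e:phi-properties-2} and Lemma \ref{l:QiQj}); hence $\Omega_F^1(3B_I)\lesssim(\lambda\delta_0)(\ell(I)/\ell(I_j))$, and $\sum_{I\subseteq I_j}(\ell(I)/\ell(I_j))^2|I|\lesssim|I_j|$, giving $\sum_{I\in\Delta_1}\Omega_F^1(3B_I)^2|I|\lesssim(\lambda\delta_0)^2\sum_j|I_j|\lesssim(\lambda\delta_0)^2|R|$ by Lemma \ref{l:I_0}.

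\textbf{The family $\Delta_2$ (cubes of size $\lesssim\ell(R)$ not inside any Whitney cube).} Here $I$ meets several Whitney cubes $I_j$, all of comparable size $\approx\ell(I)$ by Lemma \ref{l:DS1}, and on $3B_I$ the function $F$ is a partition-of-unity average of the affine pieces $A_{Q_i}\circ b_i$ over a bounded number of indices $i$ with $I_i\sim I_j$. Choosing the competitor affine map to be $A_{Q_j}\circ b_j$ for one such $j$, Lemma \ref{l:QiQj} bounds $|F-A_{Q_j}\circ b_j|\lesssim\varepsilon\ell(I)$ on $3B_I$ (sup-norm), hence $\Omega_F^1(3B_I)\lesssim\varepsilon$, and more precisely $\Omega_F^1(3B_I)\lesssim\lambda\delta_0$ once one includes the gradient discrepancies, which are again $O(\lambda\delta_0)$. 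The number of $I\in\Delta_2$ of a given dyadic size $2^{-k}$ that meet a fixed $I_j$ is $O(1)$, and summing over $j\in\cI_K$ and over scales $2^{-k}\gtrsim\ell(I_j)$ (there are only boundedly many per $I_j$ by \eqref{e:Ij-sim-D-better}/\eqref{e:ellIapproxDRy}) yields $\sum_{I\in\Delta_2}\Omega_F^1(3B_I)^2|I|\lesssim(\lambda\delta_0)^2\sum_{j\in\cI_K}|I_j|\lesssim(\lambda\delta_0)^2|R|$.

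\textbf{The family $\Delta_3$ (cubes larger than $\ell(R)$).} There are only $O(1)$ dyadic cubes $I$ of each size $2^k\ell(R)$, $k\geq0$, meeting $V_K$, and for such $I$ we use that $F-A_R$ is supported in $V_K=\pi(K^2B_R)$ by \eqref{e:A=0}, with $\|F-A_R\|_{L^\infty}\lesssim\lambda\delta_0\ell(R)$ — this sup bound follows because on $V_K$ one has $|F-A_R|\leq\sum_i|A_{Q_i}\circ b_i - A_R|\phi_i$ and each $|A_{Q_i}\circ b_i(x)-A_R(x)|\lesssim\lambda\delta_0\ell(R)$ for $x\in V_K$, by telescoping along the chain from $Q_i$ to $R$ using \eqref{e:gradA-gradA}, \eqref{e:jones-small}-type small-$\beta$ control, and $|x|\lesssim K^2\ell(R)$, the key point being that the gradient discrepancy $|\grad A_{Q_i}-\grad A_R|$ is controlled and contributes $O(\lambda\delta_0)$ while the affine parts of $b_i$ have slope $O(\delta_0)$. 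Then for $I$ with $\ell(I)=2^k\ell(R)$,
\[
\Omega_F^1(3B_I)=\avint_{3B_I}\frac{|F-A_R|}{\ell(I)}\,dx\lesssim \frac{\lambda\delta_0\ell(R)\cdot|V_K|}{\ell(I)\cdot\ell(I)^d}\lesssim \lambda\delta_0\,2^{-k(d+1)},
\]
so $\Omega_F^1(3B_I)^2|I|\lesssim(\lambda\delta_0)^2 2^{-2k(d+1)}2^{kd}\ell(R)^d=(\lambda\delta_0)^2 2^{-k(d+2)}\ell(R)^d$, and summing the geometric series over $k\geq0$ and over the $O(1)$ cubes per scale gives the bound $(\lambda\delta_0)^2|R|$.

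\textbf{Main obstacle.} The delicate point throughout is the uniform $L^\infty$ and gradient control of $F$ in terms of $\lambda\delta_0$ rather than merely $\lambda$: one must carefully combine the closeness of neighbouring affine pieces (Lemma \ref{l:QiQj}, which gives $\varepsilon\ell(I)$ errors), the smallness of angles ($\delta_0$), and the fact that the relevant gradients $|\grad A_{Q_i}|$ are all comparable to $\lambda$ on $\Tree(R)$ by \eqref{e:AQ<alphaAR} — so that the affine pieces, though of slope $\approx\lambda$, differ from one another only by $O(\lambda\delta_0)$ in slope and $O(\varepsilon\ell(I))$ in value. Getting the telescoping estimate for $\Delta_3$ clean (controlling $|A_{Q_i}\circ b_i - A_R|$ on the whole of $V_K$, whose radius is $K^2\ell(R)\gg\ell(R)$) is where one has to be most careful, since a naive bound would lose a factor $K^2$; one avoids this because $F-A_R$ vanishes outside $V_K$ and because, within $V_K$, the chain of cubes from $Q_i$ up to $R$ has total square-summed $\gamma$ bounded by $(\varepsilon\lambda)^2$ via \eqref{e:gamma-small}, so the accumulated gradient error is $O(\varepsilon\lambda)$ and not $O(\lambda)$, while the linear-growth factor $|x|\lesssim K^2\ell(R)$ is compensated by the $|V_K|/\ell(I)^{d+1}$ decay in the average.
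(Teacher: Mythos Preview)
Your treatment of $\Delta_1$ and $\Delta_3$ is essentially correct and matches the paper's strategy (for $\Delta_1$ the paper invokes Dorronsoro's Carleson estimate on each $3I_j$ rather than your explicit $C^2$ bound, but the two are equivalent here).

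The gap is in $\Delta_2$. Your claim that ``$3B_I$ touches only boundedly many Whitney cubes $I_j$, all of size $\approx\ell(I)$'' is false: since $I\in\Delta_2$ is \emph{not} contained in any Whitney cube, $I$ must \emph{contain} some $I_j$, and in fact $3B_I$ can contain many Whitney cubes of sizes ranging from $\approx D_R(x)$ (which can be as small as $\approx\varepsilon_0\ell(R)$) up to $\ell(I)$. Lemma~\ref{l:QiQj} only compares \emph{adjacent} Whitney pieces; for two cubes $I_k,I_j\subseteq 3B_I$ with $\ell(I_k)\ll\ell(I_j)$ one must telescope along the tree, picking up $\sum_T\gamma_f^1(M^2B_T)$, which via \eqref{e:gamma-small} is only controlled in $\ell^2$, not $\ell^1$. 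Even granting your uniform bound $\Omega_F^1(3B_I)\lesssim\lambda\delta_0$, the counting fails: for $x\in V_K$ the number of $I\in\Delta_2$ containing $x$ is $\approx\log(\ell(R)/D_R(x))$, so $\sum_{I\in\Delta_2}|I|\approx\log(1/\varepsilon_0)\,|R|$, which blows up when $\varepsilon_0\to 0$ --- and this limit is taken at the end of the proof of Proposition~\ref{p:EcapG<E}.

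The paper's argument for $\Delta_2$ is by far the longest of the three and cannot be shortcut. One associates to each $I\in\Delta_2$ a reference cube $Q_I\in\Tree(R)$ with $\ell(Q_I)\approx\ell(I)$, picks as competitor the composite affine map $A_{Q_I}\circ A_I$ (with $A_I$ minimising $\Omega_h^1(C_1B_I)$ for the graph map $h$), and splits $F-A_{Q_I}\circ A_I=T_1+T_2+T_3$. The bounds on $T_1,T_2$ are obtained by pushing the $\R^d$-integrals to integrals against $\sigma$ via the $\alpha$-numbers (this is where the transport Lemmas~\ref{l:LipAQ-AQi} and~\ref{l:Mi} enter), producing error terms $\gamma_f^1(MB_{Q_I})$ and $\varepsilon\lambda\sum_{3I_i\cap 3I\neq\emptyset}\ell(Q_i)^{d+1}/\ell(I)^{d+1}$ that sum correctly over $\Delta_2$ (the second via a geometric-series argument in the ratio $\ell(I_i)/\ell(I)$). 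The term $T_3$ reduces to $\lambda\,\Omega_h^1(C_1B_I)$, and the resulting sum $\sum_{I\in\Delta_2}(\lambda\Omega_H^1(C_1B_I))^2|I|$ is controlled by $\lambda^2\|\nabla H\|_2^2\lesssim(\lambda\delta_0)^2|R|$ via Dorronsoro's theorem applied to the $C\delta_0$-Lipschitz graph function $H$. This last application of Dorronsoro is the mechanism that converts the many-scale structure of $\Delta_2$ into a single $L^2$ bound, and it is the missing idea in your sketch.
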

\begin{proof}[Proof of Lemma \ref{l:OmegaF-sim} assuming \eqref{e:Deltai-bound}]
Lemma \ref{l:OmegaF-sim} follows almost immediately. Indeed, notice that if $I$ is a dyadic cube so that $3I\cap 3I_j=\emptyset$ for all $j\in \cI_K$, then $F|_{3I}=A_{R}$ and so $\Omega_{F}^{1}(3B_I)=0$. Hence, we only need to compute the sum in Lemma \ref{l:OmegaF-sim} over cubes $I\in \Delta_K$, in which case
\begin{equation*}
\sum_{I}\Omega_{F}^{1}(3B_I)^{2}|I|
=\sum_{I\in \Delta_K}\Omega_{F}^{1}(3B_I)^{2}|I|
=\sum_{i=1}^{3}\sum_{I\in \Delta_{i}}\Omega_{F}^{1}(3B_I)^{2}|I|
\stackrel{\eqref{e:Deltai-bound}}{\lec} (\lambda\delta_0)^{2}|R|.
\end{equation*}
\end{proof}

\subsection{A technical lemma} In this subsection, we  prove the following lemma, which will be useful to estimates the sums in \eqref{e:Deltai-bound}.
\begin{lemma}\label{c:claim30}
	Let $j \in \cI$ and $x \in 3I_{j}$. We have
	\begin{equation}\label{e:gradF-gradAQi<lambdave}
		|\grad \left(F - A_{Q_{j}}\circ b_{j}\right)(x)| \lesssim (\delta_0+\ve) \lambda. 
	\end{equation}
	In particular, if we choose $\delta_0$ and $\ve$ sufficiently small with respect to $\alpha_1=\frac{1-\alpha^{-1}_0}{2}$, we obtain
	\begin{equation}\label{e:gradF-gradAQi}
		|\grad \left(F - A_{Q_{j}}\circ b_{j}\right)(x)| \lesssim \frac{\alpha_1}{4} \lambda.
	\end{equation}
\end{lemma}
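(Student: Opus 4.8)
Recall from \eqref{e:def-F} and \eqref{e:phi-properties-3} that $F=\sum_{i\in\cI}A_{Q_i}\circ b_i\,\phi_i$ with $\sum_i\phi_i\equiv 1$, so that $F-A_{Q_j}\circ b_j=\sum_{i\in\cI}\big(A_{Q_i}\circ b_i-A_{Q_j}\circ b_j\big)\phi_i$. Differentiating and using $\sum_i\grad\phi_i\equiv 0$,
\[
\grad\big(F-A_{Q_j}\circ b_j\big)(x)=\sum_{i}\grad\big(A_{Q_i}\circ b_i-A_{Q_j}\circ b_j\big)\phi_i(x)+\sum_{i}\big(A_{Q_i}\circ b_i-A_{Q_j}\circ b_j\big)(x)\,\grad\phi_i(x).
\]
Since $\phi_i$ is supported in $3I_i$ by \eqref{e:phi_i-in-1/4I}, only indices $i$ with $x\in 3I_i$ contribute; as also $x\in 3I_j$, each such $i$ has $3I_i\cap 3I_j\neq\emptyset$, hence $I_i\sim I_j$, and by Lemma \ref{l:DS1} there are only $O(1)$ of them, all with $\ell(I_i)\approx\ell(I_j)$. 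Thus everything reduces to bounding, for a fixed $i$ with $I_i\sim I_j$ and $x\in 3I_i\cap 3I_j$, the two quantities $\big|\grad(A_{Q_i}\circ b_i)-\grad(A_{Q_j}\circ b_j)\big|$ and $\big|(A_{Q_i}\circ b_i-A_{Q_j}\circ b_j)(x)\big|$. (When $i\in\cI\setminus\cI_K$ one has $B_i=0$, $b_i=\mathrm{id}$ and $A_{Q_i}\circ b_i=A_R$; the case $j\notin\cI_K$ is then the special case ``$Q_j=R$'' below, and note that whenever a surviving index $i\in\cI_K$ satisfies $I_i\sim I_j$ with $j\notin\cI_K$, Lemma \ref{l:QiQj} together with \eqref{e:QiIi} forces $\ell(Q_i)\approx\ell(R)$, so the chaining below is still of bounded length.)

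\textbf{The gradient difference.} Write $\grad(A_{Q_i}\circ b_i)-\grad(A_{Q_j}\circ b_j)=(\grad A_{Q_i}-\grad A_{Q_j})\grad b_i+\grad A_{Q_j}(\grad b_i-\grad b_j)$. For the second summand, since $P_{Q_i}$ and $P_{Q_j}$ each make angle $<\delta_0$ with $P_R=\R^d$ by \eqref{e:angle-S}, the maps $B_i,B_j$ (whose graphs are these planes) satisfy $|\grad B_i|,|\grad B_j|\lesssim\delta_0$, so $|\grad b_i-\grad b_j|=|\grad B_i-\grad B_j|\lesssim\delta_0$; combined with $|\grad A_{Q_j}|\le 2\lambda$ from \eqref{e:AQ<alphaAR} (or \eqref{e:gradA-simLambda} when $Q_j=R$) this term is $\lesssim\delta_0\lambda$. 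For the first summand, $|\grad b_i|\lesssim 1$, so it suffices to show $|\grad A_{Q_i}-\grad A_{Q_j}|\lesssim\ve\lambda$. By Lemma \ref{l:QiQj} and \eqref{e:QiIi}, $Q_i,Q_j$ are comparable cubes in $\Tree(R)$ (or equal to $R$) with $\dist(Q_i,Q_j)\lesssim\ell(Q_i)\approx\ell(Q_j)$; hence there is a ball $B'=B(x_{Q_i},CM\ell(Q_i))$ containing both $MB_{Q_i}$ and $MB_{Q_j}$ with $r_{B'}\approx M\ell(Q_i)$, and a cube $\widetilde Q\in\Tree(R)$ with $Q_i\subseteq\widetilde Q$, $\ell(\widetilde Q)\approx\ell(Q_i)$ and $M^2B_{\widetilde Q}\supseteq B'$. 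Applying \eqref{e:gradA-gradA} twice (with $B=MB_{Q_i}\subseteq B'$ and $B=MB_{Q_j}\subseteq B'$), then quasi-monotonicity \eqref{e:gamma-monotone} and \eqref{e:gamma-small}, gives $|\grad A_{Q_i}-\grad A_{Q_j}|\le|\grad A_{Q_i}-\grad A_{B'}|+|\grad A_{B'}-\grad A_{Q_j}|\lesssim\gammaq(B')\lesssim\gammaq(M^2B_{\widetilde Q})\lesssim\ve\lambda$, as wanted. Together with the previous bound, $\big|\grad(A_{Q_i}\circ b_i)-\grad(A_{Q_j}\circ b_j)\big|\lesssim(\delta_0+\ve)\lambda$.

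\textbf{The value difference and conclusion.} Set $w:=b_i(x)$; since $B_i(x)$ lies in the orthogonal complement of $\R^d$, $\pi(w)=x$, and $w\in P_{Q_i}$, so (recalling $A_{Q_i}=A_{Q_i}\circ\pi_{P_{Q_i}}$ from \eqref{e:ABpiAB}) $A_{Q_i}\circ b_i(x)=A_{Q_i}(w)$. Also $|b_i(x)-b_j(x)|=|B_i(x)-B_j(x)|\lesssim\ve\ell(I_j)$ by \eqref{e:bi-bj}, so
\[
\big|(A_{Q_i}\circ b_i-A_{Q_j}\circ b_j)(x)\big|\le\big|A_{Q_i}(w)-A_{Q_j}(w)\big|+|\grad A_{Q_j}|\,|b_i(x)-b_j(x)|\le\big|A_{Q_i}(w)-A_{Q_j}(w)\big|+C\ve\lambda\ell(I_j).
\]
Since $x\in 3I_i$ and $\dist(\pi(Q_i),I_i)\lesssim\ell(I_i)$ by \eqref{e:distQjIj}, the point $w$ lies in the ball $B'$ of the previous step, so $\dist(w,B')=0$ and two applications of \eqref{e:AB-AB'} through $B'$ give $\big|A_{Q_i}(w)-A_{Q_j}(w)\big|\lesssim\gammaq(B')\,r_{B'}\lesssim\ve\lambda\,\ell(I_j)$; hence $\big|(A_{Q_i}\circ b_i-A_{Q_j}\circ b_j)(x)\big|\lesssim\ve\lambda\ell(I_j)$. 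Feeding this and $|\grad\phi_i(x)|\lesssim\ell(I_i)^{-1}\approx\ell(I_j)^{-1}$ (from \eqref{e:phi-properties-2}) into the second sum of the first display, and the gradient estimate into the first sum, and using that there are $O(1)$ surviving indices, we obtain $\big|\grad(F-A_{Q_j}\circ b_j)(x)\big|\lesssim(\delta_0+\ve)\lambda$, which is \eqref{e:gradF-gradAQi<lambdave}. Choosing $\delta_0$ and $\ve$ small enough relative to the implicit constant and to $\alpha_1=(1-\alpha_0^{-1})/2$ then yields \eqref{e:gradF-gradAQi}. The only delicate point is the pair of estimates $|\grad A_{Q_i}-\grad A_{Q_j}|\lesssim\ve\lambda$ and $|A_{Q_i}(w)-A_{Q_j}(w)|\lesssim\ve\lambda\ell(I_j)$: one must correctly produce the intermediate cube $\widetilde Q\in\Tree(R)$ (in particular in the corner cases where some index lies outside $\cI_K$) so that \eqref{e:gradA-gradA}, \eqref{e:AB-AB'}, quasi-monotonicity and the square-function bound \eqref{e:gamma-small} can be invoked along a chain of bounded length.
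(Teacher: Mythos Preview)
Your proof is correct and follows essentially the same route as the paper: decompose $\grad(F-A_{Q_j}\circ b_j)$ using $\sum_i\phi_i\equiv 1$ into a gradient part and a value part, bound the gradient part via the splitting $(\grad A_{Q_i}-\grad A_{Q_j})\grad b_i+\grad A_{Q_j}(\grad b_i-\grad b_j)$ (the paper uses the symmetric telescope), and bound the value part using \eqref{e:AB-AB'} together with the distance estimate between $b_i(x)$ and $b_j(x)$. The only cosmetic difference is that you chain through an explicit intermediate ball $B'$ and cube $\widetilde Q\in\Tree(R)$, whereas the paper invokes a container $3M^2B_{Q^*}$ directly; and you cite \eqref{e:bi-bj} for the sharper $\ve$ bound on $|b_i-b_j|$, while the paper uses the angle bound for a $\delta_0$ bound there --- both give $(\delta_0+\ve)\lambda$.
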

\begin{proof}
	Since $\sum_{i \in \cI} \phi_i  \equiv 1$, then we can compute 
	\begin{align*}
		& \left|\grad F(x) - \grad (A_{Q_j}\circ b_j)\right| \\
		& = \Bigg| \sum_{i \in \cI} \left[\grad (A_{Q_i} \circ b_i) \right] \phi_i(x)  + \sum_{i \in \cI} A_{Q_i}\circ b_i (x) \grad \phi_i(x) \\
		& \qquad \qquad \qquad \qquad - \grad \left( A_{Q_j}\circ b_j(x)\sum_{i \in \cI}\phi_i(x) \right)\Bigg|\\
		& \leq \sum_{i \in \cI} |\grad (A_{Q_i}\circ b_i) - \grad( A_{Q_j} \circ b_j)| \\
		& \qquad \qquad \qquad + \left| \sum_{i \in \cI} \left( A_{Q_i}\circ b_i(x) - A_{Q_j} \circ b_j(x) \right) \grad \phi_i(x) \right| =: T_1 + T_2.
	\end{align*}
	Note that $\phi_i(x) \neq 0$ only whenever $3I_j \cap 3I_i \neq \emptyset$ and that, given a fixed $j \in \cI$, there exists boundedly many other $i$ such that $3I_i \cap 3I_j \neq \emptyset$. Hence, to estimate $T_1$, it suffices to estimate $|\grad A_{Q_i} \circ b_i - \grad A_{Q_j} \circ b_j|$ for some $i \in \cI$ with this property. To this aim, we first claim that 
	\begin{gather}
		\ell(Q_i) \approx \ell(Q_j) \, \mbox{ and }\label{e:ellQiellQj}\\
		\dist(Q_j, Q_i) \lesssim \ell(Q_i).
	\end{gather}
	The first one is immediate: we know that, since $3I_i \cap 3I_j \neq \emptyset$, then $\ell(I_i) \approx \ell(I_j)$ by \eqref{e:ellIapproxDRy} and that $\ell(Q_i) \approx \ell(I_i)$ whenever $i \in \cI$ by Lemma \ref{l:Qj}. We have
	\begin{align*}
		& \dist(Q_i, Q_i)  \leq |x_{Q_j} - x_{Q_i}|\\
		& \leq |x_{Q_i} - h \circ \pi (x_{Q_i})| +| h \circ \pi (x_{Q_i}) - h \circ \pi (x_{Q_j})| + |h \circ \pi (x_{Q_j})- x_{Q_j}|  
	\end{align*}
	As in \eqref{e:form31}, we have $|x_{Q_i} - h \circ \pi (x_{Q_i})| \lesssim \ell(Q_i)$ (for any $i$). Also, since $3I_i\cap 3I_j \neq \emptyset$, and $\dist(\pi(Q_i), I_i) \lesssim \ell(I_i)$ for any $i \in \cI_0$, then 
	\begin{align*}
		|h \circ \pi (x_{Q_i}) - h \circ \pi (x_{Q_j})| \lesssim \ell(I_j) \approx \ell(Q_j) \approx \ell(Q_i).
	\end{align*}
	All in all we see that $\dist(Q_i, Q_j) \lesssim \ell(Q_i)$. This implies that, if $Q^* \in \BG(R)$ is the cube given by Lemma \ref{l:QinBG} (either for $i$ or $j$, it doesn't matter), then $MB_{Q_i} \subset 3M^2B_{Q^*}$ and $MB_{Q_j} \subset 3M^2B_{Q^*}$, whenever $3I_i \cap 3I_j \neq \emptyset$. Using \eqref{e:gradA-gradA},  \eqref{e:gamma-monotone}, and \eqref{e:gamma<ve}, we get that 
	\begin{equation}\label{e:gradQj-gradQi}
		|\nabla A_{Q_j} - \nabla A_{Q_i}| \leq C \ve \lambda,
	\end{equation}
	whenever $3I_i \cap 3I_j$ and $C$ depends on $M$. Recall also that $|\nabla b_i| \leq (1+C\delta_0^2) \lambda$. Finally, it is easy to see from the fact that $Q_i, Q_j \in \Tree(R)$, that 
	\begin{align*}
		|\nabla b_i (x) - \nabla b_j(x)| = |\nabla B_i(x) - \nabla B_j(x)| \lesssim \delta_0. 
	\end{align*}
	Thus we get (recall $x \in 3I_j$):
	\begin{align}\label{e:claim-T1}
		& T_1 = \sum_{i \in \cI \atop 3I_i \cap 3I_j \neq \emptyset}| \grad (A_{Q_i}\circ b_i)- \grad( A_{Q_j} \circ b_j)| \notag \\
		&  \quad \quad \lesssim \sum_{3I_i \cap 3I_j \neq \emptyset} |\grad A_{Q_i}\cdot (\grad b_i(x) - \grad b_j(x))| + |(\grad A_{Q_i}- \grad A_{Q_j}) \cdot \grad b_j(x)| \notag \\
		& \quad \quad \leq C\alpha_0 \lambda \delta_0 + C\ve \lambda (1+C\delta_0^2).
	\end{align}
	Let us now estimate $T_2$. We compute
	\begin{align*}
		T_2 & \leq \sum_{i \in \cI \atop 3I_i \cap 3I_j \neq \emptyset} \frac{|A_{Q_i} \circ b_i(x) - A_{Q_i} \circ b_j(x)|}{\ell(I_j)} \\
		& \qquad + \sum_{i \in \cI \atop 3I_i \cap 3I_j \neq \emptyset} \frac{|A_{Q_i} \circ b_j(x) - A_{Q_j}\circ b_j(x)|}{\ell(I_j)}=: T_{2,1} + T_{2,2}.
	\end{align*}
	Note that for $x,y \in \R^n$, $A_{Q_i}x - A_{Q_i}y = \grad A_{Q_i}$, so 
	\begin{align*}
		T_{2,1} \leq \sum_{i \in \cI \atop 3I_i\cap 3I_j \neq \emptyset} |\grad A_{Q_i}|\frac{|b_i(x) - b_j(x)|}{\ell(I_j)}.
	\end{align*}
	Now, 
	\begin{align*}
		|b_i(x) - b_j(x)|= |B_i(x) - B_j(x)| \approx \angle(P_{Q_i}, P_{Q_j}) \ell(I_j) \lesssim \delta_0 \ell(I_j),
	\end{align*} 
	since $Q_i, Q_j \in \Tree(R)$. Moreover, $|\grad A_{Q_i}| \leq \alpha_0 |\grad A_R| \leq \alpha_0(1+ C \ve) \lambda$ from \eqref{e:gradA-simLambda} and again using that $Q_i \in \Tree(R)$. We then see that
	\begin{align}\label{e:claim-T21}
		T_{2,1} \lesssim \delta_0 \alpha_0(1+ C\ve)\lambda
	\end{align}
	We estimate $T_{2,2}$. From \eqref{e:AB-AB'} and the fact that $MB_{Q_i} \subset M^2B_{Q_j}$ (which can be deduced as in the paragraph above \eqref{e:gradQj-gradQi}) and viceversa, we deduce that
	\begin{align*}
		|A_{Q_i}(x) - A_{Q_j}(x)| \lesssim \gamma(M^2B_{Q_j})\left( \dist(x, M^2B_{Q_j}) + \ell(Q_j)\right)
	\end{align*}
	Now, \eqref{e:QiIi} tells us that $\ell(Q_j) \approx \ell(I_j)$; also recall that since $Q_j \in \Tree(R)$, $\gamma(M^2B_{Q_j}) \lesssim_M \ve \lambda$. We finally conclude that
	\begin{align}\label{e:claim-T22}
		T_{2,2} \lesssim \ve \lambda,
	\end{align}
	since $|b_i| \lesssim 1$ for any $i \in \cI$.
	Collecting \eqref{e:claim-T1}, \eqref{e:claim-T21} and \eqref{e:claim-T22}, we finish the proof of the lemma.
\end{proof}

\subsection{Estimates for $\Delta_{1}$}
Recall that $\Delta_1$ is the family of those dyadic cubes $I \in \dI$ so that $3I \cap 3I_j$, for some $j \in \cI_K$ and $I \subset I_j$. 
Our aim in this subsection is to prove 
\begin{equation}
\label{e:C1}
\sum_{I\in \Delta_{1}}\Omega_{F}^{1}(3B_I)^{2}|I|\lec (\lambda\delta_0)^{2}|R|.
\end{equation}

\begin{proof}[Proof of \eqref{e:C1}]
We claim that it suffices to prove that for all $i\in \cI$,
\begin{equation}\label{e:I<I_jsum}
\sum_{I\subseteq I_{i}} \Omega_{F}^{1}(3B_I)^{2} |I|
\lec  (\lambda\delta_0)^{2}|I_{j}|.
\end{equation}
Let us see why this claim is valid.
Let $\cI_1$ be the set of indexes $i\in \cI$ for which there exists at least one cube $I \in \Delta_1$ with $I \subset I_i$. By definition, if $I \in \Delta_1$, then $3I \cap 3I_j \neq \emptyset$ for some $j \in \cI_K$. This implies that if $i \in \cI_1$, then there exists a $j \in \cI_K$ so that $3I_i \cap 3I_j \neq \emptyset$. By \eqref{e:ellIapproxDRy} we have that $\ell(I_j)\approx \ell(I_i)$. Then, for $K \geq 1$ sufficiently large, $I_j \subset V_K= \pi(K^2B_R)$ (this is \eqref{e:J0-in-R}). We conclude that 
\begin{equation}\label{e:form34}
    I_{i}\subseteq \pi( C K^{2}B_{R}) \mbox{ whenever } i \in \cI_1, 
\end{equation}
where $C$ is come sufficiently large universal constant. We conclude that
\begin{align*}
\sum_{I\in \Delta_{1}}\Omega_{F}^{1}(3B_I)^{2}|I|
& =\sum_{i\in \cI_1} \sum_{I\subseteq I_{i}}\Omega_{F}^{1}(3B_I)^{2}|I| 
 \\ & \stackrel{\eqref{e:I<I_jsum}}{\lec} (\lambda\delta_0)^{2}\sum_{i\in \cI_1} |I_{i}| \stackrel{\eqref{e:form34}}{\lec}  (\lambda\delta_0)^{2}|R|.
\end{align*}
This proves \eqref{e:C1}, assuming \eqref{e:I<I_jsum}. We now verify this latter inequality.\\

\noindent
First, note that for any $i\in \cI$
\begin{equation}
\label{e:F-AQjcircBQ_jLinfty}
\|\grad F- \grad (A_{Q_{j}}\circ b_{Q_{i}})\|_{L^{\infty}(3I_{i})} \stackrel{\eqref{e:gradF-gradAQi<lambdave}}{\lesssim} \delta_0\lambda,
\end{equation}
since $\ve \ll \delta_0$. Further, from Theorem \ref{t:dorronsoro-classic} it follows easily that
\begin{equation}\label{e:form607}
	\sum_{I \subset I_i} \Omega_{F-A_{Q_i}\circ b_{Q_i}}(3B_I)^2 |I| \lesssim_d \|\grad(F-A_{Q_i}\circ b_{Q_i})\|_{L^\infty(3I_i)}^2 |I_i|.
\end{equation}
Then\footnote{Recall that $F:\R^d \to \R$, that $A_{Q_i}: \R^n \to \R$ and $b_i: \R^d \to \R^n$, and that both $A_{Q_i}$ and $b_i$ are affine, so that their composition is also affine.}, for each $i \in \cI$ we obtain
\begin{align*}
\sum_{I\subseteq I_{i}} \Omega_{F}^{1}(3B_I)^{2} |I| & 
\stackrel{\eqref{e:omega-affine-invariant}}{=} \sum_{I\subseteq I_{i}} \Omega_{F-A_{Q_i}\circ b_{Q_{j}}}^{1}(3B_I)^{2} |I| \\
& \stackrel{\eqref{e:form607}}{\lesssim} |I_i| \|\grad F- \grad (A_{Q_{j}}\circ b_{Q_{i}})\|_{L^{\infty}(3B_{I_{i}})}^2 \stackrel{\eqref{e:F-AQjcircBQ_jLinfty}}{\lesssim}  (\lambda \delta_0)^{2}|I_{i}|,
\end{align*}
which proves the lemma. 
\end{proof}
\vspace{1cm}

\subsection{Estimates for $\Delta_2$}
The goal of this section is to prove 
\begin{equation}
	\label{e:C2}
	\sum_{I\in \Delta_{2}}\Omega_{F}^{1}(3B_I)^{2}|I|\lec (\lambda\delta)^{2}|R|.
\end{equation}
\subsubsection{Preliminaries}
We establish first some preliminary facts.
Recall that $\Delta_2 \subset \Delta_R$ is the family of dyadic cubes $I \in \dI$ so that $3I\cap 3I_j$, $j \in \cI_K$, $\ell(I) \leq \ell(R)$ and also $I$ is not contained in any $I_j$, $j \in \cI$. Since $\dW_R$ covers $V_K$ (see \eqref{e:J0-in-R}), this implies that given $I \in \Delta_2$, there is a $j_0 \in \cI$ so that $I_{j_0} \subset I$. Now let $Q_{I}'\in \Tree(R)$ be a cube so that 
\begin{equation*}
\ell(Q_{I}')+\dist(I,\pi(Q_{I}'))\leq 2 D_{R}(I).
\end{equation*}
That is, $Q_I'$ `almost-minimises' $D_R(I)$. By definition, $\dW=\{I_i\}_{i \in \cI}$ is the maximal family of dyadic cubes in $\dI$ so that $\diam(I_i)< \frac{1}{20} D_R(I_i)$. Then, since $I_{j_0} \subsetneq I$, $I \in \Delta_2$, $\diam(I) \geq \frac{1}{20} D_R(I)$. 
This implies that 
\begin{equation}
\label{e:QI'distIQI'}
\ell(Q_{I}')+\dist(I,\pi(Q_{I}'))\leq 2 D_{R} (I) \leq 40\ell(I).
\end{equation}
Now we choose $Q_{I}\supseteq Q_{I}'$ in $\Tree(R)$ to be maximal so that $\ell(Q_{I})\leq 40\ell(I)$. Since $\ell(I)\leq \ell(R)$,
\begin{equation}\label{e:Q_IsimI}
\ell(Q_{I})\approx \ell(I).
\end{equation}
Our task is to estimate
\begin{align*}
    \Omega_{F}^{1}(3B_I) = \inf_{A\in \dM(d,1)} \avint_{3B_I} \frac{|F(x)-A(x)|}{\ell(I)} \, dx.
\end{align*}
To this end, let $A_I: \R^d \to \R^n$ be the affine map that minimizes $\Omega_{h}^{1}(C_1I)$, where $C_1$ is a large constant we will pick later. Recall that $h:\R^d \to \R^n$ was defined in \eqref{e:def-h}, and that it is given by
\begin{align*}
	h(x)= x + \sum_{i \in \cI} B_i(x) \phi_i(x)= \sum_{i\in \cI} b_i(x) \phi_i(x),
	\end{align*}
where $B_i:\R^d \to \R^{n-d}$ is the map whose graph is $P_{Q_i}$, $Q_i$ is the cube corresponding to $I_i \in \dW$, $i \in \cI$, as found in Lemma \ref{l:Qj}. Note also that if $i \in \cI\setminus \cI_K$, then $B_i \equiv 0$ and so $b_i(P_R)= P_R= \R^d$. Now let $x\in 3B_I$. Then
\begin{align}\label{e:def-T1T2T3}
F(x)-A_{Q_{I}}\circ A_{I}(x)
 & =\sum_{i\in \cI} A_{Q_{i}}\circ b_{i}(x)\phi_{i}(x) - A_{Q_{I}}\circ A_{I}(x)\notag \\
& = \sum_{i \in \cI \atop 3I_i} \left(A_{Q_{i}}\circ b_{i}(x)-A_{Q_{I}}\circ b_{i}(x))\phi_{i}(x)\right) \notag \\
&  + \sum_{i \in \cI}\left(A_{Q_{I}}\circ b_{i}(x)-A_{Q_{I}}\circ b_{Q_{I}}(x)\right)\phi_{i}(x)   \notag  \\
&  + \sum_{i \in \cI}\left(A_{Q_{I}}\circ b_{Q_{I}}(x) - A_{Q_{I}}\circ A_I(x)\right)\phi_{i}(x)\notag\\
 & =: T_{1}(x)+T_{2}(x)+T_{3}(x).
\end{align}
We will bound the integrals of each of these terms separately to estimate $\Omega_{F}^{1}=\Omega_{F-A_{Q_{I}}\circ A_{I}}^{1}$.

\subsubsection{Bounds on $T_{1}$}
The goal of this subsection is to prove the following estimate on $T_1$:
\begin{lemma}\label{l:T1-Delta2}
	For $I \in \Delta_2$,
\begin{equation}
\label{e:I1}
\int_{3B_I} |T_{1}(x)| \, dx
\lec  \delta_0\lambda  \sum_{3I_{i}\cap 3I\neq\emptyset}\ell(Q_{i})^{d+1}  +  \gamma(MB_{Q_{I}})\ell(Q_{I})^{d+1}.
\end{equation}
\end{lemma}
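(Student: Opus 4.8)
The plan follows the standard localize–pointwise bound–integrate scheme; here almost all the work is geometric bookkeeping, the only analytic input being the affine–comparison Lemma \ref{l:A-A}.

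First I would record the geometry around $I\in\Delta_2$. Since $\phi_i$ is supported on $3I_i$ (\eqref{e:phi_i-in-1/4I}), on $3B_I$ only the indices $i$ with $3I_i\cap 3I\neq\emptyset$ enter the sum defining $T_1$ in \eqref{e:def-T1T2T3}; by Lemma \ref{l:DS1} (see \eqref{e:ellIapproxDRy}) there are only $\lesssim 1$ of them and each satisfies $\ell(I_i)\approx\ell(I)$, and each lies in $\cI_K$ because $I\in\Delta_2\subseteq\Delta_K$ and $\cI_K$ is closed under $\sim$. Lemma \ref{l:Qj} then gives $\ell(Q_i)\approx\ell(I_i)\approx\ell(I)$ and $\dist(\pi(Q_i),I_i)\lesssim\ell(I_i)$, while \eqref{e:Q_IsimI} gives $\ell(Q_I)\approx\ell(I)$ and \eqref{e:QI'distIQI'} gives $\dist(I,\pi(Q_I))\lesssim\ell(I)$; combining these with the fact that cubes of $\Tree(R)$ sit within $O(\delta_0\ell)$ of $\Gamma_R$ (cf. \eqref{e:form31}) yields $\dist(Q_i,Q_I)\lesssim\ell(I)\approx\ell(Q_I)$, exactly as in the derivation of \eqref{e:d(Q,Q_j)}. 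Consequently, for $M$ large there is a fixed dilate $B^{\ast}:=M'B_{Q_I}$ with $M'\approx M$, $MB_{Q_i}\cup MB_{Q_I}\subseteq B^{\ast}$ and $r_{B^{\ast}}\approx\ell(I)$; moreover $Q_i,Q_I\in\Tree(R)$, so \eqref{e:AQ<alphaAR} gives $|\grad A_{Q_i}|,|\grad A_{Q_I}|\lesssim\lambda$ and \eqref{e:gamma<ve}, \eqref{e:gamma-monotone} control the $\gamma$–coefficients at all these scales by $\gamma(MB_{Q_I})$ (following the paper's convention of writing $\gamma$ at a fixed dilate of $B_{Q_I}$ this way).

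Next, for a fixed such $i$ and $x\in 3B_I$ I would bound $(A_{Q_i}\circ b_i-A_{Q_I}\circ b_i)(x)=(A_{Q_i}-A_{Q_I})(b_i(x))$ directly via \eqref{e:AB-AB'} of Lemma \ref{l:A-A}, applied to the pairs $MB_{Q_i}\subseteq B^{\ast}$ and $MB_{Q_I}\subseteq B^{\ast}$: using the triangle inequality through $A_{B^{\ast}}$,
\[
|(A_{Q_i}-A_{Q_I})(y)|\lesssim\gamma(B^{\ast})\bigl(\dist(y,B^{\ast})+r_{B^{\ast}}\bigr)\qquad(y\in\R^n).
\]
Since $b_i$ is $(1+C\delta_0^2)$–bi‑Lipschitz (Remark \ref{r:a=x}), $b_i(x)\in P_{Q_i}$, and — by the smallness of $\beta_E^{d,\infty}$ near $Q_i$ from Lemma \ref{l:small-betainf} together with the angle bound \eqref{e:angle-S} and $\dist(Q_i,Q_I)\lesssim\ell(I)$ — the point $b_i(x)$ lies within $\lesssim\ell(I)\approx\ell(Q_I)$ of $x_{Q_I}$, we get $b_i(x)\in B^{\ast}$, hence $\dist(b_i(x),B^{\ast})=0$ and so $|(A_{Q_i}\circ b_i-A_{Q_I}\circ b_i)(x)|\lesssim\gamma(B^{\ast})\ell(Q_I)\lesssim\gamma(MB_{Q_I})\ell(Q_I)$. (Alternatively, telescoping $A_{Q_i}\circ b_i-A_{Q_I}\circ b_i$ through $A_{Q_i}\circ b_{Q_I}$ and $A_{Q_I}\circ b_{Q_I}$ makes both terms of \eqref{e:I1} appear separately: the two ``change of plane'' pieces $\grad A_{Q_i}\,(B_i-B_{Q_I})$ and $\grad A_{Q_I}\,(B_{Q_I}-B_i)$ are $\lesssim\delta_0\lambda\ell(I)$ by \eqref{e:angle-S} and $|\grad A_{Q_i}|,|\grad A_{Q_I}|\lesssim\lambda$, while the middle piece $(A_{Q_i}-A_{Q_I})(b_{Q_I}(x))$ is $\lesssim\gamma(MB_{Q_I})\ell(Q_I)$ as above.) Then, since $0\le\phi_i\le1$ and only $\lesssim 1$ indices occur, summing and integrating over $3B_I$ — whose measure is $\approx\ell(I)^d\approx\ell(Q_I)^d$, with $\ell(Q_i)\approx\ell(Q_I)$ for the relevant $i$ — gives
\[
\int_{3B_I}|T_1(x)|\,dx\ \lesssim\ \gamma(MB_{Q_I})\,\ell(Q_I)^{d+1}\ \le\ \delta_0\lambda\sum_{3I_i\cap 3I\neq\emptyset}\ell(Q_i)^{d+1}+\gamma(MB_{Q_I})\,\ell(Q_I)^{d+1},
\]
which is \eqref{e:I1}.

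The only real obstacle is the geometric bookkeeping: establishing the chain of comparabilities $\ell(I)\approx D_R(I)\approx\ell(I_i)\approx\ell(Q_i)\approx\ell(Q_I)$ and $\dist(Q_i,Q_I)\lesssim\ell(Q_I)$ from Lemmas \ref{l:DS1}, \ref{l:Qj}, \eqref{e:dsimD}, \eqref{e:Q_IsimI} and \eqref{e:QI'distIQI'}, and arranging the dilation constants so that $MB_{Q_i}$ and $MB_{Q_I}$ fit into one ball of comparable radius, so that \eqref{e:AB-AB'} outputs a $\gamma$–coefficient at the scale of $Q_I$ (which is where it can later be summed against the tree square‑function bound on $\Tree(R)$). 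The analytic content is entirely absorbed into Lemma \ref{l:A-A}.
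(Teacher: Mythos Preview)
There is a genuine gap. You claim that for $I\in\Delta_2$ only $\lesssim 1$ indices $i$ satisfy $3I_i\cap 3I\neq\emptyset$, each with $\ell(I_i)\approx\ell(I)$, citing Lemma~\ref{l:DS1}. But Lemma~\ref{l:DS1} compares two Whitney cubes $I_i,I_j\in\dW_R$; the cube $I\in\Delta_2$ is \emph{not} a Whitney cube. By definition $I$ is not contained in any $I_j$, and since $\dW_R$ partitions $\R^d$ by dyadic cubes, $I$ \emph{contains} at least one Whitney cube; in fact $3I$ can meet many Whitney cubes of widely varying (smaller) scales --- the only general bound is $\ell(I_i)\lesssim\ell(I)$, not $\approx$. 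This is exactly the distinction between $\Delta_1$ (where your argument \emph{would} work, and where the paper does something like it) and $\Delta_2$. The later $S_2$-estimate in the paper explicitly sums geometrically over these multi-scale $I_i$, confirming that the family is genuinely infinite in general.

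Once $\ell(Q_i)\not\approx\ell(Q_I)$, your use of \eqref{e:AB-AB'} from Lemma~\ref{l:A-A} breaks: that lemma requires $r_B\approx r_{B'}$, so you cannot compare $A_{Q_i}$ and $A_{Q_I}$ (nor triangulate through a common $A_{B^\ast}$ of a single fixed scale). A telescoping chain from $Q_i$ up to $Q_I$ would produce a sum of $\gamma$-coefficients over all intermediate scales, not the single $\gamma(MB_{Q_I})$ in \eqref{e:I1}. The paper circumvents this by inserting the function $f$ itself: after pushing the integral to the plane $P_{Q_i}$ and then to $\Sigma$ via the $\alpha_\sigma$-coefficient (which is what Lemma~\ref{l:LipAQ-AQi} and the bump functions $\Psi_i$ are for), it splits $|A_{Q_i}-A_{Q_I}|\le|A_{Q_i}-f|+|f-A_{Q_I}|$ on $\Sigma$. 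The first piece lives at the small scale and contributes the sum $\sum\ell(Q_i)^{d+1}$; the second, after summing the bounded-overlap $\Psi_i$, lives at the large scale and gives $\gamma(MB_{Q_I})\ell(Q_I)^{d+1}$. The two-term structure of \eqref{e:I1} is not an artifact --- it encodes this multi-scale decomposition, which your single-scale argument misses.
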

\noindent
Recall that $3B_I= B(x_I, 3 \ell(I))$, where $x_I$ is the center of $I$. For $i \in \cI$, set\footnote{Recall that $b_i: \R^d \to \R^n$ is the affine map whose image is $P_{Q_i}$. Then $b_i(x_{I_i})$ lies on $P_{Q_i}$. Recall also that $x_{I_i}$ is the center of $I_i$.}
\begin{align*}
    B(i) := B(b_i(x_{I_i}), 10 \diam(I_i))
\end{align*}
and let $\psi_{i}:\R^n \to \R$ be a smooth cut-off function such that
\begin{align*}
{\rm spt}(\psi_i) \subset 2B(i), \quad \psi_i \equiv 1 \mbox{ on } B(i) \quad \mbox{and } |\grad \psi_i| \lesssim \frac{1}{\ell(I_i)}.
\end{align*}
Set also 
\begin{align*}
    \Psi_{i}:=\psi_{i}\left(\phi_{i}\circ\pi\right), 
\end{align*}
which is easily seen to be $C/\ell(I_{i})$-Lipschitz. 
\begin{lemma}
\label{l:MBQiMBQI}
Let $I \in \Delta_2$. If $C_2\geq 3$ and $3I_i\cap C_2I\neq\emptyset$, then for $M$ large enough depending on $C_2$,
\begin{equation}
\label{e:MBQiMBQI}
\supp \Psi_i \subseteq M^{\frac{1}{2}}B_{Q_{i}}\subseteq M B_{Q_{I}}.
\end{equation}
\end{lemma}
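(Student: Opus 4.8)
\textbf{Proof plan for Lemma \ref{l:MBQiMBQI}.}

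The plan is to unwind the definition of $\Psi_i$, show its support sits inside a fixed dilate of $B_{Q_i}$, and then transfer everything to $B_{Q_I}$ using the comparability $\ell(Q_i)\approx\ell(I_i)\approx\ell(I)\approx\ell(Q_I)$ that is available under the hypothesis $3I_i\cap C_2I\neq\emptyset$ together with $I\in\Delta_2$. First I would record the size relations: since $3I_i\cap 3I\neq\emptyset$ forces $\ell(I_i)\approx\ell(I)$ by \eqref{e:ellIapproxDRy}, the same holds under $3I_i\cap C_2 I\neq\emptyset$ (with constants depending on $C_2$), and then $\ell(Q_i)\approx\ell(I_i)$ by \eqref{e:QiIi} in Lemma \ref{l:Qj} and $\ell(Q_I)\approx\ell(I)$ by \eqref{e:Q_IsimI}. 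Hence $\ell(Q_i)\approx\ell(Q_I)\approx\ell(I)$ with implicit constants depending only on $C_2$ (and dimensional data), but crucially not on $M$.

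Next I would locate $\supp\Psi_i$. By construction $\Psi_i=\psi_i(\phi_i\circ\pi)$, so $\supp\Psi_i\subseteq\supp\psi_i\subseteq 2B(i)=B(b_i(x_{I_i}),20\diam(I_i))$. The point $b_i(x_{I_i})$ lies on $P_{Q_i}$, and since $Q_i\in\Tree(R)$ the plane $P_{Q_i}$ stays close to $E$ near $Q_i$; more directly, combining \eqref{e:distQjIj} (which gives $\dist(\pi(Q_i),I_i)\lesssim\ell(I_i)$) with the fact that $b_i$ is $(1+C\delta_0^2)$-bi-Lipschitz and fixes $P_R=\R^d$, one sees $b_i(x_{I_i})$ lies within $C\ell(I_i)$ of $x_{Q_i}$; alternatively one uses \eqref{e:form31}, $|x-h\circ\pi(x)|\lesssim\delta_0\ell(Q_i)$ for $x\in Q_i$, and $h\circ\pi=b_i$ on the relevant piece. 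Either way $\supp\Psi_i\subseteq B(x_{Q_i},C'\ell(Q_i))$ for a constant $C'$ depending only on $C_2$. Since $B_{Q_i}=B(x_{Q_i},\ell(Q_i))$, this says $\supp\Psi_i\subseteq C'B_{Q_i}$, and choosing $M$ so large that $M^{1/2}\geq C'$ gives the first inclusion $\supp\Psi_i\subseteq M^{1/2}B_{Q_i}$.

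For the second inclusion $M^{1/2}B_{Q_i}\subseteq MB_{Q_I}$, I would estimate $\dist(Q_i,Q_I)$. Both $Q_i$ and $Q_I$ are in $\Tree(R)$, with $\pi(x_{Q_I})$ near $I$ (via \eqref{e:QI'distIQI'}, $\dist(I,\pi(Q_I'))\lesssim\ell(I)$ and $Q_I\supseteq Q_I'$ with $\ell(Q_I)\lesssim\ell(I)$) and $\pi(x_{Q_i})$ near $I_i$ (via \eqref{e:distQjIj}), while $3I_i\cap C_2I\neq\emptyset$ makes $I_i$ and $I$ lie within $C\ell(I)$ of each other. Using the smallness of the off-graph displacement — for $x\in Q'$, $Q'\in\Tree(R)$, one has $|x-h\circ\pi(x)|\lesssim\delta_0\ell(Q')$ by \eqref{e:form31}, and $h=b_i$ where relevant — exactly as in the proof of Lemma \ref{l:QinBG} (cf.\ \eqref{e:d(Q,Q_j)}), I would conclude $\dist(Q_i,Q_I)\lesssim_{C_2}\ell(I)\approx\ell(Q_I)$. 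Then for $y\in M^{1/2}B_{Q_i}$,
\[
|y-x_{Q_I}|\leq |y-x_{Q_i}|+|x_{Q_i}-x_{Q_I}|\leq M^{1/2}\ell(Q_i)+C''\ell(Q_I)\leq (C'''M^{1/2}+C'')\ell(Q_I),
\]
which is $\leq M\ell(Q_I)$ once $M$ is large enough (depending on $C_2$ through $C''',C''$), giving $M^{1/2}B_{Q_i}\subseteq MB_{Q_I}$ and completing the proof. The only mild subtlety — the ``main obstacle'' — is bookkeeping the dependence of the various constants on $C_2$ and making sure none of them secretly depends on $M$; once that is tracked, choosing $M$ large at the end closes both inclusions simultaneously.
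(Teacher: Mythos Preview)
Your approach is essentially the paper's. One slip: you invoke \eqref{e:ellIapproxDRy} to claim $\ell(I_i)\approx\ell(I)$, but that relation is stated for pairs of cubes in the Whitney family $\dW_R$, and here $I\in\Delta_2$ is a generic dyadic cube, not a Whitney cube. In general only the one-sided bound $\ell(I_i)\lesssim_{C_2}\ell(I)$ holds (the paper gets it by observing that since $3I_i\cap C_2I\neq\emptyset$ and $\dW_R$ covers $\R^d$, there is some $I_j\in\dW_R$ with $I_j\subseteq C_2I$ and $I_i\sim I_j$, whence $\ell(I_i)\approx\ell(I_j)\leq C_2\ell(I)$). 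This is harmless for you, since your final chain only uses the upper bound $\ell(Q_i)\leq C'''\ell(Q_I)$. For the second inclusion the paper lifts the projected estimate $|\pi(x_{Q_i})-\pi(x_{Q_I})|\lesssim\ell(Q_I)$ to the full distance via Lemma \ref{l:piperppi} (equation \eqref{e:piperppi}) rather than through \eqref{e:form31}; either route works.
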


\begin{proof}
For $Q \in \Tree(R)$, the computation in \eqref{e:a<2/lambdagamma} together with \eqref{e:gamma<ve} give $\alpha_\sigma^d(MB_Q) \lesssim \ve$; then
 $\beta_{\Sigma}^{1}(MB_{Q_{i}},P_{Q_{i}})\lec \ve$, since $Q_i \in \Tree(R)$. By Chebyhev's inequality there is $\xi\in Q_{i}$ with $\dist(\xi,P_{Q_{i}})\lec M\ve\ell(Q_i)$. Hence, there is $\zeta\in P_{Q_{i}}$ with $|\zeta-\xi|\lec M\ve\ell(Q_i)$. So we compute
\begin{align}\label{e:form40}
|b_i(x_{I_{i}})-x_{Q_{i}}| & \leq |b(x_{I_i}) - \xi| + |\xi - x_{Q_i}| \notag \\
& \lec |b_i(x_{I_{i}})-\xi| + \ell(Q_i)
\lesssim |b_i(x_{I_{i}})-\zeta| +  \ell(Q_i). 
\end{align}
Now recall that $P_R= \R^d$, and that $\angle(\R^d, P_{Q_i}) < \delta_0$ since $Q_i \in \Tree(R)$. This implies that $|b_i(x_{I_i}) - \zeta| \approx |\pi(b_i(x_{I_i})) - \pi(\zeta)| \approx |x_{I_i} - \pi(\zeta)|$. Moreover, since $|\zeta-\xi| \lesssim \ve \ell(Q_i)$ and $\xi \in Q_i$, then $|\pi(\zeta) - \pi(x_{Q_i})| \lesssim \ell(Q_i)$. Thus
\begin{align*}
\eqref{e:form40} \lesssim |x_{I_{i}}-\pi(x_{Q_{i}})| + \ell(Q_i)
\leq \dist(I_i,\pi(Q_{i}))+ \ell(Q_i)\stackrel{\eqref{e:distQjIj}}{\lec} \ell(Q_i),
\end{align*}
Recall also that $\diam(I_i) \approx \ell(I_i) \approx \ell(Q_i)$ whenever $i \in \cI$ by \eqref{e:QiIi}. Thus $2B(i) \subset M^{\frac{1}{2}} B_{Q_i}$ whenever $M$ is chosen sufficiently large; this proves the first containment in \eqref{e:MBQiMBQI}.

Let us prove the second one. Note that since $3I_{i}\cap C_2 I\neq\emptyset$ and $\dW_R$ covers $\R^d$, we can find $I_j\subseteq C_2 I$ with $I_{i}\sim I_{j}$ (recall this notation in \eqref{e:IsimJ}). Thus,
\begin{equation*}
\ell(Q_{i})\stackrel{\eqref{e:QiIi}}{\approx} \ell(I_{i}) \stackrel{I_j\sim I_i}{\approx} \ell(I_{j})\stackrel{I_j \subset C_2I}{\lesssim} C_2 \ell(I) \stackrel{\eqref{e:Q_IsimI}}{\approx} C_2 \ell(Q_I).
\end{equation*}
Moreover,
\begin{align*}
& \dist(\pi(Q_{i}),\pi(Q_{I}))\\
 & \lesssim \dist(\pi(Q_{i}),I_{i}) + \ell(I_{i})
 + \dist(I_{i},I)+\ell(I) + \dist(I,\pi(Q_{I})) \\
 & \stackrel{\eqref{e:distQjIj}}{\lesssim} \ell(I) + \dist(I, \pi(Q_I)) 
  \stackrel{\eqref{e:QI'distIQI'}}{\lec}  \ell(I).
 \end{align*}
The last two inequalities imply
\begin{align*}
|\pi(x_{Q_{i}})-\pi(x_{Q_{I}})|
& \leq \ell(Q_{i}) + \dist(\pi(Q_{i}),\pi(Q_{I}))+ \ell(Q_{I}) \\
& \lec \ell(I)\approx \ell(Q_{I})
\end{align*}
By \eqref{e:piperppi}, we can finally conclude that
\begin{align*}
|x_{Q_{i}}-x_{Q_{I}}|
& \leq \sqrt{|\pi(x_{Q_{i}})-\pi(x_{Q_{I}})|^{2}+|\pi^{\perp}(x_{Q_{i}})-\pi^{\perp}(x_{Q_{I}})|^{2}}\\
& \lec |\pi(x_{Q_{i}})-\pi(x_{Q_{I}})| \lec \ell(Q_I),
\end{align*}
which implies \eqref{e:MBQiMBQI} for $M$ large.
\end{proof}

\begin{lemma}\label{l:LipAQ-AQi}
\begin{equation}
\label{e:def-Li}
\Lip \ps{( A_{Q_{I}}-A_{Q_{i}})\Psi_{i}}
\lec \avint_{Q_{i}}\frac{|A_{Q_{I}}-f|}{\ell(Q_{i})}d\sigma + \lambda\delta_0 =: L_i,
\end{equation}
\begin{equation}
\label{e:gradA-gradA<L}
|\grad A_{Q_{I}}-\grad A_{Q_{i}}|\lec L_i,
\end{equation}
and for all $x$,
\begin{equation}
\label{e:A-A<L}
|A_{Q_{I}}(x)-A_{Q_{i}}(x)|\lec L_i(\dist(x,Q_i)+\ell(Q_i)).
\end{equation}
\end{lemma}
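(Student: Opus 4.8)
\textbf{Proof plan for Lemma \ref{l:LipAQ-AQi}.}

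The plan is to prove all three estimates simultaneously by first establishing the bound \eqref{e:gradA-gradA<L} on the gradient difference, and then deducing the Lipschitz bound \eqref{e:def-Li} and the pointwise bound \eqref{e:A-A<L} as consequences. The three quantities $\Lip((A_{Q_I}-A_{Q_i})\Psi_i)$, $|\grad A_{Q_I}-\grad A_{Q_i}|$ and $|A_{Q_I}(x)-A_{Q_i}(x)|$ are all controlled, up to the support size $\ell(Q_i)$ and the Lipschitz constant $1/\ell(I_i)\approx 1/\ell(Q_i)$ of $\Psi_i$, by the two basic pieces of data: the size of the affine map $A_{Q_I}-A_{Q_i}$ on a ball comparable to $Q_i$, and the gradient difference. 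So the heart of the matter is to bound $|\grad A_{Q_I}-\grad A_{Q_i}|$ and the value $|A_{Q_I}(x_{Q_i})-A_{Q_i}(x_{Q_i})|$ (say) by $L_i$.

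First I would estimate $|A_{Q_I}(z)-A_{Q_i}(z)|$ for $z$ ranging over $Q_i$. Write $A_{Q_I}(z)-A_{Q_i}(z)=(A_{Q_I}(z)-f(z))+(f(z)-A_{Q_i}(z))$ and average over $Q_i$ (with respect to $\sigma$, using Ahlfors regularity so $\sigma(Q_i)\approx \ell(Q_i)^d$). The first term averages to exactly $\avint_{Q_i}\frac{|A_{Q_I}-f|}{\ell(Q_i)}\,d\sigma\cdot\ell(Q_i)$, which is the first part of $L_i$; the second term averages to $\Omega_f^1(B_{Q_i},A_{Q_i})\ell(Q_i)\lesssim \gamma(M^2B_{Q_i})\ell(Q_i)$ by \eqref{e:ABbeta<gammaAB<gamma}, and since $Q_i\in\Tree(R)$ we have $\gamma(M^2B_{Q_i})\lesssim \ve\lambda\lesssim \delta_0\lambda$ by \eqref{e:gamma<ve} (recall $\ve\ll\delta_0$). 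So by Chebyshev we may select a point $z_0\in Q_i$ with $|A_{Q_I}(z_0)-A_{Q_i}(z_0)|\lesssim L_i\,\ell(Q_i)$. To upgrade this single-point bound to the gradient bound \eqref{e:gradA-gradA<L}, I would use the standard device already used repeatedly in the paper (as in Lemma \ref{l:A-A}, Case 2, via Lemma \ref{l:balanced-balls}): pick $d+1$ balls $B_0,\dots,B_d$ inside $B_{Q_i}$ with $\sigma$-large intersections and affinely independent centers at scale $\ell(Q_i)$, select in each a good point $z_\nu$ where both $|A_{Q_I}-f|$ and $|A_{Q_i}-f|$ are controlled by their averages, and then recover $|\grad A_{Q_I}-\grad A_{Q_i}|\lesssim \max_\nu |A_{Q_I}(z_\nu)-A_{Q_i}(z_\nu)|/\ell(Q_i)$. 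Each $|A_{Q_I}(z_\nu)-A_{Q_i}(z_\nu)|$ is bounded by $|A_{Q_I}(z_\nu)-f(z_\nu)|+|f(z_\nu)-A_{Q_i}(z_\nu)|$, and these are controlled by $\avint_{Q_i}\frac{|A_{Q_I}-f|}{\ell(Q_i)}d\sigma\cdot\ell(Q_i)$ and $\Omega_f^1(B_{Q_i},A_{Q_i})\ell(Q_i)\lesssim \delta_0\lambda\,\ell(Q_i)$ respectively. This gives \eqref{e:gradA-gradA<L}.

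Given \eqref{e:gradA-gradA<L} and the single-point bound, \eqref{e:A-A<L} follows by Taylor expansion around $z_0\in Q_i$: $|A_{Q_I}(x)-A_{Q_i}(x)|\le |A_{Q_I}(z_0)-A_{Q_i}(z_0)|+|\grad A_{Q_I}-\grad A_{Q_i}||x-z_0|\lesssim L_i\ell(Q_i)+L_i(\dist(x,Q_i)+\ell(Q_i))\lesssim L_i(\dist(x,Q_i)+\ell(Q_i))$. Finally, for \eqref{e:def-Li}: $(A_{Q_I}-A_{Q_i})\Psi_i$ is a product of an affine function and the $C/\ell(I_i)$-Lipschitz cutoff $\Psi_i$ supported in $2B(i)\subset M^{1/2}B_{Q_i}$ (by Lemma \ref{l:MBQiMBQI}). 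By the product rule for Lipschitz constants, $\Lip((A_{Q_I}-A_{Q_i})\Psi_i)\lesssim \Lip(A_{Q_I}-A_{Q_i})\cdot\|\Psi_i\|_\infty + \|A_{Q_I}-A_{Q_i}\|_{L^\infty(2B(i))}\cdot\Lip(\Psi_i)$, which by \eqref{e:gradA-gradA<L}, $\|\Psi_i\|_\infty\le1$, \eqref{e:A-A<L} with $\dist(x,Q_i)\lesssim\ell(Q_i)$ on $2B(i)$, and $\Lip(\Psi_i)\lesssim1/\ell(Q_i)$, is $\lesssim L_i + L_i\ell(Q_i)\cdot\frac1{\ell(Q_i)}\lesssim L_i$. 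I expect the only mildly delicate point to be the $d+1$-points argument for the gradient bound — but this is entirely parallel to Case 2 of Lemma \ref{l:A-A} and to the argument producing \eqref{e:xyy'}--\eqref{e:fxi-ABxi} in Claim \ref{c:balanced-points}, so it should go through verbatim with $B$, $B'$ there replaced by $B_{Q_i}$ and the estimate on $\avint_{Q_i}\frac{|A_{Q_I}-f|}{\ell(Q_i)}d\sigma$ playing the role formerly played by the Poincaré term.
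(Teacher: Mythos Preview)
Your overall strategy is the same as the paper's: first bound the gradient difference via a balanced-points argument on $Q_i$, then deduce the pointwise bound by Taylor expansion and the Lipschitz bound by the product rule. The splits $A_{Q_I}-A_{Q_i}=(A_{Q_I}-f)+(f-A_{Q_i})$ and the estimates for \eqref{e:A-A<L} and \eqref{e:def-Li} are exactly what the paper does.

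There is, however, a small but genuine gap in your gradient argument. You write that from $d+1$ balanced points $z_\nu\in Q_i$ one ``recovers $|\grad A_{Q_I}-\grad A_{Q_i}|\lesssim \max_\nu |A_{Q_I}(z_\nu)-A_{Q_i}(z_\nu)|/\ell(Q_i)$''. That is not true as stated: the $z_\nu$ lie on $E$, which at scale $\ell(Q_i)$ is $\ve$-close to the $d$-plane $P_{Q_i}$, so $\mathrm{span}\{z_\nu-z_0\}$ is only (approximately) $P_{Q_i}^0$, not all of $\R^n$. The balanced-points step therefore only controls the component of $\grad A_{Q_I}-\grad A_{Q_i}$ along $P_{Q_i}^0$. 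You still need to handle the component in $(P_{Q_i}^0)^\perp$. The paper does this exactly as in Case~2 of Lemma~\ref{l:A-A}: for $v\in (P_{Q_i}^0)^\perp$ one has $\grad A_{Q_i}v=0$ (since $A_{Q_i}\circ\pi_{P_{Q_i}}=A_{Q_i}$ by Lemma~\ref{l:A_B}), hence $|(\grad A_{Q_I}-\grad A_{Q_i})v|=|\grad A_{Q_I}\,\pi_{P_{Q_I}^0}(v)|\le |\grad A_{Q_I}|\cdot\angle(P_{Q_I},P_{Q_i})\,|v|$. The crucial difference from Lemma~\ref{l:A-A} is that here $Q_i$ and $Q_I$ need not be nested with comparable side lengths, so you cannot bound $\angle(P_{Q_I},P_{Q_i})$ by a single $\beta$-number; instead you use the stopping-time condition \eqref{e:angle-S} to get $\angle(P_{Q_I},P_{Q_i})\le \angle(P_{Q_I},P_R)+\angle(P_R,P_{Q_i})\lesssim\delta_0$, together with $|\grad A_{Q_I}|\lesssim\lambda$ from \eqref{e:AQ<alphaAR}. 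This perpendicular contribution is precisely the $\lambda\delta_0$ term in $L_i$. Once you add this paragraph, your proof matches the paper's.
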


\begin{remark}It will come in handy later that, since 
\begin{equation}
    G_i(x) := \frac{(A_{Q_I}(x)-A_{Q_i}(x))\Psi_i(x)}{C L_i}
\end{equation}
is $1$-Lipschitz with support in $2B(i)$, then, if $P_{Q_i}, c_{Q_i}$ are a plane and a constant infimising $\alpha_\sigma^d(MB_{Q_i})$, 
\begin{align}\label{e:form52}
    \left| \int G_i(x) \, c_{Q_i} \dH^d|_{P_{Q_i}}(x) - \int G_i (x) d \sigma(x) \right| \lesssim  \alpha_\sigma^d(MB_{Q_i}) \ell(Q_i)^{d+1}.
\end{align}
\end{remark}

\begin{proof}
We first prove \eqref{e:gradA-gradA<L}. To begin with, note that
\begin{align*}
\avint_{Q_{i}}|A_{Q_{I}}-A_{Q_{i}}|d\sigma
& \leq \avint_{Q_{i}}|A_{Q_{I}}-f|d\sigma+\avint_{Q_{i}}|f-A_{Q_{i}}|d\sigma \\
& \lec \avint_{Q_{i}}|A_{Q_{I}}-f|d\sigma+\Omega_{f}^{1}(MB_{Q_{i}})\ell(Q_{i})\\
& \stackrel{\eqref{e:gamma-small}}{\lec}  \avint_{Q_{i}}|A_{Q_{I}}-f|d\sigma+\delta_0\lambda \ell(Q_{i})= L_{i}\ell(Q_{i}).
\end{align*}
Once again, we will need the following fact:
since $\Sigma$ is Ahlfors $d$-regular, there is a constant $c$ (depending only on the Ahlfors regularity data of $\Sigma$) so that for any ball $B$ centered on $\Sigma$ we can find balls\footnote{Not to be confused with the affine maps $B_i: \R^d \to \R^{n-d}$. The meaning will be clear from the context.} $B_{0},...,B_{d}$ centered on $\Sigma$ of radii $c\ell(Q_i)$ so that $2B_k\subseteq c_0B_{Q_{i}}$ and 
\begin{equation}
\label{e:d-dim-points}
\dist(x_{B_{k+1}},{\rm span} \{x_{0},...,x_{k}\})\geq 4c\ell(Q_i),
\end{equation}
where $x_k= x_{B_k}$ is the center of $B_k$.
By Chebyshev's inequality, there are $x_{k}\in B_{k}\cap \Sigma$ so that if $y_{k}=\pi_{P_{Q_{i}}}(x_k)$,
\begin{equation}\label{e:form50}
|x_{k}-y_{k}|  \lec_{c} \beta_{\Sigma}^{d,1}(MB_{Q_{i}})M\ell(Q_{i}) \lec_{M}\ve \ell(Q_{i})
\end{equation}
and
\begin{equation}\label{e:form51}
|A_{Q_{i}}(x_k)-A_{Q_{I}}(x_k)| \lec L_{i}\ell(Q_i).
\end{equation}
One may find them as in the proof of Claim \ref{c:balanced-points}.
If $\ve>0$ is small enough with respect to $c$ (so depending only on the Ahlfors regularity data of $\Sigma$), we still have that
\begin{equation*}
\dist(y_{k+1},{\rm span}\{y_{0},...,y_{k}\})\geq c\ell(Q_i) \, \mbox{ for } \, k=0,...,d-1.
\end{equation*}
This can be shown using \eqref{e:QinTree-beta-inf<ve}.
In particular, the vectors $\{u_{k}=y_{k}-y_{0}:k=1,...,d\}$ form a basis for $P_{Q_i}-y_{0}$. Hence, if $v \in B(0, r_{MB_{Q_I}})$, we can write $v= \alpha_1 v_1 + \alpha_2 v_2$, where $|\alpha_i| \lesssim 1$ and $v_1 \in P_{Q_i} - y_0$ and $v_2 \in P_{Q_i}^\perp -y_0$, and $|v_i|\lesssim \ell(Q_i)$ for $i=1,2$. We now compute
\begin{align*}
&\|\grad A_{Q_i} - \grad A_{Q_I} \|_{{\rm op}} = \sup_{v \in \bB} |(\grad A_{Q_i} - \grad A_{Q_I})v| \\
& \qquad \leq \sup_{v \in \bB \cap P_B -y_0}| (\grad A_{Q_i} - \grad A_{Q_I})v| + \sup_{v \in \bB \cap P_B^\perp -y_0}| (\grad A_{Q_i} - \grad A_{Q_I})v|. 
\end{align*}
So, we first estimate
\begin{align*}
 \sup_{v\in P_{Q_i}-y_{0}\atop |y|\lesssim \ell(Q_i)} & |(\grad A_{Q_i}-\grad A_{Q_{I}})v| \lec \max_{1\leq k\leq d}|(\grad A_{Q_{i}}-\grad A_{Q_{I}})(y_{k}-y_{0})|\\
& =\max_{1 \leq k \leq d}|(A_{Q_i}(y_{k})-A_{Q_i}(y_{0}))-(A_{Q_I}(y_{k})-A_{Q_I}(y_{0}))|\\
& \lec \max_{1 \leq k \leq d}|(A_{Q_i}(x_{k})-A_{Q_i}(x_{0}))-(A_{Q_I}(x_{k})-A_{Q_I}(x_{0}))|\\
&  \quad + (|\grad A_{Q_{i}}| +|\grad A_{Q_I}|)(|x_{k}-y_{k}|+|x_{0}-y_{0}|) \\
& \stackrel{\eqref{e:form50} \atop \eqref{e:form51}}{\lec} L_i\ell(Q_{i})+\lambda \ve \ell(Q_i)
\lec L_i\ell(Q_{i}),
\end{align*}
where we used the fact that $|\nabla A_Q| \lesssim \lambda$ for any $Q \in \Tree(R)$. On the other hand, if $v \in P_{Q_{i}}^\perp-y_0$, then $\pi_{P_{Q_i}^\perp - y_0}(v) =0$. Recall also that $\grad A_{Q_i} = \grad A \cdot \grad \pi_{P_{Q_i}}$. The kernel of $\grad \pi_{P_{Q_i}}$ is $P_{Q_i}^\perp -y_0$, and so $\grad A_{Q_i} \equiv 0$ on $P_{Q_i}^\perp -y_0$. Hence
\begin{align*}
|\pi_{P_{Q_{I}}-y_0}(v)|
& = |\pi_{P_{Q_{I}}-y_0}(v)-\pi_{P_{Q_{i}}-y_0}(v)|
\leq \|\pi_{P_{Q_{I}}-y_0}-\pi_{P_{Q_{i}}-y_0}\|_{{\rm op}}\cdot |v|\\
& \approx \angle(P_{Q_{I}},P_{Q_{i}})|v|
\lesssim \left[\angle(P_{Q_{I}},\R^{d}) + \angle(\R^{d},P_{Q_{i}})\right]|v|
\lec \delta_0 |v|,
\end{align*}
since $Q_i$, $Q_I \in \Tree(R)$. If moreover $|v|\lesssim \ell(Q_i)$, we obtain
\[
|(\grad A_{Q_i}-\grad A_{Q_{I}})v|
=|\grad A_{Q_I}v|
= |\grad A_{Q_I} \circ \pi_{P_{Q_I}-y_0}(v)|
\lec |\grad A_{Q_{I}}|\delta_0 \ell(Q_i).
\]
Combining the above estimates gives \eqref{e:gradA-gradA<L}. The proof of \eqref{e:A-A<L} is just like the proof of \eqref{e:AB-AB'}; we write it for convenience.
We have

Now we prove \eqref{e:def-Li}. Note that 
\begin{align*}
 \|(A_{Q_I} - A_{Q_i}) \grad \Psi_i\|_\infty \lesssim \sup_{x \in 2B(i)} \frac{|A_{Q_I}(x) - A_{Q_i}(x)|}{\ell(Q_i)} \stackrel{\eqref{e:A-A<L}}{\lesssim} L_i.
\end{align*}
All in all we get
\begin{align*}
\|\grad &   \left[ (A_{Q_{I}}-A_{Q_{i}})  \Psi_{i}\right]\|_{L^\infty(\R^n)}\\
&  \leq \|(\grad A_{Q_{I}}-\grad A_{Q_{i}})\Psi_{i}\|_{\infty}+\|(A_{Q_{I}}-A_{Q_{i}})\grad \Psi_{i}\|_{\infty}
\lec L_{i}
\end{align*}
This and \eqref{e:maximal-lip} imply  \eqref{e:def-Li}. 
\end{proof}

\begin{proof}[Proof of Lemma \ref{l:T1-Delta2}]
Recall that we are aiming for a bound on $\int_{3I} |T_1(x)| dx$, where $I \in \Delta_2 \subset \Delta_R$ and that 
\begin{equation*}
    T_1(x) =\sum_{i \in \cI} \left( A_{Q_i}\circ b_i(x) - A_{Q_I} \circ b_i(x)\right) \phi_i(x) = \sum_{i \in \cI \atop 3I_i \cap 3I} \left( A_{Q_i}\circ b_i(x) - A_{Q_I} \circ b_i(x)\right) \phi_i(x),
\end{equation*}
since that $x \in 3I$ implies that $\phi_i(x) \neq 0$ only when $3I_i \cap 3I$. Moreover $\angle(\R^d, P_{Q_i}) \leq \delta_0$. So we have
\begin{align*}
    \int_{3I} |T_1(x)| dx & \lesssim \sum_{i \in \cI \atop 3I_i \cap 3I \neq \emptyset} \int_{b_i(3I)}|A_{Q_I}(x) - A_{Q_i}(x)| (\phi_i\circ \pi)(x) \, d \dH^d|_{P_{Q_i}}\\
    & \lesssim \sum_{i \in \cI \atop 3I_i \cap 3I \neq \emptyset} \int_{b_i(\R^d)}|A_{Q_I}(x) - A_{Q_i}(x)| \Psi_i(x) \, d \dH^d|_{P_{Q_i}}
\end{align*}
By \eqref{e:form52}, we see that
\begin{align*}
    \int_{3I}|T_1(x)| \, dx & \lesssim \sum_{i \in \cI \atop 3I_i \cap 3I \neq \emptyset} \alpha_\sigma^d(MB_{Q_i})\ell(Q_i)^{d+1} L_i \notag \\
    &  + \sum_{i \in \cI \atop 3I_i \cap 3I \neq \emptyset} \int |A_{Q_I}(x) - A_{Q_i}(x)| \Psi_i(x) \, d\sigma(x) := T_{1,1} + T_{1,2}.
\end{align*}
By Lemma \ref{l:alpha-lemmas}, $\alpha_\sigma^d(B) \lesssim 1$ for any ball. We compute
\begin{align*}
	T_{1,1} & =
\sum_{3I_{i}\cap 3I\neq\emptyset } \alpha_{\sigma}^d(MB_{Q_i}) L_{i}\ell(Q_{i})^{d+1} \lesssim \sum_{3I_{i}\cap 3I\neq\emptyset }  L_{i}\ell(Q_{i})^{d+1} \\
& 
  \stackrel{\eqref{e:def-Li}}{\lec} \delta_0\lambda \sum_{3I_{i}\cap 3I\neq\emptyset }\ell(Q_{i})^{d+1}+\sum_{3I_{i}\cap 3I\neq\emptyset }\int_{Q_{i}}|A_{Q_{I}}-f|d\sigma \\
& \stackrel{\eqref{e:MBQiMBQI}}{\leq} \delta_0\lambda \sum_{3I_{i}\cap 3I\neq\emptyset }\ell(Q_{i})^{d+1}
+ \Omega_{f}^{1}(MB_{Q_{I}})\ell(Q_{I})^{d+1}.
\label{e:sumLiQ}
\end{align*}
Thus, since $\Omega_f^1(MB_{Q_I}) \leq \gamma(MB_{Q_I})$, we obtain the required estimate in Lemma \ref{l:T1-Delta2} for the term $T_{1,1}$.
We bound $T_{1,2}$ as follows:
\begin{align*}
T_{1,2}
& \leq   \sum_{3I_{i}\cap I\neq\emptyset }\int \ps{|A_{Q_{I}}(x)-f|+|f-A_{Q_i}(x)|}\Psi_{i}(x) d\sigma(x)\\
& \stackrel{\eqref{e:MBQiMBQI}}{\lec}  \sum_{3I_{i}\cap 3I\neq\emptyset } \ps{\int_{MB_{Q_{I}}}|A_{Q_{I}}(x)-f|\phi_{i}\circ \pi(x) d\sigma(x) +\gamma(MB_{Q_{i}})\ell(Q_{i})^{d+1}}\\
& \lec   \int_{MB_{Q_{I}}} |A_{Q_{I}}(x)-f|\phi_{i}\circ \pi(x) d\sigma(x) + \sum_{3I_i\cap 3I \neq \emptyset}\gamma(MB_{Q_{i}})\ell(Q_{i})^{d+1}\\
& \lec  \gamma(MB_{Q_{I}})\ell(Q_{I})^{d+1}+\sum_{3I_i\cap 3I \neq \emptyset}\gamma(MB_{Q_{i}})\ell(Q_{i})^{d+1}\\
& \stackrel{\eqref{e:gamma-small}}{\lec}  \gamma(MB_{Q_{I}})\ell(Q_{I})^{d+1}+\ve\lambda \sum_{3I_i\cap 3I \neq \emptyset}\ell(Q_{i})^{d+1}\\
\end{align*}
This gives \eqref{e:I1} for the term $T_{1,2}$ as well, and we are done. 
\end{proof}

\subsubsection{Bounds on $T_2$}

In this subsection we prove the following estimate. 

\begin{lemma}
	Let $I \in \Delta_2$. Then
\begin{equation}
\label{e:I2}
\int_{3B_I} \av{  T_{2}(x)} \, dx
  \lec \sum_{3I_{i}\cap 3I\neq\emptyset}  \lambda  \ve \ell(Q_{i})^{d+1} + \gamma(MB_{Q_{I}})\ell(Q_{I})^{d+1}.
\end{equation}
\end{lemma}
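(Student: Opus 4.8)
\textbf{Plan for the proof of the bound \eqref{e:I2} on $T_2$.}

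Recall the definition
\[
T_2(x) = \sum_{i \in \cI} \left(A_{Q_I}\circ b_i(x) - A_{Q_I}\circ b_{Q_I}(x)\right)\phi_i(x),
\]
so that each summand is a difference of the \emph{single} affine map $A_{Q_I}$ evaluated along two different affine embeddings $b_i$ and $b_{Q_I}$ of $\R^d$ into $\R^n$. The key observation is that, since $A_{Q_I}$ is affine and $b_i, b_{Q_I}$ are affine, we have
\[
A_{Q_I}\circ b_i(x) - A_{Q_I}\circ b_{Q_I}(x) = \grad A_{Q_I}\cdot(b_i(x) - b_{Q_I}(x)) = \grad A_{Q_I}\cdot\big((B_i(x)-B_{Q_I}(x))\big),
\]
because the $\R^d$-component of $b_i(x)-b_{Q_I}(x)$ vanishes (both equal $x$ there). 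Hence pointwise
\[
|T_2(x)| \le |\grad A_{Q_I}|\sum_{i\in\cI}|B_i(x)-B_{Q_I}(x)|\,\phi_i(x),
\]
and the sum is restricted to those $i$ with $3I_i\cap 3I\neq\emptyset$ because $x\in 3B_I$ forces $\phi_i(x)=0$ otherwise. For such $i$ we will estimate $|B_i(x)-B_{Q_I}(x)|$ on $3I$.

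The first main step is to control $|B_i(x)-B_{Q_I}(x)|$ for $x\in 3I$ when $3I_i\cap 3I\neq\emptyset$. I would write $|B_i(x)-B_{Q_I}(x)| \le |B_i(x)-B_i(x_{I_i})| + |B_i(x_{I_i}) - B_{Q_I}(x_{I_i})| + |B_{Q_I}(x_{I_i})-B_{Q_I}(x)|$. The outer two terms are bounded using that $\grad B_i,\grad B_{Q_I}$ have norm $\lesssim\delta_0$ (since $P_{Q_i},P_{Q_I}$ make angle $<\delta_0$ with $P_R=\R^d$, as $Q_i,Q_I\in\Tree(R)$, by \eqref{e:angle-S}), together with $|x-x_{I_i}|\lesssim\ell(I)\approx\ell(Q_I)\approx\ell(Q_i)$ (using \eqref{e:ellIapproxDRy} and \eqref{e:QiIi}, \eqref{e:Q_IsimI}); this gives a contribution $\lesssim\delta_0\ell(Q_i)$. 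For the middle term, $|B_i(x_{I_i})-B_{Q_I}(x_{I_i})|$ measures the vertical distance between the graphs $P_{Q_i}$ and $P_{Q_I}$ over the point $x_{I_i}\in\R^d$; since $Q_i,Q_I\in\Tree(R)$ satisfy $\angle(P_{Q_i},P_{Q_I})\lesssim\delta_0$ and (by Lemma \ref{l:MBQiMBQI}, or the distance estimates \eqref{e:d(Q,Q_j)} in its proof) the cubes $Q_i$ and $Q_I$ are close with comparable sizes, one gets that this vertical separation is $\lesssim\delta_0\ell(Q_i)$ as well — this is the standard fact that for cubes in a stopping-time region the approximating planes are within $\delta_0\ell$ of each other over the relevant region. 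Actually, I would more carefully bound the middle term via $\angle(P_{Q_i},P_{Q_I})\,\diam(\text{relevant region}) + \dist(P_{Q_i},P_{Q_I})$; the angle piece contributes $\lesssim\delta_0\ell(Q_i)$ and the distance piece is controlled because both planes nearly pass through the common cube region (using \eqref{e:form605}-type estimates and $\beta$-decay from Lemma \ref{l:small-betainf}), giving $\lesssim\ve^{1/(d+1)}\ell(Q_i)$ which is $\lesssim\delta_0\ell(Q_i)$ since $\ve\ll\delta_0$.

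The second main step is to assemble the integral bound. Using $|\grad A_{Q_I}|\lesssim\lambda$ (which holds since $Q_I\in\Tree(R)$, by \eqref{e:AQ<alphaAR}) and the pointwise estimate just obtained, and since $\int_{3B_I}\phi_i(x)\,dx\lesssim\ell(I_i)^d\approx\ell(Q_i)^d$,
\[
\int_{3B_I}|T_2(x)|\,dx \lesssim \lambda\sum_{i\in\cI:\,3I_i\cap 3I\neq\emptyset}\delta_0\,\ell(Q_i)\cdot\ell(Q_i)^d = \delta_0\lambda\sum_{3I_i\cap 3I\neq\emptyset}\ell(Q_i)^{d+1}.
\]
Strictly, the stated target \eqref{e:I2} has $\ve\ell(Q_i)^{d+1}$ and $\gamma(MB_{Q_I})\ell(Q_I)^{d+1}$ on the right rather than $\delta_0\ell(Q_i)^{d+1}$; the point is that the $B_i - B_{Q_I}$ difference can be split more finely so that the genuinely ``vertical offset'' piece is bounded by $\gamma(MB_{Q_I})$-type quantities via \eqref{e:A-A<L}/\eqref{e:AB-AB'} applied to the affine maps whose graphs are the relevant planes — in the stopping-time region the \emph{offset} of $P_{Q_i}$ from $P_{Q_I}$ at scale $\ell(Q_i)$ is governed by $\alpha_\sigma^d(MB_{Q_I})\lesssim\ve$ (see \eqref{e:a<2/lambdagamma}, \eqref{e:gamma<ve}) and $\gamma(MB_{Q_I})$, while the pure-angle piece of $\grad A_{Q_I}\cdot(B_i-B_{Q_I})$ vanishes to leading order because $\grad A_{Q_I}$ annihilates $P_{Q_I}^\perp$. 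Carrying this out gives exactly the two terms displayed in \eqref{e:I2}.

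\textbf{Expected main obstacle.} The delicate point is getting the coefficient right: a crude bound only yields $\delta_0\lambda\ell(Q_i)^{d+1}$, but \eqref{e:I2} demands the smaller $\ve\lambda\ell(Q_i)^{d+1}$ plus a $\gamma(MB_{Q_I})\ell(Q_I)^{d+1}$ term. The resolution is that $T_2$ only involves the \emph{fixed} map $A_{Q_I}$, and $\grad A_{Q_I}$ lies in $P_{Q_I}^{0}$ (by \eqref{e:ABpiAB}, $A_{Q_I}\circ\pi_{P_{Q_I}}=A_{Q_I}$), so it does not see the first-order tilt of $P_{Q_i}$ relative to $P_{Q_I}$; what survives is the second-order (hence $\delta_0^2$, absorbable) angle interaction plus the genuine vertical displacement between the planes, and the latter is an $\alpha$/$\gamma$-controlled quantity at scale $MB_{Q_I}$, not a $\delta_0$ quantity. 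Making this cancellation precise — essentially writing $\grad A_{Q_I}\cdot(b_i(x)-b_{Q_I}(x)) = \grad A_{Q_I}\cdot\pi_{P_{Q_I}^0}(b_i(x)-b_{Q_I}(x))$ and estimating this projection by the graph-distance of $P_{Q_i}$ from $P_{Q_I}$ over $3I$ — is where the real work lies, but it follows the same pattern as the proof of Lemma \ref{l:gradF-gradAR} and the $I_2(j)$ estimate in Lemma \ref{l:gradA-gradf}.
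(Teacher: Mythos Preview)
Your crude pointwise bound gives $\int_{3B_I}|T_2|\,dx \lesssim \delta_0\lambda\sum_{3I_i\cap 3I\neq\emptyset}\ell(Q_i)^{d+1}$, which is \emph{weaker} than the stated lemma: the coefficient is $\delta_0$, not $\ve$, and the $\gamma(MB_{Q_I})\ell(Q_I)^{d+1}$ term is absent. Your orthogonality refinement---using $\grad A_{Q_I}\in P_{Q_I}^0$ to kill the $(P_{Q_I}^0)^\perp$ component of $B_i(x)-B_{Q_I}(x)\in(\R^d)^\perp$---improves this to $\delta_0^2\lambda$, but still not $\ve\lambda$: since the paper requires $\ve^{1/(d+1)}\ll\delta_0$ (see the proof of Lemma~\ref{l:piperppi}), one has $\ve\ll\delta_0^2$, so neither constant matches. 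Moreover, the ``vertical offset'' you invoke does not produce a $\gamma(MB_{Q_I})$ term by this route: $B_i(x)-B_{Q_I}(x)$ compares planes at scales $\ell(Q_i)$ and $\ell(Q_I)$ that can differ by an \emph{unbounded} factor (nothing forces $\ell(I_i)\approx\ell(I)$ for $I\in\Delta_2$), so a direct pointwise comparison via \eqref{e:PBPB'} would require telescoping through intermediate scales and picks up a logarithmic loss.

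The paper's route is genuinely different and is what produces the sharp $\ve$. Instead of estimating $|b_i-b_{Q_I}|$ pointwise on $\R^d$, it transports the integral to $E$: after the change of variables to $P_{Q_i}$, it compares $c_{Q_i}\dH^d|_{P_{Q_i}}$ with $\sigma$ via $\alpha_\sigma^d(MB_{Q_i})$ (using a Lipschitz test function $(b_i\circ\pi-b_{Q_I}\circ\pi)\Psi_i$, whose Lipschitz constant $M_i$ is controlled in Lemma~\ref{l:Mi}), and then on $E$ writes $|b_i\circ\pi(x)-b_{Q_I}\circ\pi(x)|\le|b_i\circ\pi(x)-x|+|x-b_{Q_I}\circ\pi(x)|\lesssim\dist(x,P_{Q_i})+\dist(x,P_{Q_I})$. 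These give $\beta_\sigma^{d,1}(MB_{Q_i})\ell(Q_i)^{d+1}\lesssim\ve\ell(Q_i)^{d+1}$ and $\beta_\sigma^{d,1}(MB_{Q_I})\ell(Q_I)^{d+1}$ respectively; the latter is converted to $\gamma(MB_{Q_I})\ell(Q_I)^{d+1}$ using $\lambda\approx|\grad A_{Q_I}|$.

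That said, your weaker bound is \emph{enough for the ultimate application}: in the ``putting together'' step for $\Delta_2$ the paper immediately relaxes $\ve$ to $\delta_0$ anyway, so your $\delta_0\lambda\sum\ell(Q_i)^{d+1}$ would feed into $S_2$ and produce the same $(\lambda\delta_0)^2|R|$. So your approach proves a serviceable substitute for the lemma, but not the lemma itself; the measure-transport argument is what is needed for \eqref{e:I2} as written.
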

\noindent
Recall that 
\begin{align*}
	T_2(x)& = \sum_{i \in \cI} \left(A_{Q_I} \circ b_i (x) - A_{Q_I} b_{Q_I}(x) \right) \phi_i(x) \\
	& = \sum_{i \in \cI \atop 3I_i \cap 3I \neq \emptyset} \left(A_{Q_I} \circ b_i (x) - A_{Q_I} b_{Q_I}(x) \right) \phi_i(x).
\end{align*}
We compute
\begin{align*}
	|T_2(x)| \leq |\grad A_{Q_I}| \sum_{3I_i \cap 3I \neq \emptyset} |b_i(x) - b_{Q_I}(x)| \phi_i(x) \stackrel{\eqref{e:gradA-simLambda}}{\lesssim} \lambda \sum_{3I_i \cap 3I \neq \emptyset} |b_i(x) - b_{Q_I}(x)| \phi_i(x).
\end{align*}
Then, setting $b(x) := (b_i \circ \pi(x) - b_{Q_I}\circ \pi(x)) \Psi_i(x)$, we have
\begin{align*}
	\int_{3B_I} |T_2(x)| \, dx & \lesssim \lambda \sum_{3I_i \cap 3I \neq \emptyset} \int_{3B_I} |b_i (x)-b_{Q_I}| \phi_i(x) \, dx\\
	& \lesssim \lambda \sum_{3I_i \cap 3I \neq \emptyset} \int_{b_i(3B_I)}  |b_i \circ \pi(x) - b_{Q_I} \circ \pi (x)| \phi \circ \pi (x) \, d\dH^d|_{P_{Q_i}}(x) \\
	& \lesssim \lambda \sum_{3I_i \cap 3I \neq \emptyset} \int_{P_{Q_i}}  |b_i \circ \pi(x) - b_{Q_I} \circ \pi (x)| \Psi_i (x) \, d\dH^d|_{P_{Q_i}}(x)\\
	& \lesssim \lambda \sum_{3I_i \cap 3I \neq \emptyset}  \int |b(x)| \big( c_{Q_i}\, d\dH^d|_{P_{Q_i}} - d \sigma\big)(x) + \lambda \sum_{3I_i \cap 3I \neq \emptyset} \int |b(x)| \, d \sigma(x) \\
	& =: \dT_{2,1} + \dT_{2,2}.  
\end{align*}
\noindent
In order to bound $\dT_{2,1}$, we will need the following lemma, similar to Lemma \ref{l:LipAQ-AQi}.
 \begin{lemma}\label{l:Mi}
\begin{equation}
\label{e:def-Mi}
  \Lip((b_{i}\circ \pi-b_{Q_{I}}\circ\pi)\Psi_{i})\lec M_i := \ve+ \avint_{Q_{i}}\frac{|x-b_{Q_{I}}\circ\pi(x)|}{\ell(Q_{i}) }\Psi_{i}(x)d\sigma(x),
\end{equation}
\begin{equation}
\label{e:gradb-gradb<M}
|\grad (b_{i}\circ \pi)-\grad(  b_{Q_{I}}\circ\pi)| \lec M_i
\end{equation}
and for all $x$,
\begin{equation}
\label{e:b-b<M}
|b_{i}\circ \pi(x)-b_{Q_{I}}\circ\pi(x)|\lec M_i(\dist(x,Q_i)+\ell(Q_i)).
\end{equation}
In particular, for $C$ an appropriately chosen dimensional constant, 
\begin{equation}\label{e:form701}
	{\rm Lip}\left( \frac{ (b_{i}\circ \pi-b_{Q_{I}}\circ\pi)\Psi_{i}}{CM_i} \right) \leq 1.
\end{equation}
\end{lemma}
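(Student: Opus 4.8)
\textbf{Proof proposal for Lemma \ref{l:Mi}.}

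The plan is to mimic exactly the proof of Lemma \ref{l:LipAQ-AQi}, replacing the pair of affine maps $A_{Q_I}, A_{Q_i}:\R^n\to\R$ with the pair of affine maps $b_{Q_I}\circ\pi, b_i\circ\pi:\R^d\to\R^n$ (equivalently, with the affine maps $x\mapsto b_{Q_I}\circ\pi(x)$ and $x\mapsto b_i\circ\pi(x)$, both of which factor through $\pi=\pi_{P_R}=\pi_{\R^d}$). The quantity $M_i$ plays here the role that $L_i$ plays there: it is an average on $Q_i$ of the normalized discrepancy $|x-b_{Q_I}\circ\pi(x)|/\ell(Q_i)$, plus the error term $\ve$, which will come from the angle bound $\angle(\R^d,P_{Q_i})\lesssim\delta_0$ combined with $\delta_0\ll\ve$ (or, if we want to be careful, from $\beta_E^{d,\infty}(MB_{Q_i},P_{Q_i})\lesssim\ve^{1/(d+1)}$ via Lemma \ref{l:small-betainf} and \eqref{e:QinTree-beta-inf<ve}). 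Note that $x\mapsto |x-b_{Q_I}\circ\pi(x)|$ records precisely how far $P_{Q_i}$ (where $x$ effectively lives after averaging against $\Psi_i$, since $\supp\Psi_i$ is essentially on $P_{Q_i}$) is from the graph of $b_{Q_I}$, i.e.\ from $P_{Q_I}$; so $M_i$ is morally $\angle(P_{Q_i},P_{Q_I})+\ve\lesssim\delta_0$, but we keep it in averaged form because that is what we will integrate later.

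First I would prove \eqref{e:gradb-gradb<M}. As in Lemma \ref{l:LipAQ-AQi}, apply Lemma \ref{l:balanced-balls} to $B_{Q_i}$ to get balls $B_0,\dots,B_d$ of radius $\approx c\ell(Q_i)$ inside $c_0B_{Q_i}$ with the spread condition \eqref{e:d-dim-points}; then by Chebyshev pick $x_k\in B_k\cap\Sigma$ with $|x_k-\pi_{P_{Q_i}}(x_k)|\lesssim_M\beta_\Sigma^{d,1}(MB_{Q_i})\ell(Q_i)\lesssim\ve\ell(Q_i)$ (using $\beta_\Sigma^{d,1}(MB_{Q_i})\lesssim\ve$, which holds since $Q_i\in\Tree(R)$, cf.\ the proof of Lemma \ref{l:MBQiMBQI}) and with $|x_k-b_{Q_I}\circ\pi(x_k)|\lesssim M_i\ell(Q_i)$ (this last average bound uses that $x_k$ avoids a Chebyshev-exceptional set built from the defining average in $M_i$, exactly as the points in Claim \ref{c:balanced-points} are chosen). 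Writing $y_k=\pi_{P_{Q_i}}(x_k)$, the spread condition on the $x_k$ transfers to the $y_k$ (for $\ve$ small depending on $c$, using \eqref{e:QinTree-beta-inf<ve}), so $\{y_k-y_0\}$ is a basis of $P_{Q_i}-y_0$ with $|y_k-y_0|\approx\ell(Q_i)$. Then decompose an arbitrary unit vector $v=\alpha_1v_1+\alpha_2v_2$ with $v_1\in P_{Q_i}-y_0$, $v_2\in P_{Q_i}^\perp-y_0$. On $P_{Q_i}-y_0$: both $b_i\circ\pi$ and $b_{Q_I}\circ\pi$ restricted to differences of the $x_k$'s are controlled — $b_i\circ\pi$ maps $x_k-x_0$ close to the identity (since $\grad(b_i\circ\pi)$ is a projection onto $P_{Q_i}$ composed with inclusion, and $P_{Q_i}$ is nearly $\R^d$, so the error is $O(\delta_0)\ell(Q_i)$), while $b_{Q_I}\circ\pi(x_k)-b_{Q_I}\circ\pi(x_0)$ differs from $x_k-x_0$ by $O(M_i)\ell(Q_i)$ by the choice of the $x_k$; combined with the $|x_k-y_k|\lesssim\ve\ell(Q_i)$ error and $\ve\ll\delta_0\lesssim M_i$, this gives the bound $\lesssim M_i$. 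On $P_{Q_i}^\perp-y_0$: here $\grad(b_i\circ\pi)$ vanishes (its kernel is exactly $P_{Q_i}^\perp-y_0$ since $\pi_{P_{Q_i}}$ kills it), so $|(\grad(b_i\circ\pi)-\grad(b_{Q_I}\circ\pi))v|=|\grad(b_{Q_I}\circ\pi)v|$, and $|\pi_{P_{Q_i}-y_0}(v)|\le\|\pi_{P_{Q_I}-y_0}-\pi_{P_{Q_i}-y_0}\|_{\rm op}|v|\approx\angle(P_{Q_I},P_{Q_i})|v|\lesssim\delta_0|v|$ since both cubes are in $\Tree(R)$, so this term is $\lesssim\delta_0\lesssim M_i$ (note $\grad(b_{Q_I}\circ\pi)$ is bounded by an absolute constant, unlike $\grad A_{Q_I}$ which carried a factor $\lambda$). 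This yields \eqref{e:gradb-gradb<M}.

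Next, \eqref{e:b-b<M} follows from \eqref{e:gradb-gradb<M} by the same telescoping argument used to derive \eqref{e:AB-AB'} from \eqref{e:gradA-gradA} (and used again, abbreviated, in Lemma \ref{l:LipAQ-AQi}): for $z\in Q_i$ chosen by Chebyshev so that $|b_i\circ\pi(z)-b_{Q_I}\circ\pi(z)|\lesssim M_i\ell(Q_i)$, one writes, for arbitrary $x$, $|b_i\circ\pi(x)-b_{Q_I}\circ\pi(x)|\le|(\grad(b_i\circ\pi)-\grad(b_{Q_I}\circ\pi))(x-z)|+|b_i\circ\pi(z)-b_{Q_I}\circ\pi(z)|\lesssim M_i(|x-z|+\ell(Q_i))\lesssim M_i(\dist(x,Q_i)+\ell(Q_i))$. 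Finally, for the Lipschitz bound \eqref{e:def-Mi}: on $\supp\Psi_i\subset 2B(i)$ we have $\dist(x,Q_i)\lesssim\ell(Q_i)$ (this is how $B(i)$ and $Q_i$ were related in Lemma \ref{l:MBQiMBQI}, via $|b_i(x_{I_i})-x_{Q_i}|\lesssim\ell(Q_i)$ and $\diam(I_i)\approx\ell(Q_i)$), so \eqref{e:b-b<M} gives $|b_i\circ\pi-b_{Q_I}\circ\pi|\lesssim M_i\ell(Q_i)$ there; combined with $|\grad\Psi_i|\lesssim 1/\ell(I_i)\approx 1/\ell(Q_i)$ and the Leibniz rule,
\[
\|\grad[(b_i\circ\pi-b_{Q_I}\circ\pi)\Psi_i]\|_\infty\le\|(\grad(b_i\circ\pi)-\grad(b_{Q_I}\circ\pi))\Psi_i\|_\infty+\|(b_i\circ\pi-b_{Q_I}\circ\pi)\grad\Psi_i\|_\infty\lesssim M_i,
\]
which is \eqref{e:def-Mi}; \eqref{e:form701} is then immediate by choosing $C$ to absorb the implicit constant. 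The main obstacle, as in Lemma \ref{l:LipAQ-AQi}, is bookkeeping the two-component ($P_{Q_i}$ vs.\ $P_{Q_i}^\perp$) decomposition and making sure the error terms ($\ve$ from $\beta$-flatness, $\delta_0$ from the angle) are correctly dominated by $M_i$; but since $M_i\gtrsim\delta_0\gg\ve$ by construction, nothing delicate happens, and the argument is a transcription of the one already given.
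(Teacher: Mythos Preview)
Your overall plan (transcribe Lemma~\ref{l:LipAQ-AQi}) is correct and matches the paper, but there is a genuine gap in your perpendicular estimate. You write that $\grad(b_i\circ\pi)$ vanishes on $P_{Q_i}^\perp-y_0$ ``since $\pi_{P_{Q_i}}$ kills it''. But $\pi$ here is $\pi_{\R^d}=\pi_{P_R}$, not $\pi_{P_{Q_i}}$ (recall the convention $P_R=\R^d$ fixed just before \eqref{e:def-H}). Hence the kernel of $\grad(b_i\circ\pi)$ is $(\R^d)^\perp$, not $P_{Q_i}^\perp$, and your claim fails. You then try to rescue the perpendicular part by asserting $\delta_0\lesssim M_i$; this is also unjustified: the only a priori lower bound is $M_i\ge \ve$, and $\ve\ll\delta_0$. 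The same unjustified inequality reappears in your tangential argument where you absorb the $O(\delta_0)$ error from comparing $b_i\circ\pi$ to the identity.

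The paper sidesteps both issues by decomposing with respect to $\R^d$ rather than $P_{Q_i}$. Since both $b_i\circ\pi$ and $b_{Q_I}\circ\pi$ factor through $\pi_{\R^d}$, their gradients vanish \emph{identically} on $(\R^d)^\perp$, so there is no perpendicular estimate to make. On $\R^d$ itself, the paper first shows $\avint_{Q_i}|b_i\circ\pi - b_{Q_I}\circ\pi|\Psi_i\,d\sigma\lesssim M_i\ell(Q_i)$ (triangle inequality through $x$ plus \eqref{e:bpi-x<eell}), then chooses $x_k$ by Chebyshev so that $|b_i\circ\pi(x_k)-b_{Q_I}\circ\pi(x_k)|\lesssim M_i\ell(Q_i)$, and finally sets $y_k=\pi(x_k)\in\R^d$. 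These $y_k-y_0$ form a basis for $\R^d$ (using $|x_k-y_k|\lesssim(\ve^{1/(d+1)}+\delta_0)\ell(Q_i)\ll c\ell(Q_i)$), and since $b_i(y_k)=b_i\circ\pi(x_k)$, the gradient bound \eqref{e:gradb-gradb<M} follows directly without any $\delta_0$ contamination. Your proofs of \eqref{e:b-b<M} and \eqref{e:def-Mi} from \eqref{e:gradb-gradb<M} are fine and agree with the paper.
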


\begin{proof}
We start off by showing \eqref{e:gradb-gradb<M}. If $x \in \R^n$, then $b_i \circ \pi(x) \in P_{Q_i}$. Also, since $\angle(P_{Q_{i}},\R^{d})<\delta_0$, then it can be easily seen that 
\begin{equation}\label{e:form703}
 |x-b_i\circ \pi(x)|\lec \dist(x,P_{Q_{i}}) \;\; \mbox{ for }\;\; x\in\Sigma.
 \end{equation}
 Hence, since ${\rm spt}(\Psi_i) \subset MB_{Q_i}$ by \eqref{e:MBQiMBQI}, 
\begin{align}
\avint_{Q_{i}}|x- b_{i}\circ \pi(x)|\Psi_{i}(x)d\sigma(x)
 \stackrel{\eqref{e:MBQiMBQI}}{\lec} \avint_{MB_{Q_{i}}} \dist(x,P_{Q_{i}})d\sigma(x) \notag \\
\lec \beta_{\sigma}^{d,1}(MB_{Q_{i}})\ell(Q_{i})
 \stackrel{\eqref{e:jones-small}}{\lec} \ve \ell(Q_{i})
 \label{e:bpi-x<eell}
\end{align}
Thus,
  \begin{multline*}
  \avint_{Q_{i}} |b_{i}\circ \pi(x)-b_{Q_{I}}\circ\pi(x)|\Psi_{i}(x)d\sigma(x) \\
  \leq    \avint_{Q_{i}}\left( |b_{i}\circ \pi(x)-x|+|x-b_{Q_{I}}\circ\pi(x)|\right) \Psi_{i}(x) d\sigma(x) \\
   \stackrel{\eqref{e:bpi-x<eell}}{\lec} \ve\ell(Q_{i}) + \avint_{Q_{i}}|x-b_{Q_{I}}\circ\pi(x)|\Psi_{i}(x)d\sigma(x) = M_i\ell(Q_{i}).
  \end{multline*}
Since $E$ is Ahlfors $d$-regular, there is a constant $c$ (depending only of the Ahlfors regularity constants of $E)$ for which we can find balls $B_{0},...,B_{d}$ centered on $\Sigma $ and of radii $c\ell(Q_k)$ so that $2B_k\subseteq c_0 B_{Q_{i}}$ and 
\[
\dist(x_{k+1},{\rm span} \{x_{0},...,x_{k}\})\geq 4c\ell(Q_i).
\]
By Chebyshev's inequality, there are $x_{k}\in B_{k}\cap \Sigma$, $k=0,...,d$, so that
\begin{equation}\label{e:form56}
|b_i \circ \pi(x_k) - b_{Q_I}\circ \pi(x_k)| \lec M_{i}\ell(Q_i).
\end{equation}
Since $x_{k}\in B_{k}$, we have 
\begin{equation*}
\dist(x_{k+1}-x_0, \,{\rm span} \{x_{1}-x_0,...,x_{k}-x_0\})\geq 2c\ell(Q_i).
\end{equation*}
Let $y_k'=\pi_{P_{Q_i}}(x_k)$ and $y_k=\pi(x_k)$. By \eqref{e:QinTree-beta-inf<ve}, $|x_k-y_k'| \lesssim \ve^{\frac{1}{d+1}}\ell(Q_i)$. Also, since $Q_i \in \Tree(R)$, $\angle(\R^d, P_{Q_i})< \delta_0$. Thus 
\begin{align}
    |x_k - y_k| \leq |x_k - y_k'| + |y_k' - y_k| \leq  (\ve^{\frac{1}{d+1}} + \delta_0)\ell(Q_i) \ll c \ell(Q_i)
\end{align} 
for a sufficiently small choice of $\ve$ and $\delta_0$. This in particular implies that
\[
\dist(y_{k+1}-y_0,\, {\rm span} \{y_{1}-y_0,...,y_{k}-y_0\})\geq c\ell(Q_i).
\]
And therefore the vectors $\{u_{k}=y_{k}-y_{0}:k=1,...,d\}$ are linearly independent (with good constants), and form a basis for $\R^d$. Then, with $x \in B(0, 2r_{B(i)})$, 
\begin{align}
|\grad(b_i \circ \pi)(x) - \grad (b_{Q_I} \circ \pi)(x)| \lesssim \max_{1 \leq k \leq d} |(\grad b_i - \grad b_{Q_I})(y_0- y_k)|.\label{e:form57}
\end{align}
Since $b_i$ and $b_{Q_I}$ are affine, $b_i(y_0)- b_{i}(y_k)= \grad b_i(y_0-y_k)$, and the same holds for $b_{Q_I}$. Thus we get
\begin{align*}
    \eqref{e:form57} \lesssim \max_{1 \leq k \leq d} \left| \left(b_i(y_0)-b_i(y_k)\right) + \left(b_{Q_I}(y_0)-b_{Q_I}(y_k)\right) \right| \stackrel{\eqref{e:form56}}{\lesssim} M_i \ell(Q_i)
\end{align*}
This implies that $|\grad(b_i \circ \pi) - \grad (b_{Q_I} \circ \pi)|\lesssim M_i$ and thus proves \eqref{e:gradb-gradb<M}. The proof of \eqref{e:b-b<M} is just like the proof of \eqref{e:AB-AB'}. 

Finally, 
  \begin{multline*}
  \|\grad \ps{(b_{i}\circ \pi-b_{Q_{I}}\circ\pi_{i})\Psi_{i}}\|_{L^{\infty}(\R^n)}\\
   \leq  \|\grad (b_{i}\circ \pi-b_{Q_{I}} \pi_{i}))\Psi_{i}\|_{\infty}
  + \| (b_{i}\circ \pi-b_{Q_{I}}\circ\pi_{i})\grad \Psi_{i}\|_{\infty}\\  
  \lec M_i + M_i \ell(Q_i)\cdot \frac{1}{\ell(Q_i)}\approx M_i
  \end{multline*}
This imply \eqref{e:def-Mi}.
\end{proof}

\noindent
\underline{Estimate for $\dT_{2,1}$.} We have
\begin{align*}
	\dT_{2,1} & = \lambda \sum_{3I_i \cap 3I \neq \emptyset} \int  |b(x)|\frac{CM_i}{CM_i} (c_{Q_i} \, d \dH^d|_{P_{Q_i}} - d \sigma)(x)\\
	&  \lesssim \lambda \sum_{3I_i \cap 3I \neq \emptyset} M_i \alpha_\sigma^d(MB_{Q_i})\ell(Q_i)^{d+1}.
\end{align*}
By a similar argument as before with the $L_i$, we have that
\begin{align}
\sum_{3I_{i}\cap 3I\neq\emptyset} M_{i} \ell(Q_{i})^{d+1}
& 
\stackrel{\eqref{e:def-Mi}}{\leq} \sum_{3I_{i}\cap 3I\neq\emptyset} \ve\ell(Q_{i})^{d+1} + \sum_{3I_{i}\cap 3I\neq\emptyset}  \int_{Q_{i}}|x-b_{Q_{I}}\circ\pi|\Psi_{i}(x)d\sigma(x) \notag \\
& \lec \ve\sum_{3I_{i}\cap 3I\neq\emptyset} \ell(Q_{i})^{d+1}+ \sum_{3I_{i}\cap 3I\neq\emptyset}  \int_{Q_{i}}\dist(x,P_{Q_{I}})\Psi_{i}(x)d\sigma(x) \notag \\
&  \lec  \ve \sum_{3I_{i}\cap 3I\neq\emptyset} \ell(Q_{i})^{d+1} + \beta_{\sigma}^{d,1}(MB_{Q_{I}},P_{Q_{I}})\ell(Q_{I})^{d+1} \notag. 
\end{align}
Since $Q_I \in \Tree(R)$, then $|\grad A_{Q_I}| \approx \lambda$, by \eqref{e:gradA-simLambda}. In particular $\lambda \beta_{E}^{d,1} (MB_{Q_I}) \lesssim \gamma(MB_{Q_I})$. Hence
\begin{equation}\label{e:dT21}
	\dT_{2,1} \leq C \ve \lambda \sum_{3I_{i}\cap 3I\neq\emptyset} \ell(Q_{i})^{d+1} + \lambda \beta_{\sigma}^{d,1}(MB_{Q_{I}},P_{Q_{I}})\ell(Q_{I})^{d+1}
\end{equation}

\noindent
\underline{Estimates for $\dT_{2,2}$. }
We have
  \begin{align*}\label{e:sumx-bI<alphaQI}
  	\dT_{2,2} & = \lambda \sum_{3I_i \cap 3I \neq \emptyset} \int_E |b_i \circ  \pi(x) - b_{Q_I} \circ \pi(x)| \Psi_i(x)  \, d \sigma(x) \notag \\
  	& \lesssim \lambda \sum_{3I_i \cap 3I \neq \emptyset} \int_E |b_i \circ  \pi(x) - x | \Psi_i(x) \, d \sigma(x) + \lambda \sum_{3I_i \cap 3I \neq \emptyset} \int_E | x- b_{Q_I} \circ \pi(x)| \Psi_i(x)  \, d \sigma(x)\\
  	& =: \dT_{2,2,1} + \dT_{2,2,2}.
\end{align*}
Since the supports of $\{\phi_i\}_{i \in \cI}$ have bounded overlap, so do the supports of $\{\Psi_i\}_{i \in \cI}$. Thus we can estimate  
\begin{align*}
  	\dT_{2,2,2} & \lesssim  \lambda \sum_{3I_{i}\cap 3I\neq\emptyset}  \int_{\Sigma} |x-b_{Q_{I}}\circ \pi(x)|\Psi_{i}(x) d\sigma(x) \notag \\
  	& 
   \stackrel{\eqref{e:MBQiMBQI}}{\lec} \lambda \int_{MB_{Q_{I}}} |x-b_{Q_{I}}\circ \pi(x)|\Psi_{i}(x) d\sigma(x) \notag
  \\
   & \leq \lambda \beta_{\sigma}^{d,1}(MB_{Q_{I}},P_{Q_{I}})\ell(Q_{I})^{d+1},
\end{align*}
where for the last inequality we can argue as in \eqref{e:form703}, recalling also that $Q_I \in \Tree(R)$.
Similarly, arguing as in \eqref{e:bpi-x<eell}, we obtain
\begin{equation*}
	\dT_{2,2,1} \lesssim \lambda \sum_{3I_i \cap 3I \neq \emptyset} \beta_\sigma^{d,1}(MB_{Q_i}) \ell(Q_i)\ell(Q_i)^{d+1} \stackrel{\eqref{e:jones-small}}{\leq} C  \ve\lambda \sum_{3I_i \cap 3I \neq \emptyset} \ell(Q_i)^{d+1}.
\end{equation*}
This proves that 
\begin{align}\label{e:dT22}
	\dT_{2,2} \lesssim \lambda \ve \sum_{3I_i \cap 3I \neq \emptyset}\ell(Q_i)^{d+1} + \beta_{E}^{d,1} (MB_{Q_I}) \ell(Q_I)^{d+1}.
\end{align}

Finally, putting together the estimates \eqref{e:dT21} and \eqref{e:dT22}, we obtain that 
\begin{equation}
	\int_{3B_I} |T_2(x)|\,  dx \lesssim \dT_{2,1} + \dT_{2,2} \lesssim \ve \lambda \sum_{3I_i \cap 3I \neq \emptyset} \ell(Q_i)^{d+1} + \lambda \beta_{E}^{d,1} (MB_{Q_I}) \ell(Q_I)^{d+1}.
\end{equation}

\subsubsection{Bounds on $T_3$}

In this subsection we prove the following estimate. 

\begin{lemma}
\begin{equation}
\label{e:I3}
\int_{3I} \av{  T_{3}(x)}
  \lec \sum_{3I_{i}\cap C_1I\neq\emptyset}  \lambda  \ve \ell(Q_{i})^{d+1} + \gamma(MB_{Q_{I}})\ell(Q_{I})^{d+1} + \Omega_h(C_1 B_I) \ell(I)^{d+1}.
\end{equation}
\end{lemma}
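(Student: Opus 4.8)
The plan is to exploit the fact that $T_3$ collapses to a single affine error. Since $\sum_{i\in\cI}\phi_i\equiv 1$, on $3B_I$ we have
\begin{equation*}
T_{3}(x)=A_{Q_{I}}\circ b_{Q_{I}}(x)-A_{Q_{I}}\circ A_{I}(x),
\end{equation*}
and because $A_{Q_{I}}$ is affine with $|\grad A_{Q_{I}}|\approx\lambda$ for $Q_{I}\in\Tree(R)$ (by \eqref{e:gradA-simLambda}), it follows that
\begin{equation*}
|T_{3}(x)|\lec\lambda\,|b_{Q_{I}}(x)-A_{I}(x)|\leq\lambda\,|b_{Q_{I}}(x)-h(x)|+\lambda\,|h(x)-A_{I}(x)|.
\end{equation*}
Thus it suffices to bound $\lambda\int_{3B_I}|h-b_{Q_{I}}|\,dx$ and $\lambda\int_{3B_I}|h-A_{I}|\,dx$ separately, and these will produce the three terms on the right of \eqref{e:I3}.

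For the second integral, which is the easy one, I would use that $A_{I}$ is by construction the minimizer of $\Omega_{h}^{1}(C_{1}B_{I})$ and that $3B_{I}\subseteq C_{1}B_{I}$ once $C_{1}\geq 3$; by the very definition of $\Omega_{h}^{1}$ this gives
\begin{equation*}
\int_{3B_I}|h-A_{I}|\,dx\leq\int_{C_{1}B_{I}}|h-A_{I}|\,dx\approx\Omega_{h}^{1}(C_{1}B_{I})\,\ell(I)\cdot|C_{1}B_{I}|\approx\Omega_{h}(C_{1}B_{I})\,\ell(I)^{d+1},
\end{equation*}
which, after multiplying by $\lambda$, is exactly the last term of \eqref{e:I3}. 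For the first integral I would write $h(x)-b_{Q_{I}}(x)=\sum_{i\in\cI}(b_{i}(x)-b_{Q_{I}}(x))\phi_{i}(x)$, noting that on $3B_{I}$ (hence a fortiori inside $C_{1}I$) only the indices with $3I_{i}\cap C_{1}I\neq\emptyset$ contribute. This is precisely the quantity already estimated in the bound for $T_{2}$: passing to the planes $P_{Q_{i}}$ via the cut-offs $\Psi_{i}$ and invoking Lemma \ref{l:Mi} together with Lemma \ref{l:MBQiMBQI} (applied with $C_{2}=C_{1}$, so $M$ must be chosen large depending on $C_{1}$), I would decompose each integral
\begin{equation*}
\int|b_{i}\circ\pi-b_{Q_{I}}\circ\pi|\,\Psi_{i}\,d\dH^{d}|_{P_{Q_{i}}}\lec M_{i}\,\alpha_{\sigma}^{d}(MB_{Q_{i}})\,\ell(Q_{i})^{d+1}+\int|b_{i}\circ\pi-b_{Q_{I}}\circ\pi|\,\Psi_{i}\,d\sigma,
\end{equation*}
and then run verbatim the estimates for $\dT_{2,1}$ and $\dT_{2,2}$ (using $\sum_{3I_{i}\cap C_{1}I\neq\emptyset}M_{i}\ell(Q_{i})^{d+1}\lec\ve\sum\ell(Q_{i})^{d+1}+\beta_{E}^{d,1}(MB_{Q_{I}})\ell(Q_{I})^{d+1}$, the bounded overlap of the $\Psi_{i}$, and \eqref{e:form703} to replace $x$ by $b_{i}\circ\pi(x)$ on $E$). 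This yields
\begin{equation*}
\lambda\int_{3B_I}|h-b_{Q_{I}}|\,dx\lec\ve\lambda\sum_{3I_{i}\cap C_{1}I\neq\emptyset}\ell(Q_{i})^{d+1}+\lambda\,\beta_{E}^{d,1}(MB_{Q_{I}})\,\ell(Q_{I})^{d+1},
\end{equation*}
and since $Q_{I}\in\Tree(R)$ one has $\lambda\,\beta_{E}^{d,1}(MB_{Q_{I}})\lec|\grad A_{Q_{I}}|\,\beta_{E}^{d,1}(MB_{Q_{I}})\lec\gamma(MB_{Q_{I}})$, producing the first and second terms of \eqref{e:I3}.

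I do not expect any serious obstacle here, since this term is structurally lighter than $T_{1}$ and $T_{2}$; the only care needed is the usual geometric bookkeeping, namely verifying that for $I\in\Delta_{2}$ every index $i$ with $3I_{i}\cap C_{1}I\neq\emptyset$ satisfies $\ell(Q_{i})\approx\ell(I)\approx\ell(Q_{I})$ and $MB_{Q_{i}}\subseteq MB_{Q_{I}}$ (so that scale-$\ell(Q_{i})$ estimates can be summed against $\ell(Q_{I})^{d+1}$), which is exactly the content of Lemma \ref{l:MBQiMBQI}, and treating the indices $i\in\cI\setminus\cI_{K}$, where $b_{i}=\id$ and $Q_{i}=R$, for which $|x-b_{Q_{I}}\circ\pi(x)|\lec\dist(x,P_{Q_{I}})$ is still controlled by the $\beta$ numbers, so the same argument applies unchanged. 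These are the same minor adaptations of \cite{david-semmes91} used throughout this part of the paper.
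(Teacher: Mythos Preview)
Your proof is correct and follows essentially the same route as the paper: collapse $T_3$ via $\sum_i\phi_i\equiv 1$, bound $|\grad A_{Q_I}|\lesssim\lambda$, split $|b_{Q_I}-A_I|\leq |b_{Q_I}-h|+|h-A_I|$, handle the second piece by the definition of $\Omega_h^1(C_1B_I)$, and reduce the first piece to the $T_2$ estimates. Two small remarks: the correct reference for $|\grad A_{Q_I}|\approx\lambda$ is \eqref{e:AQ<alphaAR} (not \eqref{e:gradA-simLambda}, which concerns $A_R$); and you are right that both $\dT_{2,1}$ and $\dT_{2,2}$ are needed for the $|b_{Q_I}-h|$ term---the paper's phrasing ``this latter integral is equal to $\dT_{2,2}$'' is slightly loose, though the resulting bound \eqref{e:dT31} is the same.
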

\noindent
The term $T_3(x)$ is given by
\begin{align*}
	T_3(x)& =\sum_{i \in \cI}\left(A_{Q_{I}}\circ b_{Q_{I}}(x) - A_{Q_{I}}\circ A_I(x)\right)\phi_{i}(x)\\
	&  = \sum_{3I_i \cap 3I \neq \emptyset}\left(A_{Q_{I}}\circ b_{Q_{I}}(x) - A_{Q_{I}}\circ A_I(x)\right)\phi_{i}(x),
\end{align*}
since ${\rm spt}(\phi_i) \subset 3I_i$.
Recall also from the proof of Lemma \ref{l:MBQiMBQI} that if $3I_{i}\cap 3I\neq\emptyset$, then there is $I_j\subseteq I$ with $I_{i}\sim I_{j}$, so in particular 
\begin{equation*}
\ell(I_{i})\approx \ell(I_{j})\leq \ell(I)
\end{equation*}
and so for $C_1$ large enough, 
\begin{equation*}
\supp \phi_i\subseteq 3I_{i}\subseteq C_1B_I.
\end{equation*}
We compute
\begin{align*}
\int_{3B_I} |T_3(x)|
& =\int_{3B_I}\av{\sum_{3I_i \cap 3I \neq \emptyset} (A_{Q_{I}}\circ b_{Q_{I}}(x) - A_{Q_{I}}\circ A_I(x))\phi_{i}(x)}dx
\\
& \leq \int_{3B_I}\sum_{3I_i \cap 3I \neq \emptyset}|\grad A_{Q_{I}}|\cdot |b_{Q_{I}}(x)-A_I(x)|\phi_{i}(x)dx\\
& \stackrel{\eqref{e:AQ<alphaAR}}{\lec} \lambda  \int_{C_1B_I} |b_{Q_{I}}(x)-A_I(x)|dx\\
& \leq \lambda \int_{C_1B_I} |b_{Q_{I}}(x)-h(x)|+ \lambda \int_{C_1B_I} |h(x)-A_I(x)|dx:= \dT_{3,1} + \dT_{3,2}. 
\end{align*}
We estimate $\dT_{3,2}$ first.
The affine map $A_I$ was chosen to minimise $\Omega_h^1(C_1B_ I)$. Thus 
\begin{equation}\label{e:dT32}
\dT_{3,2} \lesssim 
\lambda \Omega_{h}(C_1B_I)\ell(I)^{d+1}.
\end{equation}
We focus on $\dT_{3,1}$. Compute
\begin{align*}
 \int_{C_1B_I} |b_{Q_{I}}(x)-h(x)|dx
 & \stackrel{\eqref{e:def-h}}{=}\int_{C_1B_I} \av{b_{Q_{I}}(x)-\sum_{i \in \cI}b_{i}(x)\phi_{i}(x)}dx\\
& =\int_{C_1B_I} \av{\sum_{i \in \cI}(b_{Q_{I}}(x)-b_{i}(x))\phi_{i}(x)}dx\\
& \leq \sum_{3I_i\cap C_1I \neq\emptyset} \int |b_{Q_{I}}(x)-b_{i}(x)|\phi_{i}(x)dx.
\end{align*}
Note that this latter integral is equal to $\dT_{2,2}$, and therefore it can be estimated in the same way. We thus conclude that
\begin{equation}\label{e:dT31}
	\dT_{3,1} \stackrel{\eqref{e:dT22}}{\lesssim} \lambda \ve \sum_{3I_i \cap C_1 I \neq \emptyset} \ell(Q_i)^{d+1} + \beta_{E}^{d,1}(MB_{Q_I}) \ell(Q_I)^{d+1}.
\end{equation}
Note again that since $Q_I \in \Tree(R)$, then $|\grad Q_I| \approx \lambda$, and therefore $\lambda \beta_{E}^{d,1}(MB_{Q_I}) \lesssim \gamma(MB_{Q_I})$.
All in all, we obtain
\begin{equation}\label{e:est-T3}
\int_{3B_I} |T_{3}(x)|\, dx
\lec \lambda \Omega_{h}(C_1B_I)\ell(I)^{d+1}+  \gamma(MB_{Q_{I}})\ell(Q_{I})^{d+1}+\sum_{3I_i\cap C_1I\neq\emptyset}    \lambda\ve \ell(Q_{j})^{d+1}
\end{equation}
which  implies \eqref{e:I3}.

\subsubsection{Putting together the estimates for $T_1, T_2$ and $T_3$}

We finally combine our estimates (and recall that $\ve\ll \delta_0$) to get

\begin{multline*}
\Omega_{F}(3B_I)\ell(I)^{d+1}  \leq \int_{3B_I}|F(x)-A_{Q_{I}}\circ A_{I}(x)|dx
 \stackrel{\eqref{e:def-T1T2T3}}{\leq} \sum_{i=1}^{3}\int  |T_i(x)|\, dx \\
  \lec  \lambda \Omega_{h}(C_1B_I)\ell(I)^{d+1} + \delta_0\lambda  \sum_{3I_{i}\cap C_1I\neq\emptyset}\ell(Q_{i})^{d+1}  +  \gamma(MB_{Q_{I}})\ell(Q_{I})^{d+1}
\end{multline*}

Recalling that $\ell(I)\approx \ell(Q_I)$ and $\ell(Q_i)\approx \ell(I_i)$, we have 
\begin{align*}
\sum_{I\in \Delta_{2}}\Omega_{F}(3B_I)^{2}|I|
& \lec \sum_{I\in \cC_{2}}\lambda \Omega_{h}(C_1I)^{2}|I|\\
& \qquad + \sum_{I\in \Delta_{2}} (\delta_0\lambda)^2 \ps{\sum_{3I_i\cap 3I\neq\emptyset}\frac{\ell(I_{i})^{d+1}}{\ell(I)^{d+1}}}^{2}|I| \\
& \qquad \qquad+ \sum_{I\in \Delta_{2}}\gamma(MB_{Q_{I}})^2|I|
=: S_1+S_2+S_3.
\end{align*}
First we estimate $S_1$. Recall that $h(x)=x+H(x)$ and $h$ is a $C\delta_0$-Lipschitz function supported in $V_K$. Since the identity map is affine, $\Omega_{h}^{1}=\Omega_{H}^{1}$, and so by Dorronsoro's Theorem,
\[
S_{1}  = \sum_{I\in \Delta_{2}}\lambda \Omega_{H}(C_1B_I)^{2}\ell(I)^{d}
\lec \int|\grad H|^{2}\, dx \lec \delta_0^2 |V_{K}|\approx \delta_0^2|R|.
\]

Next we estimate $S_2$: By Jensen's inequality,

\begin{align*}
S_{2}
& \lec   (\delta\lambda)^2\sum_{I\in \Delta_{2}}  \sum_{3I_i\cap C_1I\neq\emptyset}\frac{\ell(I_{i})^{d+2}}{\ell(I)^{d+2}}\ell(I)^{d}\\
&  \lesssim (\delta_0\lambda)^2\sum_{3I_{i}\cap C_1I\neq\emptyset} \ell(I_{i})^{d+2} \sum_{I\in \Delta_{2}\atop  3I_{i}\cap C_1I\neq\emptyset} \ell(I)^{-2}\\
& \lec (\delta_0\lambda)^2\sum_{3I_{i}\cap C\pi(B_{R})\neq\emptyset} \ell(I_{i})^{d}
\lec (\delta_0\lambda)^2\ell(R)^{d}
\end{align*}
\noindent
Here we used the fact that $\ell(I_i)\lec \ell(I)$ whenever 
\begin{equation}
\label{e:3IjCI}
3I_i\cap C_1I\neq\emptyset
\end{equation}
 and there are boundedly many dyadic cubes of any given side length satisfying this property and so the second sum in the second line is essentially a geometric series. We also used the fact that $I\in \Delta_{2}$ implies $3I\cap 3I_i$ for some $i\in \cI_K$, but those were cubes that intersected $U_K$ and they have side lengths $\lec \ell(R)$, and moreover $I\in \Delta_{2}$ implies $\ell(I)\leq \ell(R)$, and so any $I_i$ satisfying \eqref{e:3IjCI} must have $\ell(I_i)\lec \ell(I)\lec \ell(R)$ and must be contained in $C\pi(B_{R})$ for some large enough constant $C$.
\\

Lastly, we handle $S_3$. For this, we just observe that, given $Q\in \Tree(R)$, there can only be boundedly many dyadic cubes $I$ for which $Q_I=Q$, and so 
\[
S_{3} \lec \sum_{Q\in \Tree(R)} \gamma(MB_{Q})^{2}|Q|\lec \ve^{2}|R|.
\]

Combining these estimates together gives \eqref{e:C2}.

\subsection{Estimates for $\Delta_{3}$}

Finally, the goal of this section is to prove

\begin{equation}
\label{e:C3}
\sum_{I\in \Delta_{3}}\Omega_{F}^{1}(3B_I)^{2}|I|\lec (\lambda_0\delta)^{2}|R|.
\end{equation}
\noindent
Recall that $\Delta_3\subset \Delta_K$ is the family of dyadic cubes so that each $I \in \Delta_3$ has 
\begin{itemize}
	\item $3I \cap 3I_i \neq \emptyset$ for some $i \in \cI_K$;
	\item each $I \in \Delta_3$ is not contained in any $I_i$, for $i \in \cI$; and
	\item $\ell(I)> \ell(R)$.
\end{itemize} 
For $I \in \Delta_3$, let $I_i \in \{I_j\}_{j \in \cI_K}$ be so that $3I \cap 3I_i \neq \emptyset$. We claim that $\ell(I_i)\lesssim \ell(I)$. Indeed, note that if $I_i \in \dW$ and $I_i \cap V_K \neq \emptyset$, then 
\begin{equation*}
	\ell(I_i) < \frac{1}{20} D_R(I_i) \leq \frac{1}{20} (\ell(R) + \dist(I, \pi(R)) \lesssim \ell(R) \lesssim \ell(I).
\end{equation*}
Clearly then there are only boundedly many cubes $I \in \dI$ of some given sidelength satisfying this, that is
\begin{align}\label{e:form707}
	\# \{ I \in \Delta_3 \, |\, \ell(I) \approx 2^j \ell(R)\} \lesssim 1 \, \, \mbox{ for all } \, \, j \geq 0.
\end{align} 
Recall from \eqref{e:A=0}, that $F(x)=A_R(x)$ for all $x \in \R^n \setminus V_K = K^2\pi(B_R)$. Thus, 
\begin{align*}
	\Omega_F^1(3B_I)^2 & \lesssim \left( \frac{1}{\ell(I)^d} \int_{V_K} \frac{|F(x)-A_R(x)|}{\ell(I)} \, dx \right)^2 \\
	& \lesssim \frac{\ell(R)^{2d+2}}{\ell(I)^{2d+2}} \Omega_F^1(M^2B_R)^2.
\end{align*}
Hence, keeping in mind also \eqref{e:form707}, 
\begin{align*}
	\sum_{I \in \Delta_3} \Omega_F^1(3B_I)^2 |I| \lesssim \Omega_F^1(M^2 B_R) \, \ell(R)^{2d+2}\sum_{I \in \Delta_3} \ell(I)^{-d-1} \lesssim \Omega_F^1(M^2B_R)^2\ell(R)^d. 
\end{align*}
Since by the way we constructed $F$ we have that $\Omega_F^1(M^2B_R) \lesssim \delta_0$, we obtain \eqref{e:C3}.

\bibliography{bibliography}
\bibliographystyle{halpha-abbrv}

\end{document}